\def\H{{\mathbb H}}
\begin{document}

\setcounter{secnumdepth}{3}
\setcounter{tocdepth}{2}

\newtheorem{definition}{Definition}[section]
\newtheorem{lemma}[definition]{Lemma}
\newtheorem{sublemma}[definition]{Sublemma}
\newtheorem{corollary}[definition]{Corollary}
\newtheorem{proposition}[definition]{Proposition}
\newtheorem{theorem}[definition]{Theorem}

\newtheorem{remark}[definition]{Remark}
\newtheorem{example}[definition]{Example}

\newcommand{\mf}{\mathfrak}
\newcommand{\mb}{\mathbb}
\newcommand{\ol}{\overline}
\newcommand{\la}{\langle}
\newcommand{\ra}{\rangle}
\newcommand{\hess}{\mathrm{Hess}}
\newcommand{\grad}{\mathrm{grad}}
\newcommand{\II}{\textsc{I\hspace{-0.05 cm}I}}

\renewcommand{\O}{\mathcal{O}}

\newtheorem{thmprime}{Theorem}
\renewcommand{\thethmprime}{1.\arabic{thmprime}\textquotesingle}


\newtheorem{Alphatheorem}{Theorem}
\renewcommand{\theAlphatheorem}{\Alph{Alphatheorem}}

\newtheorem{Alphatheoremprime}{Theorem}
\renewcommand{\theAlphatheoremprime}{\Alph{Alphatheoremprime}\textquotesingle}

\theoremstyle{remark}

\newtheorem{exB}{Balls}
\newtheorem{excone}{Cones}
\newtheorem{exF}{Example E}
\newtheorem{exFuch}{Fuchsian}
\newtheorem{expast}{quasi-Fuchsian domain}
\newtheorem{exGamma}{quasi-Fuchsian convex set}
\newtheorem{familly}{A familly of examples}


\theoremstyle{plain}
\newtheorem*{flp}{Flip algorithm}
\newtheorem{clm}{Claim}

\newcommand{\EM}{\ensuremath}
\newcommand{\norm}[1]{\EM{\left\| #1 \right\|}}

\newcommand{\modul}[1]{\left| #1\right|}

\def\co{\colon\thinspace}
\def\I{{\mathcal I}}
\def\N{{\mathbb N}}
\def\R{{\mathbb R}}
\def\Z{{\mathbb Z}}
\def\Sph{{\mathbb S}}
\def\Tor{{\mathbb T}}
\def\Disk{{\mathbb D}}
\def\Fl{{\mathbb F}}

\def\H{{\mathbb H}}
\def\RP{{\mathbb R}{\mathrm{P}}}
\def\dS{{\mathrm d}{\mathbb{S}}}

\def\phi{\varphi}
\def\epsilon{\varepsilon}
\def\V{{\mathcal V}}
\def\E{{\mathbb E}}
\def\F{{\mathcal F}}
\def\C{{\mathcal C}}
\def\K{{\mathcal K}}

\renewcommand{\span}{\operatorname{span}}
\newcommand{\ang}{\operatorname{ang}}
\newcommand{\area}{\operatorname{area}}
\newcommand{\cone}{\operatorname{cone}}
\newcommand{\pr}{\operatorname{pr}}
\newcommand{\vol}{\operatorname{vol}}
\newcommand{\covol}{\operatorname{covol}}
\newcommand{\st}{\operatorname{st}}
\newcommand{\ost}{\operatorname{st}^\circ}
\newcommand{\inn}{\operatorname{int}}
\renewcommand{\d}{\operatorname{d}}

\newcommand{\bu}{{\bar u}}
\newcommand{\bv}{{\bar v}}
\newcommand{\bm}{{\bar{M}}}
\newcommand{\tr}{\operatorname{tr}}
\newcommand{\sff}{|\mathbf{II}|}
\newcommand{\sfa}[1]{|\mathbf{II}|^{#1}}
\newcommand{\kp}[1]{k_p\left(#1\right)}
\newcommand{\Kp}[1]{\mathcal{K}_p\left(#1\right)}\newcommand{\nor}[1]{\left\| #1\right\|}
\newcommand{\cmed}[1]{\frac{1}{m}\sum_{ #1=1}^{m}k_{#1}}
\newcommand{\nphiq}{\left|\phi\right|^2}
\newcommand{\nphin}[1]{\left|\phi\right|^{#1}}
\newcommand{\nphi}{\left|\phi\right|}
\newcommand{\trphi}{\operatorname{tr}\phi}
\newcommand{\ricc}{\operatorname{Ric}}
\newcommand{\mricc}{{}^M \operatorname{Ric}}
\newcommand{\nricc}{{}^N \operatorname{Ric}}
\newcommand{\tnn}{\tilde N_{\times/{}}}
\newcommand{\riem}{\operatorname{Riem}}
\newcommand{\mriem}{{}^M \operatorname{Riem}}
\newcommand{\nriem}{{}^N \operatorname{Riem}}
\newcommand{\rr}{\mathbb{R}}
\newcommand{\hh}{\mathbb{H}}
\newcommand{\sss}{\mathbb{S}}
\newcommand{\ttt}{\mathbb{T}}
\newcommand{\Hess}{\operatorname{Hess}}
\newcommand{\sect}{\operatorname{Sect}}
\newcommand{\msect}{{}^M \operatorname{Sect}}
\newcommand{\nsect}{{}^N \operatorname{Sect}}
\newcommand{\supp}{\operatorname{supp}}
\newcommand{\snk}{\operatorname{sn}_k}
\newcommand{\cnk}{\operatorname{cn}_k}
\newcommand{\ink}{\operatorname{in}_k}
\newcommand{\tnk}{\operatorname{tn}_k}
\newcommand{\dr}{\nabla r}
\newcommand{\dxi}{\nabla \xi}
\newcommand{\du}{\nabla u}
\newcommand{\aaa}{\frac{\left(\sigma-1\right)\mu}{\mu-1}}
\newcommand{\bbb}{\frac{\mu-1}{\left(\sigma-1\right)\mu}}
\newcommand{\inte}[2]{\int_{#1}#2 d\textit{v}_M}
\newcommand{\intebar}[2]{\int_{#1}#2 d\textit{v}_{\bar{M}}}
\newcommand{\intex}[2]{\int_{#1}#2 d\tilde{\textit{v}}_M}
\newcommand{\intem}[1]{\int_{M}#1 d\textit{v}_M}
\newcommand{\intsx}[1]{\int_{\supp\left(\xi\right)}#1 d\textit{v}_M}
\newcommand{\bux}{\mathcal{B}\left(u,\xi\right)}
\newcommand{\tu}{\tilde{u}}
\newcommand{\tir}{\tilde{r}}
\newcommand{\tq}{\tilde{q}}
\newcommand{\tv}{\tilde{v}}
\newcommand{\tb}[1]{\tilde{B}_{Q_0}\left(#1\right)}
\newcommand{\td}{\tilde{\nabla}}
\newcommand{\tn}{\tilde{N}}
\newcommand{\tm}{\tilde{M}}
\newcommand{\inteb}[1]{\int_{\tb{1}}#1 d\tilde{\textit{v}}_M}
\newcommand{\ve}{\varphi_{\varepsilon}}
\newcommand{\vel}{\ve^{\left(l\right)}}
\newcommand{\dive}{\operatorname{div}}
\newcommand{\conv}{\operatorname{conv}}
\newcommand{\hc}{\hat{c}}
\newcommand{\mmetr}[2]{\left\langle #1,#2\right\rangle_M}
\newcommand{\bmmetr}[2]{\left\langle #1,#2\right\rangle_\bm}
\newcommand{\nmetr}[2]{\left\langle #1,#2\right\rangle_N}
\newcommand{\hsmetr}[2]{\left\langle #1,#2\right\rangle_{HS}}
\newcommand{\Capp}[1]{\operatorname{Cap}_{#1}}
\newcommand{\abs}[1]{\left|#1\right|}
\newcommand{\ps}[2]{\left\langle#1,#2\right\rangle}
\newcommand{\ton}[1]{\left(#1\right)}
\newcommand{\nr}[2]{{}^N r(#1,#2)}
\newcommand{\dist}{\operatorname{dist}}
\newcommand{\tphi}{{\tilde\phi}}
\newcommand{\tx}{{\tilde x}}
\newcommand{\tX}{{\tilde X}}
\newcommand{\tih}{{\tilde h}}
\newcommand{\hG}{{\hh^d/\Gamma}}

\def\dist{\mathrm{dist\,}}
\def\diam{\mathrm{diam\,}}

\def\M{{\mathcal M}}
\def\Mt{{\mathcal M}_{\mathrm{tr}}}
\def\T{{\mathcal T}}
\def\Vt{V_{\mathrm{tr}}}

\setlength{\abovedisplayshortskip}{1pt}
\setlength{\belowdisplayshortskip}{3pt}
\setlength{\abovedisplayskip}{3pt}
\setlength{\belowdisplayskip}{3pt}


\title{Lorentzian area measures and the Christoffel problem}

 \author{Fran\c{c}ois Fillastre and Giona Veronelli}

\date{\today}

\maketitle

Universit\'e de Cergy-Pontoise, UMR CNRS 8088, F-95000 Cergy-Pontoise, France.

Universite Paris 13, Sorbonne Paris Cit\'e, LAGA, CNRS (UMR 7539), F-93430 Villetaneuse, France. 

 francois.fillastre@u-cergy.fr, veronelli@math.univ-paris13.fr

\begin{abstract}
 We introduce a particular class of unbounded closed convex sets
of $\R^{d+1}$, called F-convex sets (F stands for future). To define them, we use   the Minkowski bilinear form of 
signature $(+,\ldots,+,-)$ instead of the usual scalar product, and we ask the Gauss map to be a surjection 
onto the hyperbolic space
$\H^d$. 
Important examples are embeddings of the universal cover 
of some globally hyperbolic maximal flat Lorentzian manifolds.

Basic tools are first derived, similarly to the classical study of convex bodies.
For example, F-convex sets are determined by their  support function, which is defined on $\H^d$.
Then the area measures of order $i$, $0\leq i\leq d$ are defined. As in 
the convex bodies case, they are the coefficients of the polynomial in $\epsilon$ which is the volume of
an $\epsilon$ approximation of the convex set. Here the area measures are defined with respect to the
 Lorentzian structure.

Then we focus on the area measure of order one. Finding necessary and sufficient conditions for a measure 
(here on $\H^d$) to be the first area measure of an F-convex set is the Christoffel Problem.
We derive many results about this problem. If we restrict to F-convex set
setwise invariant under linear isometries acting cocompactly on $\H^d$, then
the problem is totally solved, analogously to the case of convex bodies. In this case the measure can be
given on a compact hyperbolic manifold.

Particular attention is given on the smooth and polyhedral cases. In those cases, the Christoffel problem is equivalent to
prescribing the mean radius of curvature
and the edge lengths respectively.\end{abstract}


\textbf{MSC:} 52A38, 52A38, 58J05

\setcounter{tocdepth}{3}
\tableofcontents

\section{Introduction}

\subsection{Area measures and the Christoffel problem for convex bodies}
Let $K$ be a convex body in $\mathbb{R}^{d+1}$ and $\omega$ be a Borel set of
 the sphere $\mathbb{S}^d$, seen as the set of unit vectors of  $\mathbb{R}^{d+1}$.
 Let $B_{\epsilon}(K,\omega)$ be the set of points $p$ which are at distance
as most $\epsilon$ from their metric projection $\overline{p}$ onto $K$ and such that $p-\overline{p}$ is collinear
to a vector belonging to $\omega$. It was proved in \cite{FJ38} that
the volume of $B_{\epsilon}(K,\omega)$  is a polynomial with respect to $\epsilon$:
\begin{equation}\label{eq:area I vol}\tag{I}
V(B_{\epsilon}(K,\omega))= \frac{1}{d+1}\sum_{i=0}^{d}\epsilon^{d+1-i}  \binom{d+1}{i} S_i(K,\omega).
\end{equation}
Each 
$S_i(K,\cdot)$ is a finite positive measure on the Borel sets of the sphere, called
the \emph{area measure of order $i$}. $S_0(K,\cdot)$ is only the Lebesgue measure of the sphere $\mathbb{S}^d$, and
$S_d(K,\omega)$ is the $d$-dimensional Hausdorff measure of the pre-image of $\omega$ for the Gauss map.
The problem of prescribing the $d$th area measure is the (generalized) Minkowski problem,
and the one of prescribing the first area measure is the (generalized) Christoffel problem
(each problem having a smooth and polyhedral specialized version). 

There are other ways of introducing the area measures \cite{Sch93}. If
$K_{\epsilon}:=K+\epsilon B$, with $B$ the unit closed ball, we have
\begin{equation}\label{eq:area II St}\tag{II}S_{d}(K_{\epsilon},\omega)=\sum_{i=0}^{d}\epsilon^{d-i}\binom{d}{i} S_i(K,\omega). 
\end{equation}

We can also use the mixed-volume $V(\cdot,\ldots,\cdot)$.
Let $h_K$ be the support function of $K$:
$$h_K(x)=\sup_{k\in K}\langle x,k \rangle $$
where $\langle \cdot,\cdot \rangle$ is the usual scalar product and $x\in\mathbb{S}^{d}$.
The set of support functions of convex bodies in $\mathbb{R}^d$
is a convex cone, that spans a linear subspace of the space of continuous functions on 
$\mathbb{S}^d$. Identifying a convex body with its support function, 
the mixed-volume is the unique 
symmetric $(d+1)$-linear form on the space of convex bodies of $\mathbb{R}^{d+1}$ with 
$V(K,\ldots,K)=V(K)$, if $V$ is the volume.  It is continuous and hence, 
fixing convex bodies $K_1,\ldots,K_{d}$, $V(K,K_1,\ldots,K_{d})$, seen as a function of $h_K$, is an additive functional on a subset
of the space of continuous functions on the sphere $\mathbb{S}^{d}$. It can be extended to
the whole space, and by the Riesz representation theorem, there exists a unique measure
$S(K_1,\ldots,K_d;\cdot)$ on the Borel sets of the sphere with
$$V(K,K_1,\ldots,K_{d})=\frac{1}{d+1}\int_{\mathbb{S}^{d}} h_K(x) \mathrm{d}S(K_1,\ldots,K_d,x). $$
The area measure of order $i$ can then be defined as
$$
S_i(K,\cdot)=S(\underbrace{K,\ldots,K}_{i},B,\ldots,B, \cdot),
$$
so the first area measure of $K$ is the unique positive measure on the sphere such that for any convex body $K'$,
\begin{equation}\label{eq:area III mix a}\tag{III}V(K',K,B,\ldots,B)=\frac{1}{d+1}\int_{\mathbb{S}^d}h_{K'}(x)\mathrm{d}S_1(K,x). \end{equation}

A last way of defining the first area measure is due to C.~Berg \cite{Ber69}. 
In the case of a strictly convex body with $C^2$ boundary $K$, the first area measure
is $\phi_K \mbox{d}\mathbb{S}^d$, with $\mbox{d}\mathbb{S}^d$ the usual volume form on the sphere
and $\phi_K$ the mean radius of curvature of $K$ (the sum of the principal radii of curvature of $\partial K$
 divided by $d$). One can compute $\phi_K$ as 
\begin{equation}\label{eq:area IV distr}\tag{IV}
\frac{1}{d}{}^{\mathbb{S}^d}\!\!\!\Delta h_K + h_K
\end{equation}
where  ${}^{\mathbb{S}^d}\!\!\!\Delta$ is the Laplacian on $\mathbb{S}^d$. The fact is that, for any convex 
body $K$, $S_1(K,\cdot)$ is equal in the sense of distributions to the formula above, defined in the sense of distributions.
All those definitions of area measures use approximation results of a convex body by a 
sequence of polyhedral or smooth convex bodies.

The Christoffel problem was completely solved independently  by  W.~Firey (for 
sufficiently smooth case in \cite{Fir67}, then generally by approximation
in \cite{Fir68}) and C.~Berg \cite{Ber69}. 
See \cite{Fir81} for an history of the 
problem to the date, and Section~4.3 in \cite{Sch93}. See \cite{GZ99}, \cite{GYY11}  for  developments around \cite{Ber69}.

\subsection{Content of the paper}
There is an active research about problems \`a la Minkowski and Christoffel for space-like hypersurfaces
of the Minkowski space (at least too many to be cited exhaustively; some references will be given further). 
However they mainly concern smooth hypersurfaces, and often in the $d=2$ case. 
One of the aim of the present paper is to introduce a class of convex set which are intended to be
the analog of convex bodies when the Euclidean structure is considered. In particular, 
they are the objects arising naturally for this kind of problems.

 In the first section of the paper we define \emph{F-convex sets}. They are
 intersection of the future sides of space-like hyperplanes,  such that
any future time-like vector is a support vector of the convex set.
This section is almost self-contained, as we have to prove 
all the basics results similar to the convex bodies theory,  for which the main source was  \cite{Sch93}. Actually we will use
some results contained in \cite{Bon05}. For example, the support functions of F-convex sets are defined 
on $\H^d$. Also, single points, which are convex bodies, are not F-convex bodies. Their analogues are future cones of single points.
However the matter is complicated because conditions on the boundary enter the picture (F-convex sets may have light-like support planes).

The motivation behind the definition of F-convex set is to be able to get 
the analog of \eqref{eq:area I vol} for the Lorentzian structure. The volume is independent of the signature of the metric, but not the orthogonal projection.
The idea is to first prove it for particular F-convex sets, called \emph{Fuchsian convex sets}  which
are F-convex sets invariant under a group of linear isometries $\Gamma$ of the Minkowski space acting cocompactly on $\H^d\subset\R^{d+1}$.
In many aspects they behave very analogously 
to convex bodies, roughly speaking compactness is replaced by ``cocompactness'' (this was noted in \cite{FF}). For them, we find formulas analogous to \eqref{eq:area I vol} and \eqref{eq:area III mix a}. As the definition of area measure is local, we use
a result of ``Fuchsian extension'' (Subsection~\ref{subsub: fuch ext}) of any part of an F-convex set to treat the general case.

We then focus on the first area measure. 
In the regular case, it is absolutely continuous with respect to the volume form of $\H^d$ with 
density the mean radius of curvature $\phi$, obtained as
\begin{equation}\label{eq_hn}
	\frac{1}{d}\Delta h -  h =\phi
\end{equation}
where $\Delta$ is the Beltrami--Laplace operator on $\mathbb{H}^d$.
In the general case, the area measure of order one is given by the formula above in the sense of distribution.

To find conditions on a given measure $\mu$ on $\H^d$ such that there exists
an F-convex set with $\mu$ as first area measure is the Christoffel problem. 
Section~\ref{sec:sol} contains computations related to the Christoffel problem. 
In the smooth case, related results were proved in \cite{So81,So83,OS83,LS06}.
Our computations go back to \cite{He59,Hel62}, and generalizes the preceding ones. See Remark~\ref{rem: helga firey et co} for more details.
In the polyhedral case, we adapt a classical construction, which appears to be related to
more recent works on Lorentzian geometry
 \cite{Mes07,Bon05,BB09}, see Remark~\ref{rem:mess pol}. 
 
 The content of Section~\ref{sec:qf sol} will be described later.

\subsection{The Fuchsian case}
Fuchsian convex sets are very special  F-convex sets, because they are at the same time
invariant under the action of a (cocompact) group and contained in the future cone of a point, which is a relevant property
as it will appear.  Seemly, they are the only F-convex sets for which a definitive result can be given,
very analogous to the one of convex bodies. By invariance, the support function of Fuchsian convex bodies can be defined 
on the compact hyperbolic manifold $\H^d/\Gamma$ instead of $\H^d$. The following statement 
stands to give an idea about the kind of results we obtained, we cannot define precisely all the terms in the introduction.

\begin{theorem}\label{thm: base general}
Let $\Gamma$ be  so that $\mathbb H^d/\Gamma$ is a compact hyperbolic manifold with universal covering map
$P_{\Gamma}:\mathbb H^d\to\mathbb H^d/\Gamma$. Let $\bar\mu$ be a positive Radon measure on $\mathbb H^d/\Gamma$. 
Define a positive Radon measure $\mu:=P_\Gamma^{\ast}\bar\mu$ on $\mathbb H^d$ as the pull-back distribution of $\bar\mu$ (see Subsection~\ref{sub:fuchssol}) 
and define the distribution 
\begin{equation*}
h_\mu:=\int_{\mathbb H^d}G(x,y)\mathrm{d}\mu(y)
\end{equation*}
where $G(x,y)$ is the kernel function defined by
\begin{equation*}
	G(x,y)= \frac{\cosh d_{\mathbb H^d}(x,y)}{v_{d-1}}
\int_{+\infty}^{d_{\mathbb H^d}(x,y)}\frac{\mathrm{d}t}{\sinh^{d-1}(t)\cosh^2(t)}
\end{equation*}
($v_{d-1}$ is the area of $\mathbb{S}^{d-1}\subset \R^d$) and the precise action of $h_\mu$ is explained in \eqref{action}. Then 
\begin{enumerate}[nolistsep]
	\item $h_\mu$ is a solution to equation 
	\begin{equation*}
\frac{1}{d}\Delta h - h = \mu 
\end{equation*}
in the sense of distributions on $\mathbb H^d$.
\item There exists a unique $\Gamma$-invariant F-convex set $K$ with first area measure $\bar\mu$ if and only if
\begin{enumerate}[nolistsep]
	\item \begin{equation*}
	      \left| \int_{\mathbb H^d}G(x,y)\mathrm{d}\mu(y)\right|<+\infty,\quad\forall x\in\H^d,
	      \end{equation*}
	\item the convexity condition 
	\begin{equation*}
\int_{\mathbb H^d} \Lambda(\eta,\nu,y)\mathrm{d}\mu(y) \geq 0,
\end{equation*}
is satisfied for all future time-like vectors $\eta,\nu$, where $\Lambda(\eta,\nu,y)$ is 
 \[
  \Lambda(\eta,\nu,y)=\Gamma(\eta,y)+\Gamma(\nu,y)-\Gamma(\eta+\nu,y)
  \]
  and $\Gamma(\eta,y)=\|\eta\|_-G\left(\frac{\eta}{\|\eta\|_-},y\right)$.
\end{enumerate}
\item  If $\mu=\bar\phi \mathrm{d}\H^d$ for some $0<\bar\phi\in C^{k,\alpha}(\hG)$, $k\geq0$ and $0\leq\alpha<1$, then $h_{\mu}\in C^{k+2,\alpha}(\hh^d)$ if $\alpha>0$ and 
$h_{\mu}\in C^{1,\beta}(\hh^d)$ for all $\beta<1$ if $\alpha=k=0$.
\end{enumerate}
\end{theorem}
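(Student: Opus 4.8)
The plan is to treat the three assertions in turn: the first is an explicit fundamental-solution computation, the second is a passage from an analytic solution to a genuine support function, and the third is standard elliptic regularity transplanted to the compact quotient.

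For (1), I would first use that $G(x,y)$ depends only on $r:=d_{\mathbb H^d}(x,y)$, so that on radial functions the operator $\frac1d\Delta-\mathrm{id}$ becomes $u\mapsto \frac1d\big(u''+(d-1)\coth r\,u'\big)-u$. A direct check shows that $\cosh r$ solves the homogeneous equation $\frac1d\Delta u-u=0$ (indeed $\Delta\cosh r=d\cosh r$), and that $G=\frac{\cosh r}{v_{d-1}}\int_{+\infty}^{r}\frac{\mathrm dt}{\sinh^{d-1}t\,\cosh^2 t}$ is precisely the second, singular-at-$0$, solution obtained from $\cosh r$ by reduction of order; hence $\big(\frac1d\Delta-\mathrm{id}\big)G(\cdot,y)=0$ away from $y$. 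To identify the singularity I would integrate over a small geodesic ball $B_\epsilon(y)$ and let $\epsilon\to0$: the zeroth-order term contributes nothing in the limit because $G$ is locally integrable ($G\sim -r^{2-d}/((d-2)v_{d-1})$), while the flux $\frac1d\oint_{\partial B_\epsilon}\partial_r G$, weighed against the area $v_{d-1}\sinh^{d-1}\epsilon$ of the geodesic sphere, fixes the weight of the Dirac mass — this is where the constant $v_{d-1}$ is chosen so that $G(\cdot,y)$ becomes the fundamental solution of $\frac1d\Delta-\mathrm{id}$. The distributional identity $\frac1d\Delta h_\mu-h_\mu=\mu$ then follows by Fubini and self-adjointness: pairing $h_\mu$ against $\big(\frac1d\Delta-\mathrm{id}\big)\psi$ for a test function $\psi$ and exchanging the order of integration reduces to the fundamental-solution identity against $\psi$, yielding $\int\psi\,\mathrm d\mu$; some care is needed to match this with the distributional action defined in the statement.

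For (2), I would form from $h_\mu$ its $\|\cdot\|_-$-homogeneous extension $H(\eta):=\int_{\mathbb H^d}\Gamma(\eta,y)\,\mathrm d\mu(y)$ to the future cone, where $\Gamma(\eta,y)=\|\eta\|_-\,G(\eta/\|\eta\|_-,y)$ is $1$-homogeneous by construction; condition (a) is exactly what guarantees $H$ (equivalently $h_\mu$) is finite, hence a genuine function. Since $H$ is automatically positively homogeneous of degree one, it is the support function of an F-convex set if and only if it is subadditive, and integrating termwise gives $\int\Lambda(\eta,\nu,y)\,\mathrm d\mu(y)=H(\eta)+H(\nu)-H(\eta+\nu)$, so condition (b) is literally the subadditivity $H(\eta+\nu)\le H(\eta)+H(\nu)$ for all $\eta,\nu\in\mathcal F$. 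As $\mu=P_{\Gamma_0}^{\ast}\bar\mu$ is $\Gamma_0$-invariant, so are $h_\mu$ and $H$, whence the convex set $K$ they determine is a $\Gamma_0$-convex set; by part (1) together with the identification of the first area measure with $\frac1d\Delta h-h$ recalled in \eqref{eq_hn}, its first area measure is $\mu$, i.e.\ $\bar\mu$ on $\mathbb H^d/\Gamma_0$. For uniqueness I would observe that two $\Gamma_0$-convex sets with first area measure $\bar\mu$ have support functions $h_1,h_2$ with $u:=h_1-h_2$ solving $\frac1d\Delta u=u$; descending to the compact manifold and integrating $u\,\Delta u$ by parts forces $\int|\nabla u|^2=-d\int u^2$, so both sides vanish and $u\equiv0$, i.e.\ $K_1=K_2$. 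Necessity of (a) and (b) is then immediate: if such a $K$ exists, its support function is finite and sublinear and solves $\frac1d\Delta h-h=\mu$, so by the triviality of the kernel it must coincide with $h_\mu$, forcing (a) and (b).

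For (3), since $\bar\phi$ and all data are $\Gamma_0$-invariant I would work directly on the closed hyperbolic manifold $\mathbb H^d/\Gamma_0$, where $\frac1d\Delta-\mathrm{id}$ is elliptic with trivial kernel (by the integration-by-parts argument above) and hence invertible. Global Schauder theory on a closed manifold then gives, for $0<\alpha<1$ and $\bar\phi\in C^{k,\alpha}$, solvability together with the a priori estimate yielding $h_\mu\in C^{k+2,\alpha}$. In the borderline case $k=\alpha=0$ (merely continuous density) the $C^2$ conclusion genuinely fails, but the classical potential estimates for elliptic equations with continuous right-hand side still give $h_\mu\in C^{1,\beta}$ for every $\beta<1$; the absence of a further bootstrap accounts for the weaker endpoint statement.

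The main obstacle I expect is in (2). The fundamental-solution computation of (1) and the Schauder regularity of (3) are essentially routine once the normalization and the compactness of the quotient are used, whereas the heart of (2) is to run the classical ``sublinear $\Leftrightarrow$ support function'' correspondence in the Lorentzian, F-convex setting: one must show that a finite, $\|\cdot\|_-$-homogeneous, subadditive function on the future cone is exactly the support function of an F-convex set, and that the resulting set really is F-convex, i.e.\ has Gauss map surjecting onto $\mathbb H^d$. The indefinite bilinear form and the unbounded, possibly light-like boundary force these arguments to be redone from scratch, and confirming rigorously (rather than merely formally via part (1)) that the first area measure of $K$ equals $\bar\mu$ will rely on the local definition of the area measures and the Fuchsian-extension results.
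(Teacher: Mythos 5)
Your proposal is correct, and its architecture coincides with the paper's: part (1) amounts to showing that $G(\cdot,y)$ is the fundamental solution of $\frac{1}{d}\Delta-\mathrm{id}$ and then dualizing; part (2) is the observation that condition (b) is precisely the subadditivity of the $1$-homogeneous extension $H(\eta)=\int\Gamma(\eta,y)\,\mathrm{d}\mu(y)$, combined with the identification of the first area measure with $\frac{1}{d}\Delta h-h$ and with uniqueness on the compact quotient; part (3) is Schauder/Sobolev regularity. Where you genuinely diverge is in the two analytic verifications, and in both cases your route is more elementary. For the fundamental-solution identity the paper does not integrate Green's identity over small geodesic balls; it works with Helgason's spherical mean-value operator $M_\rho(\phi;x)$, the commutation $\Delta_x M_\rho=\partial_{\rho\rho}M_\rho+\frac{\dot A}{A}\partial_\rho M_\rho$, and extracts the Dirac mass from the boundary term $\lim_{\rho\to0}\dot k(\rho)A(\rho)=1$ after an integration by parts in $\rho$; your flux computation is the same normalization read off pointwise (only note that for $d=2$ the singularity of $G$ is logarithmic rather than $r^{2-d}/(d-2)$, which affects neither local integrability nor the flux limit). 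For uniqueness the paper argues by duality, pairing the difference of two solutions against smooth solutions of the equation with arbitrary smooth right-hand side, whereas you use that $\Delta u=du$ on a closed hyperbolic manifold forces $-\int|\nabla u|^2=d\int u^2$ and hence $u\equiv0$; your spectral argument is shorter and self-contained, while the paper's duality argument doubles as the mechanism for defining the pushed-forward distribution on the quotient. Your reading of the logic of (2) — necessity of (a) and (b) because any $\Gamma_0$-convex set with first area measure $\bar\mu$ must, by this uniqueness, have support function equal to $h_\mu$ — is exactly what the paper's one-line proof leaves implicit, and your closing caveat that identifying $S_1(K,\cdot)$ with $\frac{1}{d}\Delta h_K-h_K$ for merely continuous support functions rests on the Fuchsian-extension and polyhedral/smooth approximation machinery is well placed.
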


If the $\bar \phi$  above is $C^2$ another characterization of convexity is given in Proposition~\ref{convexity_Lopes}. In 
this case $\bar\phi$ is the mean radius of curvature of the Fuchsian convex set with support function $h_{\mu}$.

Those conditions are very cumbersome, so necessary conditions could be wished. 
In the compact Euclidean case, necessary conditions were first given in \cite{Pog53,Pog73} 
(a proof is  in \cite{GM03}), but it does not seem to have an analogue in our case, see Remark~\ref{rem:sufficient conditions}, and the next  subsection.

\subsection{Quasi-Fuchsian convex sets and flat spacetimes}\label{sub:intro quasifuc}

A \emph{quasi-Fuchsian convex set} is the data of an F-convex set $K$ and a 
group of isometries $\Gamma$ of the Minkowski space such that
\begin{itemize}[nolistsep]
 \item $K$ is setwise invariant under the action of $\Gamma$,
 \item $\Gamma$ is isomorphic to its linear part $\Gamma_0$, which is such that $\H^d/\Gamma_0$ is a 
 compact hyperbolic manifold.
\end{itemize}

Their interest comes in part from general relativity. Actually for any  group $\Gamma$ as
above, there is a unique convex open set $\Omega$, maximal for the inclusion,
such that $\Gamma$ acts freely properly discontinuously on it. The closure of $\Omega$ is an F-convex set. 
The quotient $\Omega /\Gamma$ is a future complete flat Lorentzian spacetime, globally hyperbolic, maximal, 
spatially compact and homeomorphic to $\H^d/\Gamma_0\times \R$.  We refer to  \cite{Bar05} for a classification of such manifolds. For more details on $\Omega$, see 
\cite{Mes07,Mes07+,Bon05,BB09}.

Section~\ref{sec:qf sol} contains in particular a kind of slicing of those spacetimes by 
constant mean radius of curvature hypersurface (the ``dual'' problem of slicing by constant mean curvature hypersurfaces is classical, see \cite{ABBZ12}), 
with the particularity that the slicing goes ``outside'' of the future complete space-time and then slices a past complete spacetime.

\subsection{The Christoffel--Minkowski problem}
From now on let us consider only smooth objects. The classical Christoffel--Minkowski problem consists of
characterizing functions on the sphere which are elementary symmetric functions of
the radii of curvature of convex bodies.  Aside from the 
cases corresponding to the Minkowski and Christoffel problems, the Christoffel--Minkowski problem is not yet solved. 
Active research is still going on, see \cite{STW04,GM03,GLM06,GMZ06}  and the references inside
(see \cite{GLL12} for the ``dual'' problem of prescribing curvature measures). 
Another aim of the present paper is to bring attention to the fact 
that similar analysis can be done on the hyperbolic space or on compact hyperbolic manifolds, that still have a geometric interpretation.
Convex bodies are then replaced by F-convex sets.

For example, a Minkowski theorem (smooth version) was proved for quasi-Fuchsian convex sets in \cite{BBZ11}, in the case $d=2$.
In the Fuchsian case, it is proved in any dimension  \cite{OS83}. The Minkowski problem for quasi-Fuchsian convex sets is the subject of \cite{BF}.

%
%
%

\subsection{Acknowledgement}
The authors want to thank Francesco Bonsante, Thierry Daud\'e, Gerasim Kokarev, Yves Martinez-Maure
and Jean-Marc Schlenker. The first author enjoyed useful conversations with  Yves Martinez-Maure
about hedgehogs.  Francesco Bonsante pointed out to the first author 
the relation between the first area measure and measured geodesic laminations.

Several remarks and suggestions from the anonymous referee contributed to improve both the content and the presentation of this paper. The authors are grateful to him for such a careful reading.

The first author was partially supported by the \textit{ANR GR-Analysis-Geometry}. The second author was partially supported by \textit{INdAM COFUND-2012 Outgoing Fellowships in Mathematics and/or Applications Cofunded by Marie Curie Actions.}

\section{Background on convex sets}\label{sec:back}

\subsection{Notations}

\paragraph{Subsets of $\R^{d+1}$.}
For a set $A\subset \R^{d+1}$ we will denote by
$\overline{A}, \stackrel{\circ}{A}, \partial A$ respectively the closure, the interior and the boundary
of $A$.
A hyperplane $\mathcal{H}$ of $\R^{d+1}$ is a \emph{support plane} of a closed convex set $K$ if it has a non empty intersection with $K$ and $K$ is totally contained in one side of $\mathcal{H}$.
In this paper, a vector orthogonal to a support plane and inward pointing is a \emph{support vector} of $K$.
A
\emph{support plane at infinity} of $K$ is a hyperplane $\mathcal{H}$ such that
 $K$ is contained in one side of $\mathcal{H}$, and any
parallel displacement of $\mathcal{H}$ in the direction of $K$ meets the interior of $K$ 
($\mathcal{H}$ and $K$ may have empty intersection).  A support plane is  a support plane at infinity.

We denote by $V$ the volume form of $\R^{d+1}$ (the Lebesgue measure).

\paragraph{Minkowski space.}
The Minkowski space-time of dimension $(d+1)$, $d\geq 1$,  is  
$\mathbb{R}^{d+1}$ endowed with the symmetric bilinear form
$$\langle x,y\rangle_-=x_1y_1+\cdots+x_ny_n-x_{d+1}y_{d+1}.$$
The interior of the future cone of a point $p$ is denoted by $I^+(p)$.
We will denote  $I^+(0)$ by $\mathcal{F}$,  it is the set of future time-like 
 vectors:
$$\F=\{x\in \R^{d+1}\vert \langle x,x\rangle_-<0, x_{d+1}>0\}.$$
 $\partial \F^{\star }$ and $\overline{\F}^{\star }$ are respectively $\partial \F$ and $\overline{\F}$ without the origin
(respectively the set of future light-like vectors and the set of future vectors).
Let us also denote
$$\C(p):=\overline{I^+(p)}$$
and for $t>0$,
$$K(\H_t):=\{x\in \R^{d+1}\vert \langle x,x\rangle_-\leq -t^2, x_{d+1}>0\} $$
with $K(\H):=K(\H_1)$.

For a differentiable real function $f$ on an open set of $\mathbb{R}^{d+1}$, 
$\operatorname{grad}_{x}f$ will be the Lorentzian gradient of $f$ at $x$:
$$D_xf(X)=\langle X,\operatorname{grad}_xf\rangle_-, $$
namely the Lorentzian gradient is the vector with entries $(\frac{\partial f}{\partial x_1},\ldots,\frac{\partial f}{\partial x_d},-\frac{\partial f}{\partial x_{d+1}})$.

For two points $x,y$ on a causal (i.e.~no-where space-like) line, the Lorentzian distance 
is $$d_L(x,y)=\sqrt{-\langle x-y,x-y\rangle_-},$$ and $\| x\|_-:=d_L(x,0).$
 We have the reversed triangle inequality:
\begin{equation}\label{eq:revers triang ineq} \|x\|_-+\|y\|_-\leq \|x+y\|_-,\qquad\forall x,y\in\F.\end{equation}

An isometry $f$ of the Minkowski space has the form $f(x)=l(x)+v$, with $v\in\R^{d+1}$ and 
$l\in O(d,1)$, the group of linear maps such that $lJl=J$, with $$J=\mathrm{diag}(1,\ldots,1,-1).$$
We refer to \cite{One83} for more details. For a $C^2$ function $f:\R^{d+1}\rightarrow \R$, the wave operator is
$$\square f=\frac{\partial^2 f}{\partial x_{1}^2}+\ldots +\frac{\partial^2 f}{\partial x_{d}^2}-\frac{\partial^2 f}{\partial x_{d+1}^2} .$$

\paragraph{Hyperbolic Geometry.}
In all the paper, the hyperbolic space is identified with the pseudo-sphere
$$\H^d=\{x\in \R^{d+1}\vert \langle x,x\rangle_-=-1, x_{d+1}>0\}, $$
i.e.~$\H^d=\partial K(\H)$. We denote by $g, \nabla, \nabla^2, \Delta=\operatorname{div}\nabla$ respectively the Riemannian metric, the gradient,
the Hessian and the  Laplacian  of $\H^d$. Using
 hyperbolic coordinates on $\F$ (any
orthonormal frame $X_1,\ldots,X_d$  on
$\H^d$ extended to  an orthonormal frame of $\F$ with the decomposition
$r^2g_{\H^d}-\mathrm{d}r\otimes \mathrm{d}r$ of the metric on $\F$),
the Hessian of a function $f$ on $\F$ and the hyperbolic Hessian of its restriction to $\H^d$ are related by
\begin{equation}\label{restr hessien}
\operatorname{Hess}f=\nabla^2 f-\frac{\partial f}{\partial r}g.
\end{equation}

A function $H$ on $\F$ is \emph{positively homogeneous of degree one}, or in short \emph{$1$-homogeneous},
if $$H(\lambda \eta)=\lambda H(\eta)\, \forall \lambda > 0.$$
It is determined by its restriction $h$ to $\H^d$ via
$H(\eta)=h(\eta/\|\eta\|_-)/\|\eta\|_-$. A function $H$ obtained in 
this way will be called the \emph{$1$-extension} of $h$.

\begin{lemma}
 Let $h$ be a $C^1$ function on $\H^d$ and $H$ be its $1$-extension to $\F$.
Then
\begin{equation}\label{eq:nablanabla}\grad_{\eta}H=\nabla_{\eta}h-h(\eta)\eta. \end{equation}
Moreover, if $h$ is $C^2$, then
$\forall X,Y \in T_{\eta}\H^d$,
\begin{equation}\label{eq:II-1}\operatorname{Hess}_{\eta}H(X,Y)= \nabla^2 h (X,Y)- h g(X,Y), \end{equation}
and, for $\eta\in\H^d$,
$$\square_{\eta}H=\Delta h - dh.$$
\end{lemma}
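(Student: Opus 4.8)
The plan is to establish the three identities in turn at a point $\eta\in\H^d$, exploiting the $\langle\cdot,\cdot\rangle_-$-orthogonal splitting $\R^{d+1}=T_\eta\H^d\oplus\R\eta$, in which $\langle X,\eta\rangle_-=0$ for every $X\in T_\eta\H^d$ and $\langle\eta,\eta\rangle_-=-1$. I will use repeatedly that $H$ is $1$-homogeneous, so that $H|_{\H^d}=h$, the radial derivative is $\tfrac{\partial H}{\partial r}=h$, and Euler's relation gives $D_\eta H(\eta)=H(\eta)=h(\eta)$.

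For \eqref{eq:nablanabla}, I would write $\grad_\eta H=W+c\,\eta$ with $W\in T_\eta\H^d$ and pin down $W$ and $c$ from the defining relation $D_\eta H(X)=\langle X,\grad_\eta H\rangle_-$. Testing against $X\in T_\eta\H^d$ gives $g(X,W)=D_\eta H(X)=g(X,\nabla_\eta h)$, since $H|_{\H^d}=h$ and the metric of $\H^d$ is the restriction of $\langle\cdot,\cdot\rangle_-$; hence $W=\nabla_\eta h$. Testing against $\eta$ gives $-c=\langle\eta,\grad_\eta H\rangle_-=D_\eta H(\eta)=h(\eta)$, so $c=-h(\eta)$, which is the asserted formula.

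For \eqref{eq:II-1}, the key observation is that the Levi-Civita connection of the flat metric $\langle\cdot,\cdot\rangle_-$ is the ordinary directional derivative, so $\hess_\eta H$ is simply the usual Hessian of $H$. I would then apply \eqref{restr hessien} to $f=H$: it expresses the restriction of $\hess H$ to $T_\eta\H^d$ as the hyperbolic Hessian of $H|_{\H^d}=h$ minus $\tfrac{\partial H}{\partial r}\,g=h\,g$, which is exactly \eqref{eq:II-1}. If one prefers not to quote \eqref{restr hessien}, the alternative is to differentiate \eqref{eq:nablanabla} directly, extending $\nabla h$ and $h$ off $\H^d$ by the appropriate homogeneity; there the umbilicity of $\H^d$ in $\F$ (its second fundamental form being $g$) is what produces the correction term $-h\,g$.

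For the wave operator, I would take a $\langle\cdot,\cdot\rangle_-$-orthonormal frame $E_1,\dots,E_d,\eta$ with $E_1,\dots,E_d$ spanning $T_\eta\H^d$, so that
\[
\square_\eta H=\sum_{i=1}^d\hess_\eta H(E_i,E_i)-\hess_\eta H(\eta,\eta).
\]
By \eqref{eq:II-1} the spatial sum equals $\sum_{i=1}^d\big(\nabla^2 h(E_i,E_i)-h\,g(E_i,E_i)\big)=\Delta h-dh$, while the radial term vanishes because $1$-homogeneity forces $\hess_\eta H(\eta,\eta)=\tfrac{d^2}{dt^2}\big|_{t=1}H(t\eta)=0$; thus $\square_\eta H=\Delta h-dh$. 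The whole argument is essentially bookkeeping: the only points needing genuine care are the Lorentzian signs (the sign of $c$ above and the minus in front of the radial term in $\square$) and the homogeneity identities; and, should one bypass \eqref{restr hessien}, the direct differentiation of \eqref{eq:nablanabla} is the one step where the homogeneous and radial contributions must be tracked attentively.
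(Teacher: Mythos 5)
Your proof is correct and, for the first two identities, follows essentially the same route as the paper: the paper also splits $T_\eta\F=T_\eta\H^d\oplus\R\eta$, reads off the tangential part of $\grad_\eta H$ as $\nabla_\eta h$ and the radial part as $-h(\eta)$ via homogeneity, and for \eqref{eq:II-1} it primarily differentiates \eqref{eq:nablanabla} using the Gauss formula while noting that the identity also follows from \eqref{restr hessien} — you simply swap which of those two routes is primary and which is the aside, and both are sound. The only genuine difference is the wave-operator identity, which the paper dispatches by citing Helgason; your computation — taking a Lorentzian orthonormal frame $E_1,\dots,E_d,\eta$, writing $\square_\eta H=\sum_i\hess_\eta H(E_i,E_i)-\hess_\eta H(\eta,\eta)$, killing the radial term by $1$-homogeneity and summing \eqref{eq:II-1} over the spatial directions — is a correct, self-contained replacement for that citation, and the signs are handled properly throughout.
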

See Figure~\ref{fig:nabla} for a geometric interpretation of \eqref{eq:nablanabla}.
\begin{proof}
Using hyperbolic coordinates on $\F$,
 $\grad_{\eta}H$ has $d+1$ entries, and, at $\eta\in\H^d$,
the $d$ first ones are the coordinates of $\nabla_{\eta} h$. 
We identify $\nabla_{\eta} h\in T_{\eta}\H^d\subset \mathbb{R}^{d+1}$
with a vector of $\mathbb{R}^{d+1}$. The last component of $\grad_{\eta}H$ is $-\partial H /\partial r(\eta)$, and,
 using
the homogeneity of $H$, it is equal to $-h(\eta)$ when $\eta\in \mathbb{H}^d$.
Note that at such a point, $T_{\eta}\F$ is the orthogonal sum of $T_{\eta}\mathbb{H}^d$ and
$\eta$, and \eqref{eq:nablanabla} follows.

On the other hand, $\nabla^2 h (X,Y)=g(D_X\nabla h,Y)$, with $X,Y\in T_{\eta}\H^d$, where 
$D$ is the Levi-Civita connection of $\H^d$. By the Gauss Formula, it is equal to the connection of
$\R^{d+1}$ plus a normal term. Differentiating $\nabla_{\eta} h=\grad_{\eta} H+h(\eta)\eta$ and using
that $\eta$ is orthogonal to $Y$ leads to \eqref{eq:II-1}. This also follows from \eqref{restr hessien}.
The last equation is well-known, see e.g. Lemma~25 in \cite{He59}.
\end{proof}

For $x_0\in \H^d$, $\rho_{x_0}(x)$ is the hyperbolic distance between 
$x_0$ and $x\in\H^d$.  This gives local spherical coordinates 
$(\rho_x,\Theta=(\theta_2,\ldots,\theta_{d}))$ centered at $x_0$ on $\H^d$.  A particular $x_0$ is
$e_{d+1}$, the vector with entries $(0,\cdots,0,1)$
and we will denote $\rho_{e_{d+1}}(x)$ by 
$\rho(x)$. We have
$\langle x,-e_{d+1}\rangle_-=x_{d+1}=\cosh \rho(x).$

As we identify the hyperbolic space with a pseudo-sphere in Minkowski space,
we will identify hyperbolic isometries with isometries of Minkowski space.
More precisely, the group of hyperbolic isometries is identified with the
group of linear isometries of the Minkowski space preserving $\F$, see  \cite{Rat06}.
In all the paper, $\Gamma$ is a given group of hyperbolic isometries (hence of 
linear Minkowski isometries) such that  $\H^d/\Gamma$ is a compact manifold.

\paragraph{Cocycles.} Let $C^1(\Gamma,\R^{d+1})$ be the space of \emph{$1$-cochains}, i.e.~the space 
of maps 
$\tau:\Gamma\rightarrow \R^{d+1}$.
For $\gamma_0\in\Gamma$, we will denote $\tau(\gamma_0)$ by
$\tau_{\gamma_0}$.
The space of \emph{$1$-cocycles} $Z^1(\Gamma,\R^{d+1})$ is the subspace of 
$C^1(\Gamma,\R^{d+1})$ of maps satisfying 
\begin{equation}\label{eq:cocycle}
 \tau_{\gamma_0 \mu_0}=\tau_{\gamma_0}+\gamma_0 \tau_{\mu_0}.
\end{equation}

For any $\tau\in Z^1(\Gamma,\R^{d+1})$ we get a group $\Gamma_{\tau}$ of isometries of 
Minkowski space, with linear part $\Gamma$ and with translation part given by $\tau$:
for  $x\in\R^{d+1}$, $\gamma\in\Gamma_{\tau}$ is defined by
$$\gamma x=\gamma_0 x+ \tau_{\gamma_0}. $$
The cocycle condition \eqref{eq:cocycle} expresses 
the fact that $\Gamma_{\tau}$ is a group. In other words, $\Gamma_{\tau}$ is a group of isometries
which is isomorphic to its linear part $\Gamma$. Of course, $\Gamma_0=\Gamma$.

The space of  \emph{$1$-coboundaries} $B^1(\Gamma,\R^{d+1})$
is the subspace of 
$C^1(\Gamma,\R^{d+1})$ of maps of the form 
$\tau_{\gamma_0}=\gamma_0v-v$ for a given $v\in\R^{d+1}$. This has the following meaning.
Let $v\in\R^{d+1}$ and let $f$ be an isometry of the Minkowski space 
with linear part $f_0$ and translation part $v$, so $f(x)=f_0(x)+v$
and $f^{-1}(x)=f_0^{-1}(x-v)$. Suppose that, for $\tau,\tau'\in Z^1$,
$\Gamma_{\tau}$ and $\Gamma_{\tau'}$ are conjugated by $f$:
$\forall\gamma\in \Gamma_{\tau}$ and $\forall\gamma'\in \Gamma_{\tau'}$
with the same linear part $\gamma_0$,
$\gamma=f\circ\gamma'\circ f^{-1}$. 
Developing $\gamma x=f\gamma' f^{-1} x$, we get 
$$\gamma_0x+\tau_{\gamma_0}=f_0\gamma_0 f_0^{-1} x-f_0\gamma_0f_0^{-1} v +f_0\tau_{\gamma_0}'+v $$ 
so for any $\gamma_0\in\Gamma$, $f_0\gamma_0 f_0^{-1}=\gamma_0$, hence
$f_0$ is trivial \cite[12.2.6]{Rat06}, $f$ is a translation by $v$, and $\tau$ and $\tau'$ differ by 
a 
$1$-coboundary. Conversely, it is easy to check that if $\tau$ and $\tau'$ differ by a $1$-coboundary,
then  $\gamma x=f\gamma' f^{-1} x$, with $f$ a translation.

Note that $B^1(\Gamma,\R^{d+1})\subset Z^1(\Gamma,\R^{d+1})$,  that they are both linear spaces, and that the dimension
of $B^1(\Gamma,\R^{d+1})$ is $d+1$. The names come from the usual cohomology of groups, and 
$H^1(\Gamma,\R^{d+1})=Z^1(\Gamma,\R^{d+1})/B^1(\Gamma,\R^{d+1})$ is the $1$-cohomology
group. 
The following lemma, certainly well-known, says that those notions are relevant only
for $d>1$. Note that for $d>2$, $H^1(\Gamma,\R^{d+1})$ may be trivial.

\begin{lemma}\label{lem:quasi dim}
$Z^1(\Gamma,\R^2)=B^1(\Gamma,\R^2)$.
\end{lemma}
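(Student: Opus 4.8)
The plan is to show that for $d=1$ the group $\Gamma_0$ is forced to be infinite cyclic, after which the equality $Z^1=B^1$ reduces to a single $2\times 2$ invertibility statement.

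First I would pin down the structure of $\Gamma_0$. Here $\H^1=\{x\in\R^2:\langle x,x\rangle_-=-1,\ x_2>0\}$ is one branch of a hyperbola, isometric to $\R$. The linear isometries preserving $\F$ form the orthochronous group, which has two components: the identity component consists of the boosts $\gamma_0=\left(\begin{smallmatrix}\cosh t & \sinh t\\ \sinh t & \cosh t\end{smallmatrix}\right)$, acting on $\H^1$ as the arc-length translations, while the other component consists of orientation-reversing isometries, which act on $\H^1\cong\R$ as reflections and hence always fix a point. Since $\H^1/\Gamma_0$ is a (genuine, hence freely acted upon) compact manifold, it must be $\Sph^1$, so $\Gamma_0$ is a discrete cocompact subgroup of the translation group of $\R$. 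Such a subgroup is $\cong\Z$, generated by a single \emph{nontrivial} boost $\gamma_0$ with parameter $t_0\neq0$ (nontrivial because $\Gamma_0$ is infinite, $\H^1$ being noncompact).

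Next I would exploit the cocycle relation \eqref{eq:cocycle}, which shows that any $\tau\in Z^1(\Gamma_0,\R^2)$ is completely determined by the single vector $\tau_{\gamma_0}$: one gets $\tau_{\mathrm{id}}=0$, $\tau_{\gamma_0^n}=\sum_{k=0}^{n-1}\gamma_0^k\tau_{\gamma_0}$ for $n\geq1$, and the analogous expression for negative powers. Thus $\dim Z^1\leq 2$, the value $\tau_{\gamma_0}$ being a priori free. Since $B^1\subseteq Z^1$ and $\dim B^1=d+1=2$ is already known, it suffices to produce, for each cocycle $\tau$, a vector $v\in\R^2$ with $\tau_{\gamma_0}=\gamma_0 v-v=(\gamma_0-I)v$; then the coboundary $\gamma_0\mapsto\gamma_0v-v$ agrees with $\tau$ on the generator and hence, by the determination just noted, on all of $\Gamma_0$, so $\tau\in B^1$.

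This is where the only real point enters: I claim $\gamma_0-I$ is invertible. The eigenvalues of the boost $\gamma_0$ are $\cosh t_0\pm\sinh t_0=e^{\pm t_0}$, both different from $1$ because $t_0\neq0$; equivalently $\det(\gamma_0-I)=(e^{t_0}-1)(e^{-t_0}-1)\neq0$. Hence $v:=(\gamma_0-I)^{-1}\tau_{\gamma_0}$ exists and solves the required equation, giving $Z^1=B^1$. The main (indeed only) obstacle is the geometric input that a cocompactly acting $\Gamma_0$ in dimension $d=1$ is cyclic, generated by a single hyperbolic boost with $t_0\neq0$; once this is established, the argument is pure linear algebra, and one sees clearly why it is special to $d=1$: it is precisely the absence of any nontrivial fixed direction of $\gamma_0$ that forces the cohomology to vanish.
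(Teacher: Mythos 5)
Your proof is correct and follows essentially the same route as the paper: both reduce to the observation that a cocycle is determined by its value on the single boost generating $\Gamma_0$, and that $\gamma_0-\mathrm{Id}$ is invertible (eigenvalues $e^{\pm t_0}\neq 1$), so $v=(\gamma_0-\mathrm{Id})^{-1}\tau_{\gamma_0}$ exhibits $\tau$ as a coboundary. You merely supply more detail than the paper on why $\Gamma_0$ must be infinite cyclic generated by a nontrivial boost, which the paper takes for granted.
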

\begin{proof}
 $\Gamma$  is the free group generated by 
 a Lorentz boost of the form
\begin{equation} \label{eq:boost}\gamma_0=\left(
    \begin{array}{cc}
       \cosh t  & \sinh t \\
   \sinh t & \cosh t
    \end{array}
\right)\end{equation} 
for a  $t\not= 0$. As $\gamma_0$ is a Lorentz boost on the plane, $(\mathrm{Id}-\gamma_0)$ is invertible.
Let $\tau$ be a cocycle, and define $v=:(\mathrm{Id}-\gamma_0)^{-1}\tau_{\gamma_0}$.
Then one checks easily that
for any integer $n$, $\gamma^n x=\gamma_0^n x + v -\gamma_0^n v$, that means that
$\tau$ is a coboundary. 
\end{proof}

As we will deal only with $1$-cocycles and $1$-coboundaries, we will call them
cocycles and coboundaries respectively.

\paragraph{$\tau$-equivariant functions.}
Let $\tau$ be a cocycle.  A function $H:\mathcal{F}\rightarrow \mathbb{R}$
is called \emph{$\tau$-equivariant} if it is $1$-homogeneous and satisfies 
\begin{equation}\label{eq:sup func gamma}H(\gamma_0 \eta)=H(\eta)+\langle \gamma_0^{-1}\tau_{\gamma_0} ,\eta\rangle_-.\end{equation}
See Remark~\ref{ex: gamma inv supp fct} for the  existence of such functions. 
A function $h:\H^d\rightarrow \mathbb{R}$ is called $\tau$-equivariant
if its $1$-extension is $\tau$-equivariant. Note that 
a $0$-equivariant map on $\H^d$ satisfies 
$$h(\gamma_0 \eta)=h(\eta)$$
$\forall\eta\in\H^d$, and hence has a well-defined 
quotient on the compact hyperbolic manifold $\H^d/\Gamma$.
Conversely, the lifting of any function defined on $\H^d/\Gamma$ gives
a $0$-equivariant map on $\H^d$, that is, a $\Gamma$ invariant map.

Examples of  $\tau$-equivariant functions are given in the lemma below.
Non-trivial examples will follow from Remark~\ref{ex: gamma inv supp fct}.

\begin{lemma}\label{lem:equivariant}
Let $\tau,\tau'$ be two cocycles. 
\begin{enumerate}[nolistsep,label={\bf(\roman{*})},ref={\bf(\roman{*})}]
\item\label{item diff}  The difference of two $\tau$-equivariant maps is $0$-equivariant.
\item\label{item sum} The sum of a $\tau$-equivariant and a $\tau'$-equivariant map is
$(\tau+\tau')$-equivariant. The product of  a $\tau$-equivariant map with a real $\alpha$ is
$(\alpha\tau)$-equivariant. 
\item\label{item quasi inv} If there exists $H:\mathcal{F}\rightarrow \mathbb{R}$  
at the same time $\tau$-equivariant
and $\tau'$-equivariant, then $\tau=\tau'$.
\item \label{item coboundary}  If $\tau$ is a coboundary ($\tau_{\gamma_0}=v-\gamma_0v$),
then the map $\eta\mapsto\langle\eta,v\rangle_-$ is $\tau$-equivariant.
\item \label{item cocycle diff} If $\tau$ is a coboundary and
$H$ is $\tau$-equivariant, then there exists a $0$-equivariant map $H_0$ with $H=H_0+\langle\cdot,v\rangle_-$.
\end{enumerate}
\end{lemma}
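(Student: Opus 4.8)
The plan is to verify each assertion by directly unwinding the definition of $\tau$-equivariance, namely $1$-homogeneity together with the defect relation $H(\gamma_0\eta)=H(\eta)+\langle\gamma_0^{-1}\tau_{\gamma_0},\eta\rangle_-$. For \ref{item diff} and \ref{item sum} I would exploit that the defect term $\langle\gamma_0^{-1}\tau_{\gamma_0},\eta\rangle_-$ depends linearly on $\tau$ while $H$ enters linearly in itself. Concretely, if $H_1,H_2$ are $\tau$-equivariant then $H_1-H_2$ is $1$-homogeneous, and subtracting $H_1(\gamma_0\eta)$ from $H_2(\gamma_0\eta)$ cancels the two identical defect terms, leaving $(H_1-H_2)(\gamma_0\eta)=(H_1-H_2)(\eta)$; this is \ref{item diff}. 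For \ref{item sum}, adding a $\tau$-equivariant $H$ and a $\tau'$-equivariant $H'$ produces the defect $\langle\gamma_0^{-1}(\tau_{\gamma_0}+\tau'_{\gamma_0}),\eta\rangle_-$, and since $(\tau+\tau')_{\gamma_0}=\tau_{\gamma_0}+\tau'_{\gamma_0}$ (recall $Z^1$ is a linear space) this is exactly $(\tau+\tau')$-equivariance; the scaling statement is identical with $\tau$ replaced by $\alpha\tau$.

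For \ref{item quasi inv} I would write the two equivariance relations for the single function $H$ and subtract them, obtaining $\langle\gamma_0^{-1}(\tau_{\gamma_0}-\tau'_{\gamma_0}),\eta\rangle_-=0$ for every $\eta\in\F$. The decisive point is that $\F$ is a nonempty open cone, hence spans $\R^{d+1}$, while $\langle\cdot,\cdot\rangle_-$ is nondegenerate; therefore $\gamma_0^{-1}(\tau_{\gamma_0}-\tau'_{\gamma_0})=0$, and applying $\gamma_0$ gives $\tau_{\gamma_0}=\tau'_{\gamma_0}$ for all $\gamma_0$, i.e.\ $\tau=\tau'$.

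The computational heart is \ref{item coboundary}. Set $H(\eta)=\langle\eta,v\rangle_-$, which is manifestly $1$-homogeneous. Since each $\gamma_0$ is a linear Minkowski isometry, $\langle\gamma_0\eta,v\rangle_-=\langle\eta,\gamma_0^{-1}v\rangle_-$, so $H(\gamma_0\eta)=\langle\eta,\gamma_0^{-1}v\rangle_-=H(\eta)+\langle\eta,\gamma_0^{-1}v-v\rangle_-$. Using the coboundary hypothesis $\tau_{\gamma_0}=v-\gamma_0v$ one has $\gamma_0^{-1}v-v=\gamma_0^{-1}(v-\gamma_0v)=\gamma_0^{-1}\tau_{\gamma_0}$, and by symmetry of the form the defect is precisely $\langle\gamma_0^{-1}\tau_{\gamma_0},\eta\rangle_-$, establishing $\tau$-equivariance. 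The only real subtlety is the sign bookkeeping: the convention $\tau_{\gamma_0}=v-\gamma_0v$ used here (rather than $\gamma_0v-v$) is exactly what makes the defect carry the correct sign, so I would track carefully where $\gamma_0^{-1}$ is applied.

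Finally, \ref{item cocycle diff} follows by combining the previous items: by \ref{item coboundary} the map $\langle\cdot,v\rangle_-$ is $\tau$-equivariant, and $H$ is $\tau$-equivariant by hypothesis, so by \ref{item diff} the difference $H_0:=H-\langle\cdot,v\rangle_-$ is $0$-equivariant, yielding the decomposition $H=H_0+\langle\cdot,v\rangle_-$. I expect no genuine obstacle beyond the sign care in \ref{item coboundary} and the nondegeneracy/spanning argument in \ref{item quasi inv}; everything else is bookkeeping on the defect term.
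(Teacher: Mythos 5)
Your proof is correct and follows essentially the same route as the paper's: items \ref{item diff} and \ref{item sum} by cancellation/linearity of the defect term, \ref{item quasi inv} by subtracting the two equivariance relations and using nondegeneracy of $\langle\cdot,\cdot\rangle_-$ on the open cone $\F$, \ref{item coboundary} by direct computation with the isometry property of $\gamma_0$, and \ref{item cocycle diff} by combining \ref{item diff} and \ref{item coboundary}. Your sign bookkeeping in \ref{item coboundary} is in fact the careful version (the paper's one-line computation there writes the defect with the opposite sign, consistent with the $\tau_{\gamma_0}=\gamma_0 v - v$ convention rather than the one stated in the lemma), and your spanning argument in \ref{item quasi inv} fills in a step the paper leaves implicit.
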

\begin{proof} \ref{item diff} and \ref{item sum} are straightforward from \eqref{eq:sup func gamma}.
 
\ref{item quasi inv} From \eqref{eq:sup func gamma}, for any $\eta\in\F$, $\gamma_0\in\Gamma$,
$$H(\eta)+\langle \gamma_0^{-1}\tau_{\gamma_0}' ,\eta\rangle_-
=H(\gamma_0\eta)=H(\eta)+\langle \gamma_0^{-1}\tau_{\gamma_0} ,\eta\rangle_-,$$
so for any $\eta\in\F$,
$\langle \gamma_0^{-1}(\tau_{\gamma_0}-\tau_{\gamma_0}') ,\eta\rangle_-=0 $
that leads to $\tau_{\gamma_0}=\tau_{\gamma_0}'$. 

 \ref{item coboundary} It is immediate that $\langle \gamma_0^{-1}\tau_{\gamma_0},\eta \rangle_-=
\langle v,\eta\rangle_--\langle v,\gamma_0\eta\rangle_-$.

\ref{item cocycle diff} $H-\langle\cdot,v\rangle_-$ is $0$-equivariant by \ref{item diff}
and \ref{item coboundary}
\end{proof}

The general structure of the set of equivariant maps can be summarized as follows.
\begin{itemize}[nolistsep]
\item  $\mathcal{E}(\Gamma)$ is the vector space of $0$-equivariant functions.
\item  $\mathcal{E}(\Gamma_{\tau})$ is the affine space over $\mathcal{E}(\Gamma)$  of 
$\tau$-equivariant functions.
 \item  $\cup_{\tau\in Z^1}\mathcal{E}(\Gamma_{\tau})$ is the vector space
of equivariant functions for $\Gamma$. The union is disjoint.
\end{itemize}

 Let $H$ be a $C^1$ $\tau$-equivariant function. 
For any  $\gamma\in\Gamma_{\tau}$ it is easy to check that 
\begin{equation}\label{eq:eq grad}\grad_{\gamma_0\eta}H=\gamma\grad_{\eta}H\end{equation}
and, if $H$ is $C^2$, for $X,Y\in\R^{d+1}$,
$$\hess_{ \gamma_0 \eta}H(\gamma_0 X,\gamma_0 Y)= \hess_{\eta}H(X,Y).$$
From \eqref{eq:II-1},   if $X,Y\in T_{\eta}\H^d$ and $h$ is the restriction of
$H$ to $h$,
$$ \nabla^2_{\gamma_0\eta} h (d_{\eta}\gamma_0(X),d_{\eta}\gamma_0(Y))-
 h(\gamma_0\eta) g(d_{\eta}\gamma_0( X),d_{\eta}\gamma_0(Y))= \nabla^2_{\eta} h (X,Y)- h(\eta) g(X,Y). $$
Let us state it as
\begin{lemma}\label{lem: hess gamma inv} Let $h$ be  a $\tau$-equivariant map on $\H^d$. Then
 $\nabla^2 h - h g$ is $0$-equivariant.
\end{lemma}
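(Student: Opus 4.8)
The plan is to read the statement off the transformation rule \eqref{eq:II-1} together with the $\Gamma_0$-invariance of the Minkowski Hessian of $H$. First I would make precise what $0$-equivariance means for the symmetric $(0,2)$-tensor $T:=\nabla^2 h-hg$: it should mean $\gamma_0^*T=T$ for every $\gamma_0\in\Gamma_0$, i.e.
\begin{equation*}
T_{\gamma_0\eta}\bigl(d_\eta\gamma_0(X),d_\eta\gamma_0(Y)\bigr)=T_\eta(X,Y),\qquad \forall X,Y\in T_\eta\H^d .
\end{equation*}
Since $\gamma_0$ is a \emph{linear} isometry of Minkowski space preserving $\H^d$, its differential on $\H^d$ is the restriction of $\gamma_0$ itself, $d_\eta\gamma_0(X)=\gamma_0X$, and $\gamma_0$ maps $T_\eta\H^d$ onto $T_{\gamma_0\eta}\H^d$; this is the one small point to record before computing.

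Next I would reduce the claim to a statement about $H$. By \eqref{eq:II-1}, for $X,Y\in T_\eta\H^d$ we have $T_\eta(X,Y)=\hess_\eta H(X,Y)$, where $H$ is the ($C^2$) $1$-extension of $h$; applying the same identity at the point $\gamma_0\eta$ with the tangent vectors $\gamma_0X,\gamma_0Y\in T_{\gamma_0\eta}\H^d$ gives $T_{\gamma_0\eta}(\gamma_0X,\gamma_0Y)=\hess_{\gamma_0\eta}H(\gamma_0X,\gamma_0Y)$. Hence it suffices to prove that the Minkowski Hessian of $H$ is invariant under the linear $\Gamma_0$-action, namely
\begin{equation*}
\hess_{\gamma_0\eta}H(\gamma_0X,\gamma_0Y)=\hess_\eta H(X,Y)\qquad(X,Y\in\R^{d+1}),
\end{equation*}
which is exactly the displayed identity immediately following \eqref{eq:eq grad}.

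To establish this invariance I would differentiate the defining relation \eqref{eq:sup func gamma} twice. Writing $F(\eta):=H(\gamma_0\eta)$, equivariance gives $F(\eta)=H(\eta)+\langle\gamma_0^{-1}\tau_{\gamma_0},\eta\rangle_-$. The added term is linear in $\eta$, so its Minkowski Hessian vanishes and $\hess_\eta F=\hess_\eta H$. On the other hand $\gamma_0\in O(d,1)$ is a linear isometry, so the flat connection of the Minkowski metric is $\gamma_0$-equivariant and the Hessian of a composition with $\gamma_0$ transforms covariantly: $\hess_\eta F(X,Y)=\hess_{\gamma_0\eta}H(\gamma_0X,\gamma_0Y)$. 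Comparing the two expressions for $\hess_\eta F$ yields the required invariance, and chaining it with the two applications of \eqref{eq:II-1} gives $T_{\gamma_0\eta}(\gamma_0X,\gamma_0Y)=T_\eta(X,Y)$, which is the $0$-equivariance of $\nabla^2 h-hg$.

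The argument is essentially bookkeeping; the only genuinely substantive point, and the place where I would be most careful, is the covariance of the Lorentzian Hessian under the linear map $\gamma_0$, i.e.\ that differentiating $H\circ\gamma_0$ produces $\hess_{\gamma_0\eta}H(\gamma_0\cdot,\gamma_0\cdot)$ rather than a term involving extra derivatives of $\gamma_0$. This is clean precisely because $\gamma_0$ is linear, so $d\gamma_0$ is constant and equals $\gamma_0$, and is a Minkowski isometry, so it commutes with the index raising implicit in $\grad$ and preserves $\langle\cdot,\cdot\rangle_-$; this is what forces the cocycle correction to drop out at the level of second derivatives.
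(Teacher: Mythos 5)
Your proof is correct and follows essentially the same route as the paper: the paper also derives the lemma by combining the invariance $\hess_{\gamma_0\eta}H(\gamma_0X,\gamma_0Y)=\hess_\eta H(X,Y)$ (stated there as an easy consequence of $\tau$-equivariance) with the identity \eqref{eq:II-1}. Your only addition is to spell out why the Hessian invariance holds — the cocycle term is linear and $\gamma_0$ is a linear isometry — which the paper leaves as "easy to check."
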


\subsection{F-convex sets}

Let $K$ be a proper closed convex set of $\mathbb{R}^{d+1}$ defined as the intersection of the future side of space-like
hyperplanes.

\begin{lemma}\label{lem: base convexe} Let $K$ be a convex set as above.
 \begin{enumerate}[nolistsep,label={\bf(\roman{*})},ref={\bf(\roman{*})}]
\item\label{base:cone} $\forall k\in K, I^+(k)\subset \stackrel{\circ}{K}$,
\item\label{base:interieur}  $K$ has non empty interior,
\item\label{base:time}  $K$ has no time-like support plane,
\item\label{base:ray}  if $k\in\partial K$ is contained in a light-like support plane of $K$, then 
 $k$ belongs to a light-like half-line contained in $\partial K$.
\end{enumerate}
\end{lemma}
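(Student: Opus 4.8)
The plan is to reduce all four items to \ref{base:cone}. First I would fix a dual description: since $K$ is an intersection of future sides of space-like hyperplanes, write $K=\bigcap_\alpha\{x:\langle x,n_\alpha\rangle_-\le c_\alpha\}$ with each normal $n_\alpha\in\H^d$ a future unit time-like vector; with this orientation the future side really is the sublevel set, because adding a future time-like vector to a point of the hyperplane strictly lowers $\langle\cdot,n_\alpha\rangle_-$. Throughout I only need the elementary sign rule for the Minkowski form on future vectors: $\langle v,n\rangle_-<0$ when $v$ is future time-like and $n$ future causal, and $\langle v,n\rangle_-\le0$ for two future causal vectors with equality exactly when they are light-like and parallel. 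For \ref{base:cone}, take $k\in K$ and $p=k+v$ with $v\in\F$; then for every $\alpha$ one has $\langle p,n_\alpha\rangle_-=\langle k,n_\alpha\rangle_-+\langle v,n_\alpha\rangle_-<c_\alpha$, so $p\in K$. Hence $I^+(k)\subseteq K$, and since $I^+(k)$ is open it is contained in the interior $\stackrel{\circ}{K}$.

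Item \ref{base:interieur} is then immediate: $K$ is nonempty, so any $k\in K$ gives $\emptyset\neq I^+(k)\subseteq\stackrel{\circ}{K}$. For \ref{base:time} I argue by contradiction. A time-like support plane meets $K$ at some $k$, has a space-like normal $m$, and satisfies $K\subseteq\{x:\langle x-k,m\rangle_-\le0\}$. By \ref{base:cone}, $k+\F=I^+(k)\subseteq K$, so $\langle v,m\rangle_-\le0$ for every $v\in\F$. But the future cone is self-dual: $\langle v,m\rangle_-\le0$ for all $v\in\overline{\F}$ forces $m\in\overline{\F}$ (testing $v=e_{d+1}$ gives $m_{d+1}\ge0$, and $v=(w,1)$ with $|w|\le1$ gives $|m'|\le m_{d+1}$, i.e.\ $m$ is future causal). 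This contradicts $m$ being space-like.

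The substance is in \ref{base:ray}. Let $\mathcal H$ be a light-like support plane through $k$, oriented so that $K\subseteq\{x:\langle x,n\rangle_-\le c\}$ and $\langle k,n\rangle_-=c$, where $n$ is its light-like normal. As in \ref{base:time}, $I^+(k)\subseteq K$ forces $\langle v,n\rangle_-\le0$ for all future time-like $v$, hence $n\in\overline{\F}$; being light-like, $n\in\partial\F^{\star}$ is future light-like. I claim the half-line $\ell=\{k+sn:s\ge0\}$ works. On one hand $\ell\subseteq\mathcal H$, since $\langle k+sn,n\rangle_-=c+s\langle n,n\rangle_-=c$. On the other hand $\ell\subseteq K$: item \ref{base:cone} together with $K$ being closed with nonempty interior gives $\C(k)=\overline{I^+(k)}\subseteq\overline{\stackrel{\circ}{K}}=K$, and $k+sn\in\C(k)$ because $sn\in\overline{\F}$. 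Thus $\ell\subseteq K\cap\mathcal H\subseteq\partial K$, since any point of $K$ lying on a support plane is a boundary point, and $\ell$ is the desired light-like half-line through $k$.

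The only genuinely delicate point I expect is in \ref{base:ray}: one has to guess that the correct ray is generated by the null normal $n$ of the support plane itself, and then confirm at once that it never leaves the support plane and never leaves $K$. Everything else is a direct consequence of \ref{base:cone} and of the sign rules for the Minkowski form; in particular the self-duality computation used in \ref{base:time} is the one nontrivial ingredient, and it gets reused to pin down the orientation of $n$ in \ref{base:ray}.
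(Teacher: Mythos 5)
Your proof is correct and follows essentially the same route as the paper's: item (i) by checking the defining inequalities strictly, (ii) as an immediate consequence, (iii) by showing a support plane's normal must be future causal (the paper phrases the same obstruction geometrically, as the time-like plane cutting into $I^+(k)$), and (iv) by identifying the light-like ray as the intersection of the support plane with $\partial I^+(k)$, which is exactly the paper's one-line argument spelled out. The extra detail you supply — the self-duality computation and the explicit verification that $k+sn$ stays in both $K$ and the support plane — fills in steps the paper leaves implicit, and is accurate.
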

\begin{proof}
\ref{base:cone} The definition says that there exists a family $\eta_i$, $i\in I$ of future time-like vectors 
and a family $\alpha_i$ of real numbers such that any $k\in K$ satisfies 
$\langle k,\eta_i\rangle_-\leq \alpha_i $
for all $i\in I$. For any future time-like or light-like vector $\ell$ we have $\langle \eta_i,\ell\rangle_-<0$,
hence $\langle k+\ell,\eta_i\rangle_-\leq \alpha_i $. \ref{base:interieur} follows from \ref{base:cone}.

\ref{base:time} If $k\in K$ is contained in a time-like support plane, then 
$ I^+(k)$ is not in the interior of $K$, that contradicts \ref{base:cone}.

\ref{base:ray} The intersection of the light-like support  hyperplane
with the boundary of $I^+(k)$ must be contained in the boundary of $K$.
\end{proof}
The half-line in \ref{base:ray} can not be extended in the past, because any light-like line meets any space-like hyperplane. 
But the end-point of the half-line is not necessarily contained in a space-like support plane, see Example~\ref{ex:cotenu light}.

 An \emph{F-convex set}
is a convex set as above such that  any future time-like vector is a support vector:
\begin{equation}\label{eq: def Lcvx}
\forall \eta\in\F,  \exists \alpha\in\R, \forall k\in K,  \langle \eta,k\rangle_-\leq \alpha~. 
\end{equation}
For example the intersection of the future side of two 
space-like hyperplanes is not an F-convex set.
The  $K(\H_t)$'s are  F-convex sets. They will play a role analogue to the balls centered at the origin in the classical case. 
 The cone $\C(p)$ of a point $p$, in particular $\overline{\mathcal{F}}$, is an F-convex set. This example shows that an F-convex set can have
light-like support planes.

The following observation can be helpful.

\begin{lemma}\label{lem: conv dans Fconv}
A proper closed convex set  defined as the intersection of the future side of space-like
hyperplanes contained in an F-convex set is an F-convex set.
\end{lemma}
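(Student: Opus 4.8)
The plan is to verify directly that $K'$ satisfies the defining condition \eqref{eq: def Lcvx}: namely, that for every $\eta\in\F$ the functional $k\mapsto\langle\eta,k\rangle_-$ is bounded above on $K'$ and attains its supremum there. Since $K'$ is already assumed to be a proper closed convex set obtained as an intersection of future sides of space-like hyperplanes, establishing this support condition is all that remains. Boundedness is immediate: because $K'\subset K$ and $K$ is F-convex, the functional $\langle\eta,\cdot\rangle_-$ is bounded above on $K$, hence a fortiori on $K'$; write $\alpha':=\sup_{k\in K'}\langle\eta,k\rangle_-<+\infty$.

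The real content is the attainment of $\alpha'$, and I would reduce it to a compactness statement about the ambient set: for $\eta\in\F$ and reals $a\le b$, the ``slab'' $K\cap\{x:a\le\langle\eta,x\rangle_-\le b\}$ is compact. It is closed as an intersection of closed sets, so only boundedness needs an argument. Suppose it fails and take $k_n$ in this slab with $|k_n|\to\infty$; after passing to a subsequence the Euclidean unit vectors $k_n/|k_n|$ converge to some unit vector $w$. Dividing the bounded quantities $\langle\eta,k_n\rangle_-$ by $|k_n|$ and using bilinearity shows $\langle\eta,w\rangle_-=0$, and since $\eta$ is time-like its Minkowski-orthogonal complement is positive definite, so $w$ is space-like. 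The key step is then to produce a future time-like vector $\zeta\in\F$ with $\langle\zeta,w\rangle_->0$; this can be done explicitly (e.g.\ taking for $\zeta$ a large positive multiple of the spatial part of $w$ together with a slightly larger positive time component yields a future time-like vector with $\langle\zeta,w\rangle_->0$, since the spatial part of the space-like vector $w$ dominates its time component). For such a $\zeta$ one gets $\langle\zeta,k_n\rangle_-=|k_n|\,\langle\zeta,k_n/|k_n|\rangle_-\to+\infty$, contradicting the boundedness of $\langle\zeta,\cdot\rangle_-$ on $K$ guaranteed by the F-convexity of $K$ (again using $\zeta\in\F$). This is exactly where the hypothesis that $K$ be F-convex, and not merely convex, is essential.

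Granting the slab compactness, attainment follows in the standard way: choose a maximizing sequence $k_n\in K'$ with $\langle\eta,k_n\rangle_-\to\alpha'$. For $n$ large these lie in the slab $\{\alpha'-1\le\langle\eta,\cdot\rangle_-\le\alpha'\}$, hence in the compact set $K\cap\{\alpha'-1\le\langle\eta,\cdot\rangle_-\le\alpha'\}$; since $K'$ is closed, its intersection with this slab is compact, so a convergent subsequence $k_{n_j}\to k_*$ has $k_*\in K'$ with $\langle\eta,k_*\rangle_-=\alpha'$ by continuity. Thus $\eta$ is a support vector of $K'$, and as $\eta\in\F$ was arbitrary, $K'$ meets the definition of an F-convex set. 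The only delicate point, and the one I expect to be the main obstacle, is the boundedness of the slabs; everything else is either part of the hypotheses or a routine limiting argument.
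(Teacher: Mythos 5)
Your argument is correct and complete. The paper actually states Lemma~\ref{lem: conv dans Fconv} as a bare observation with no proof at all, so there is nothing to compare against; what you supply fills that gap. The two ingredients check out: boundedness of $\langle\eta,\cdot\rangle_-$ on $K'$ is indeed inherited from $K$, and your slab-compactness argument is sound — the limit direction $w$ of a Euclidean-unbounded sequence in a slab satisfies $\langle\eta,w\rangle_-=0$ with $\eta$ time-like, hence is space-like, and one can always exhibit $\zeta\in\F$ with $\langle\zeta,w\rangle_->0$ (your explicit choice works: for $w=(\bar w,w_{d+1})$ with $|\bar w|>|w_{d+1}|$, take $\zeta=(\lambda\bar w,\lambda|\bar w|(1+\epsilon))$ with $\epsilon$ small), contradicting the F-convexity of $K$. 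This is, incidentally, the same mechanism the paper uses in the proof of Lemma~\ref{lem:pas half space} (a space-like direction escaping to infinity inside $K$ violates boundedness of some support functional with $\eta\in\F$); your proof is essentially the sequential form of that observation, packaged as compactness of the sets $K\cap\{a\le\langle\eta,\cdot\rangle_-\le b\}$, after which attainment of the supremum on the closed subset $K'$ is routine.
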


The following lemma says that for an F-convex set $K$, any space-like hyperplane is parallel to a support plane of $K$.

\begin{lemma}[{\cite[Lemma~3.13]{Bon05}}]\label{lem:existence point boundary}
 Let $K$ be an F-convex set. Then $\forall \eta\in\F,  \exists \alpha\in\R, \exists k\in K,  \langle \eta,k\rangle_-= \alpha$.
\end{lemma}

%
%
%
%

\begin{lemma}\label{lem:pas half space}
If an F-convex set $K$ contains a half-line in its boundary, then this half-line is light-like.
\end{lemma}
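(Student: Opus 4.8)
The plan is to write the half-line as $R=\{p+tv:t\geq 0\}\subset\partial K$ with $v\neq 0$, and to rule out the possibilities that $v$ is time-like or space-like, leaving only the light-like case. I would organize everything around the causal character of the direction vector $v$, and the heart of the matter is to show first that $v$ must be future-directed causal.

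The first step uses the defining support property \eqref{eq: def Lcvx} to constrain $v$. Since $R\subset K$ and every $\eta\in\F$ is a support vector, for each such $\eta$ there is $\alpha\in\R$ with $\langle\eta,p+tv\rangle_-=\langle\eta,p\rangle_-+t\langle\eta,v\rangle_-\leq\alpha$ for all $t\geq 0$; letting $t\to+\infty$ forces $\langle\eta,v\rangle_-\leq 0$, and by continuity this inequality persists for every $\eta\in\overline{\F}$. The key elementary computation is then the self-duality of the future cone under $\langle\cdot,\cdot\rangle_-$: a vector $v$ satisfies $\langle\eta,v\rangle_-\leq 0$ for all $\eta\in\overline{\F}$ if and only if $v\in\overline{\F}$. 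Writing $v=(v_1,\dots,v_d,v_{d+1})$ and maximizing $\langle\eta,v\rangle_-$ over $\{\eta:(\eta_1^2+\cdots+\eta_d^2)^{1/2}\leq\eta_{d+1}\}$ (the maximum being attained by aligning the spatial part of $\eta$ with that of $v$) yields exactly the condition $(v_1^2+\cdots+v_d^2)^{1/2}\leq v_{d+1}$. This single computation simultaneously excludes the space-like case (where $(v_1^2+\cdots+v_d^2)^{1/2}>|v_{d+1}|$) and the past-directed time-like case (where $v_{d+1}<0$), so $v$ is necessarily a nonzero future causal vector.

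It then remains to exclude the future time-like case, which is immediate from Lemma~\ref{lem: base convexe}\ref{base:cone}: since $p\in K$ we have $I^+(p)\subset\stackrel{\circ}{K}$, and if $v$ were future time-like then $p+v\in I^+(p)\subset\stackrel{\circ}{K}$ would contradict $p+v\in R\subset\partial K$. Hence $v$ is future light-like, as claimed. The only delicate point is the exclusion of the space-like and past time-like directions, and its whole content is the dual-cone inequality above; everything else is a direct application of the two structural facts already at hand, namely the support-vector definition \eqref{eq: def Lcvx} and the inclusion $I^+(k)\subset\stackrel{\circ}{K}$. I expect no further obstacle, since the reversed signature of the Minkowski form is precisely what forces an "outward" boundary half-line to be causal rather than space-like.
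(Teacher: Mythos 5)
Your proof is correct and follows essentially the same route as the paper: the support property \eqref{eq: def Lcvx} forces $\langle\eta,v\rangle_-\leq 0$ for all $\eta\in\F$ (the paper phrases the space-like exclusion contrapositively, by picking $\eta\in\H^d$ with $\langle\eta,v\rangle_->0$, which is exactly your dual-cone computation), and the time-like case is excluded via Lemma~\ref{lem: base convexe}. Your repackaging through the self-duality of $\overline{\F}$ is a clean way to organize the same two ingredients.
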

\begin{proof}
 It follows from Lemma~\ref{lem: base convexe} that the half-line cannot be time-like. Let us suppose that
the boundary contains a space-like half-line starting from $x$ and directed by the space-like vector $v$. 
Hence for any $\lambda>0$, $x+\lambda v\in K$. Let $\eta\in\H^d$ be such that
$\langle \eta,v\rangle_- >0$. By definition of F-convex set, there exists $\alpha\in\mathbb{R}$
such that $\forall k\in K$, $\langle k,\eta\rangle_-\leq \alpha$.
Then for any $\lambda$,
$ \langle \eta,x+\lambda v\rangle_- \leq \alpha $, that is impossible.
\end{proof}

We denote by  $\partial_{s}K$ the set of points of $\partial K$ which are contained
in a space-like support plane. 

\begin{lemma}\label{lem: acausal}
Let $k_1,k_2\in  \partial_{s}K$. Then $k_1-k_2$ is space-like. 
\end{lemma}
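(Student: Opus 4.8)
The plan is to argue by contradiction, exploiting the fact that a point lying on a \emph{space-like} support plane carries a \emph{future} time-like support vector. Fix $k_1,k_2\in\partial_s K$; we may assume $k_1\neq k_2$, since otherwise $k_1-k_2=0$ and there is nothing to prove. For $i=1,2$, let $\mathcal H_i$ be a space-like support plane of $K$ through $k_i$, and let $\eta_i$ be a normal to $\mathcal H_i$, with its sign chosen so that $\langle k,\eta_i\rangle_-\le\langle k_i,\eta_i\rangle_-$ for all $k\in K$. Since $\mathcal H_i$ is space-like, $\eta_i$ is time-like, and I would first check that $\eta_i$ is moreover future-pointing, i.e.\ $\eta_i\in\F$. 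This follows from Lemma~\ref{lem: base convexe}\ref{base:cone}: for every future time-like $\ell$ we have $k_i+\ell\in I^+(k_i)\subset K$, hence $\langle \ell,\eta_i\rangle_-\le 0$; choosing $\ell=-\eta_i$ in the case $\eta_i$ were past-pointing yields $\langle\ell,\eta_i\rangle_-=-\langle\eta_i,\eta_i\rangle_->0$, a contradiction, so $\eta_i\in\F$.

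Next I would record the two support inequalities obtained by evaluating each support function at the \emph{other} point. Writing $w:=k_1-k_2$, the inequality at $k_1$ gives $\langle k_2,\eta_1\rangle_-\le\langle k_1,\eta_1\rangle_-$, that is $\langle w,\eta_1\rangle_-\ge 0$, while the inequality at $k_2$ gives $\langle k_1,\eta_2\rangle_-\le\langle k_2,\eta_2\rangle_-$, that is $\langle w,\eta_2\rangle_-\le 0$.

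Now suppose, for contradiction, that $w$ is not space-like. As $w\neq 0$, it is then causal, hence either future-pointing ($w\in\overline{\F}^\star$) or past-pointing ($-w\in\overline{\F}^\star$). The decisive input is the elementary Lorentzian sign rule: if $\eta\in\F$ is future time-like and $v$ is a nonzero future causal vector, then $\langle v,\eta\rangle_-<0$, the inequality being \emph{strict} precisely because $\eta$ is time-like. If $w$ is future causal, applying this with $\eta=\eta_1$ gives $\langle w,\eta_1\rangle_-<0$, contradicting $\langle w,\eta_1\rangle_-\ge 0$. If instead $w$ is past causal, then $-w$ is future causal, and applying the rule with $\eta=\eta_2$ gives $\langle -w,\eta_2\rangle_-<0$, i.e.\ $\langle w,\eta_2\rangle_->0$, contradicting $\langle w,\eta_2\rangle_-\le 0$. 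In either case we reach a contradiction, so $w=k_1-k_2$ must be space-like.

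The only genuinely delicate point is the opening step, namely verifying that the support vector at a point of $\partial_s K$ is future time-like rather than merely time-like; everything afterwards is bookkeeping with the two support inequalities together with the sign rule for causal vectors. I would therefore take care to state that sign rule cleanly, since it is exactly where the strict inequality $<0$, and hence the contradiction, originates.
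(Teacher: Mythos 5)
Your proof is correct and follows essentially the same route as the paper's: take the future time-like support vector at one of the two points, evaluate the support inequality at the other point, and contradict the strict negativity of the Minkowski pairing of a future time-like vector with a nonzero future causal vector. The only cosmetic difference is that you treat the two causal orientations of $k_1-k_2$ via the two support vectors $\eta_1,\eta_2$, whereas the paper reduces to one case by exchanging $k_1$ and $k_2$; your explicit check that the inward normal is future-pointing is a detail the paper leaves implicit in its Gauss-map convention.
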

\begin{proof}
 Let us suppose that $k_1-k_2$ is not space-like. Up to exchange $k_1$ and $k_2$, let us suppose
 that $k_1-k_2$ is future (light-like or time-like).
 Let $\eta$ be a support future time-like vector of $k_1$. Then $\langle \eta,k_2\rangle_-\leq \langle \eta,k_1\rangle_-$,
i.e.~$\langle \eta,k_1-k_2\rangle_-\geq 0$, that is impossible for two future vectors (they are not both light-like). 
\end{proof}

 \begin{remark}[\textbf{P-convex sets}]\label{rem:Pconvex}{\rm 
 Similarly to the definition of F-convex set, a \emph{P-convex set} 
 $K$ is a proper closed convex set of $\mathbb{R}^{d+1}$ defined as the intersection of the past side of space-like
 hyperplanes and such that any past time-like vector is a support vector:
 \begin{equation*}\label{eq: def Pcvx}
 \forall \eta\in\F,  \exists \alpha\in\R, \forall k\in K,  \langle -\eta,k\rangle_-\leq \alpha~. 
 \end{equation*}
 The study of P-convex sets reduces to the study of F-convex sets because clearly the symmetry
 with respect to the origin is a bijection between F-convex and P-convex sets.
 Note that the symmetric of a $\tau$-F-convex set is a $(-\tau)$-P-convex set.
 In particular, the symmetric of a $\tau$-F-convex set is a $\tau$-P-convex set if and only if $\tau=0$.   
 }
 \end{remark}

 \begin{example}[\textbf{$\tau$-F-convex sets}]\label{ex: domain}{\rm 
 Let $\tau$ be a cocycle and $\Gamma_{\tau}$ be the corresponding group. A \emph{$\tau$-F-convex set} is an F-convex set setwise invariant 
under the action of $\Gamma_{\tau}$. They are the quasi-Fuchsian convex sets mentioned in the introduction.
If $\tau=0$, the F-convex sets are $\Gamma$ invariant. They are  ``Fuchsian'' according to the terminology of the introduction.
The $K(\H_t)$'s and $\overline{\F}$ are Fuchsian.

$\tau$-F-convex sets are F-convex sets \cite[Lemma~3.12]{Bon05}.

Moreover, there exists a unique maximal domain $\Omega_{\tau}$ on which $\Gamma_{\tau}$ acts freely and 
properly discontinuously. Its closure $\overline{\Omega_{\tau}}$ is a $\tau$-F-convex set.
Actually, $\Omega_{\tau}$ is maximal in the sense that any $\tau$-F-convex set is contained
in $\overline{\Omega_{\tau}}$ \cite{Bar05,Bon05}. 

The elementary example is $\Omega_0=\F$. There also exists a past domain with the same property.
See Subsection~\ref{sub:intro quasifuc} and also  \cite{BMS13} for an up-to-date overview.
}
\end{example}

\begin{remark}[\textbf{Regular domains}]{\rm 
A (future) regular (convex) domain is a convex set which is the intersection of
the future sides of light-like hyperplanes,
 and such
that at least two light-like support planes exist. Regular domains were introduced in \cite{Bon05}.
 See also \cite{BB09} for the $d=2$ case. 
The intersection of the future side of two light-like hyperplanes is a regular domain but not an F-convex set.
The F-convex set $K(\H)$ bounded by $\H^d$ is an F-convex set which is not a regular domain. We will call
\emph{F-regular domains} the regular domains which are F-convex sets. Future cones of points are
F-regular domains.  The $\overline{\Omega_{\tau}}$ are F-regular domains
}
\end{remark}

\subsection{Gauss map}

Let $K$ be an F-convex set.
The inward unit normal of a space-like support plane is identified with an element of $\H^d$. 
The \emph{Gauss map} $G_K$ of $K$ is a set-valued map from $\partial K$ to $\H^d$. It
 associates to 
each point on $\partial K$ the inward unit normals of all the space-like support planes at this point.
The Gauss map is defined only on  $\partial_s K$. 
By the definition and Lemma~\ref{lem:existence point boundary}, F-convex sets are exactly the future convex sets with $G_K(\partial_sK)=\H^d$.

\begin{example} {\rm
 The Gauss map of $K(\H_t)$ is $x\mapsto x/t$.
 The Gauss map of  $\C(p)$ 
is defined only at the apex $p$ of the cone. It maps $p$ onto the whole $\H^d$.   
}
\end{example}

\subsection{Minkowski sum}

The (Minkowski) sum of two sets $A,B$ of $\R^{d+1}$ is 
$$A+B:=\{a+b | a\in A, b\in B\}. $$
It is immediate from \eqref{eq: def Lcvx}
that the sum of two F-convex sets is an F-convex set.
It is also immediate that if $\lambda>0$ and $K$ is an F-convex set, then
$\lambda K=\{\lambda k | k\in K\}$ is also an F-convex set. If $\lambda<0$, $\lambda K$ is a 
P-convex set.

Note that $\C(p)=\{p\}+\overline{\F}$. Moreover if $K$
is an F-convex set and $k\in K$, then  $\C(k)\subset K$,  
so $K+\overline{\F}=K$ and  then, 
for any $p\in\R^{d+1}$, $K+\C(p)=K+\{p\}$. $K+\{p\}$  is the set obtained by a translation of $K$ along the vector $p$.

\begin{example}{\rm
Let $K$ be a $\tau$-F-convex set and $p\in\R^{d+1}$.
Then $K+\{p\}$ is a $\tau'$-convex set, with $\tau'$ differing 
from $\tau$ by a coboundary: $\tau'_{\gamma_0}=\tau_{\gamma}+p-\gamma_0 p$.
Lemma~\ref{lem:quasi dim} says that in $d=1$, 
any $\tau$-F-convex set is the translation of a Fuchsian convex set.
}\end{example}

\subsection{Extended support function}\label{sub: extended}

Let $K$ be an F-convex set. The \emph{extended support function} 
$H_K$ of $K$ is the map from $\F$ to $\mathbb{R}$ defined by
\begin{equation}\label{def:sup func}
\forall \eta\in\F, H_K(\eta)=\mbox{sup}\{\langle k,\eta\rangle_- \vert k\in K\}
\end{equation}
Note that the sup is a max by Lemma~\ref{lem:existence point boundary}. By definition
$$
K=\{k\in\R^{d+1} \vert \langle k,\eta\rangle_-\leq H_K(\eta), \forall \eta \in \F\}.
$$

An extended support function
is sublinear, that is $1$-homogeneous
and subadditive: $$H(\eta+\mu)\leq H(\eta)+H(\mu).$$ 
For a $1$-homogeneous function, subadditivity and convexity are equivalent. In particular
$H$ is continuous. 
 Note that, for $\lambda >0$,
\begin{equation}\label{eq sum supp}H_{K+K'}=H_K+H_{K'}, \, H_{\lambda K}=\lambda H_K.\end{equation}
Hence
$$K+K'=K+K''\Rightarrow K'=K''.$$

\begin{example}{\rm
The extended support function of $K(\H_t)$ is $-t\| \eta\|_-$. The sublinearity is
equivalent to the reversed triangle inequality \eqref{eq:revers triang ineq}.
The extended support function of
$\C(p)$ is the restriction to $\F$ of the linear form $\langle \cdot, p\rangle_-$. In particular the support function of $\C(0)=\overline{\F}$ is the 
null function.
}\end{example}

 As from the definition
$$K\subset K'\Leftrightarrow  H_K\leq H_{K'}$$
it follows from the example above that

$$ K\subset \overline{\F} \Leftrightarrow H_K\leq 0.$$

Actually, for $ K\subset \overline{\F}$, if $0\in K$, then $\overline{F} \subset K$ and then  $ K= \overline{\F}$.
That says that
\begin{equation}\label{eq: cone 0}
K\subset \overline{\F}^{\star } \Leftrightarrow H_K<0.
\end{equation}

\begin{remark}\label{ex: gamma inv supp fct}{\rm
Let $K$ be a $\tau$-F-convex with extended support function $H$. By definition of the support function, for $\eta\in F$ and $\gamma\in\Gamma_{\tau}$ with linear part
$\gamma_0$,
$$H(\gamma_0\eta)=\mbox{sup}\{\langle k,\gamma_0\eta\rangle_- \vert k\in K\}
=\mbox{sup}\{\langle \gamma k,\gamma_0\eta\rangle_- \vert \gamma k\in K\} $$
$$=\mbox{sup}\{\langle \gamma_0 k,\gamma_0\eta\rangle_- + \langle \tau_{\gamma_0},\gamma_0\eta\rangle_- \vert  k\in K\} 
=H(\eta)+\langle \tau_{\gamma_0},\gamma_0\eta\rangle_-, $$
so $H$ is $\tau$-equivariant.
In particular the existence of $\tau$-F-convex sets implies the existence of 
$\tau$-equivariant functions, and Lemma~\ref{lem:equivariant}
gives properties on $\tau$-F-convex sets.
For example, from \eqref{eq sum supp} we get that if $K$ (resp. $K'$) is a
$\tau$-F-convex set (resp. $\tau'$-convex set) then 
 $\alpha K+K'$ is a $(\alpha \tau+\tau')$-convex set.
Also, a  $\tau$-F-convex set can not be a $\tau'$-convex set
if $\tau\not= \tau'$.
}
\end{remark}

\subsection{Total support function}

The extended support function $H$ of an F-convex set 
is defined only on $\F$ and we will see that this suffices to determine the 
F-convex set. 
The \emph{total support function} of $K$ is, $\forall \eta\in\R^{d+1}$,
$$  \tilde{H}_K(\eta)=\mbox{sup}\{\langle k,\eta\rangle_- \vert k\in K\}.$$
 We have $\tilde{H}_K(0)=0$ and $\tilde{H}_K=H_K$ on $\F$. We also have 
 $\tilde{H}_K=+\infty$ outside of $\overline{\F}$. This expresses the fact that
 $K$ has no time-like support
plane and that $K$ is not in the past of a non time-like hyperplane.
 The question
is what happens on $\partial\F$. 
As a supremum of a family of continuous functions, 
 $\tilde{H}_K$ is lower semi-continuous, hence a classical result gives the following lemma,  see
propositions~IV.1.2.5 and 1.2.6 in \cite{HUL93} or theorems 7.4 and 7.5 in \cite{Roc97}.

\begin{lemma}\label{lem cl}
For any $\ell\in\partial\F$ and any $\eta\in\F$, we have
$$\tilde{H}_K(\ell)=\underset{t\downarrow 0}{\mathrm{lim}} H_K(\ell+t(\eta-\ell)). $$
\end{lemma}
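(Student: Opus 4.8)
The plan is to recognize $\tilde H_K$ as a closed proper convex function and then apply the standard convex-analytic formula that recovers the value of such a function at a boundary point of its domain as a limit along a segment aimed into the relative interior — which is exactly the content of the cited results. So the proof is essentially a matter of checking that the hypotheses of those theorems hold, the only geometric input being the F-convexity of $K$.

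First I would record the three properties of $\tilde H_K$ that make the machinery applicable. It is convex, being $1$-homogeneous and subadditive (sublinear), as already established for the extended support function and inherited verbatim by $\tilde H_K$. It is proper: it is finite on $\F$ — this is precisely the defining property \eqref{eq: def Lcvx} of a F-convex set, which guarantees that the supremum in the definition is attained and finite for every $\eta\in\F$ — and it is nowhere $-\infty$. Finally it is lower semicontinuous, hence closed, as already observed in the text, being a supremum of the continuous linear functionals $\eta\mapsto\langle k,\eta\rangle_-$; thus $\operatorname{cl}\tilde H_K=\tilde H_K$.

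Next I would pin down the effective domain. From the discussion preceding the lemma, $\tilde H_K=+\infty$ outside $\overline{\F}$ while $\tilde H_K=H_K$ is finite on $\F$, so the convex set $\operatorname{dom}\tilde H_K$ is squeezed between $\F$ and $\overline{\F}$. Since $\overline{\F}$ is a full-dimensional closed convex cone with interior $\F$, this forces $\operatorname{ri}(\operatorname{dom}\tilde H_K)=\F$. Hence $\eta\in\F$ is a relative interior point of the domain, while $\ell\in\partial\F\subset\overline{\F}$ is an arbitrary point of the closure of the domain. With these hypotheses in place I would invoke the closure-along-a-segment result (Theorems 7.4--7.5 in \cite{Roc97}, or Propositions IV.1.2.5--1.2.6 in \cite{HUL93}): for a proper convex $f$, a relative interior point $x$ of its domain, and any $y$, one has $(\operatorname{cl}f)(y)=\lim_{t\downarrow 0}f\big(y+t(x-y)\big)$. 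Applying this with $f=\tilde H_K$, $x=\eta$, $y=\ell$, and using $\operatorname{cl}\tilde H_K=\tilde H_K$, gives $\tilde H_K(\ell)=\lim_{t\downarrow 0}\tilde H_K(\ell+t(\eta-\ell))$. It then remains only to observe that for $t\in(0,1]$ the point $\ell+t(\eta-\ell)$ is a convex combination of the closure point $\ell$ and the relative interior point $\eta$, hence lies in $\operatorname{ri}(\operatorname{dom}\tilde H_K)=\F$, where $\tilde H_K$ coincides with $H_K$; rewriting the limit with $H_K$ yields the statement.

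The only step requiring genuine care — the main obstacle, though a mild one — is the identification $\operatorname{ri}(\operatorname{dom}\tilde H_K)=\F$. Everything downstream is a verbatim application of the cited theorem, so the real geometric content is exactly the F-convexity of $K$: it is this that makes $\tilde H_K$ finite on the whole open cone $\F$ (and not merely on some proper sub-cone), and thereby places the chosen $\eta$ safely in the relative interior of the domain, which is what the classical limit formula requires.
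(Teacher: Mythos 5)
Your proposal is correct and follows exactly the route the paper takes: the paper gives no separate proof of this lemma, merely noting that $\tilde H_K$ is lower semi-continuous as a supremum of continuous functions and then citing the same closure-along-a-segment results (IV.1.2.5--1.2.6 in \cite{HUL93}, Theorems 7.4--7.5 in \cite{Roc97}). Your additional verification that $\operatorname{ri}(\operatorname{dom}\tilde H_K)=\F$, which rests on the F-convexity of $K$, is precisely the hypothesis the paper leaves implicit.
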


 Let $K$ be an F-convex set and $\tilde{H}$ be its total support function.
If $\tilde{H}(\ell)$ is finite for a future light-like vector $\ell$, then the 
light-like hyperplane $$\ell^{\bot}:=\{x\in\R^{d+1}|\langle x,\ell\rangle_-= \tilde{H}(\ell)\}$$ 
is a support plane at infinity of $K$: $K$ is contained in the future side of $\ell^{\bot}$, and any
parallel displacement of $\ell^{\bot}$ in the future direction meets the interior of $K$. Of course
$\ell^{\bot}$ and $K$ may have empty intersection, for example any light-like vector hyperplane
is a support plane at infinity for $K(\H)$, but they never meet it.

The following fundamental result allows to recover the F-convex set from a sublinear function.

\begin{lemma}\label{lem: deter Hcs}
 Let $H:\F\rightarrow \R$ be a sublinear function. Then $H$
is the extended support function of the F-convex set
\begin{equation}\label{eq:def K supp}
K=\{x\in\R^{d+1} \vert \langle x,\eta\rangle_-\leq H(\eta), \forall \eta \in \F\}.
\end{equation}
\end{lemma}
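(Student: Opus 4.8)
The plan is to check directly that the set $K$ defined in \eqref{eq:def K supp} satisfies the three defining properties of a F-convex set — that it is a proper closed convex set, that it is an intersection of future sides of space-like hyperplanes, and that every $\eta\in\F$ is a support vector — and, simultaneously, to identify its extended support function $H_K$ from \eqref{def:sup func} with $H$. The first two properties are essentially built in: for each $\eta\in\F$ the half-space $\{x\mid\langle x,\eta\rangle_-\leq H(\eta)\}$ is the future side of the hyperplane $\{x\mid\langle x,\eta\rangle_-=H(\eta)\}$, which is space-like since $\eta$ is time-like, so $K$ is an intersection of future sides of space-like hyperplanes and is therefore closed and convex. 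Properness (nonemptiness and $K\neq\R^{d+1}$) will fall out of the construction below.

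One inequality is immediate. For every $k\in K$ and $\eta\in\F$ the defining constraint gives $\langle k,\eta\rangle_-\leq H(\eta)$, so $H_K(\eta)=\sup\{\langle k,\eta\rangle_-\mid k\in K\}\leq H(\eta)$ for all $\eta\in\F$.

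The substance is the reverse inequality together with the support-vector property, which I would obtain at once by a supporting-hyperplane (Hahn--Banach) argument. Fix $\eta_0\in\F$. Because $H$ is $1$-homogeneous and subadditive it is convex, and hence (as already noted in the text) finite and continuous, on the open convex cone $\F$; thus it admits a subgradient at the interior point $\eta_0$, i.e.\ there is a vector $k_0\in\R^{d+1}$ with
$$H(\eta)\geq H(\eta_0)+\langle k_0,\eta-\eta_0\rangle_-\qquad\text{for all }\eta\in\F,$$
the subgradient linear functional being represented through the non-degenerate Minkowski form. Using $1$-homogeneity I would substitute $\eta=\lambda\eta_0\in\F$ for $\lambda>0$ and let $\lambda$ cross $1$ from above and below, which forces $\langle k_0,\eta_0\rangle_-=H(\eta_0)$; feeding this back into the subgradient inequality yields $\langle k_0,\eta\rangle_-\leq H(\eta)$ for every $\eta\in\F$. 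The latter says precisely $k_0\in K$, so $K\neq\emptyset$, while the former gives $H_K(\eta_0)\geq\langle k_0,\eta_0\rangle_-=H(\eta_0)$ with the supremum attained at $k_0$. Combined with the easy inequality this yields $H_K(\eta_0)=H(\eta_0)$, and since the supremum is attained, $\eta_0$ is a support vector (it is inward-pointing, as moving from $k_0$ along $\eta_0$ decreases $\langle\cdot,\eta_0\rangle_-$ because $\langle\eta_0,\eta_0\rangle_-<0$). As $\eta_0\in\F$ was arbitrary, every future time-like vector is a support vector; moreover $H_K=H$ is finite on $\F$, which rules out $K=\R^{d+1}$, so $K$ is proper. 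Hence $K$ is a F-convex set with extended support function $H$.

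I expect the only delicate point to be the transition from ``$H$ is defined merely on the cone $\F$'' to the existence of the supporting vector $k_0$. One must apply the subgradient-existence theorem on the open convex set $\F$ itself (rather than on all of $\R^{d+1}$), so that the resulting domination $\langle k_0,\cdot\rangle_-\leq H$ is asserted only where $H$ is defined — which is exactly what membership $k_0\in K$ requires, and no more. Everything else is formal once $1$-homogeneity is used to upgrade the affine support at $\eta_0$ into a linear domination.
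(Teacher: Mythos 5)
Your proof is correct, and it takes a genuinely different route from the paper's. The paper first extends $H$ to the lower semi-continuous closure $\tilde{H}$ on all of $\R^{d+1}$ (equal to $+\infty$ outside $\overline{\F}$), invokes the general representation theorem that a closed sublinear function on $\R^{d+1}$ is the total support function of a closed convex set, and then identifies that set with $K$; the F-convexity is read off at the end. You instead work pointwise on the open cone $\F$: at each $\eta_0\in\F$ the subgradient of the finite convex function $H$ produces a vector $k_0$ with $\langle k_0,\cdot\rangle_-\leq H$ on $\F$ and $\langle k_0,\eta_0\rangle_-=H(\eta_0)$, which simultaneously shows $k_0\in K$, gives $H_K=H$, and exhibits the point where the supremum in \eqref{def:sup func} is attained. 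Your argument is more elementary and self-contained, and it makes the attainment of the supremum --- which is exactly what the definition \eqref{eq: def Lcvx} of F-convexity demands --- completely explicit, whereas in the paper's proof this point is left somewhat implicit in the final sentence. What the paper's approach buys in exchange is the closed extension $\tilde{H}$ to $\partial\F$, which is not needed for this lemma but is reused in the surrounding results (e.g.\ Lemma~\ref{lem cl} and the discussion of support planes at infinity). The one delicate step you flag --- applying subgradient existence on the open convex set $\F$ rather than on $\R^{d+1}$, so that the domination $\langle k_0,\cdot\rangle_-\leq H$ is only asserted where $H$ is defined --- is indeed the right thing to watch, and your handling of it is correct.
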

The set $K$ as defined above is clearly a convex set  as an intersection of 
half-spaces. If it is an F-convex set,  it has an extended support function $H'$, and a priori $H'\leq H$.
\begin{proof}
 We define 
 $\tilde{H}$ as the closure of the convex function which is $H$ on 
$\F$ and $+\infty$ outside of $\F$: $\tilde{H}(x)$ is defined as $\operatorname{Liminf}_{x\rightarrow y}H(y)$.
$\tilde{H}$ is then lower semi-continuous and sublinear \cite[p.~205]{HUL93}.
We know  
(see e.g.~Theorem 2.2.8 in \cite{Hor07} or V.3.1.1. in \cite{HUL93}) that the set $$  
F=\{x\in\R^{d+1}\vert \langle x,\eta\rangle_- \leq \tilde{H}(\eta) \,\forall \eta\in\R^{d+1} \}
$$
is a closed convex set with total support function $\tilde{H}$.
As $\tilde{H}$ takes infinite values on $\R^n\setminus \overline{\F}$, we have
$$  
F=\{x\in\R^{d+1}\vert \langle x,\eta\rangle_- \leq \tilde{H}(\eta) \,\forall \eta\in\overline{\F} \}
$$
Finally as $\tilde{H}$ and $H$ coincide on $\F$
\cite[IV, Proposition~1.2.6]{HUL93}, and by definition of $\tilde{H}$, we get $F=K$. 
It follows that $K$ is a closed convex set with $H$ as  extended 
support function. The definition of $K$ says exactly that it is the intersection of 
the future of space-like hyperplanes, and as its extended support function is defined for 
any $\eta\in\F$, it is an F-convex set.
\end{proof}

\begin{remark}\label{rem: bord infini domaine}\rm{
For any $\eta\in\H^d$, consider a sequence 
 $(\gamma_0(n))_n$  of $\Gamma$ such that $\gamma_0(n)\eta/(\gamma_0(n)\eta)_{d+1}$ converges to 
a light-like vector $\ell$. Then, for any $\tau$-equivariant function $H$ we have
$$H\left(\frac{\gamma_0(n)\eta}{(\gamma_0(n)\eta)_{d+1}}\right)=
 \frac{H(\gamma_0(n)\eta)}{(\gamma_0(n)\eta)_{d+1}}
=\frac{H(\eta)}{(\gamma_0(n)\eta)_{d+1}}+\left\langle \frac{\gamma_0(n)\eta}{(\gamma_0(n)\eta)_{d+1}},
\tau_{\gamma_0(n)}\right\rangle_-. $$
This limit does not depend on the choice of the $\tau$-invariant function. Note that if
 $\tau=0$  the limit is $0$.
Take care that, even in the case where the limit above is finite, we cannot deduce 
 that 
the extended support function of a $\tau$-F-convex set has finite value 
at $\ell$.
When $\gamma_0(n)=\gamma_0^n$, all the orbits are on the geodesic fixed by the isometry
$\gamma_0$, and 
 Lemma~\ref{lem cl} says that the limit of the expression above is $\tilde{H}(\ell)$, and  
Proposition~3.14 in \cite{Bon05} says that the value is finite. It says even more, that 
$\ell$ is normal to a support plane (and not only to a support plane at infinity).
Actually $H$ has a continuous extension on the sphere \cite{BF}, but 
the set of directions of light-like support planes has zero measure \cite[Proposition~4.15]{BMS13}.
}
\end{remark}

\subsection{Restricted support function}

As an extended support function is homogeneous of degree one, it is determined by its restriction
to 
$\H^d$, which we call the  \emph{(restricted) support function}.

\begin{example}\label{ex:restricted lin}{\rm
 The support function of $K(\H_t)$ is the constant function $-t$.

The expression of support function $h_p$ of $\C(p)$ depends on $p$, and is given by the standard formulas 
relating the distance in the hyperbolic space and the Minkowski bilinear form, see  \cite{Thurcour1}.
\begin{itemize}[nolistsep]
 \item If $p$ is the origin, $h_p=0$.
\item If $p$ is time-like, then $h_p(\eta)=\pm \|p\|_- \cosh \rho_{\overline{p}}(\eta)$
where the sign depends on if $p$ is past or future, and $\overline{p}$ is the central projection
of $p$ (or $-p$) on $\H^d$.
\item If $p$ is space-like, then $h_p(\eta)= \langle p,p\rangle_-^{1/2} \sinh d^*(\eta,p^{\bot})$
where $d^*$ is the signed distance from $\eta$ to
the totally geodesic hyperplane defined by the orthogonal $p^{\bot}$ of the vector $p$.
\item If $p$ is light-like then $h_p(\eta)=\pm e^{d^*(\eta,H_p)}$
where $d^*$ is the distance between $\eta$ and the horosphere $$\{x\in\H^d | \langle x,\pm p\rangle_-=-1\}$$
the sign depending on whether $p$ is past or future.
\end{itemize}}
\end{example}

Let us consider spherical coordinates $(\rho,\Theta)$ on $\H^d$
centered at $e_{d+1}$. 
Along radial directions, the subadditivity of the extended support function can be read
on the restricted support function.

\begin{lemma}\label{lem:convexite radiale}
 Let $h$ be the support function of an F-convex set. If $\Theta$ is fixed, 
 then for any real $\alpha$,
\begin{equation}\label{eq:hyp convex'}
 h(\rho+\alpha,\Theta)+h(\rho-\alpha,\Theta)\geq 2 \cosh (\alpha)  h(\rho,\Theta).
\end{equation}
\end{lemma}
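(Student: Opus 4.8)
The plan is to read the claimed inequality as subadditivity of the \emph{extended} support function $H$ along a single geodesic. Recall from the paragraph on the extended support function that $H$ is sublinear on $\F$, i.e.\ it is $1$-homogeneous and satisfies $H(\eta+\mu)\leq H(\eta)+H(\mu)$, and that the restricted support function $h$ is just $H$ restricted to $\H^d\subset\F$. So the whole statement will reduce to combining these two properties with one hyperbolic trigonometric identity.

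First I would parametrize the complete geodesic of $\H^d$ through $e_{d+1}$ in the fixed angular direction $\Theta$. It lies in the $2$-plane spanned by $e_{d+1}$ and the space-like unit vector $w$ determined by $\Theta$, with $\langle w,w\rangle_-=1$ and $\langle w,e_{d+1}\rangle_-=0$, and is given by
\begin{equation*}
\eta(\rho)=\cosh(\rho)\,e_{d+1}+\sinh(\rho)\,w .
\end{equation*}
Since $\langle e_{d+1},e_{d+1}\rangle_-=-1$, one checks $\langle\eta(\rho),\eta(\rho)\rangle_-=-\cosh^2\rho+\sinh^2\rho=-1$, so $\eta(\rho)\in\H^d$, and $\langle\eta(\rho),-e_{d+1}\rangle_-=\cosh\rho$ confirms that $\rho$ is indeed the radial coordinate. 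The key computation is then the finite-difference identity coming from the addition formulas $\cosh(\rho+\alpha)+\cosh(\rho-\alpha)=2\cosh\rho\cosh\alpha$ and $\sinh(\rho+\alpha)+\sinh(\rho-\alpha)=2\sinh\rho\cosh\alpha$, which give
\begin{equation*}
\eta(\rho+\alpha)+\eta(\rho-\alpha)=2\cosh(\alpha)\,\eta(\rho).
\end{equation*}

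Finally I would apply sublinearity. Both $\eta(\rho\pm\alpha)$ lie in $\H^d\subset\F$, so subadditivity yields $H(\eta(\rho+\alpha))+H(\eta(\rho-\alpha))\geq H\bigl(\eta(\rho+\alpha)+\eta(\rho-\alpha)\bigr)$; by the identity above and $1$-homogeneity (note $2\cosh\alpha>0$) the right-hand side equals $H(2\cosh(\alpha)\,\eta(\rho))=2\cosh(\alpha)\,H(\eta(\rho))$. Since $h=H$ on $\H^d$, we have $h(\rho\pm\alpha,\Theta)=H(\eta(\rho\pm\alpha))$ and $h(\rho,\Theta)=H(\eta(\rho))$, so this is exactly the desired inequality \eqref{eq:hyp convex'}.

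The argument is essentially one line once the identity is in place, so there is no serious obstacle; the only point deserving care is the domain. When $|\alpha|>\rho$ the argument $\rho-\alpha$ becomes negative, but the parametrization $\eta(\cdot)$ extends to the whole complete geodesic (negative $\rho$ corresponding to the antipodal direction $-w$), the addition identity holds for all real arguments, and $h$ is defined at every point of $\H^d$ because the Gauss map is surjective; hence the inequality holds for every real $\alpha$. Conceptually, \eqref{eq:hyp convex'} is just the integrated (finite-difference) form of the infinitesimal convexity condition $\nabla^2 h-hg\geq 0$ restricted to the radial direction, which is why it characterizes support functions along geodesics.
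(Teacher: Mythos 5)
Your proof is correct and follows essentially the same route as the paper: both rest on the hyperbolic addition identity $\eta(\rho+\alpha)+\eta(\rho-\alpha)=2\cosh(\alpha)\,\eta(\rho)$ along the geodesic in direction $\Theta$, combined with subadditivity and $1$-homogeneity of the extended support function $H$. The paper states the identity symmetrically in $\rho,\rho'$ and concludes by a change of variables, which is only a cosmetic difference; your explicit remark that the parametrization extends to negative radial parameter is a welcome clarification the paper leaves implicit.
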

\begin{proof}
 As $\Theta$ is fixed, let us denote $h(\rho):=h(\rho,\Theta)$.
 The proof is based on the following elementary formula: for $\rho,\rho'\in\R$ we have
\begin{equation}\label{eq:addition cosh}
 \binom{\sinh \rho}{\cosh \rho}+\binom{\sinh \rho'}{\cosh \rho'}=2\cosh\left(\frac{\rho-\rho'}{2}\right)\binom{\sinh\frac{\rho+\rho'}{2}}{\cosh \frac{\rho+\rho'}{2}}.
\end{equation}
 This is easily checked by direct computation but it is more fun to use the hyperbolic exponential
(see e.g. supplement C in \cite{Yag79} or \cite{CB11})
$$e^{\mathbf{h} \rho}=\cosh \rho + \mathbf{h} \sinh \rho $$
where $\mathbf{h} \notin\R$ is such that $\mathbf{h}^2=1$. As in the complex case we get
$$e^{\mathbf{h} \rho}+e^{\mathbf{h} \rho'}=e^{\mathbf{h} \rho}e^{\mathbf{h} \frac{\rho'-\rho}{2}}\left(e^{\mathbf{h} 
\frac{\rho'-\rho}{2}}+e^{-\mathbf{h} \frac{\rho'-\rho}{2}} \right)=2\cosh \left(\frac{\rho-\rho'}{2}\right)e^{\mathbf{h} \frac{\rho'+\rho}{2}}.$$
Then
\begin{equation}\label{eq:conv d1}
 h(\rho)+h(\rho')=  H\left(\binom{\sinh \rho}{\cosh \rho}\right)+H\left(\binom{\sinh \rho'}{\cosh \rho'}\right)\geq   
 H\left(\binom{\sinh \rho}{\cosh \rho}+\binom{\sinh \rho'}{\cosh \rho'}\right)
 \stackrel{\eqref{eq:addition cosh}}{=}2\cosh\left(\frac{\rho-\rho'}{2}\right)h\left(\frac{\rho+\rho'}{2}\right)
\end{equation}
 which is \eqref{eq:hyp convex'} up to change of variable. 
\end{proof}

Fixing a $\Theta$ we get a radial direction  along a half-geodesic of $\H^d$.
It corresponds to a half time-like plane in $\R^{d+1}$, whose intersection with 
$\partial \F$ gives a light-like half-line. We denote by $\ell_{\Theta}$ the light-like vector on 
this line which has last coordinate equal to one.

\begin{lemma}\label{lem:h infini}
For an F-convex set $K$ we have 
$$
\underset{\rho\rightarrow +\infty}{\mathrm{lim}}\frac{h_K(\rho,\Theta)}{\cosh (\rho)}=\tilde{H}_K(\ell_{\Theta}).
$$
In particular, $K$ has a support plane at infinity directed by $\ell_{\Theta}$ if and only if
\begin{equation}
 \underset{\rho\rightarrow +\infty}{\mathrm{lim}}\frac{h_K(\rho,\Theta)}{\cosh (\rho)}<+\infty. 
\end{equation}
\end{lemma}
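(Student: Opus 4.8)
The plan is to reduce the statement to Lemma~\ref{lem cl} by exploiting the $1$-homogeneity of the extended support function. First I would make the spherical coordinates explicit in $\R^{d+1}$: the point of $\H^d$ with coordinates $(\rho,\Theta)$ is
$$x(\rho)=(\sinh\rho\,u,\cosh\rho),$$
where $u=u(\Theta)\in\Sph^{d-1}\subset\R^d$ is the fixed unit vector determined by $\Theta$; this is forced by $\langle x,x\rangle_-=-1$ together with $\langle x,-e_{d+1}\rangle_-=x_{d+1}=\cosh\rho(x)$. Since $h_K$ is the restriction to $\H^d$ of the $1$-homogeneous function $H_K$ and $\cosh\rho>0$, we have
$$\frac{h_K(\rho,\Theta)}{\cosh\rho}=\frac{H_K(x(\rho))}{\cosh\rho}=H_K\!\left(\frac{x(\rho)}{\cosh\rho}\right),$$
so everything comes down to understanding the behaviour of $H_K$ along the curve $x(\rho)/\cosh\rho=(\tanh\rho\,u,1)$ as $\rho\to+\infty$.

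Next I would check that this curve approaches $\ell_\Theta$ exactly along a radial segment of the type appearing in Lemma~\ref{lem cl}. Indeed $(\tanh\rho\,u,1)\to(u,1)$, and $(u,1)$ is future light-like with last coordinate one, hence equals $\ell_\Theta$ by definition. Setting $t:=1-\tanh\rho$, which decreases to $0$ as $\rho\to+\infty$, a direct computation gives
$$\frac{x(\rho)}{\cosh\rho}=((1-t)u,1)=\ell_\Theta+t(e_{d+1}-\ell_\Theta),$$
where $e_{d+1}\in\F$. Applying Lemma~\ref{lem cl} with the interior vector $\eta=e_{d+1}$ then yields
$$\lim_{\rho\to+\infty}\frac{h_K(\rho,\Theta)}{\cosh\rho}=\lim_{t\downarrow 0}H_K\bigl(\ell_\Theta+t(e_{d+1}-\ell_\Theta)\bigr)=\tilde H_K(\ell_\Theta),$$
which is the claimed formula.

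For the ``in particular'' part I would simply invoke the discussion preceding the lemma: $\tilde H_K(\ell_\Theta)<+\infty$ is exactly the condition under which the light-like hyperplane $\ell_\Theta^{\bot}=\{x\mid\langle x,\ell_\Theta\rangle_-=\tilde H_K(\ell_\Theta)\}$ is well defined and is a support plane at infinity of $K$ directed by $\ell_\Theta$, while if the value is $+\infty$ no such hyperplane exists. The only delicate point, and the step I would treat most carefully, is matching the geometric approach to $\ell_\Theta$ with the hypotheses of Lemma~\ref{lem cl}: one must verify that the radial-limit formula there (approach from an interior direction $\eta$) genuinely applies to our curve, i.e.~that $x(\rho)/\cosh\rho$ lies on the segment from $\ell_\Theta$ towards the time-like vector $e_{d+1}$ rather than tending to $\ell_\Theta$ tangentially along $\partial\F$. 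The computation above shows this is the case, so no extra lower-semicontinuity argument is needed beyond the one already packaged in Lemma~\ref{lem cl}.
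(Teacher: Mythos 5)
Your proof is correct and follows essentially the same route as the paper: both reduce the claim to Lemma~\ref{lem cl} by writing $h_K(\rho,\Theta)/\cosh\rho=H_K\bigl(x(\rho)/\cosh\rho\bigr)$ via $1$-homogeneity and observing that $x(\rho)/\cosh\rho=(1-\tanh\rho)\,e_{d+1}+\tanh\rho\,\ell_\Theta$ is exactly the radial segment $\ell_\Theta+t(e_{d+1}-\ell_\Theta)$ with $t=1-\tanh\rho\downarrow 0$. Your version merely makes the coordinates and the choice $\eta=e_{d+1}$ explicit, and handles the ``in particular'' part by the same appeal to the discussion of support planes at infinity preceding the lemma.
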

\begin{proof}
 We have
$$h_K(\rho,\Theta)=(\rho,\Theta)_{d+1}H_K\left(\frac{(\rho,\Theta)}{(\rho,\Theta)_{d+1}}\right)= 
\cosh (\rho) H_K\left(\frac{(\rho,\Theta)}{(\rho,\Theta)_{d+1}}\right).$$
We can write (see Figure~\ref{fig:radial})
$$\frac{(\rho,\Theta)}{(\rho,\Theta)_{d+1}}=(1-\tanh (\rho)) e_{d+1} + \tanh (\rho)\ell_{\Theta}. $$
Setting $t:=1-\tanh(\rho)$ the result follows because by Lemma~\ref{lem cl}
$$\tilde{H}_K(\ell_{\Theta})=\underset{t\rightarrow 0}{\mathrm{lim}}H_K(te_{d+1}+(1-t)\ell_{\Theta}).$$
\end{proof}

\begin{figure}[ht]
\begin{center}
\includegraphics[scale=0.5]{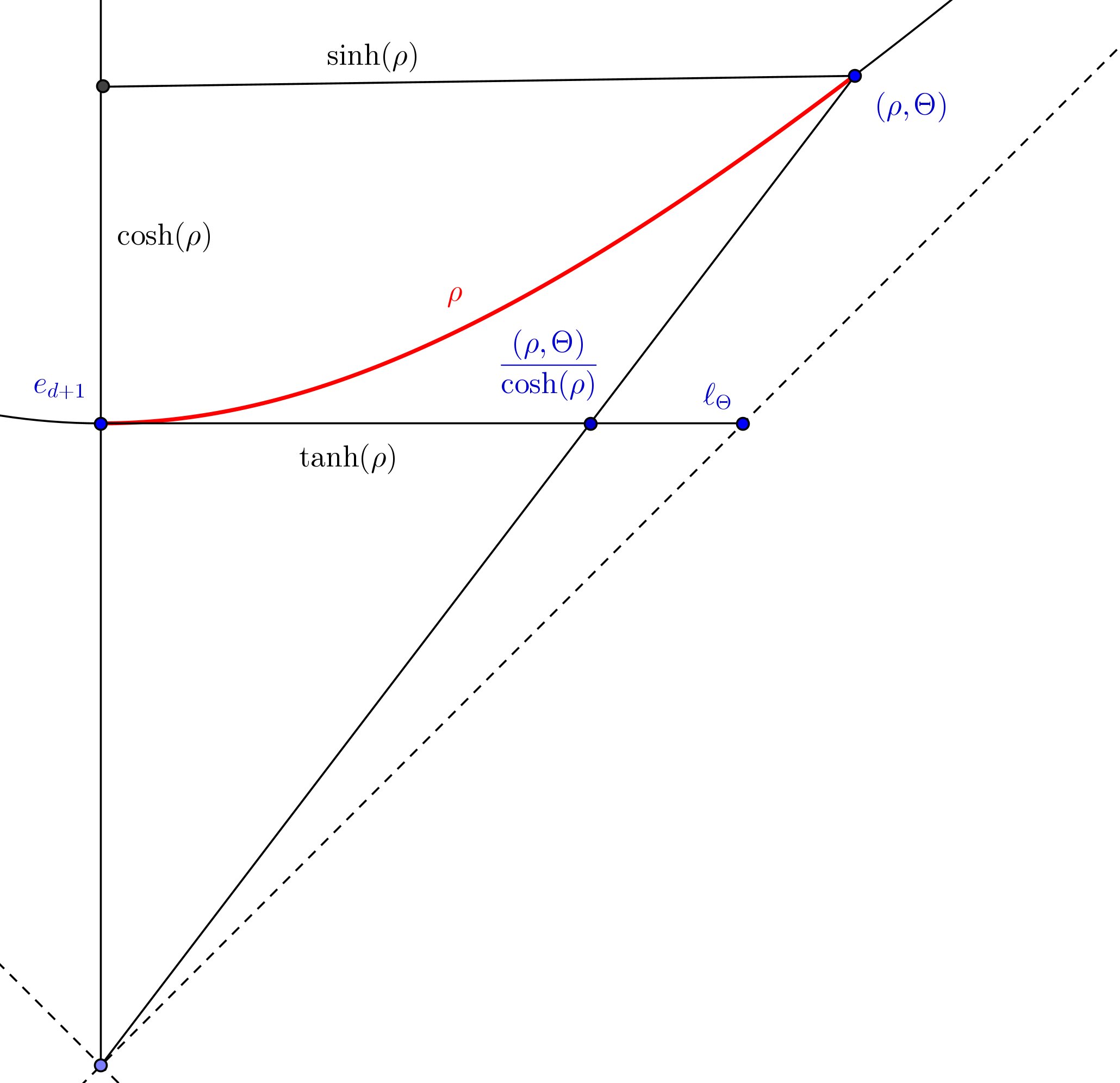}  
\end{center}
\caption{To Lemma~\ref{lem:h infini}.}\label{fig:radial}
\end{figure}

\begin{lemma}\label{lem: petit o}
 Let $K$ be an F-convex set with support function $h$ and total support function 
$\tilde{H}$. 
\begin{itemize}[nolistsep]
 \item If for any $\Theta$, 
$h(\eta,\Theta)=o(\cosh(\rho(\eta))),\eta\to\infty$  (in particular if $h$ is bounded) then 
$\tilde{H}$ equals $0$ on $\partial \F$. 
\item If $\tilde{H}$ equals $0$ on $\partial \F$, $h$ is either negative and $K\subset \overline{\F}^{\star }$
 or $h$ is equal to $0$
and $K=\overline{\F}$.
\end{itemize}
\end{lemma}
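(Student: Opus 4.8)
The plan is to prove the two bullet points of Lemma~\ref{lem: petit o} in sequence, relying heavily on Lemma~\ref{lem:h infini}, Lemma~\ref{lem cl}, and the characterization \eqref{eq: cone 0}. For the first bullet, I would start from the formula
\[
\tilde{H}_K(\ell_{\Theta})=\lim_{\rho\to+\infty}\frac{h_K(\rho,\Theta)}{\cosh(\rho)}
\]
given by Lemma~\ref{lem:h infini}. Every future light-like vector of the form $\ell_{\Theta}$ (last coordinate one) is obtained this way, so the hypothesis $h(\rho,\Theta)=o(\cosh\rho)$ forces this limit to be $0$ for each $\Theta$. Since $\tilde{H}$ is $1$-homogeneous and positive on $\partial\F$, and every point of $\partial\F$ is a positive multiple of some $\ell_{\Theta}$, this shows $\tilde{H}\equiv 0$ on $\partial\F$. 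The bounded case is just a special instance, since a bounded function is trivially $o(\cosh\rho)$.

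For the second bullet I would split according to the sign behavior of $h$ on $\H^d$. First note that since $K$ is a F-convex set, by the example computations $\overline{\F}$ has null support function, and in general $K\subset\overline{\F}\Leftrightarrow H_K\le 0$. The hypothesis $\tilde{H}=0$ on $\partial\F$ combined with Lemma~\ref{lem cl}, which expresses $\tilde{H}(\ell)$ as a radial limit of $H_K$ from inside $\F$, should force $H_K\le 0$ on all of $\F$: indeed, if $H_K(\eta)>0$ for some $\eta\in\F$, then by $1$-homogeneity and convexity of $H_K$ one can propagate positivity toward the boundary and contradict the vanishing of $\tilde{H}$ at the corresponding light-like limit. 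Hence $K\subset\overline{\F}$, i.e.\ $h\le 0$.

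It then remains to separate the two cases $h<0$ and $h$ attaining the value $0$. If $h<0$ everywhere, then $H_K<0$ on $\F$ and \eqref{eq: cone 0} gives exactly $K\subset\overline{\F}^{\star}$, the first alternative. If instead $h(\eta_0)=0$ for some $\eta_0\in\H^d$, I would use the argument in the proof of \eqref{eq: cone 0}: the vanishing of the support function at $\eta_0$ means the vector hyperplane orthogonal to $\eta_0$ meets $K$, but vector hyperplanes meet $\overline{\F}$ only at the origin, so $0\in K$, hence $\C(0)=\overline{\F}\subset K$ by the cone property of Lemma~\ref{lem: base convexe}\ref{base:cone}. Combined with $K\subset\overline{\F}$ this yields $K=\overline{\F}$ and therefore $h\equiv 0$.

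The main obstacle I anticipate is the middle step, namely rigorously deducing $h\le 0$ on all of $\H^d$ from the mere vanishing of $\tilde{H}$ on $\partial\F$. The subtlety (emphasized by the paper's own warning after Lemma~\ref{lem: deter Hcs} that finiteness of light-like limits does not control interior values) is that $\tilde{H}$ records only the \emph{limits} along radial directions, so I must carefully use the convexity/subadditivity of $H_K$ — via the radial inequality \eqref{eq:hyp convex'} of Lemma~\ref{lem:convexite radiale} — to transfer boundary information inward. Concretely, \eqref{eq:hyp convex'} shows that along each radial geodesic $h(\rho,\Theta)/\cosh\rho$ is monotone, so its limit $\tilde H_K(\ell_\Theta)=0$ dominates $h(\rho,\Theta)/\cosh\rho$ at finite $\rho$ from the appropriate side, giving $h\le 0$; getting the monotonicity direction right is where care is needed.
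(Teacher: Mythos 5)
Your overall architecture is sound and for the most part runs parallel to the paper's: the first bullet is exactly the paper's one-line appeal to Lemma~\ref{lem:h infini}, and your treatment of the dichotomy in the second bullet (either $h<0$ everywhere, giving $K\subset\overline{\F}^{\star}$ by \eqref{eq: cone 0}, or $h(\eta_0)=0$ for some $\eta_0$, whence the support plane orthogonal to $\eta_0$ is a vector hyperplane, $0\in K$, and $K=\overline{\F}$) is a legitimate variant of the paper's, which instead shows by convexity of $\tilde H$ that a single interior zero forces $\tilde H\equiv 0$ on $\F$. The real difference is in the middle step: the paper works with the ambient convex function $\tilde{H}$ and observes that, being convex and zero on $\partial\F$, it is nonpositive on $\F$ because every point of $\F$ lies on a chord with both endpoints on $\partial\F$ — one line, no radial analysis needed.

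The mechanism you propose for that middle step, however, contains a genuine error. It is not true that $h(\rho,\Theta)/\cosh\rho$ is monotone along a radial half-geodesic: writing $f(\rho)=h(\rho,\Theta)$, the radial convexity \eqref{eq:hyp convex'} amounts to $f''-f\ge 0$, and then $g:=f'\cosh\rho-f\sinh\rho$ satisfies $g'=(f''-f)\cosh\rho\ge 0$, so $(f/\cosh\rho)'=g/\cosh^2\rho$ may change sign once from negative to positive; $f/\cosh\rho$ is only \emph{first nonincreasing, then nondecreasing}. Worse, on a half-geodesic the implication you want is simply false: $f(\rho)=e^{-\rho}$ satisfies $f''-f=0$ and $f(\rho)/\cosh\rho\to 0$, yet $f>0$ everywhere. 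The repair is to run the argument on a \emph{complete} geodesic through $e_{d+1}$, using that the limit of $h/\cosh$ vanishes at \emph{both} ideal endpoints: since $g$ is nondecreasing on all of $\mathbb{R}$, the quantity $h/\cosh$ decreases away from the limit $0$ at one end and increases toward the limit $0$ at the other, hence is $\le 0$ along the whole geodesic, and these geodesics cover $\H^d$. This is exactly the content of the paper's ``convex and zero on $\partial\F$, hence nonpositive on $\F$'', transported to $\H^d$; but as you stated it, the half-geodesic monotonicity claim is the step that fails.
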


\begin{proof}
 If $h$ satisfies the hypothesis, it is immediate from the previous lemma that $\tilde{H}$ 
 equals $0$ on $\partial \F$. 
As $\tilde{H}$ is convex and equal to $0$ on $\partial \F$, it is non-positive on $\F$.
Suppose that there exists $x\in\F$ with $\tilde{H}(x)=0$, and let $y\in\F\setminus\{x\}$. 
By homogeneity,  $\tilde{H}( \lambda x)=0$ for all  $\lambda>0$. Up to choose an appropriate
$\lambda$, we can suppose that the line joining $\lambda x$ and
$y$ meets $\partial \F$ in two points. Let $\ell$ be the one such that
there exists $t\in]0,1[$ with $\lambda x=t \ell+(1-t) y$. By convexity and because
$\tilde{H}(\lambda x)=\tilde{H}(\ell)=0$, we get $0\leq \tilde{H}(y)$, hence $\tilde{H}(y)=0$ and
$H\equiv 0$. The conclusion follows from \eqref{eq: cone 0}.
\end{proof}

\begin{remark}[\textbf{Intrinsic properties of restricted support functions}]{\rm
Let $h$ be the  restricted support function of a F-convex set $K$, with homogeneous extension $H$ on $\F$.
\begin{itemize}[nolistsep]
 \item $h$ is locally Lipschitz. Actually as a convex function, $H$
 is locally Lipschitz for the usual Euclidean metric on $\R^{d+1}$. Clearly $h$ is Lipschitz 
 on $\H^d$ for the Riemannian metric induced by the Euclidean ambient metric. The result follows 
 because local Lipschitz condition does not depend on the Riemannian metric.
 \item $h$ is $(-1)$-convex. This means that on any unit speed geodesic $\gamma$, there exists a function $f$ such that $f$ is a
 solution of $f''-f=0$, $h\circ \gamma=f$ at a point $t$, and $h\geq f$ near $t$. This is usually denoted by
 $h''-h\geq 0$. 
 In our case, $f$ is given by the restriction of the support function of the future cone of any point
 of $\partial_sK$ contained in a support plane orthogonal to $\gamma(t)\in\H^d\subset\R^{d+1}$.
 \item  Actually any  $(-1)$-convex function $h$ on $\H^d$ is a support function. It suffices to prove
 that the extension $H$ of $h$ to $\F$ is subadditive. Let $x,y\in \F$. They span a plane $P$, which
 defines a geodesic $\gamma$ on $\H^d$. Let $f$ and $F$ as above, such that $h=f$ at $(x+y)/\|x+y\|_-$. Then
 $H(x+y)=F(x+y)=F(x)+F(y)\leq H(x)+H(y)$.
 \item A classical example of $(-1)$-convex function on $\H^d$ is the composition of $\cosh(\cdot)-1$ with the distance to
 a point $p$. The extension of this function on $\F$ is $-\langle p,\cdot\rangle_- - \|\cdot\|_-$, which is the
 support function of the convex side of the unit hyperboloid in the future cone at $-p$.
\end{itemize}
}\end{remark}

\begin{remark}[\textbf{Euclidean support function of F-convex sets}]\label{rem:eucl}{\rm 
Let $\eta$ be a support vector of an F-convex set $K$,
orthogonal to a support plane $\mathcal{H}$. 
For a vector $v\in\mathcal{H}$, $\langle v,\eta\rangle_-=0$,
i.e.~in matrix notation, $^tv.J.\eta=0$, $J=\mathrm{diag}(1,\ldots,1,-1).$
So $v$ is orthogonal to $J\eta$ for the 
standard Euclidean metric: $J\eta$ is an Euclidean outward support vector to $K$.
Hence the Euclidean support function of an F-convex set is defined 
on the intersection of the Euclidean unit sphere and the interior of the past cone of the origin. Let us denote by $S$
the map from $\H^d$ to this part of $\mathbb{S}^d$:
$$S(\eta)=\frac{J\eta}{\|J\eta\|}=\frac{J\eta}{\|\eta\|}, $$
with $\langle\cdot,\cdot\rangle$ the usual scalar product and $\|\cdot\|$ the associated norm.
Let $x\in K$ with $h(\eta)=\langle x,\eta \rangle_-$. So
$$h(\eta)=\langle x,J\eta\rangle=\langle x ,S(\eta)\rangle \|\eta\|, $$
and for suitable radial coordinates on $\H^d$, $\eta=(0,\ldots,0,\sinh(\rho),\cosh(\rho))$, so  if $h^E$ is the Euclidean support function of $K$ (the supremum is reached at the same point
$x$ for the two bilinear forms):
$$h(\eta)=\sqrt{\cosh(2\rho)}h^E(S(\eta)).$$

}

\end{remark}

\begin{remark}[\textbf{Restricted support function on the ball}]{\rm 
 An extended support function $H$ is also defined by its restriction onto the intersection of any space-like hyperplane with the interior of $\F$. This set can be identified
 with the open ball $B$ of $\mathbb{R}^d$. We do not need this here, but this is relevant for example to study the Minkowski problem \cite{BF}. This reference also consider F-convex sets as
 graphs on a space-like hyperplane, that we also do not consider here. But let us note for the next remark that restriction of extended support functions on $B$ are exactly convex functions on
 $B$. 
}\end{remark}

\begin{remark}[\textbf{A function not bounded on the boundary}]\rm{ It is tempting to say that if,
 for any $\Theta$,
$ \underset{\rho\rightarrow +\infty}{\mathrm{lim}}\frac{h_K(\rho,\Theta)}{\cosh (\rho)}$ 
is finite, then $K$ is contained in the future cone of
a point, taking the supremum for $\Theta$ of the limits. But this is false: there exist convex functions on the closed ball $\overline{B}$ (see the preceding remark), 
lower-semi continuous and unbounded. They are constructed in Lemma p.~870 of \cite{GKR}. In this reference there are also constructed  convex functions on the closed ball $\overline{B}$, 
lower-semi continuous, bounded, which attain no maximum.
} \end{remark}

\subsection{Polyhedral sets}\label{subsub back poly}

Let $p_i, i\in I$, be a discrete set of points of $\R^{d+1}$. Let us suppose that
for all $\eta\in\H^d$, $\operatorname{sup}_{i} \langle \eta,p_i\rangle_-$
is finite, and moreover that the supremum is attained. That is obviously not
always the case, as $-i e_{d+1}$ and $\frac{1}{i}e_{d+1}$ show for $i\in\mathbb{N}$. 
The function 
$$H(\eta)=\operatorname{max}_i \langle \eta,p_i\rangle_- $$
from $\F$ to $\mathbb{R}$ is clearly sublinear. From 
Lemma~\ref{lem: deter Hcs} there exists an F-convex set $K$ with support function $H$.
We call an F-convex set obtained in this way an \emph{F-convex polyhedron}. In particular 
$$K=\{x\in\R^{d+1}|  \langle x,\eta\rangle_- \leq \operatorname{max}_i \langle \eta,p_i\rangle_-  \}. $$
Without loss of generality, we suppose that the set $p_i,i\in I$ is minimal, in the sense
that if a $p_j$ is removed from the list, a different F-convex polyhedron is then obtained.
In particular, for any $i$ there exists $\eta$ with $H(\eta)=\langle \eta,p_i\rangle_-$, so
 $p_i\in\partial_s K$. Note that by Lemma~\ref{lem: acausal}, $p_i-p_j$ is space-like $\forall i,j$.
This last property is not a sufficient condition on the $p_i$ to define an F-convex polyhedron, as the example
$p_i=iv$ for any space-like vector $v$ and $i\in\mathbb{N}$ shows.

An F-convex polyhedron can be described more geometrically as
a ``future convex hull''. If $\mathcal{H}$ is a space-like hyperplane, we
denote by $\mathcal{H}^+$ its future side.
\begin{lemma}
 Let $K$ be an F-convex polyhedron as above.
 $K$ is the smallest F-convex set containing the $p_i$.
 
 Moreover,
 $$K=\cap \{\mathcal{H}^+ | p_i\in \mathcal{H}^+ \forall i\} .$$
\end{lemma}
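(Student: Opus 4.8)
The plan is to treat all three assertions through the extended support function $H(\eta)=\max_i\langle\eta,p_i\rangle_-$, which by Lemma~\ref{lem: deter Hcs} is the support function of $K$, so that $K=\{x\mid\langle x,\eta\rangle_-\leq H(\eta)\ \forall\eta\in\F\}$. First I would observe that $K$ contains each $p_j$, since for every $\eta\in\F$ we have $\langle p_j,\eta\rangle_-\leq\max_i\langle p_i,\eta\rangle_-=H(\eta)$, which are exactly the defining inequalities of $K$. For minimality I would take an arbitrary F-convex set $K'$ with $p_i\in K'$ for all $i$ and compare support functions: $H_{K'}(\eta)=\sup_{k\in K'}\langle k,\eta\rangle_-\geq\sup_i\langle p_i,\eta\rangle_-=H(\eta)$ for every $\eta\in\F$, so $H_K=H\leq H_{K'}$, and the monotonicity $K\subseteq K'\Leftrightarrow H_K\leq H_{K'}$ from Subsection~\ref{sub: extended} yields $K\subseteq K'$. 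Since $K$ is itself F-convex and contains the $p_i$, it is the smallest such set.

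For the intersection formula I would parametrize the space-like hyperplanes. Every such hyperplane has a time-like normal, and replacing a past normal by its opposite I may assume the normal is $\eta\in\F$; a quick orientation check (e.g.\ at $\eta=e_{d+1}$, where $\langle x,e_{d+1}\rangle_-=-x_{d+1}$) identifies the future side with the half-space $\mathcal{H}^+=\{x\mid\langle x,\eta\rangle_-\leq\alpha\}$. The condition that all $p_i$ lie in $\mathcal{H}^+$ then reads $\max_i\langle p_i,\eta\rangle_-\leq\alpha$, i.e.\ $\alpha\geq H(\eta)$. Intersecting over all admissible hyperplanes gives
\[
\bigcap\{\mathcal{H}^+\mid p_i\in\mathcal{H}^+\ \forall i\}
=\bigcap_{\eta\in\F}\ \bigcap_{\alpha\geq H(\eta)}\{x\mid\langle x,\eta\rangle_-\leq\alpha\}
=\bigcap_{\eta\in\F}\{x\mid\langle x,\eta\rangle_-\leq H(\eta)\}=K,
\]
where for each fixed $\eta$ the most restrictive half-space is the one with $\alpha=H(\eta)$; here I would use that the supremum defining $H$ is attained, so that $\{x\mid\langle x,\eta\rangle_-=H(\eta)\}$ is a genuine space-like hyperplane having all the $p_i$ in its future side.

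The support-function comparisons are routine; the one step requiring genuine care is the bookkeeping in the last display --- the reduction to future time-like normals and, above all, the correct identification of the future side $\mathcal{H}^+$ with the inequality $\langle x,\eta\rangle_-\leq\alpha$, since the $(+,\dots,+,-)$ signature makes the sign easy to invert. Once the parametrization is fixed, collapsing the double intersection to $\alpha=H(\eta)$ is immediate and the two descriptions of $K$ agree.
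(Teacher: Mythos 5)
Your argument is correct. The minimality part is essentially identical to the paper's: both compare $H_{K'}(\eta)=\sup_{k\in K'}\langle k,\eta\rangle_-\geq\max_i\langle p_i,\eta\rangle_-=H(\eta)$ and invoke the monotonicity of support functions. For the intersection formula, however, you take a genuinely different route. The paper argues by double inclusion: $A:=\cap\{\mathcal{H}^+\mid p_i\in\mathcal{H}^+\ \forall i\}\subset K$ because the support planes of $K$ are among the hyperplanes defining $A$; then Lemma~\ref{lem: conv dans Fconv} (an intersection of future sides of space-like hyperplanes contained in an F-convex set is F-convex) shows $A$ is an F-convex set containing the $p_i$, so the minimality just proved gives $K\subset A$. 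You instead parametrize the admissible half-spaces by $(\eta,\alpha)$ with $\eta\in\F$ and $\alpha\geq H(\eta)$ and collapse the double intersection to $\alpha=H(\eta)$, recovering verbatim the defining inequalities of $K$ from Lemma~\ref{lem: deter Hcs}. Your computation is self-contained and avoids both Lemma~\ref{lem: conv dans Fconv} and the appeal to minimality, at the small cost of the orientation bookkeeping you flag (which you get right: with the signature $(+,\ldots,+,-)$, the future side of the plane with future normal $\eta$ is indeed $\langle x,\eta\rangle_-\leq\alpha$). One cosmetic remark: attainment of the supremum defining $H$ is not actually needed at the step where you include the half-space at level $\alpha=H(\eta)$ in the family --- finiteness of $H(\eta)$ suffices, since a supremum is in any case an upper bound for all the $\langle p_i,\eta\rangle_-$; both properties hold by the standing hypotheses, so nothing is lost.
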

\begin{proof}
 
Let $K'$ be an F-convex set containing the $p_i$.  
For any $\eta\in\F$
 $$H_{K'}(\eta)=\operatorname{sup}_{x\in K'}\langle x,\eta\rangle_- \geq \langle \eta,p_i\rangle_- $$
for all $i$ hence
$$H_{K'}(\eta) \geq \operatorname{max}_i \langle \eta,p_i\rangle_- =H(\eta) $$
hence $K\subset K'$. 
 Let $A=\cap \{\mathcal{H}^+ | p_i\in \mathcal{H}^+ \forall i\} $. 
 $K$ is an intersection of the future side of space-like hyperplanes (namely its support planes), 
 which all contains the $p_i$'s, hence
 $A\subset K$. By Lemma~\ref{lem: conv dans Fconv}, $A$ is an F-convex set, hence $K \subset A$ by the previous
property. 
\end{proof}

Let $K$ be an F-convex polyhedron as above. It gives a decomposition of $\H^d$ by sets 
$$O_i=\{\eta\in\H^d|H(\eta)=\langle\eta,p_i\rangle_-\}.$$

\begin{lemma}
 The $O_i$'s are convex sets and $O_i\cap O_j$ is contained in a totally geodesic hypersurface if not empty.
\end{lemma}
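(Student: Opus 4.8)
The plan is to describe each cell $O_i$ by linear inequalities and then translate convexity in $\H^d$ into ordinary convexity of cones in $\R^{d+1}$. Indeed, unwinding the definition of $H$, one has $\eta\in O_i$ if and only if $\langle\eta,p_i\rangle_-\geq\langle\eta,p_j\rangle_-$ for every $j$, that is
\begin{equation*}
O_i=\bigcap_{j}C_{ij},\qquad C_{ij}:=\{\eta\in\H^d\mid \langle\eta,p_i-p_j\rangle_-\geq 0\}.
\end{equation*}
So it suffices to show that each $C_{ij}$ is geodesically convex, and that its boundary $\{\eta\in\H^d\mid\langle\eta,p_i-p_j\rangle_-=0\}$ is a totally geodesic hyperplane.

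The key fact I would use is that a subset $C\subset\H^d$ is geodesically convex if and only if its radial cone $\cone(C)=\{\lambda\eta\mid\lambda>0,\ \eta\in C\}\subset\F$ is a convex subset of $\R^{d+1}$. This holds because the geodesics of $\H^d$ are exactly the intersections of $\H^d$ with linear $2$-planes through the origin, so the geodesic segment between $\eta,\eta'\in\H^d$ is the radial projection onto $\H^d$ of the ordinary segment $[\eta,\eta']\subset\F$. Since the condition $\langle\eta,p_i-p_j\rangle_-\geq 0$ is homogeneous, the cone over $C_{ij}$ is $\F\cap\{x\mid\langle x,p_i-p_j\rangle_-\geq 0\}$, the intersection of the convex cone $\F$ with a linear half-space, hence convex. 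Thus each $C_{ij}$ is geodesically convex, and as an intersection of convex sets $O_i$ is convex.

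For the last assertion, on $O_i\cap O_j$ one has $\langle\eta,p_i\rangle_-=\langle\eta,p_j\rangle_-=H(\eta)$, so $O_i\cap O_j$ is contained in $(p_i-p_j)^{\bot}\cap\H^d$. By Lemma~\ref{lem: acausal} the vector $p_i-p_j$ is space-like, and the intersection of $\H^d$ with the orthogonal hyperplane of a space-like vector is a totally geodesic hyperplane of $\H^d$ (equivalently its cone is the linear hyperplane $(p_i-p_j)^{\bot}$, which is convex and maximal among geodesic subspaces). Hence $O_i\cap O_j$ is a geodesically convex subset of this totally geodesic hyperplane, which is the claim.

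The step I expect to require the most care is the correspondence between geodesic convexity in $\H^d$ and linear convexity of the associated cone, together with the identification of $v^{\bot}\cap\H^d$ as totally geodesic when $v$ is space-like. Both are standard facts about the hyperboloid model (see e.g.\ \cite{Rat06}), but one must check that the radial projection maps linear segments lying inside $\F$ to geodesic segments of $\H^d$; this is exactly where the Lorentzian signature and the restriction to the future cone are used, and it is what guarantees that the half-spaces $C_{ij}$ are genuinely convex rather than merely intersections with affine sets.
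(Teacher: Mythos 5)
Your proof is correct. Both you and the paper reduce the claim to showing that the cone $C(O_i)\subset\F$ over $O_i$ is convex in $\R^{d+1}$, but the mechanism differs. The paper uses sublinearity of the extended support function: for $\eta_1,\eta_2\in C(O_i)$ it sandwiches $H((1-t)\eta_1+t\eta_2)$ between $\langle(1-t)\eta_1+t\eta_2,p_i\rangle_-$ and $(1-t)H(\eta_1)+tH(\eta_2)=\langle(1-t)\eta_1+t\eta_2,p_i\rangle_-$, forcing equality. You instead exploit the fact that $H$ is a pointwise maximum of linear forms and write $C(O_i)=\F\cap\bigcap_j\{x\mid\langle x,p_i-p_j\rangle_-\geq 0\}$, an intersection of convex sets. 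Your route is more elementary (it never invokes convexity of $H$, only its definition as a max) and makes the polyhedral structure of the cells explicit; the paper's argument is the one that generalizes to an arbitrary sublinear $H$, where it shows that the contact set of any supporting linear functional is convex. For the second assertion both proofs observe that $O_i\cap O_j$ lies in $(p_i-p_j)^\bot\cap\H^d$; you add the (correct, and worth making explicit) remark that $p_i-p_j$ is space-like by Lemma~\ref{lem: acausal}, so this intersection is a genuine totally geodesic hyperplane of $\H^d$, whereas the paper only states that $\langle\eta,p_i-p_j\rangle_-=0$ defines a time-like vector hyperplane.
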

\begin{proof}
 Let us denote by $C(O_i)$ the cone over $O_i$ in $\F$. We have to prove that $C(O_i)$ is convex in $\R^{d+1}$.
 Let $\eta_1,\eta_2\in C(O_i)$. Then, for $t\in[0,1]$, as extended support functions are convex,
 $$H((1-t)\eta_1+t\eta_2)\leq (1-t)H(\eta_1)+tH(\eta_2)=\langle (1-t)\eta_1+t\eta_2,p_i\rangle_-\leq H((1-t)\eta_1+t\eta_2) $$
hence  $H((1-t)\eta_1+t\eta_2)=\langle (1-t)\eta_1+t\eta_2,p_i\rangle_-$ 
that means that $(1-t)\eta_1+t\eta_2\in C(O_i)$.

For any $\eta \in O_i\cap O_j$ we get $\langle \eta,p_i-p_j\rangle_-=0$ that is the equation of a 
time-like vector hyperplane. 
 \end{proof}

A part $F$ of  $O_i\subset \H^d$ is a \emph{$k$-face}, $k=0,\ldots,d$, 
if $k$ is the smallest integer such that $F$ can be written as an intersection of $(d+1-k)$ $O_j$.
A $0$-face is a \emph{vertex},  a $(d-1)$-face is a \emph{facet} and a $d$ face is a \emph{cell} $O_i$ of the decomposition
$\{O_i\}$.
Let $\eta\in\H^d$ and $\mathcal{H}$ be the support plane of $K$ with normal $\eta$. If 
$\eta$ belongs to the interior of a $k$-face $F$, it is easy to see that $\mathcal{H}\cap K$ 
does not depend on $\eta\in F$ but only on $F$.
The set $\mathcal{H}\cap K$ is called a \emph{$(d-k)$-face} of $K$. 
As an intersection of convex sets, the faces of $K$ are convex. By construction 
a $(d-k)$-face contains at least $(d-k+1)$ of the $p_i$. As the normal vectors of the 
hyperplane containing it span a $k+1$ vector space,  the  $(d-k)$-face is contained in 
a plane of dimension $(d-k)$, and is not contained in a plane of lower dimension.

A $0$-face is a \emph{vertex}, a $1$-face is an \emph{edge} and a $d$-face 
is a \emph{facet} of $K$.
The vertices are exactly the $p_i$.
An F-convex polyhedron must have vertices, but maybe no other $k$-faces as the example of the future cone of a point shows.

From Proposition~9.9 and Remark~9.10 in \cite{Bon05}, the decomposition given by the $O_i$ is locally finite
(each $\eta\in\H^d$ has a neighborhood intersecting a finite number of $O_i$).
Nevertheless the cells $O_i$ can have an infinite number of sides (see Figure~3.6 in \cite{Mar07} where 
 the lift of a simple closed geodesic on a punctured torus is drawn). In this case, the decomposition of
$\partial_s K$ into faces is not locally finite, for example a vertex can be the endpoint of an infinite number of 
edges.

We call an F-convex polyhedron $K$ 
 a \emph{space-like F-convex polyhedron} if the $O_i$ are compact convex hyperbolic polyhedra (each with finite number
of faces). Each vertex of the decomposition corresponds to a space-like facet of $K$, which is a compact convex
polyhedron. Moreover $\partial_s K$ is locally finite for the decomposition in facets.
It must have an infinite number of faces.

\begin{example}{\rm
Let $x\in\F$. Then the convex hull of $\Gamma x$ is a  space-like Fuchsian
convex polyhedron, because fundamental
 domains for $\Gamma$ gives a tessellation of $\H^d$ by compact convex polyhedra \cite{NP91}.
A dual construction consists of considering the orbit of a space-like hyperplane \cite{FF}.
}\end{example}

\begin{figure}[ht]
\begin{center}
\input 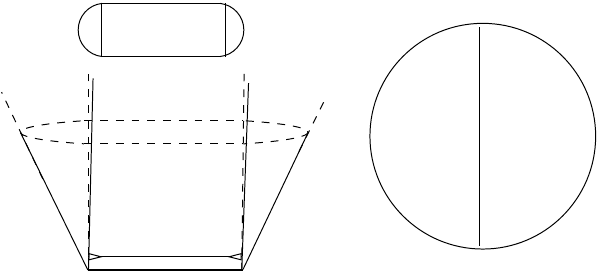_t
\end{center}
\caption{The elementary example in $d=2$.}\label{fig:wei}
\end{figure}


Let us now consider the case of an F-regular domain $K$.
From \cite{Bon05}, the image by the Gauss map $G$ of points of
$\partial_s K$ gives a decomposition of $\H^d$ by convex sets which are convex hulls of
points on $\partial_{\infty}\H^d$. Of course, if $p\in \partial_{s}K$, the support function 
$H$ of $K$ is equal to $\langle \cdot,p\rangle_-$ on $G(p)$. Hence $K$ is polyhedral in our sense
if $K$ has a discrete set of vertices (points $p$ of $\partial_s K$ such that $G(p)$ has non empty interior).
Following \cite{Bon05}, we call them \emph{F-regular domains with simplicial singularity}.

\begin{example}[\textbf{The elementary example}]\label{ex: elemntary1}{\rm
    Figure~\ref{fig:wei} is an  elementary examples of F-regular domains with simplicial singularity.
 The letters on the F-convex sets are edge-lengths. The letters on the 
cellulation of $\H^2$ are measures that will be introduced later. 
Actually we will call this example  (the union the the future cones 
of points on a space-like
segment) ``the'' elementary example, which is the simplest one, right after the future cone over a point.
 }
\end{example}

\subsection{Duality}\label{sub:duality}

The notion of duality has  interest in its own, but here it will only be used as a tool in the
proof of Proposition~\ref{lem: base}. See \cite{Ber10} for a previous introduction.
Let $A$ be a set which does not contain the origin. The \emph{dual} of $A$ is 
$$A^*=\{x\in\R^{d+1}| \langle x,a\rangle_- \leq -1, \forall a\in A\}. $$

It is immediate that $A^*$ is a closed convex set which does not contain the origin, 
that $A\subset A^{**}=:(A^*)^*$ and that $A\subset B$ implies $B^*\subset A^*$,  
see \cite[1.6.1]{Sch93}.
Note that as an F-convex set contains the future cone of its points,
it meets any future time-like ray
from the origin.

\begin{lemma}
Let $K$ be an F-convex set which does not contain the origin. Then
 $K^*$ is  contained in  $\overline{\F}^{\star }$
\end{lemma}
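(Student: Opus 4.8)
The plan is to read $K^{*}$ off directly from its definition, exploiting the one feature of F-convex sets that sets them apart from bounded convex bodies: they absorb the entire future cone of each of their points, i.e.\ $K+\overline{\F}=K$. Fix $x\in K^{*}$, so that $\langle x,a\rangle_-\le -1$ for every $a\in K$. Since $K$ has nonempty interior by Lemma~\ref{lem: base convexe}\ref{base:interieur}, I may choose some $k\in K$, and by Lemma~\ref{lem: base convexe}\ref{base:cone} together with closedness one has $\C(k)=k+\overline{\F}\subset K$. Hence for every future vector $w\in\overline{\F}$ and every $t\ge 0$ the point $k+tw$ lies in $K$, whence
\begin{equation*}
\langle x,k\rangle_- + t\,\langle x,w\rangle_- = \langle x, k+tw\rangle_- \le -1 .
\end{equation*}
Letting $t\to +\infty$, this bounded-above condition forces $\langle x,w\rangle_-\le 0$. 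Thus the first step produces $\langle x,w\rangle_-\le 0$ for all $w\in\overline{\F}$.

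The second step is to observe that this inequality, specialized to the future light-like directions, already pins down $\overline{\F}$. Write $x=(x',x_{d+1})$ with $x'\in\R^{d}$. The future light-like vectors are, up to scaling, those of the form $\ell=(u,1)$ with $u\in\R^{d}$ and $|u|=1$ (Euclidean norm), and for these $\langle x,\ell\rangle_- = \langle x',u\rangle - x_{d+1}$. Since $\ell\in\overline{\F}$, the conclusion of the first step gives $\langle x',u\rangle\le x_{d+1}$ for every unit $u$; taking the supremum over $u$ yields $|x'|\le x_{d+1}$. This is exactly the pair of conditions $\langle x,x\rangle_-=|x'|^{2}-x_{d+1}^{2}\le 0$ and $x_{d+1}\ge 0$, that is, $x\in\overline{\F}$. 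Finally $0\notin K^{*}$, because $\langle 0,a\rangle_-=0\not\le -1$, so every $x\in K^{*}$ is a nonzero future vector, i.e.\ $x\in\overline{\F}^{\star}$.

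There is essentially no obstacle here: the whole argument is forced by the defining inequality of $K^{*}$ together with the absorption property $K+\overline{\F}=K$. The only point meriting a word of care is the passage to the limit $t\to+\infty$ in the first step, which is precisely where the F-convexity enters (a general convex set need not contain a full future ray through a point, and the argument would then collapse). I note in passing that the hypothesis that $K$ avoids the origin is what makes $K^{*}$ a nontrivial object in the sense of this subsection, but it plays no role in the inclusion itself; the nonemptiness and the cone-absorption are what drive the proof.
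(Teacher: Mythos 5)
Your proof is correct and rests on the same two ingredients as the paper's own argument: the absorption property $\C(k)\subset K$ for $k\in K$, and the self-duality of the future cone under $\langle\cdot,\cdot\rangle_-$. The paper runs it contrapositively (for $x\notin\overline{\F}$ it produces some $k\in K$ with $\langle x,k\rangle_-\geq 0$, so $x\notin K^*$), whereas you argue directly and spell out the self-duality via the light-like directions; the content is the same.
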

\begin{proof}
 Let $x\notin\overline{\F}$. 
Then there exists a $k \in K$ such that $\langle x,k\rangle_-\geq 0$, so $x\notin K^*$. 
As by definition $0\notin K^*$, we have $K^*\subset \overline{\F}^{\star }$. 
\end{proof}

In the compact case, duality is defined for convex bodies 
with the origin in their interior, that is equivalent 
to say that the Euclidean support functions are positive, and 
we get the fundamental property that the dual of the dual is the 
identity.

The lemma above says that in our case, even if $0\notin K$, 
we can take $K\nsubseteq \overline{\F}^{\star }$, and then 
$K^*\subset \overline{\F}^{\star }$ and 
$(K^*)^*\subset \overline{\F}^{\star }$, so $(K^*)^*\not= K$.
Actually the genuine analog to the compact case
is that the support function is negative.
By \eqref{eq: cone 0} this is equivalent to say that $K$ is contained in $\overline{\F}^{\star }$. 

\begin{lemma}
Let $K$ be an F-convex set contained in $\overline{\F}^{\star }$.  
Then $K^*$ is a  F-convex set and $(K^*)^*=K$.
\end{lemma}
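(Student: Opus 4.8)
The plan is to establish the two assertions separately: that $K^*$ is F-convex, and that $(K^*)^*=K$. For the first, I would use that $K\subset\ol{\F}^{\star}$ implies, by the lemma just proved, $K^*\subset\ol{\F}^{\star}\subset\ol{\F}$, and $\ol{\F}$ is itself a F-convex set. Hence by Lemma~\ref{lem: conv dans Fconv} it suffices to exhibit $K^*$ as a proper closed convex set which is an intersection of future sides of \emph{space-like} hyperplanes. The subtlety is that the defining inequalities $\langle x,a\rangle_-\le -1$ coming from boundary points $a$ of $K$ may involve light-like $a$, whose bounding hyperplanes are not space-like. I would circumvent this by passing to the interior: since $K$ is closed convex with nonempty interior (Lemma~\ref{lem: base convexe}) and $K\subset\ol{\F}$, one has $\inn K\subset\inn\ol{\F}=\F$, so every point of $\inn K$ is future time-like; a density-plus-continuity argument (using $K=\ol{\inn K}$) then gives $K^*=\bigcap_{a\in\inn K}\{x:\langle x,a\rangle_-\le -1\}$. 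For time-like $a$ the hyperplane $\{\langle x,a\rangle_-=-1\}$ has time-like Lorentzian normal $a$, hence is space-like, and the half-space contains the far future, so it is precisely the future side.

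To check that $K^*$ is nonempty (and thus proper), I would invoke that $H_K<0$ everywhere on $\H^d$ by \eqref{eq: cone 0}: for any $\eta\in\H^d$ set $y_\eta:=\eta/(-H_K(\eta))$. The support inequality $\langle a,\eta\rangle_-\le H_K(\eta)$, valid for all $a\in K$, gives after dividing by the positive number $-H_K(\eta)$ that $\langle a,y_\eta\rangle_-\le -1$; thus $y_\eta\in K^*$. This normalized point is also the engine of the second assertion.

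For $(K^*)^*=K$, the inclusion $K\subset(K^*)^*$ is the general fact $A\subset A^{**}$ recalled in Subsection~\ref{sub:duality}, so only $(K^*)^*\subset K$ requires proof, and I would argue by contraposition. If $x\notin K$, then by the description $K=\{x:\langle x,\eta\rangle_-\le H_K(\eta)\ \forall\eta\in\F\}$ there is some $\eta\in\F$, which we normalize to lie in $\H^d$, with $\langle x,\eta\rangle_->H_K(\eta)$. Taking $y_\eta=\eta/(-H_K(\eta))\in K^*$ and dividing the strict inequality by $-H_K(\eta)>0$ yields $\langle x,y_\eta\rangle_->-1$. Since $y_\eta\in K^*$, this witnesses $x\notin(K^*)^*$, giving the reverse inclusion and hence equality.

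The main obstacle is the first assertion: everything hinges on reducing the dual to an intersection over the time-like interior points, so that Lemma~\ref{lem: conv dans Fconv} applies, and on verifying that the resulting hyperplanes are genuinely space-like future sides. Once $K^*$ is known to be F-convex and contained in $\ol{\F}^{\star}$, double duality collapses to the single normalization $y_\eta=\eta/(-H_K(\eta))$, which simultaneously manufactures points of $K^*$ and separates points outside $K$. This is exactly where the hypothesis $K\subset\ol{\F}^{\star}$ (equivalently $H_K<0$) is indispensable, since it guarantees $-H_K(\eta)>0$ and legitimizes every division above.
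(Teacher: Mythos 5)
Your proof is correct, and its double-duality half coincides with the paper's: both arguments hinge on normalizing a separating direction $\eta\in\F$ by the negative number $H_K(\eta)$ (the paper writes $\eta/(-\alpha)$ with $\alpha=H_K(\eta)<0$, negativity coming from \eqref{eq: cone 0}) so as to manufacture a point of $K^*$ witnessing $z\notin (K^*)^*$. For the F-convexity of $K^*$, however, you take a genuinely different route. The paper computes the total support function $\tilde H_{K^*}$: it is finite on $\F$ because $K$ meets every future time-like ray from the origin, it is $+\infty$ outside $\overline{\F}$ because $x\in K^*$ forces $x+\overline{\F}\subset K^*$, and Lemma~\ref{lem cl} then allows the light-like directions to be dropped, exhibiting $K^*$ as cut out by space-like half-spaces indexed by $\F$. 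You instead rewrite $K^*$ as the intersection of the half-spaces $\{\langle\cdot,a\rangle_-\le -1\}$ over $a\in\operatorname{int}K\subset\F$ (legitimate since $K=\overline{\operatorname{int}K}$ and each constraint is continuous in $a$), which are manifestly future sides of space-like hyperplanes, check nonemptiness via the points $\eta/(-H_K(\eta))$, and then invoke Lemma~\ref{lem: conv dans Fconv} with ambient F-convex set $\overline{\F}$, using the preceding lemma for the inclusion $K^*\subset\overline{\F}^{\star}$. Both routes delegate the attainment of the supremum in the definition of F-convexity to an earlier statement (Lemma~\ref{lem: deter Hcs} for the paper, Lemma~\ref{lem: conv dans Fconv} for you). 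What the paper's computation buys is the identification of $\tilde H_{K^*}$ itself, which is reused immediately afterwards (e.g.\ in Lemma~\ref{lem:radial dual}); your version is somewhat more elementary in that it bypasses the semi-continuity machinery entirely.
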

\begin{proof}

$K^*$ is a closed convex set, so it is determined by its total support function. For $\eta\in \F$ let us consider 
$\tilde{H}_{K^*}(\eta)=\mathrm{sup}\{\langle \eta,x\rangle_- \forall x\in K^*  \}$. 
There exists $\lambda >0$ such that $\lambda \eta \in K$, so as
$\tilde{H}_{K^*}(\eta)=\frac{1}{\lambda}\tilde{H}_{K^*}(\lambda \eta)$ and by
definition $\langle \lambda \eta,x\rangle_-\leq -1$  $\forall x\in K^*$, 
$\tilde{H}_{K^*}$ has finite values on $\F$. 
As for two future vectors $u,v$ we have $\langle u,v\rangle_-<0$
and $K\subset \overline{\F}^{\star }$, 
 if $x\in K^*$ then $x+\overline{\F}\subset K^*$, so $\tilde{H}_{\C(x)}\leq \tilde{H}_{K^*}$, hence
$\tilde{H}_{K^*}$ is infinite outside of $\overline{\F}$. So
$$K^*=\{x\in\R^{d+1}| \langle x,\eta\rangle_-\leq \tilde{H}_{K^*}(\eta), \forall \eta \in \overline{\F}\} $$
and by Lemma~\ref{lem cl}  we have 
$$K^*=\{x\in\R^{d+1}| \langle x,\eta\rangle_-\leq \tilde{H}_{K^*}(\eta), \forall \eta \in\F\} $$
that says exactly that $K^*$ is an F-convex set.

To prove that $(K^*)^*=K$ one has to prove that
 $ (K^*)^*\subset K$.
Let $z\notin K$. There exists a support plane of $K$, orthogonal to 
some  $\eta\in\F$, which separates $z$ from $K$ \cite[1.3.4]{Sch93}.
Hence there exists $\alpha$ with $\langle z,\eta\rangle_- >\alpha$ and $\langle k,\eta\rangle_-<\alpha$ for all $k\in K$.
From \eqref{eq: cone 0}, $\alpha <0$.
On the other hand, for any $k\in K$, $\langle k,\eta\rangle_-\leq \alpha$,
which can be written $\langle k,\frac{\eta}{-\alpha}\rangle_-\leq -1 $, hence
$\frac{\eta}{-\alpha}\in K^*$. But  $\langle z,\frac{\eta}{-\alpha}\rangle_- >-1$,
so $z\notin (K^*)^*$.
\end{proof}

Let $K$ be an F-convex contained in $\overline{\F}^{\star }$.
The \emph{radial function}
of $K$ is the function from $\overline{\F}^{\star }$ to $\R^+\cup\{+\infty \}$ defined by
$$R_K(\eta):=\mathrm{inf}\{ s >0 \vert s \eta\in K\}.  $$
$R_K$ has always finite values on $\F$. If $K$ does not meet the light-like ray 
directed by $\ell$, then $R_K(\ell)=+\infty$.
In particular $\forall\eta$, $R_K(\eta)\eta\in\partial K$, $R_K$ is homogeneous of
degree $-1$ and 
$$K=\{\eta\in\overline{\F}^{\star }|R_K(\eta)\leq 1\} .$$

\begin{lemma}\label{lem:radial dual}
 Let $K$ be an F-convex set contained in $\overline{\F}^{\star }$. Then on $\overline{\F}^{\star }$,
the total support function $\tilde{H}_{K^*}$ of $K^*$ satisfies
$$\tilde{H}_{K^*}=\frac{-1}{R_K}. $$
\end{lemma}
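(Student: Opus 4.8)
The plan is to prove the two inequalities $\tilde{H}_{K^*}\le -1/R_K$ and $\tilde{H}_{K^*}\ge -1/R_K$ on $\overline{\F}^{\star}$, treating the interior $\F$ and the light-like boundary $\partial\F^{\star}$ separately. Throughout I will use that $K^*$ is itself a F-convex set contained in $\overline{\F}^{\star}$ (the two preceding lemmas), that $R_K(\eta)\eta\in\partial K\subset K$ whenever $R_K(\eta)<+\infty$, and that $K+\overline{\F}=K$.

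First, the upper bound, which holds on all of $\overline{\F}^{\star}$. Fix $\eta\in\overline{\F}^{\star}$. If $R_K(\eta)<+\infty$ then $R_K(\eta)\eta\in K$, so every $x\in K^*$ satisfies $\langle x,R_K(\eta)\eta\rangle_-\le -1$, i.e. $\langle x,\eta\rangle_-\le -1/R_K(\eta)$; taking the supremum over $x\in K^*$ gives $\tilde{H}_{K^*}(\eta)\le -1/R_K(\eta)$. If $R_K(\eta)=+\infty$ then $-1/R_K(\eta)=0$, and since $K^*\subset\overline{\F}^{\star}$ every $x\in K^*$ is a future vector, so $\langle x,\eta\rangle_-\le 0$ and again $\tilde{H}_{K^*}(\eta)\le 0=-1/R_K(\eta)$.

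The lower bound on $\F$ is the geometric heart. For $\eta\in\F$ set $p:=R_K(\eta)\eta\in\partial K$; since $\eta$ is future time-like and $R_K(\eta)>0$, the point $p$ is itself future time-like. Choose any support plane of $K$ at $p$ with inward normal $\nu$; by Lemma~\ref{lem: base convexe} such a plane is space-like or light-like, so $\nu$ is a nonzero future causal vector, and $\langle k,\nu\rangle_-\le\langle p,\nu\rangle_-=:\alpha$ for all $k\in K$. Because $p$ is strictly time-like and $\nu$ is future causal, $\alpha<0$, which is exactly what makes the construction robust even when the support plane is light-like. Then $x:=\nu/(-\alpha)$ lies in $K^*$, since $\langle x,k\rangle_-\le\alpha/(-\alpha)=-1$, and $\langle x,\eta\rangle_-=\langle\nu,\eta\rangle_-/(-\alpha)=\alpha/\big(R_K(\eta)(-\alpha)\big)=-1/R_K(\eta)$, whence $\tilde{H}_{K^*}(\eta)\ge -1/R_K(\eta)$. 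Combined with the upper bound this proves the identity on $\F$.

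Finally I extend the identity to a light-like $\ell\in\partial\F^{\star}$, which I expect to be the main obstacle: the support plane at $R_K(\ell)\ell$ may be light-like and normal to $\ell$ itself, forcing $\alpha=0$ and making the above construction degenerate. I therefore argue by a limit. Since $K^*$ is a F-convex set, Lemma~\ref{lem cl} gives, for any $\eta\in\F$, $\tilde{H}_{K^*}(\ell)=\lim_{t\downarrow 0}H_{K^*}(\ell+t(\eta-\ell))=\lim_{t\downarrow 0}\big(-1/R_K(\eta_t)\big)$ with $\eta_t:=(1-t)\ell+t\eta$, using the identity just proved on $\F$. It remains to show $R_K(\eta_t)\to R_K(\ell)$. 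Lower semicontinuity of $R_K$ gives $\liminf_{t}R_K(\eta_t)\ge R_K(\ell)$: if $\eta_n\to\eta$ with $R_K(\eta_n)\to L<+\infty$, then $R_K(\eta_n)\eta_n\in K$ converges to $L\eta\in K$ as $K$ is closed, and $0\notin K$ forces $L>0$, so $R_K(\eta)\le L$. For the reverse I exploit the freedom in $\eta$ and choose $\eta:=\ell+v$ with $v\in\F$, so that $\eta\in\F$ and, when $R_K(\ell)<+\infty$, $R_K(\ell)\eta_t=R_K(\ell)\ell+tR_K(\ell)v=p+tR_K(\ell)v\in K+\overline{\F}=K$, giving $R_K(\eta_t)\le R_K(\ell)$. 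Hence $R_K(\eta_t)\to R_K(\ell)$ and $\tilde{H}_{K^*}(\ell)=-1/R_K(\ell)$; when $R_K(\ell)=+\infty$ the same semicontinuity forces $R_K(\eta_t)\to+\infty$ and both sides vanish. This completes the identity on $\overline{\F}^{\star}=\F\cup\partial\F^{\star}$.
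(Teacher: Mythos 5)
Your proof is correct, but it takes a noticeably different route from the paper's for the harder half of the identity. The easy half is essentially the same in both: your upper bound $\tilde{H}_{K^*}\le -1/R_K$ (pair an arbitrary $x\in K^*$ with the point $R_K(\eta)\eta\in K$ and use the defining inequality of $K^*$) is the paper's inclusion $K\subset X$, where $X=\{x\in\overline{\F}^{\star}\mid \tilde{H}_{K^*}(x)\le -1\}$, followed by the same homogeneity normalization. The difference is in the lower bound. The paper gets $X\subset K$ in one line from the biduality $(K^*)^*=K$ established in the immediately preceding lemma: if $\tilde{H}_{K^*}(x)\le -1$ then $\langle x,v\rangle_-\le -1$ for all $v\in K^*$, so $x\in (K^*)^*=K$. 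You instead construct an explicit (near-)maximizer in $K^*$ by taking a support plane of $K$ at $p=R_K(\eta)\eta$, normalizing its future causal normal $\nu$ by $-\langle p,\nu\rangle_->0$, and then handle the light-like directions by Lemma~\ref{lem cl} together with a continuity argument for $R_K$ along the segment $\ell+tv$ (using $K+\overline{\F}=K$ for the upper semicontinuity and closedness of $K$ with $0\notin K$ for the lower). What your approach buys is independence from the biduality statement --- you effectively re-derive the consequence of $(K^*)^*=K$ that is actually needed, and the support-plane construction makes the geometric content of the duality visible; the cost is the extra casework on $\partial\F^{\star}$, which the paper avoids because both $K\subset X$ and $X\subset K$ are statements about sets rather than pointwise limits (the paper only needs Lemma~\ref{lem cl} once, to push $K\cap\partial\F$ into $X$). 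Your observation that $\langle p,\nu\rangle_-<0$ even for a light-like support plane, because $p$ itself is time-like, is exactly the point that makes your construction survive the degenerate boundary behaviour, and it is argued correctly.
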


As $K\subset \overline{\F}^{\star }$, $\tilde{H}_{K^*}$ and $\tilde{H}_{K}$ have finite non-positive values on $\overline{\F}$.

\begin{proof}
We define $X=\{x\in\overline{\F}^{\star }|\tilde{H}_{K^*}(x)\leq -1 \}.$
Let us first prove that $K=X$.

Let $x\in K\cap \F$. There exists $v\in K^*$ such that
$\tilde{H}_{K^*}(x)=\langle v,x\rangle_-$. But by definition of $K^*$, $\langle v,x\rangle_-\leq -1$ hence
$x\in X$. If $K\cap \partial \F$ is empty, we have $K\subset X$. If not, for $x\in K\cap \partial \F$
 the result is obtained from the previous case using Lemma~\ref{lem cl}.

Let $x\in X$. By definition of the support function, 
for any $v\in K^*$ we have $\langle x,v\rangle_-\leq \tilde{H}_{K^*}(x)$. On the other hand, as $x\in X$,
$\tilde{H}_{K^*}(x)\leq -1$ hence $x\in (K^*)^*=K$. We proved that $K=X$.

Let us suppose that there exists $x$
with  $\tilde{H}_{K^*}(x)>\frac{-1}{R_K(x)}$. As $\tilde{H}_{K^*}$ and 
$\frac{-1}{R_K}$ are $1$-homogeneous, one can find $\lambda>0$ such that 
$\tilde{H}_{K^*}(\lambda x)>-1 >\frac{-1}{R_K(\lambda x)}$. So 
$\lambda x \in K \setminus X$, that is impossible.
\end{proof}

\begin{example}\label{ex:cotenu light}{\rm
 Let $p\in\overline{\F}^{\star }$. Then $\C(p)^*$ is the intersection of $\overline{\F}$ with the
half-space $\{\langle x,p \rangle_-\leq -1\}$.

 The dual of $K(\H_t)$ is $K(\H_{1/t})$. More striking
 is the dual of $K(\H)+\C(e_{d+1})$.
It is not hard to see that on $\H^d$, $R_{K(\H)+\C(e_{d+1})}(\eta)=2\eta_{d+1}$, so
$H_{(K(\H)+\C(e_{d+1}))^*}(\eta)=\langle\eta,\eta\rangle_-/(-2\langle e_{d+1},\eta\rangle_-)
$, see Figure~\ref{fig:dual1}.
Note that on $\partial\F^{\star }$, $R_{(K(\H)+\C(e_{d+1}))^*}=1$. So 
$K\subset \F^{\star }$ does not imply $K^*\subset \F^{\star }$.
}\end{example}
\begin{example}{\rm
 The dual of a $\Gamma$ invariant F-convex set is a $\Gamma$ invariant F-convex set.
}\end{example}

\begin{figure}[ht]
\centering
\includegraphics[scale=0.2]{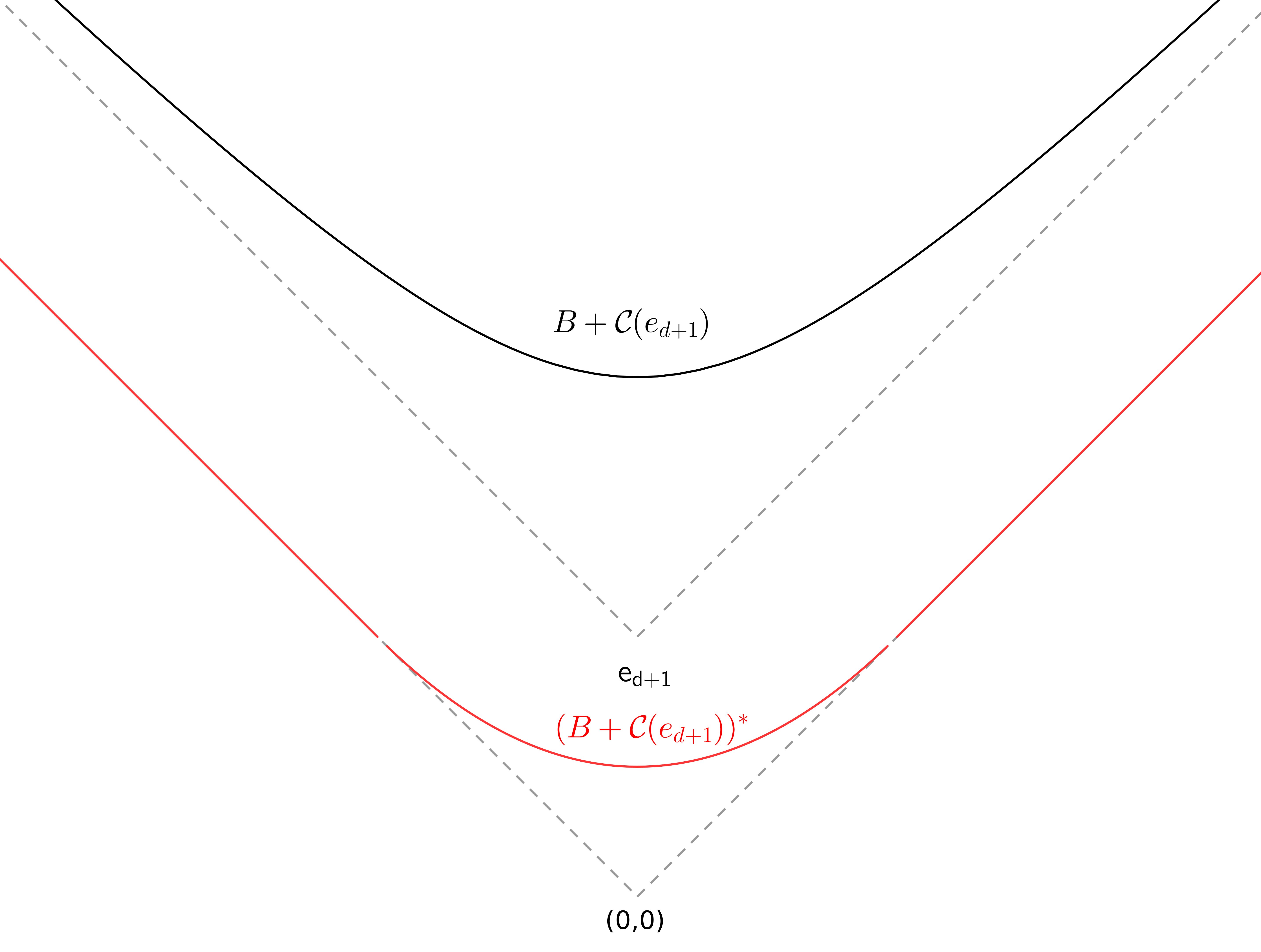}  
\caption{$K(\H)+\C(e_{d+1})$ and its dual.
\label{fig:dual1}} 
  \end{figure}

\subsection{First order regularity}\label{sub:first order}

\begin{lemma}\label{lem: grad}
Let $\eta\in \F$, $K$ be an F-convex set and $\mathcal{H}$ be the space-like support plane of $K$ with normal $\eta$. 
The intersection of $K$ and $\mathcal{H}$ is reduced to one point $p$ if and only if 
$H$ is differentiable at $\eta$. In this case $p$ is equal to the gradient $\grad_{\eta}H$ (for $\langle \cdot,\cdot\rangle_-$)
of $H$ at $\eta$. 
\end{lemma}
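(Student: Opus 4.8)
The plan is to recognize the intersection $K\cap\mathcal H$ as the subdifferential of the convex function $H$ at $\eta$, computed with respect to the Minkowski form, and then to invoke the standard fact from convex analysis that a finite convex function on an open convex set is differentiable exactly where its subdifferential is a singleton, the gradient being that single point.

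First I would make the identification precise. Since $H$ is sublinear, hence convex, and finite on the open convex cone $\F$, and since $\eta\in\F$, the subdifferential
\[
\partial H(\eta)=\{p\in\R^{d+1}\mid H(\zeta)\geq H(\eta)+\langle p,\zeta-\eta\rangle_-\ \forall\zeta\in\F\}
\]
is a nonempty compact convex set, where I use the Minkowski form $\langle\cdot,\cdot\rangle_-$ to represent linear functionals. I would then show $\partial H(\eta)=K\cap\mathcal H$. Testing the defining inequality along the rays $\zeta=(1\pm t)\eta$ (which stay in the cone $\F$) and using $1$-homogeneity forces $\langle p,\eta\rangle_-=H(\eta)$; substituting this back into the inequality yields $\langle p,\zeta\rangle_-\leq H(\zeta)$ for all $\zeta\in\F$, i.e.\ $p\in K$ by the description $K=\{x\mid\langle x,\zeta\rangle_-\leq H(\zeta)\ \forall\zeta\in\F\}$ following \eqref{def:sup func}. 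Conversely, any $p\in K$ with $\langle p,\eta\rangle_-=H(\eta)$ satisfies the subgradient inequality, and the condition $\langle p,\eta\rangle_-=H(\eta)$ is exactly $p\in\mathcal H$. Hence $K\cap\mathcal H=\partial H(\eta)$; nonemptiness is guaranteed because $\eta$ is a support vector of the F-convex set $K$ by \eqref{eq: def Lcvx}.

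Next I would pass to ordinary Euclidean convex analysis via the linear isomorphism $p\mapsto Jp$, with $J=\mathrm{diag}(1,\ldots,1,-1)$, since $\langle p,\zeta\rangle_-=\langle Jp,\zeta\rangle$. This carries $\partial H(\eta)$ bijectively onto the usual Euclidean subdifferential of $H$ at $\eta$. The standard theorem (see \cite{Roc97} or \cite{HUL93}) then gives that $H$ is differentiable at $\eta$ if and only if this subdifferential reduces to a single point, in which case the Euclidean gradient $\nabla_\eta H$ is that point. Unwinding the isomorphism, $K\cap\mathcal H$ is a single point $p$ if and only if $H$ is differentiable at $\eta$, and then $\nabla_\eta H=Jp$. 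Since the Minkowski gradient is defined by $D_\eta H(X)=\langle X,\grad_\eta H\rangle_-=\langle X,J\grad_\eta H\rangle$, we have $\nabla_\eta H=J\grad_\eta H$, whence $p=J\nabla_\eta H=\grad_\eta H$, as claimed.

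The main difficulty here is bookkeeping rather than anything deep. One must apply all convex-analytic statements on the open convex domain $\F$, where $H$ is finite and automatically locally Lipschitz, rather than on all of $\R^{d+1}$ where $H$ is not even defined. And one must keep straight the passage between the Minkowski and Euclidean pairings, so that the support point is correctly identified as $\grad_\eta H$ and not $\nabla_\eta H$. The boundedness of $K\cap\mathcal H$ that one might otherwise worry about is automatic, being that of a subdifferential of a finite convex function on an open set, and is consistent with Lemma~\ref{lem: acausal}.
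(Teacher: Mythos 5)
Your proof is correct and is exactly the ``straightforward adaptation'' of the classical Euclidean argument (\cite{Sch93}, via the subdifferential characterization of differentiability) that the paper invokes without writing out; the identification $K\cap\mathcal{H}=\partial H(\eta)$, the passage through $J$ between the Minkowski and Euclidean pairings, and the conclusion $p=\grad_\eta H$ all check out. Nothing further is needed.
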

This result is a classical fact for convex bodies in the Euclidean space \cite{Sch93}, and 
the adaptation of the proof is straightforward. See \cite{FF}, where this property is 
checked for Fuchsian convex sets, but the group invariance does not enter the proof.

An F-convex set is said to be 
$C^k$ if $\partial_{s}K$ is a $C^k$ submanifold of $\R^{d+1}$.

\begin{lemma}\label{lemme:base 1}
Let $K$ be an F-convex set with support function $h_K$ and extended support function $H_K$.
 \begin{enumerate}[nolistsep,label={\bf(\roman{*})}, ref={\bf(\roman{*})}]
  \item\label{lem princ: KC1} $K$ is $C^1$ if and only if it has a unique support plane at each boundary point.
  \item\label{lem princ: linearity cond} If there exist $\eta,\eta'\in\F$ with $H_K(\eta+\eta')=H_K(\eta)+H_K(\eta')$ then
there exists $k\in K$ with two support planes. In particular $K$ is not $C^1$.
\item\label{lem princ: hC1}  $h_K$ is $C^1$ if and only if  $\partial_{s}K$ is strictly convex
(i.e.~the intersection of $K$ with any  space-like
support plane is reduced to a point).
\item\label{lem princ: hC1b} If $K$ is strictly convex, then $h_K$ is $C^1$ and $\partial_{s}K=\partial K$. 
\item\label{lem princ: hC1t} If $h_K$ is $C^1$ and $\partial_{s}K=\partial K$, then $K$ is strictly convex.
\item\label{lem princ: gauss1} If $K$ is $C^1$ then the  Gauss map is a well defined continuous map.
If 
$\partial_{s}K$ is strictly convex, 
then the Gauss map has a well defined inverse. 
 \end{enumerate}
\end{lemma}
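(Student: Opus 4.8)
The plan is to treat the six items as a cascade rather than independently, with the convex-analytic heart sitting in (iii) and the purely geometric input supplied by Lemma~\ref{lem: base convexe} and by item (i). Throughout I would pass freely between the extended support function $H_K$ on $\F$ and the restricted one $h_K$ on $\H^d$: since $H_K$ is the $1$-extension of $h_K$ and the two are tied on $\H^d$ by \eqref{eq:nablanabla} (so that $\grad_\eta H_K$ exists and is continuous exactly when $\nabla_\eta h_K$ and $h_K$ are), differentiability and $C^1$-regularity of $H_K$ at a point of $\H^d$ are equivalent to those of $h_K$, and by $1$-homogeneity this propagates along rays to all of $\F$. Hence ``$h_K\in C^1$'' and ``$H_K\in C^1(\F)$'' are interchangeable, which is what lets Lemma~\ref{lem: grad} (stated for $H_K$ on $\F$) drive the arguments about $h_K$.

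For (iii) I would use that a finite convex function on an open set is $C^1$ as soon as it is differentiable at every point \cite{Roc97}; thus $H_K\in C^1(\F)$ iff $H_K$ is differentiable at each $\eta\in\F$, which by Lemma~\ref{lem: grad} happens iff the space-like support plane $\mathcal{H}(\eta)$ meets $K$ in a single point for every $\eta\in\F$, i.e.\ iff $\partial_{s}K$ is strictly convex. Item (i) is the classical smoothness--uniqueness duality, adapted from the Euclidean case \cite{Sch93}: if $\partial_{s}K$ is a $C^1$ submanifold its tangent plane at each point is the unique support plane, while conversely a single-valued normal on a convex hypersurface is automatically continuous and yields a local $C^1$ graph. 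For (ii) I would exploit equality in subadditivity directly: if $k\in K$ attains the maximum defining $H_K(\eta+\eta')$ then $H_K(\eta+\eta')=\langle k,\eta\rangle_-+\langle k,\eta'\rangle_-$ with $\langle k,\eta\rangle_-\le H_K(\eta)$ and $\langle k,\eta'\rangle_-\le H_K(\eta')$, so the assumed equality forces both to be equalities; hence $k$ lies in $\mathcal{H}(\eta)$ and in $\mathcal{H}(\eta')$, which are distinct support planes through $k$ whenever $\eta,\eta'$ are non-proportional, and (i) gives that $K$ is not $C^1$.

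The last three items are then corollaries. For (iv), strict convexity of $K$ forbids segments in $\partial K$, so Lemma~\ref{lem: base convexe}\,\ref{base:ray} excludes light-like support planes (one would force a light-like half-line in the boundary); thus $\partial_{s}K=\partial K$, and since no support plane can then meet $K$ in more than one point, $\partial_{s}K$ is strictly convex and $h_K\in C^1$ by (iii). For (v), assuming $h_K\in C^1$ (so $\partial_{s}K$ is strictly convex by (iii)) and $\partial_{s}K=\partial K$, suppose $[p,q]\subset\partial K$; the midpoint $m\in\partial_{s}K$ carries a space-like support plane of normal $\nu$, and from $\langle m,\nu\rangle_-=\tfrac12(\langle p,\nu\rangle_-+\langle q,\nu\rangle_-)=H_K(\nu)$ together with $\langle p,\nu\rangle_-,\langle q,\nu\rangle_-\le H_K(\nu)$ one gets $p,q\in\mathcal{H}(\nu)$, contradicting the single-point intersection. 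Finally, for (vi), under $K\in C^1$ item (i) makes the inward space-like normal single-valued and the $C^1$ regularity of $\partial_{s}K$ makes it continuous, so $G_K$ is a continuous map; it is surjective by the defining property \eqref{eq: def Lcvx} of an F-convex set, and if in addition $\partial_{s}K$ is strictly convex it is injective, since two points sharing a normal would lie in one space-like support plane.

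I expect the genuine obstacle to be item (i), from which (ii) and (vi) hang: the Euclidean equivalence ``unique support plane $\Leftrightarrow C^1$ boundary'' must be transferred with care because $\partial_{s}K$ is only the space-like part of $\partial K$, so one has to check that restricting to space-like support planes does not spoil the local graph representation and that continuity of the single-valued normal is genuinely inherited on $\partial_{s}K$. Once (i) and the convex-analysis fact behind (iii) are secured, the remaining content is routine, resting on Lemma~\ref{lem: grad}, the homogeneity relation \eqref{eq:nablanabla}, and the elementary maximizer computations above.
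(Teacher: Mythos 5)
Your proof is correct and follows essentially the same route as the paper: (i) via the classical convex-geometry equivalence from \cite{Sch93}, (iii) via Lemma~\ref{lem: grad} together with the fact that a differentiable convex function is $C^1$, (iv)--(v) as corollaries using \ref{base:ray} of Lemma~\ref{lem: base convexe}, and (vi) from (i) and strict convexity. The only cosmetic difference is in (ii), where you exhibit the two support planes with normals $\eta$ and $\eta'$ while the paper uses $\eta$ and $\eta+\eta'$; both arguments carry the same implicit non-proportionality caveat and are equivalent.
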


It follows that if $\partial_{s}K$ is $C^1$ and strictly convex, the Gauss map 
is a continuous bijection (it is surjective by assumption). We will see in Subsection~\ref{sec: normal} that in this case it is actually a homeomorphism.

\begin{proof}
 
\ref{lem princ: KC1} is a general property of closed convex sets, see \cite{Sch93} p.~104.
Suppose that the hypothesis of \ref{lem princ: linearity cond} holds. Let 
$k,x,x'$ be points of $K$ with respectively $\langle k,\eta+\eta' \rangle_-=H_K(\eta+\eta')$,
$\langle x,\eta \rangle_-=H_K(\eta)$, $\langle x',\eta' \rangle_-=H_K(\eta')$. By assumption we get
$\langle k,\eta\rangle_-=\langle x,\eta\rangle_-+\langle x'-k,\eta'\rangle_-$, and 
$\langle x'-k,\eta'\rangle_-\geq 0$ so $\langle k,\eta\rangle_-\geq \langle x,\eta\rangle_-=H_K(\eta)$,
so $H_K(\eta)=\langle k,\eta\rangle_-$, that means that the support plane directed by $\eta$ contains $k$, which
is also in the support plane directed by $\eta+\eta'$. By \ref{lem princ: KC1} $K$ is not $C^1$.

From Lemma~\ref{lem: grad}   the intersection of $K$ with any of its space-like support planes is reduced to a point if and only if
$H_K$ is differentiable, that occurs if and only if $H_K$ is $C^1$, as $H_K$ is convex (see e.g.~\cite[p.~189]{HUL93}).
This is \ref{lem princ: hC1}, that implies  \ref{lem princ: hC1t}.  \ref{lem princ: hC1b} 
follows because if $K$ is strictly convex it has only space-like 
support planes due to \ref{base:ray} of Lemma~\ref{lem: base convexe}.

From \ref{lem princ: KC1}, the Gauss map is well defined if $K$ is $C^1$.
In this case, the volume form of $\partial_sK$ is continuous, 
and, by the identification induced by  $\langle \cdot,\cdot\rangle_-$
between  $d$-forms and vectors on $\mathbb{R}^{d+1}$, normal vectors to 
$\partial_sK$ are continuous.

If 
$K$ is strictly convex, the inverse of the Gauss map is clearly well defined. 
\end{proof}

\begin{example}{\rm
If $H$ is the extended support function of $\C(p)$, it is immediate that
$\grad_{\eta}H=p, \forall \eta\in \F$.
 It is important to not confuse $\partial_{s}\C(p)$ (the single point $p$) and $\partial \C(p)$
(the boundary of the cone), as $H$ is 
 $C^1$
but $\C(p)$ is not strictly convex.
}\end{example}

\subsection{Orthogonal projection}\label{subsub:brush}

Let $K$ be an F-convex set. We recall some facts which are contained in
 \cite{Bon05}, especially
Proposition~4.3. 
For any point $k\in K$, there exists a unique point $r(k)$ on $\partial K$
which is contained in the closure of the past cone of $k$ and which maximizes the Lorentzian distance. 
 The hyperplane orthogonal to $(k-r(k))$ is a support plane 
of $K$ at $r(k)$. In particular 
$r(K)=\partial_{s}K.$ The map $k\mapsto r(k)$ is the Lorentzian analogue of the Euclidean
orthogonal projection onto a convex set, see Figure~\ref{fig:projection}.
 The \emph{cosmological time}
of $K$ is $T(k)=d_L(k,r(k))$ for any $k\in K$. This is the analogue of the distance between a point
and a convex set in the Euclidean space. 

The \emph{normal field} of $K$
is the map $N : K\setminus\partial K\rightarrow \H^d$ defined by $N(k)=\frac{1}{T(k)}(k-r(k))$.
The normal field is well-defined and continuous, because equal to minus the Lorentzian gradient of $T$,
and $T$ is a $C^1$ submersion on the interior of $K$. 
$N$ is surjective by definition
of F-convex set. Note that  $\|\operatorname{grad}T\|_-=1$.

\begin{figure}[ht]
\begin{center}
\input 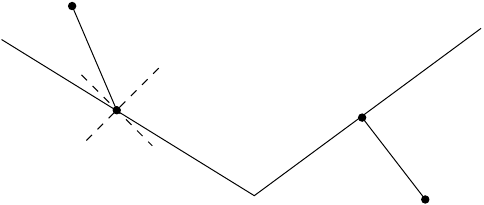_t
\end{center}
\caption{For the Euclidean metric, orthogonal projection onto a convex set is well-defined.
For the Lorentzian metric, orthogonal projection onto the complementary of a space-like convex set is well-defined.}\label{fig:projection}
\end{figure}

Let $\omega$ be a Borel set of $\H^d$ and $I$ be a non-empty interval of positive numbers 
(maybe reduced to a point). 
We introduce the following sets, see Figure~\ref{fig:brush}:
\begin{eqnarray*}
\ && K_I=T^{-1}(I), \\
\ && K_{I}(\omega)=K_I \cap N^{-1}(\omega), \\
\ && K(\omega)=G^{-1}_{K}(\omega).
\end{eqnarray*}
We have some immediate properties:
\begin{itemize}[nolistsep]
\item $K_t$ is  the boundary of the F-convex set $K+t K(\H)$. If $h$ is the  support function of $K$
and $H$ is its $1$-extension, then the support function of $K_t$ is
 $h-t$ and its extended support function is $H-t\|\cdot\|_-$.  It follows easily that $K$ and $K_t$ have the same light-like support planes at infinity. 
Finally,  $K_t$ has no light-like support plane.
\item For $t>0$, the restriction of the normal field to $K_t=T^{-1}(t)$ is equal to the Gauss map $G_{K_t}$
of $K_t$. In particular, $K_t(\omega)=G^{-1}_{K_t}(\omega)$ and $\partial K_t$ is a $C^1$ space-like hypersurface (it is actually $C^{1,1}$ \cite[4.12]{Bar05}).
\item The restriction of the normal field to $K_t$ is  a proper map \cite[4.15]{Bon05} (as $K_t$ is 
$C^1$, the Gauss map is well-defined).
Hence, if $\omega\subset \H^d$ is compact, $K_t(\omega)$ is compact.
\item The map $r:K\rightarrow \partial_{s}K$ is continuous \cite[4.3]{Bon05}.
\item If  $\omega\subset \H^d$ is compact then $K(\omega)$ is compact, by the two previous items and because 
$r(K_t(\omega))=K(\omega)$.
\end{itemize}

This allows to prove that $\partial_s K$ determines $K$ in the following sense.

\begin{lemma}
 Let $K$ be an F-convex set. Then
$$K=\bigcup_{k\in \partial_sK}\mathcal{C}(k).$$
\end{lemma}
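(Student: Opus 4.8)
The plan is to establish the two inclusions separately; both reduce to facts already recorded above, so the argument is short. The containing inclusion $\bigcup_{k\in\partial_{s}K}\C(k)\subset K$ is the easy one: every $k\in\partial_{s}K$ lies in $K$, and since $\C(k)=\{k\}+\overline{\F}$ while $K+\overline{\F}=K$ (equivalently $I^+(k)\subset\stackrel{\circ}{K}$ by Lemma~\ref{lem: base convexe}\ref{base:cone}), we get $\C(k)\subset K$ for each such $k$. Taking the union over $k\in\partial_{s}K$ gives this inclusion.

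For the reverse inclusion $K\subset\bigcup_{k\in\partial_{s}K}\C(k)$ I would use the orthogonal projection $r$ recalled in Subsection~\ref{subsub:brush}. Given an arbitrary $x\in K$, the defining properties of $r$ provide a point $r(x)\in\partial K$ that lies in the closure of the past cone of $x$; moreover $r(K)=\partial_{s}K$, so in fact $r(x)\in\partial_{s}K$. The assertion that $r(x)$ belongs to the closure of the past cone of $x$ is exactly the statement that $x-r(x)\in\overline{\F}$, i.e.\ $x\in\{r(x)\}+\overline{\F}=\C(r(x))$. Hence $x$ belongs to the union indexed by $\partial_{s}K$, which is what we want. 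For the degenerate case $x\in\partial_{s}K$ one simply has $r(x)=x$ and $x\in\C(x)$ trivially.

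The only delicate point is that the projection $r$ must be defined on all of $K$ --- including boundary points on the light-like part of $\partial K$, which need not belong to $\partial_{s}K$ --- and must always land in $\partial_{s}K$. This is precisely the content of Bonsante's Proposition~4.3 as recalled in Subsection~\ref{subsub:brush}: existence and uniqueness of $r(k)$ for every $k\in K$, together with the surjectivity $r(K)=\partial_{s}K$. No new work is therefore needed, and the statement is essentially a repackaging of the properties of $r$; I expect no genuine obstacle, the heart of the matter being the already-established surjectivity $r(K)=\partial_{s}K$.
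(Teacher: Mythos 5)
Your proof is correct and follows essentially the same route as the paper: the forward inclusion from Lemma~\ref{lem: base convexe}\ref{base:cone}, and the reverse inclusion from the retraction $r$ of Subsection~\ref{subsub:brush} together with $r(K)=\partial_{s}K$ and the fact that $r(x)$ lies in the closed past cone of $x$. The paper's proof is just a two-line version of the same argument.
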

\begin{proof}
 Because of \ref{base:cone} of Lemma~\ref{lem: base convexe},
 $\bigcup_{k\in \partial_sK}\mathcal{C}(k)\subset K.$
Because $r(K)=\partial_{s}K$, for any $p\in K$ there exists $k\in \partial_{s}K$ such that 
$p\in \mathcal{C}(k)$.
\end{proof}

\begin{figure}[ht]
\centering
\includegraphics[scale=0.1]{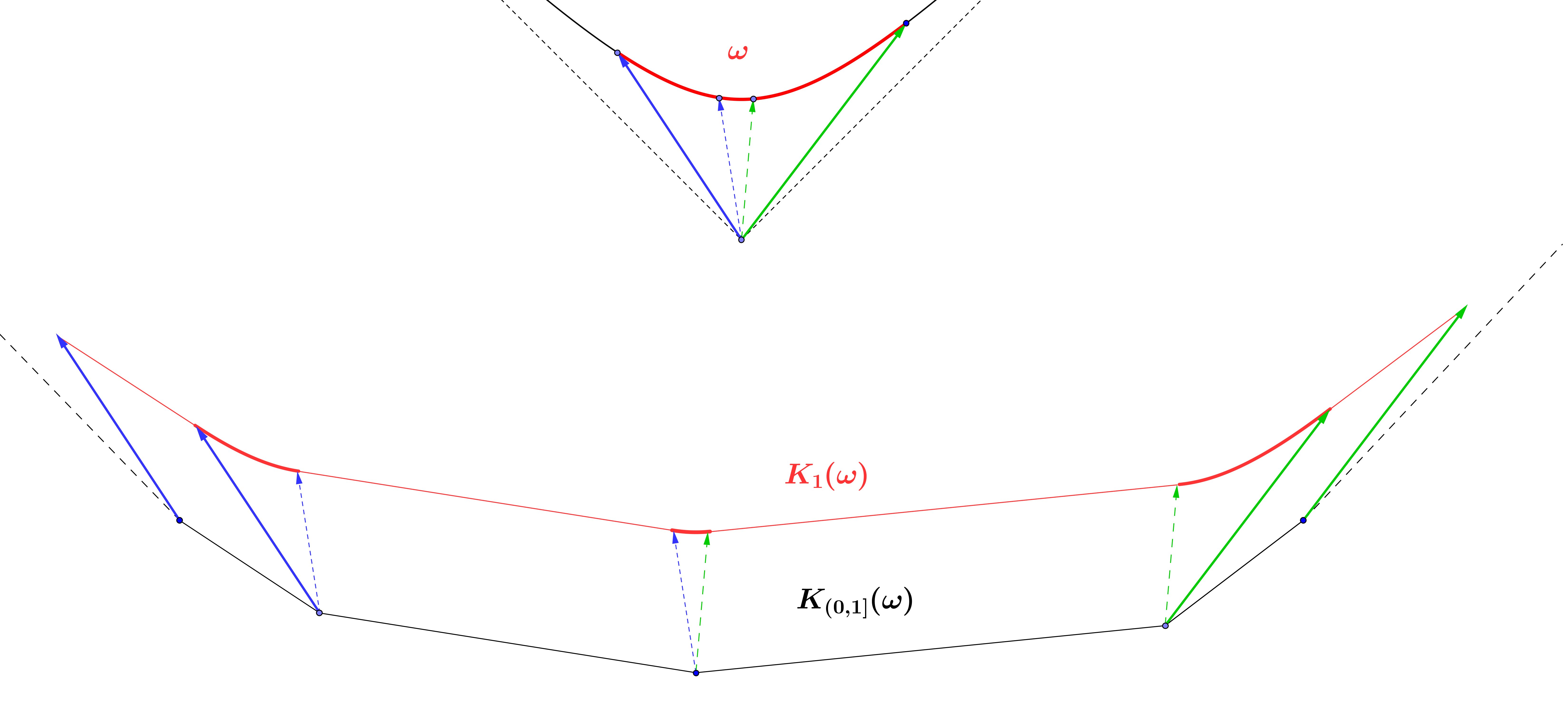}  
\caption{Notations, see Subsection~\ref{subsub:brush}.}\label{fig:brush}
  \end{figure}

\begin{remark}{\rm
 A property of F-convex sets is that the restriction of the normal field to $K_t$ is a proper map. Consider as $K$ the future of a line (an angle formed by the future of two
 light-like planes) in $\R^3$. The image of the Gauss map is a line $l$ in $\H^2$. The pre-image of any compact segment of $l$ by the Gauss map of any $K_t$  is not bounded.
}\end{remark}

\begin{remark}\label{eq:inv Komega}{\rm
Let  $K$ be a $\tau$-convex set.
It is easy to see \cite[4.10]{Bon05}
that, if $\gamma\in\Gamma_{\tau}$, with linear part $\gamma_0$, then $r\circ \gamma=\gamma\circ r$,
$N\circ \gamma=\gamma_0\circ N$, hence $T\circ \gamma =T$. It follows that
\begin{equation*}
 \gamma K_{(0,\epsilon]}(\omega)=K_{(0,\epsilon]}(\gamma_0\omega).
\end{equation*}
} 
\end{remark}

\begin{lemma}\label{lem: propr tau cvxe} Let $\tau$ be a cocycle 
and let $h_{\tau}$ be the support function of $\overline{\Omega_{\tau}}$ (see Example~\ref{ex: domain}).
\begin{enumerate}[nolistsep,label={\bf(\roman{*})},ref={\bf(\roman{*})}]
 \item\label{item: def gamma inv} 
An F-convex set $K$ which is (setwise) invariant for the action of $\Gamma_{\tau}$ 
is contained in $\overline{\Omega_{\tau}}$.
\item\label{item: infinite value supp} All $\tau$-F-convex sets have the same 
light-like support planes at infinity
than $\overline{\Omega_{\tau}}$.
\item\label{item: space-like support} A $\tau$-F-convex set contained in  $\Omega_{\tau}$
has only space-like support planes.
\item\label{item: bord omega} Let $K$ be a $\tau$-F-convex set. 
If $ K \cap \partial \Omega_{\tau}\not=\emptyset$,
then  $ K \cap \partial_s \Omega_{\tau}\not=\emptyset$.
\item\label{item: sup dedans} Let $h$ be the support function of a $\tau$-F-convex set $K$.
If $h<h_{\tau}$, then $K\subset \Omega_{\tau}$.
\end{enumerate}
\end{lemma}
\begin{proof}
Let $K$ as in \ref{item: def gamma inv}  with extended support function $H$, 
 and let $H_{\tau}$ be the extended support function
of $\overline{\Omega_{\tau}}$.
As $H-H_{\tau}$ is $0$-equivariant,  its restriction to $\H^d$ reaches a  minimum $a$ and a maximum $b$.
Hence $H_{\tau}+a\|\cdot\|_-\leq H \leq H_{\tau}+b\|\cdot\|_-$, so clearly 
$H$ and $H_{\tau}$ have the same limit on any path $\ell+t(\eta-\ell)$. From 
 Lemma~\ref{lem cl}, both sets have the same light-like support planes at infinity. 
(This proves \ref{item: infinite value supp}  if $K$ is a $\tau$-convex set.)
In particular $K$ is contained in the intersection of the future side of those planes, 
but this intersection is precisely $\Omega_{\tau}$ \cite[Corollary 3.7]{Bon05},
so $K\subset\overline{\Omega_{\tau}}$.

\ref{item: space-like support} We know from Lemma~\ref{lem: base convexe} that $K$ has 
no time-like support plane.
  Let us suppose that $K$ has a light-like support plane $L$, and let $x\in K\cap L$. 
  Then by \ref{item: infinite value supp}
$L$
is a support plane at infinity of $\Omega_{\tau}$, but $x\in \Omega_{\tau}$ so
$L$ is a support plane
of $\overline{\Omega_{\tau}}$. In particular,  $x\in\partial\Omega_{\tau}$, that is impossible 
as $K$ is supposed to be in $\Omega_{\tau}$, which is open.

\ref{item: bord omega} Suppose that  $ K \cap \partial_s \Omega_{\tau}=\emptyset$. 
By cocompactness, $H_{\tau}-H$ (the extended support functions of $\overline{\Omega_{\tau}}$ and $K$)
is bounded from below by a positive constant $c$. So $S_{c/2}$, the level set of the cosmological time
of $\Omega_{\tau}$ for the value $c/2$, contains $K$. But $S_{c/2}$ has no light-like support plane, so by
\ref{item: infinite value supp},  $S_{c/2}\cap \partial \Omega_{\tau}=\emptyset$, hence $ K \cap \partial \Omega_{\tau}=\emptyset$.

\ref{item: sup dedans} If $h<h_{\tau}$, then $ K \cap \partial_s \Omega_{\tau}=\emptyset$
and the result follows from \ref{item: bord omega}.
\end{proof}

\begin{remark}\rm{
 Lemma~\ref{lem: deter Hcs} and Lemma~\ref{lem: propr tau cvxe}
imply that there exists a $\tau$-equivariant convex function $H_{\tau}$ such that, for any  $\tau$-equivariant convex function $H$, then $H\leq H_{\tau}$.
}\end{remark}

\begin{example}\label{rem: restr fuchs}\rm{
 Let  $h$ be the support function 
of a $\Gamma$ invariant F-convex set $K$. 
 Lemma~\ref{lem: propr tau cvxe}  says that $K\subset \overline{\F}$.
Suppose that $K\not=\overline{\F}$.
From Lemma~\ref{lem: petit o}, $K\subset \overline{\F}^{\star }$, 
and by Lemma~\ref{lem: propr tau cvxe}, $K\subset \F$.

}
\end{example}

\subsection{The normal representation}\label{sec: normal}

Let $\O$ be an open set of $\H^d$ and let $h:\O\rightarrow \R$ be a $C^1$ map with $1$-extension $H$. We call \emph{normal representation}
of $h$ the map $ \chi$ from $\O\rightarrow \R^{d+1}$ defined by 
$\chi(\eta)=\grad_{\eta}H $, that is, for any space-like vector $v$,
\begin{equation}\label{eq:norm1}\langle \chi(\eta),v\rangle_-=D_{\eta}H(v) \end{equation}
and by Euler's Homogeneous Function Theorem
\begin{equation}\label{eq:norm2}\langle \chi(\eta),\eta\rangle_-=H(\eta).\end{equation}

The equation above defines a space-like hyperplane with normal $\eta$ containing the point $\chi(\eta)$.
Lemma~\ref{lem: grad} says that if $H$ is the support function of an F-convex set $K$, then
 $\chi(\H^d)=\partial_{s}K$.
If an F-convex set $K$ is $C^1$ and $\partial_{s}K$ is strictly convex, 
we know from Lemma~\ref{lemme:base 1} that the Gauss map is a continuous bijection. 
But from \ref{lem princ: hC1} 
its support function has normal representation, which is clearly the inverse of the Gauss map, which
is then a homeomorphism.

Now let $h:\O\rightarrow \R$ be $C^2$. Then $\chi$ is  $C^1$. Differentiating \eqref{eq:norm2} 
 in the direction of a space-like vector $v$, and using \eqref{eq:norm1}, 
we get that $\langle \eta,D_{\eta}\chi(v)\rangle_-=0 $, so if $\eta$ is a regular point, 
the space-like hyperplane $\langle \cdot ,\eta\rangle_-=H(\eta)$ is tangent to $\chi(\O)$ at $\chi(\eta)$. 
The differential $S^{-1}$ of $\chi$ is called the \emph{reverse shape operator}, because $\chi$ is the inverse of the Gauss map, and the differential of the 
Gauss map is the shape operator. $S^{-1}$
is considered as an endomorphism of $T_{\eta}\H^d$, by identifying this space with 
the support plane of $\chi(\eta)$ with normal $\eta$. This
 allows to
define the \emph{reverse second fundamental form} of $H$: $\forall X,Y \in T_{\eta}\H^d$,
\begin{equation}\label{eq:reversed so}\II^{-1}(X,Y):=\langle S^{-1}(X),Y\rangle_-=\operatorname{Hess}_{\eta}H(X,Y)
 \stackrel{\eqref{eq:II-1}}{=} \nabla^2 h (X,Y)- h g(X,Y). \end{equation}
As  $\II^{-1}(X,Y)=\hess_{\eta}H_K(X,Y)$,  $\II^{-1}$ is symmetric and the eigenvalues $r_1,\ldots,r_d$
of $S^{-1}$ are real.
They are the \emph{principal radii of curvature} of $h$.
If they are not zero, the Gauss map is a $C^1$ diffeomorphism, and then the $r_i$ are the inverse of the principal curvatures of the space-like hypersurface
$\chi(\O)$. 

\subsection{Second order regularity}

 An F-convex set $K$ is called $C^2_+$ if $\partial_sK$ is
$C^2$ and its Gauss map is a $C^1$ diffeomorphism. This implies that  $\partial_{s}K$ is strictly convex,
but $K$ is not necessarily strictly convex, as can be seen on Figure~\ref{fig:dual1}.

\begin{lemma}\label{lem: basebis1}
 \begin{enumerate}[nolistsep,label={\bf(\roman{*})}, ref={\bf(\roman{*})}]
 \item\label{lem princ: radneg}  If $h_K$ is $C^2$, then the radii of curvature are real non-negative numbers.
\item\label{lem princ: c2 deter}  If a $C^2$ function $h$ on $\H^d$ satisfies \begin{equation}\label{eq:hess supp}
(\nabla^2h- h g )\geq 0.
\end{equation} then it is the support function of an F-convex set. 
\item\label{lem princ: radiipos}  If $K$ is $C^2_+$ then $h_K$ is $C^2$, the radii of curvature are positive (hence equal to 
the inverses of the 
principal curvatures).
 \end{enumerate}
\end{lemma}
\begin{proof}
 
We already know that the eigenvalues
of $S^{-1}$ are real. As $H_K$ is convex, its Hessian is positive semidefinite, so \ref{lem princ: radneg} holds.

Let $h$ be a function as in \ref{lem princ: c2 deter}. Then its one homogeneous extension to 
$\R^{d+1}$ has a positive semidefinite Hessian, and \ref{lem princ: c2 deter} follows by Lemma~\ref{lem: deter Hcs}.

Let us prove \ref{lem princ: radiipos}. If the Gauss map $G$ is a $C^1$ diffeomorphism, 
its inverse is the normal representation $\chi$,
 which is then $C^1$. As $\chi$ is the gradient of 
$H_K$, $H_K$ is $C^2$. Moreover the shape operator (the differential of the Gauss map) is the inverse of the reverse
shape operator, and both are positive definite, because they are both  positive semidefinite and invertible.

\end{proof}

\begin{proposition}\label{lem: base}
Let $K$ be an F-convex set with support function $h_K$.
   If $h_K$ is $C^2$ and the principal radii of curvature are positive, i.e.~$(\nabla^2 h_K - h_Kg) >0$, then $K$ is $C^2_+$.
\end{proposition}

This proof of this proposition is the content of the next subsection.

\begin{corollary}\label{cor:base}
 Let $K$ be an F-convex set with support function $h_K$.
 \begin{enumerate}[nolistsep,label={\bf(\arabic{*})}, ref={\bf(\arabic{*})}]
 \item\label{lem princ: c2+ deter}  If a $C^2$ function $h$ on $\H^d$ satisfies \begin{equation}\label{eq:hess supp +}
(\nabla^2h- h g )> 0.
\end{equation} then it is the support function of a $C^2_+$ F-convex set. 
\item\label{lem princ: epsilon} If $h_K$ is $C^2$, then for any $\epsilon >0$, $K+\epsilon K(\H)$ is $C^2_+$.
 \end{enumerate}
\end{corollary}
\begin{proof}

\ref{lem princ: c2+ deter} follows from Proposition~\ref{lem: base}  and \ref{lem princ: c2 deter} of Lemma~\ref{lem: basebis1}.
Let $h_K$ be $C^2$. Then $(\nabla^2h- h g )\geq 0$, and for any $\epsilon>0$, the support function of $K+\epsilon K(\H)$
is $h_K-\epsilon$ and $(\nabla^2(h-\epsilon)- (h-\epsilon) g )>0$, and \ref{lem princ: epsilon}
follows from \ref{lem princ: c2+ deter}

\end{proof}

\begin{remark}\label{remark:-h}{\rm
 Let $h$ be a $C^2$ function on $\H^d$ such that $(\nabla^2h- h g )\leq 0$, with $1$-extension $H$.
 Then by the proposition above,
$-H$ is the extended support function of an F-convex set $K$, and $\grad (-H)=-\grad H$ is the normal representation of $\partial_s K$.
 Hence $\grad H$  is the normal representation of $\partial_s (-K)$, and 
$-K$ is a P-convex set, see Remark~\ref{rem:Pconvex}. 
}\end{remark}

\begin{example}{\rm
 The future cone of a point is at the same time an F-convex polyhedron and an F-convex set with $C^2$ support
function. This is the only case where it can happen.
}\end{example}

\begin{example}[\textbf{F-convex sets not contained in the future cone of a point}] \label{ex alpha}{\rm 
 Let us define, for $x\in\H^d$, $\rho=\rho(x)$ the hyperbolic
distance to $e_{d+1}$, and

$$F_{\alpha}^+(x)=\cosh (\rho)^{\alpha},   \alpha\geq 1, F_{\alpha}^-(x)=-\cosh (\rho)^{\alpha},  -1\leq \alpha\leq  1$$
whose degree one extensions on $\F$ are respectively
$$\frac{x_{d+1}^{\alpha}}{(-\langle x,x\rangle_-)^{(\alpha-1)/2}},-\frac{x_{d+1}^{\alpha}}{(-\langle x,x\rangle_-)^{(\alpha-1)/2}}.$$

As $\cosh \rho$ is the restriction to $\H^d$ of the map 
$x\mapsto x_{d+1}$, using \eqref{restr hessien} and the fact that
for $f:\mathbb{R}\rightarrow\mathbb{R}$ one has 
$\nabla^2(f\circ \rho)=(f'\circ \rho)\nabla^2\rho+(f''\circ\rho)\mathrm{d}\rho\otimes \mathrm{d}\rho $,
we compute easily that
\begin{equation}\label{eq: hessien rho}\nabla^2 \rho = 
 \frac{\cosh \rho}{\sinh \rho}\left ( g-\mathrm{d}\rho\otimes \mathrm{d}\rho \right )
\end{equation}

and finally
\begin{equation}\label{eq:nabla cosh}\nabla^2\cosh^\alpha \rho=[\alpha\cosh^{\alpha} \rho] g + [\alpha(\alpha-1)\cosh^{\alpha-2}\rho\ \sinh^2 \rho]
\mathrm{d}\rho\otimes \mathrm{d}\rho.\end{equation}

It follows that $(\nabla^2-g)(F^+_{\alpha})$ and $(\nabla^2-g)(F^-_{\alpha})$ are semi-positive definite, 
hence
$F_{\alpha}^+$ and $F_{\alpha}^-$ are support functions of  F-convex sets. 
Note that $F^-_0$ is the support function of $K(\H)$, and $F^-_1$ and $F^+_1$ are support functions
of the future cones of $e_{d+1}$ and $-e_{d+1}$ respectively.
From Lemma~\ref{lem:h infini}, for $\alpha>1$, $F^+_{\alpha}$ has no light-like support plane at infinity.
See Figure~\ref{fig:dual2}.

\begin{figure}[ht]
\centering
\includegraphics[scale=0.2]{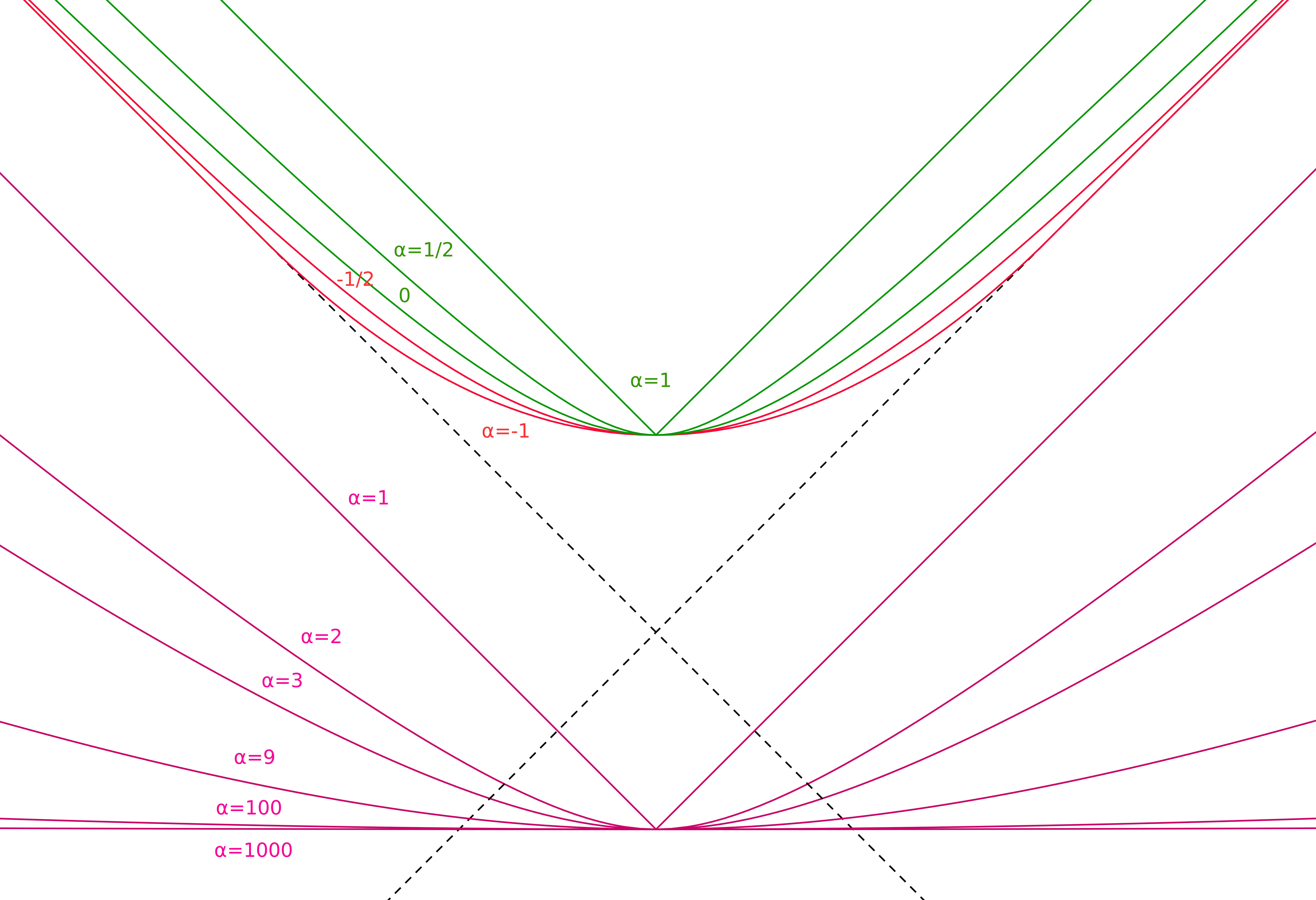}  
\caption{To Example~\ref{ex alpha}.
\label{fig:dual2}} 
  \end{figure}
}\end{example}

\subsection{Proof of Proposition~\ref{lem: base}}

As $h_K$ is $C^2$, $H_K$ is $C^2$, the normal representation is $C^1$, 
and this is a regular map as the principal radii of curvature (the eigenvalues of its differential) are positive, so 
 $\partial_s K$ is $C^1$. Moreover as the Gauss map is the inverse of the normal representation, it is a $C^1$ diffeomorphism.
It remains to prove the non-trivial result that $\partial_sK$ is actually $C^2$.

First suppose that $K$ is contained in the future cone of a point. Up to a translation, we can consider that this is
the future cone of the origin. From Lemma~\ref{lem:radial dual} and the properties of $H_K$,  $K^*$ is $C^2$.
At the point $R_{K^*}(\eta)\eta$ of the boundary of $K^*$, the Gauss map is $\chi(\eta)/(\sqrt{-\langle \eta,\eta\rangle_-})$,
so a $C^1$ diffeomorphism, and then $K^*$ is $C^2_+$. By \ref{lem princ: radiipos} of Lemma~\ref{lem: basebis1},
$h_{K^*}$ is $C^2$ and its principal radii of curvatures are positive. Repeating the argument,
we get that $K=(K^*)^*$ is $C^2$.

Now suppose that $K$ is not contained in any future cone of a point. We will need the following:

\emph{Fact: For any $k\in \partial_s K$, there exists a neighborhood $V$ of $k$ in $ \partial_s K$ and 
an F-convex set $K_V$ such that: $V$ is a part of the boundary of $K_V$, $K_V$ is contained in the future cone of a point, 
has $C^2$ support function and positive principal radii of curvature.}

From the preceding argument, it will follow that the boundary of $K_V$ is $C^2$, hence each point of $\partial_s K$
has a $C^2$ neighborhood, hence $K$ is $C^2$. Let us prove the fact.
We need the following local approximation result.

\begin{lemma}\label{lem: approx}
 Let $K$ be an F-convex set with support function $h_K$, $\omega\subset \H^d$ be compact and  $\epsilon >0$. 
Then there exists an F-convex set $A(K,\omega,\epsilon)=:A$ with support function $h_A$ such that
\begin{itemize}[nolistsep]
\item $A$ is $C^2_+$,
 \item $\mathrm{sup}_{\eta\in\omega}|h_K(\eta)-h_{A}(\eta)|<\epsilon,$
\item $A$ is contained in the future cone of a point.
\end{itemize}
\end{lemma}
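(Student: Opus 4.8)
The plan is to construct $A$ in three stages: first replace $K$ by an auxiliary F-convex set $K'$ which agrees with $K$ over a slightly enlarged compact set but is contained in the future cone of a point; then smooth $K'$ by convolution over the group of hyperbolic isometries, a process that preserves \emph{both} convexity and cone-containment; and finally add a small ball $\epsilon''B$ to turn the radii of curvature strictly positive.

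\textbf{Capping to get cone-containment.} Fix a compact neighborhood $\omega'$ of $\omega$ with $\omega\subset\stackrel{\circ}{\omega'}$, and set $K':=\conv\big(K(\omega')\big)+\overline{\F}$, where $K(\omega')=G_K^{-1}(\omega')$ is the piece of $\partial_s K$ lying over $\omega'$; recall from Subsection~\ref{subsub:brush} that $K(\omega')$ is compact, hence so is $\conv(K(\omega'))\subset\R^{d+1}$. On $\F$ one has $H_{K'}=H_{\conv K(\omega')}$, a maximum of linear forms, hence sublinear, so by Lemma~\ref{lem: deter Hcs} $K'$ is an F-convex set. Since $\conv(K(\omega'))$ is bounded it lies in a future cone $\C(q)$ (with $q$ far in the past) and $\C(q)+\overline{\F}=\C(q)$, so $K'\subset\C(q)$. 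Moreover, for $\eta\in\omega'$ pick $x_\eta\in K$ with $h_K(\eta)=\langle x_\eta,\eta\rangle_-$; then $\eta\in G_K(x_\eta)$ gives $x_\eta\in K(\omega')$, so $h_K(\eta)\le h_{K'}(\eta)$, while $\conv(K(\omega'))\subset K$ gives $h_{K'}(\eta)\le h_K(\eta)$. Thus $h_{K'}=h_K$ on $\omega'$, and in particular $\nabla h_{K'}=\nabla h_K$ on $\stackrel{\circ}{\omega'}$ whenever $h_K$ is $C^1$.

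\textbf{Smoothing by isometry convolution.} Let $\mathcal G$ be the group of linear isometries preserving $\F$ (i.e.\ the hyperbolic isometries), and let $\mu_\delta$ be a smooth probability density on $\mathcal G$ supported within $\delta$ of the identity. Put $H^\delta(\eta):=\int_{\mathcal G}H_{K'}(g^{-1}\eta)\,d\mu_\delta(g)$, with restriction $h^\delta$ to $\H^d$. For each $g$, $H_{K'}\circ g^{-1}=H_{gK'}$ is sublinear, and a positive average of sublinear functions is sublinear ($1$-homogeneity and subadditivity both pass to integrals since $g^{-1}$ is linear); hence $H^\delta$ is sublinear and, by Lemma~\ref{lem: deter Hcs}, the extended support function of an F-convex set $K'_\delta$. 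Because $\mathcal G$ acts transitively on $\H^d$, averaging a continuous function against $\mu_\delta\in C_c^\infty(\mathcal G)$ produces a $C^\infty$ function on $\H^d$ (the standard smoothing property of convolution on a homogeneous space), so $h^\delta\in C^\infty$. Cone-containment survives: from $H_{K'}(g^{-1}\eta)\le\langle g^{-1}\eta,q\rangle_-=\langle\eta,gq\rangle_-$ we get $H^\delta(\eta)\le\langle\eta,q'\rangle_-$ with $q':=\int_{\mathcal G}gq\,d\mu_\delta(g)$, i.e.\ $K'_\delta\subset\C(q')$. Finally, uniform continuity of $h_{K'}$ on $\omega'$ gives $h^\delta\to h_{K'}$ uniformly on $\omega$ as $\delta\to0$, and, since $h_{K'}=h_K$ is $C^1$ on $\stackrel{\circ}{\omega'}$, also $\nabla h^\delta\to\nabla h_K$ uniformly on $\omega$.

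\textbf{Final perturbation and estimates.} Set $A:=K'_\delta+\epsilon''B$. Then $h_A=h^\delta-\epsilon''$; as $h^\delta$ is $C^2$, Proposition~\ref{lem: base}\ref{lem princ: epsilon} shows $A$ is $C^2_+$; and $A\subset\C(q')+B\subset\C(q'')$ for suitable $q''$, using $\C(a)+\C(b)=\C(a+b)$ and that $B$ is cone-contained. Choose $\epsilon''<\epsilon/2$, then $\delta$ small enough that $\sup_\omega|h^\delta-h_{K'}|<\epsilon/2$; on $\omega$ we get $|h_K-h_A|\le|h_K-h_{K'}|+|h_{K'}-h^\delta|+\epsilon''<\epsilon$, the first term being zero. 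If in addition $h_K$ is $C^1$, shrinking $\delta$ further makes $\nabla h_A=\nabla h^\delta$ so close to $\nabla h_K$ that $g\big(\nabla_\eta(h_K-h_A),\nabla_\eta(h_K-h_A)\big)<2\epsilon$ on $\stackrel{\circ}{\omega}$. The main obstacle is precisely this smoothing step: a naive Euclidean mollification of the $1$-homogeneous extension destroys homogeneity and, after re-homogenization, may break convexity, while it is not obvious how to keep the set inside a single future cone. Convolving over $\mathcal G$ reconciles everything, since sublinearity is stable under positive averaging and the nearby cones $\C(gq)$ are absorbed into one slightly larger cone $\C(q')$.
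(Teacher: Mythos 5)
Your argument is correct, but it reaches the conclusion by a genuinely different route than the paper. The paper's proof has two stages: first an \emph{inner polyhedral approximation} (cover the compact set $K_{\epsilon/4}(\omega)$ by finitely many future cones $\C(k_i)$ and take $E$ with $H_E=\max_i\langle\cdot,k_i\rangle_-$, which only gives $\sup_\omega|H_K-H_E|<\epsilon/3$ rather than equality), and then Firey's power-mean smoothing $H_p=(\sum_i\langle\cdot,k_i\rangle_-^p/N)^{1/p}$ after a translation making all $\langle\cdot,k_i\rangle_-$ positive on $\F$; sublinearity comes from the Minkowski inequality, analyticity is automatic, cone containment is read off from the boundedness of $\tilde H_p$ on $\F(1)$, and the gradient estimate is obtained from the uniform convergence of gradients of convex functions \cite[25.7]{Roc97}. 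You replace the first stage by the exact capping $K'=\conv(K(\omega'))+\overline\F$, which has the advantage that $h_{K'}=h_K$ on all of $\omega'$ (so the $\epsilon/3$ bookkeeping disappears and the gradient of the approximant is literally unchanged on $\stackrel{\circ}{\omega'}$ before smoothing), and you replace the second stage by convolution over the isometry group of $\H^d$, which indeed preserves sublinearity (a positive average of sublinear functions precomposed with linear isometries is sublinear) and absorbs the cones $\C(gq)$ into the single cone $\C(q')$ with $q'=\int gq\,d\mu_\delta$. What the paper's route buys is complete explicitness: every fact used is either elementary or a quoted theorem. Your route is cleaner structurally but leans on two standard facts that you assert rather than prove: (i) that convolving a continuous function on the homogeneous space $\H^d$ against a $C^\infty_c$ density on the unimodular group $O^+(d,1)$ yields a $C^\infty$ function (the substitution $g\mapsto g_0u$ plus differentiation under the integral sign), and (ii) the uniform convergence $\nabla h^\delta\to\nabla h_K$ on $\stackrel{\circ}{\omega}$, which should be justified either by differentiating under the integral (licit for small $\delta$, since then $g^{-1}\eta$ stays in $\stackrel{\circ}{\omega'}$ where $h_{K'}=h_K$ is $C^1$ with derivative jointly continuous in $(g,\eta)$) or by the same citation of Rockafellar that the paper uses. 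With those two sentences added, your proof is complete.
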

\begin{proof}[Proof of Lemma~\ref{lem: approx}]
The argument is an adaptation of \cite{Fir74}.
The intersection of  $K_{\epsilon/4}(\omega)$ with 
$\bigcup_{k\in \partial_s K} \C(k)$
is an open covering of the compact set $K_{\epsilon/4}(\omega)$ (see Subsection~\ref{subsub:brush}).
 From it we get a finite covering
 $\bigcup_{i=1}^N \C(k_i)$.
Let $E$ be the convex hull of  $\cup_i\C(k_i)$. It
has extended support function $H_E(x)=\mbox{max}_{i=1\cdots N} \langle x,k_i\rangle_-$, and is 
an F-convex set due to Lemma~\ref{lem: deter Hcs}.
 As $k_i\in K$, $\C(k_i)\subset K$ hence $E\subset K$ and $H_E\leq H_K$ on $\omega$.
By construction $K_{\epsilon/4}(\omega)\subset E$ hence $H_K-\epsilon/4 \leq H_E$ on $\omega$, and
finally
$\mbox{sup}_{x\in\omega}|H_K(x)-H_{E}(x)|<\epsilon/3$.

The statement of the lemma and the computation above are true up to translations. 
 We imply that we performed a translation such that $k_1,\ldots,k_N$ are contained in the past
 cone of the origin, so $\langle x,k_i\rangle_->0$ for all $x\in \F$.
The functions
$$H_p(x)=\left(\sum_{i=1}^N \langle x,k_i\rangle_-^p/N\right)^{\frac{1}{p}}  $$
are extended support functions of F-convex sets by Minkowski inequality and Lemma~\ref{lem: deter Hcs}. $H_p$
is  clearly $C^2$ (actually analytical), and $H_p(x)$ converges to $H_E(x)$ 
when $p\rightarrow \infty$. Let us choose $p=p_{\epsilon}$ such that $\mbox{sup}_{x\in\omega}|H_p(x)-H_{E}(x)|<\epsilon/3$.
 From Lemma~\ref{lem cl}, the extension $\tilde{H}_p$ of $H_p$ to $\partial \F$ is a continuous function
with finite values.
Let $\F(1)$ be the subset of $\overline{\F}$ made of vectors with last coordinate equal to 
one. It is a compact set and let $M$ be the maximal value for $\tilde{H}_p$.
By homogeneity, we have, $\forall \eta \in \F$, 
$$H_p(\eta)=\eta_{d+1}H_p(\eta/\eta_{d+1})\leq M \eta_{d+1}=M\langle \eta, -e_{d+1}\rangle_-,$$
hence
the F-convex set supported by $H_p$ is contained in $\C(-Me_{d+1})$.
If $h_p$ is the restriction of $H_p$ to $\H^d$, we define
$h_A:=h_p-\epsilon/3$. Then:
\begin{itemize}[nolistsep]
\item from \ref{lem princ: c2 deter} of Lemma~\ref{lem: basebis1} $h_A$ is the support function of an F-convex set $A$,
\item $A$ is contained in the future cone of a point,
 \item hence \ref{lem princ: epsilon} of Corollary~\ref{cor:base} holds, and $A$ is a $C^2_+$ F-convex set,
 \item finally
 $\mathrm{sup}_{\eta\in\omega}|h_K(\eta)-h_{A}(\eta)|<\epsilon$,
\end{itemize}
so $A$ is the aimed $A(K,\omega,\epsilon)$.

\end{proof}

Let $k\in \partial_s K$, $G_K(k)\in \omega_0\subsetneq \omega$,
where $\omega_0$ and $\omega$ are two compact subsets of $\H^d$, and 
$V=\chi(\stackrel{\circ}{\omega_0})$. Let us also introduce a bump function $\psi\in C^{\infty}(\H^d)$, 
$0\leq \psi \leq 1$,
with $\mathrm{supp} \psi\subset \omega$ and $\psi=1$ in $\stackrel{\circ}{\omega_0}$. 
Let $\epsilon >0$, $A(K,\omega,\epsilon)$ be the F-convex set given by Lemma~\ref{lem: approx}, and let 
$h_{\epsilon}$ be its support function. We proceed as in \cite{Gho02} for example.
The function 
$$\overline{h}=\psi h_K+(1-\psi) h_{\epsilon} $$
is a $C^2$ function on $\H^d$. It satisfies \eqref{eq:hess supp +}
on $\stackrel{\circ}{\omega_0}$ and outside of $\omega$. On the remaining part of $\H^d$ we have
$$(\nabla^2-g)\overline{h}=
\psi (\nabla^2-g)h_K+(1-\psi)(\nabla^2-g)h_{\epsilon}+(h_K-h_{\epsilon})\nabla^2\psi$$
$$+\mathrm{d}\psi\otimes \mathrm{d}(h_K-h_{\epsilon})+\mathrm{d}(h_K-h_{\epsilon})\otimes \mathrm{d}\psi.$$
We have $\psi (\nabla^2-g)h_K>0$ 
and $(1-\psi)(\nabla^2-g)h_{\epsilon}>0$. Moreover the choice of $\epsilon$ is independent of 
$\psi$. 
On one hand $(h_K-h_{\epsilon})$ is arbitrarily small by
 Lemma~\ref{lem: approx}. On the other hand, as $h_K$ and $h_\epsilon$ are both $C^1$, they are arbitrarily close 
 in $C^1(\stackrel{\circ}{\omega})$ (this is true for the convex $1$-homogeneous extensions of the functions 
 on a suitable subset of $\F$ \cite[25.7]{Roc97}). 
So $(\nabla^2-g)\overline{h}>0$ for a well chosen $\epsilon$.
As $\overline{h}=h_{\epsilon}$ outside of a compact set, $\overline{h}$ is the support function
of an F-convex set contained in the future cone of a point, which is the wanted $K_V$.

Proposition~\ref{lem: base} is proved.

\subsection{The $d=1$ case}\label{sub:1d}

The relations between an F-convex set and its support function can be made more explicit in the case of the plane.
Let $h$ be $C^1$ and let us use
the coordinates $(r\sinh \rho,r\cosh \rho)$ on $\F$. We have
$$H(r\sinh\rho,r\cosh\rho)=rH(\sinh\rho,\cosh\rho)=:rh(\rho).$$
Computing the gradient in those coordinates, we can write $\partial_s K$ as a  curve in terms of the 
support function, that has a clear geometric meaning, see Figure~\ref{fig:nabla}:
\begin{equation}\label{eq:eq courbe}
c(\rho)=h'(\rho)\binom{\cosh \rho}{\sinh \rho} -h(\rho)\binom{\sinh \rho}{\cosh \rho}. 
\end{equation}
Note that if $h$ is $C^2$ then $c'(\rho)=(h''(\rho)-h(\rho))\binom{\cosh \rho}{\sinh \rho}$, so the curve 
is indeed space-like, and regular if $h''-h\not= 0$.

\begin{figure}
\centering 
\input 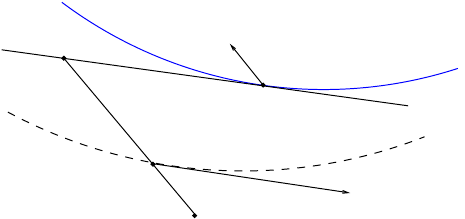_t  
\caption{Planar case: recovering the curve from the
support function (subsection~\ref{sub:1d}).  
\label{fig:nabla}} 
  \end{figure}

From Corollary~\ref{cor:base}, a $C^2$ function $h:\R\rightarrow\R$ is the support function of an F-convex curve 
(F-convex set in the plane) if and
only if $h''-h\geq 0$. If $h''-h>0$, then the curve has finite curvature. It will be useful to have a 
more general characterization of
convexity. The compact analogue of the lemma below appeared in 
\cite{Kal74}.

\begin{lemma}\label{lem:1d}
 A real function is the support function of an F-convex curve if and only if it is
continuous and satisfies, for any real $\alpha$,
\begin{equation}\label{eq:hyp convex}
 h(\rho+\alpha)+h(\rho-\alpha)\geq 2 \cosh (\alpha)  h(\rho).
\end{equation}
\end{lemma}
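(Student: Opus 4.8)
The plan is to prove the two implications by reducing everything to the sublinearity of the $1$-extension $H$ of $h$. For \textbf{necessity}, if $h$ is the support function of a F-convex curve $K$ then its $1$-extension is the extended support function of $K$, which is sublinear and therefore continuous (Subsection~\ref{sub: extended}), so $h$ is continuous; and inequality \eqref{eq:hyp convex} is exactly the $d=1$ instance of Lemma~\ref{lem:convexite radiale} (when $d=1$ there is no angular variable $\Theta$, only the radial coordinate $\rho$). So this direction is immediate. For \textbf{sufficiency}, given a continuous $h$ satisfying \eqref{eq:hyp convex} I would show its $1$-extension $H$ is sublinear and then invoke Lemma~\ref{lem: deter Hcs} to get the F-convex curve. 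Since $H$ is $1$-homogeneous by construction, the only thing to prove is subadditivity: writing $\eta_i=r_i(\sinh\rho_i,\cosh\rho_i)$ with $r_i>0$ and $\eta_1+\eta_2=r_3(\sinh\rho_3,\cosh\rho_3)$, the goal is $r_3\,h(\rho_3)\le r_1\,h(\rho_1)+r_2\,h(\rho_2)$.

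The key step is an off-center strengthening of \eqref{eq:hyp convex}: for all $\rho_1<\rho_3<\rho_2$,
\[
\sinh(\rho_2-\rho_1)\,h(\rho_3)\le \sinh(\rho_2-\rho_3)\,h(\rho_1)+\sinh(\rho_3-\rho_1)\,h(\rho_2),
\]
that is, $h$ lies below the interpolant $\psi$ in the span of $\{\cosh,\sinh\}$ (equivalently $\{e^{\rho},e^{-\rho}\}$) matching $h$ at $\rho_1,\rho_2$. I would prove this by a maximum principle. Any such $\psi$ satisfies \eqref{eq:hyp convex} with equality, so $w:=h-\psi$ still satisfies \eqref{eq:hyp convex}, is continuous, and vanishes at $\rho_1,\rho_2$. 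If $w$ had a positive maximum $M>0$, attained at an interior point $\rho^\ast$, then for small $\alpha$ both values $w(\rho^\ast\pm\alpha)\le M$, whereas \eqref{eq:hyp convex} forces $w(\rho^\ast+\alpha)+w(\rho^\ast-\alpha)\ge 2\cosh(\alpha)\,M>2M$, a contradiction; hence $w\le 0$, which is the displayed inequality.

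Finally I would close the gap with an exact hyperbolic ``law of sines''. Expanding $\sinh(\rho_3-\rho_1)$ and $\sinh(\rho_2-\rho_3)$ and using the coordinatewise identities $r_3\sinh\rho_3=r_1\sinh\rho_1+r_2\sinh\rho_2$ and $r_3\cosh\rho_3=r_1\cosh\rho_1+r_2\cosh\rho_2$ gives $r_3\sinh(\rho_3-\rho_1)=r_2\sinh(\rho_2-\rho_1)$ and $r_3\sinh(\rho_2-\rho_3)=r_1\sinh(\rho_2-\rho_1)$; in particular both are positive, so indeed $\rho_1<\rho_3<\rho_2$ whenever $\rho_1\neq\rho_2$ (the case $\rho_1=\rho_2$ being trivial by homogeneity). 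Multiplying the displayed inequality by $r_3/\sinh(\rho_2-\rho_1)$ and substituting these two identities turns its right-hand side into exactly $r_1\,h(\rho_1)+r_2\,h(\rho_2)$, yielding the desired subadditivity of $H$.

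The main obstacle is precisely this passage from the symmetric midpoint inequality \eqref{eq:hyp convex} to full subadditivity: the hyperbolic angle $\rho_3$ of the sum of two vectors of different Lorentzian norms is \emph{not} the midpoint of $\rho_1,\rho_2$, so \eqref{eq:hyp convex} cannot be applied directly. The two ingredients that overcome it are the maximum-principle upgrade to the weighted three-point inequality and the decisive (but short) computation showing that the $\sinh$-weights it produces match the magnitudes $r_1,r_2$ exactly, so that no sign hypothesis on $h$ is ever needed.
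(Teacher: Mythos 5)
Your proof is correct, and it takes a genuinely different route from the paper's. The paper proves sufficiency by contraposition: assuming $H$ is not convex, it produces (via a continuity argument and a renormalization of the convexity parameter $\lambda=\frac{\|v\|_-}{\|u\|_-+\|v\|_-}$) two \emph{unit} vectors $u,v\in\H^1$ with $H(u+v)>H(u)+H(v)$, and then the hyperbolic addition formula \eqref{eq:addition cosh} identifies this as a violation of \eqref{eq:hyp convex} at the hyperbolic midpoint $\frac{\rho+\rho'}{2}$ — exactly reversing the chain \eqref{eq:conv d1}. You instead argue directly: you upgrade the symmetric midpoint inequality to the general weighted three-point inequality by a maximum principle against the interpolant in $\operatorname{span}\{\cosh,\sinh\}$ (which satisfies \eqref{eq:hyp convex} with equality, the same structural fact that powers \eqref{eq:addition cosh}), and then verify by the two coordinate identities $r_3\sinh(\rho_3-\rho_1)=r_2\sinh(\rho_2-\rho_1)$ and $r_3\sinh(\rho_2-\rho_3)=r_1\sinh(\rho_2-\rho_1)$ that the $\sinh$-weights match the Lorentzian norms of the summands exactly, giving subadditivity of $H$ for arbitrary (not just unit) future timelike vectors; Lemma~\ref{lem: deter Hcs} then finishes. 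Your version is longer but constructive and quantitative — it exhibits the full "hyperbolically convex" three-point inequality as an intermediate statement — and it avoids the paper's rather terse "up to a reparametrization of $\lambda$" step, at the cost of the extra maximum-principle argument. The necessity direction is identical in both (Lemma~\ref{lem:convexite radiale} plus continuity of sublinear functions).
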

\begin{proof}
The condition is necessary due to Lemma~\ref{lem:convexite radiale}.
Now let $h$ be a continuous function and let $H$ be its homogeneous extension. We suppose that $H$
is not convex on $\F$.

\emph{Fact:} There exists unitary $u$ and $v$ such that $H(u+v)>H(u)+H(v)$.

If the fact is true, we see from \eqref{eq:conv d1} that \eqref{eq:hyp convex} is false. 
Now let us prove the fact. We know that there exists $u,v\in\F$ and $0<\lambda<1$ such that
$$H(\lambda u+(1-\lambda)v)>\lambda H(u)+(1-\lambda)H(v). $$
By continuity, this holds in a neighborhood of $\lambda$. Up to a reparametrization of $\lambda$, we can
consider that this holds for any $0<\lambda<1$. Then it suffices to take  $\lambda=\frac{\|v\|_-}{\|u\|_-+\|v\|_-}$
and multiply both sides of the equation above by $\frac{\|u\|_-+\|v\|_-}{\|u\|_-\|v\|_-}$.
\end{proof}

\begin{remark}[\textbf{Osculating hyperbola}]{\rm 
 We can give a geometric interpretation of the radius of curvature for F-convex curves in the plane.
Computations are formally the same as in the Euclidean case, see e.g.~the first pages of \cite{Spi79}, so
we skip them.
Let $\gamma$ be the boundary of a strictly convex F-convex set in the Minkowski plane, seen as a curve parametrized
by arc length (for the 
induced Lorentzian metric). Let $p_1,p_2,p_3$ be three points on $\gamma$, with
$p_2$ between $p_1$ and $p_3$. There exists a unique upper hyperbola passing through those points (the center of this 
hyperbola is the intersection between the two time-like lines passing through the middle, and orthogonal to,
the space-like segments $p_1p_2$ and $p_2p_3$). When $p_1$ and $p_3$ approaches $p_2$, the hyperbolas
converges to a hyperbola with radius $\displaystyle \frac{1}{\|\gamma'' \|_-}$.
Now let $c$ as in \eqref{eq:eq courbe}. We have $c=\gamma\circ s$, with $s$ the arc length of $c$:
$$s(\rho)=\int_0^{\rho} h''(t)-h(t)\mathrm{d}t $$
and $\gamma$ parametrized
by arc length. A computation shows that $\displaystyle\langle \gamma'',\gamma''\rangle_-=-\frac{1}{(h''-h)^2}$.

}
\end{remark}

\subsection{Hedgehogs}\label{subsec:hedg}

Both  spaces of support functions of F-convex sets and of
P-convex set of $\R^{d+1}$ form a convex cone in the space of
continuous functions on $\H^d$. They span a vector space, the vector space of differences of support functions. 
Such functions were known for a long under different names (see Remark~\ref{rem:herissons} and Remark~\ref{rem:mess pol}) and called  \emph{hedgehogs} since \cite{LLR}.

To simplify we restrict to the case of $C^2$ support functions.
It follows from the classical theory of difference of convex functions 
 that the vector space spanned by $C^2$ support functions is the whole space of $C^2$ functions on $\H^d$ \cite{ale12,har59,hu85,ves87}.
In the classical compact case, this is straightforward by compactness,
writing any $C^2$ function $h$ on $\mathbb{S}^d$ as $(h+r)-r$ for any sufficiently large constant $r$.
 The same
argument occurs in the quasi-Fuchsian case (see Lemma~\ref{lem:tau herisson} below). This also gives another natural 
motivation to introduce hedgehogs: level surfaces of the cosmological time 
outside of an F-convex set
are hedgehogs. Moreover, if $\tau$ is a cocycle, the following lemma says that all the $C^2$ $\Gamma_\tau$ 
invariant hedgehogs are obtained in this way. We will call such functions \emph{($C^2$) $\tau$-hedgehog}.
See Figure~\ref{fig:herissons}.

\begin{lemma}\label{lem:tau herisson}
 Let $h$ be a $C^2$ $\tau$-hedgehog. There exists positive constants
$c_1$ and $c_2$ such that $h-c_1$ bounds a $\tau$-F-convex set and $h+c_2$
bounds a $\tau$-P-convex set. For any positive constant $c$, $h+c_2+c$ (resp. $h-c_1-c$) bounds a $C^2_+$ $\tau$-F-convex (resp. $\tau$-F-convex). 
\end{lemma}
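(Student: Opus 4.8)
The plan is to control the symmetric $2$-tensor $Q:=\nabla^2 h-hg$ on $\H^d$ and to use cocompactness to bound its eigenvalues uniformly. First I would note that $Q$ is $\Gamma_0$-invariant: since $h$ is the restriction of a $\tau$-equivariant map, Lemma~\ref{lem: hess gamma inv} gives that $\nabla^2h-hg$ is $0$-equivariant. Hence the functions $\eta\mapsto\lambda_{\min}(\eta)$ and $\eta\mapsto\lambda_{\max}(\eta)$, assigning to each point the smallest and largest eigenvalue of $Q_\eta$ with respect to $g_\eta$, are continuous and $\Gamma_0$-invariant, so they descend to continuous functions on the compact manifold $\H^d/\Gamma_0$. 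Setting $M_1:=\max(1,-\min\lambda_{\min})$ and $M_2:=\max(1,\max\lambda_{\max})$, both positive, I obtain $-M_1 g\le Q\le M_2 g$ on all of $\H^d$.

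Next I would translate the support function by a constant. The $1$-extension of a constant $c$ is $c\|\cdot\|_-$, which is $0$-equivariant (recall that the extended support function of $B_t$ is $-t\|\cdot\|_-$), so by Lemma~\ref{lem:equivariant}\ref{item sum} the function $h\pm c$ is again $\tau$-equivariant. Since constants have vanishing Hessian, $\nabla^2(h-c_1)-(h-c_1)g=Q+c_1 g$ and $\nabla^2(h+c_2)-(h+c_2)g=Q-c_2 g$. Taking $c_1:=M_1$ gives $Q+c_1 g\ge 0$, so by Proposition~\ref{lem: base}\ref{lem princ: c2 deter} the function $h-c_1$ is the support function of a F-convex set $K$; being $\tau$-equivariant, its normal representation is equivariant by \eqref{eq:eq grad}, whence $\partial_sK$ and hence $K$ are $\Gamma_\tau$-invariant, i.e.\ $K$ is $\tau$-F-convex. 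Taking $c_2:=M_2$ gives $Q-c_2 g\le 0$, and Remark~\ref{remark:-h} then shows that $h+c_2$ bounds a P-convex set. Tracking equivariance, the $1$-extension of $-(h+c_2)$ is $(-\tau)$-equivariant, so the associated F-convex set is $(-\tau)$-F-convex and its symmetric, the P-convex set in question, is $\tau$-P-convex.

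For the strict ($C^2_+$) statements I would simply add an extra $c>0$: then $Q+(c_1+c)g\ge c\,g>0$ and $Q-(c_2+c)g\le -c\,g<0$, so \eqref{eq:hess supp +} holds for $h-c_1-c$ and its negative holds for $h+c_2+c$. By Proposition~\ref{lem: base}\ref{lem princ: c2+ deter} (combined with Remark~\ref{remark:-h} in the second case) the set bounded by $h-c_1-c$ is a $C^2_+$ $\tau$-F-convex set, and the one bounded by $h+c_2+c$ is a $C^2_+$ $\tau$-P-convex set.

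The one point that is not purely formal — and the reason $\tau$-equivariance is indispensable — is the uniform eigenvalue bound of the first paragraph. For a general $C^2$ hedgehog on $\H^d$ the eigenvalues of $Q$ may be unbounded, and then no single constant could convexify it. Equivariance is precisely what collapses $Q$ to a tensor on the compact quotient $\H^d/\Gamma_0$, producing the uniform constants $M_1,M_2$; this is the quasi-Fuchsian counterpart of the trivial compact-case trick $h=(h+r)-r$ recalled just before the lemma.
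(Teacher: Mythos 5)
Your proposal is correct and follows essentially the same route as the paper: invoke Lemma~\ref{lem: hess gamma inv} and cocompactness of $\H^d/\Gamma_0$ to bound the eigenvalues of $\nabla^2 h - hg$ uniformly, shift by constants to make the form positive (resp.\ negative) semi-definite or definite, and conclude via Proposition~\ref{lem: base} and Remark~\ref{remark:-h}. Your write-up merely makes explicit the eigenvalue bounds and the equivariance bookkeeping that the paper leaves implicit.
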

\begin{proof}
From Lemma~\ref{lem: hess gamma inv}, as $\H^d/\Gamma$ is compact, we get the constants $c_1$ and $c_2$ such that 
$\nabla^2h-gh$ is either positive semi-definite or negative semi-definite.  
The result follows from Corollary~\ref{cor:base} and Remark~\ref{remark:-h}.
\end{proof}

Note that to speak about ``F-hedgehogs'' is not relevant, as they are also  ``P-hedgehogs''. If $h$ is $C^k$ we will speak about $C^k$ hedgehog.
$C^2$ hedgehogs have a natural geometric representation  via the normal representation of $h$, see Subsection~\ref{sec: normal}.
Sometimes we will also call hedgehog the surface $\chi(h)$. 
Note that if $h$ is $\tau$-equivariant, by 
 \eqref{eq:eq grad} $\chi(\H^d)$ is setwise invariant for the action of $\Gamma_{\tau}$.

In the classical case, when  $h$ is the support function of  a convex body,
 the normal representation of $h$ is the boundary of the convex body with support function $h$.
Things are not so simple in our case, as if $h$ is the support function 
of an F-convex set, the normal representation of $h$ describes only  
$\partial_s K$.
For example, the normal representation of the null function is the origin, and
not the future light cone. Anyway we will be mainly interested in  $\tau$-hedgehogs.
 From Lemma~\ref{lem: propr tau cvxe}, if such a function is the support function of an F-convex set and is 
strictly less than $h_{\tau}$, then the image of the normal representation is the boundary of 
 the F-convex set.

\begin{figure}[ht]
\centering
\includegraphics[scale=1]{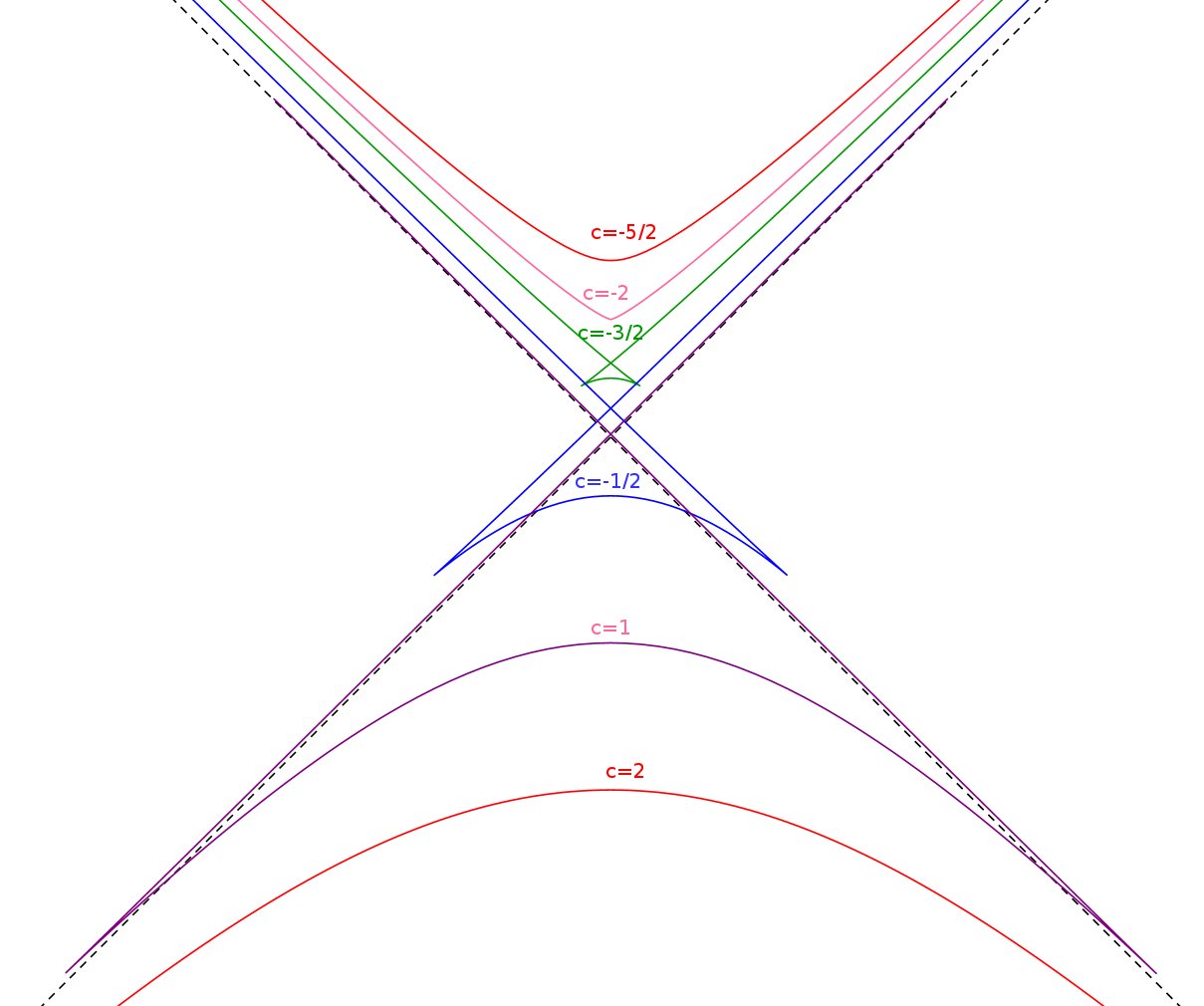}  
\caption{Plane $C^2$ hedgehogs with support function $h(t)=\cos(t)+c$ (curves are drawn thanks to 
\eqref{eq:eq courbe}). If $c$ is sufficiently small or large, the hedgehog bounds an F-convex set or
a P-convex set.
\label{fig:herissons}} 
  \end{figure}

\subsection{Elementary volume computations}

For a space-like $C^1$ hypersurface $S$, we denote by 
$\mathrm{d}(S)$  the  volume form of $S$ for the  Riemannian metric
induced on $S$ by the ambient Lorentzian metric.

\begin{lemma}\label{lemma:vol aire}

Let $A$ be an open set of $\mathbb{R}^{d+1}$ and let 
$l:A\rightarrow \mathbb{R}$ be a $C^1$ function with non-vanishing gradient.
Suppose that the level hypersurfaces $A_t:=l^{-1}(t)$ are space-like. Then
$$V(A)=\int \int_{A_t} \frac{1}{\|\operatorname{grad}_xl \|_-}\mathrm{d}(A_t)(x)  \mathrm{d}t.$$
\end{lemma}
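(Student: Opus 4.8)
The plan is to reduce the statement to the classical (Euclidean) coarea formula, the passage from the Euclidean to the Lorentzian setting being encoded in a pointwise comparison of volume forms on the level sets. Write $\nabla^E l=(\partial_1 l,\ldots,\partial_{d+1}l)$ for the Euclidean gradient and let $\mathrm{d}_E(A_t)$ denote the volume form induced on $A_t$ by the ambient Euclidean metric. The coarea formula asserts that for every nonnegative measurable $g$,
\[
\int_A g\,\|\nabla^E l\|\,\mathrm{d}x=\int_\R\Big(\int_{A_t} g\,\mathrm{d}_E(A_t)\Big)\,\mathrm{d}t ;
\]
applying it with $g=1/\|\nabla^E l\|$, which is legitimate since $\nabla^E l$ never vanishes and $l$ is $C^1$, gives
\[
V(A)=\int_\R\int_{A_t}\frac{1}{\|\nabla^E l\|}\,\mathrm{d}_E(A_t)\,\mathrm{d}t .
\]
It therefore suffices to establish the pointwise identity $\|\nabla^E l\|^{-1}\mathrm{d}_E(A_t)=\|\operatorname{grad}_x l\|_-^{-1}\mathrm{d}(A_t)$ on each $A_t$, equivalently $\mathrm{d}(A_t)=\big(\|\operatorname{grad} l\|_-/\|\nabla^E l\|\big)\,\mathrm{d}_E(A_t)$.

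For the gradients, the defining relation $D_xl=\langle\,\cdot\,,\operatorname{grad}_x l\rangle_-$ gives $\operatorname{grad} l=J\nabla^E l$ with $J=\mathrm{diag}(1,\ldots,1,-1)$, whence $\langle\operatorname{grad} l,\operatorname{grad} l\rangle_-=\langle\nabla^E l,\nabla^E l\rangle_-$ and
\[
\|\operatorname{grad} l\|_-^2=(\partial_{d+1}l)^2-\sum_{i=1}^{d}(\partial_i l)^2 ,
\]
a positive quantity precisely because $A_t$ is space-like. For the area forms I would work in a fixed tangent space $T_xA_t$. Writing the Euclidean unit normal as $N=\nabla^E l/\|\nabla^E l\|=(N',N_{d+1})$ with $N'\in\R^d$, space-likeness forces $N_{d+1}^2>|N'|^2$, and $T_xA_t$ is the graph over $\R^d$ of $y\mapsto -w\cdot y$ with $w:=N'/N_{d+1}$. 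The first fundamental forms of this graph for the Euclidean and the Lorentzian metric are $I+ww^{\top}$ and $I-ww^{\top}$, with determinants $1+|w|^2$ and $1-|w|^2$, so that
\[
\frac{\mathrm{d}(A_t)}{\mathrm{d}_E(A_t)}=\sqrt{\frac{1-|w|^2}{1+|w|^2}} .
\]
Using $|w|^2=|N'|^2/N_{d+1}^2$ and $|N'|^2+N_{d+1}^2=1$ the right-hand side simplifies to $\sqrt{2N_{d+1}^2-1}$, and substituting $N_{d+1}=\partial_{d+1}l/\|\nabla^E l\|$ identifies it with $\|\operatorname{grad} l\|_-/\|\nabla^E l\|$, which is exactly the claimed comparison.

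Combining the last two displayed identities yields the formula
\[
V(A)=\int_\R\int_{A_t}\frac{1}{\|\operatorname{grad}_x l\|_-}\,\mathrm{d}(A_t)(x)\,\mathrm{d}t .
\]
The only real content is the comparison of volume forms in the second paragraph; once $T_xA_t$ is presented as a graph, this is a one-line rank-one-determinant computation carried out at each point, and I expect it to be the main (though mild) obstacle. The appeal to the coarea formula and the gradient identity are routine, and the hypotheses ($l\in C^1$, $\operatorname{grad} l\neq0$, level sets space-like) guarantee measurability and the positivity of $\|\operatorname{grad} l\|_-$, so no regularity or integrability issue arises.
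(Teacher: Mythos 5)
Your proof is correct, and it takes a genuinely different route from the paper's. The paper never invokes the Euclidean coarea formula: it builds the fibration by hand via the implicit function theorem, writes $V(A)$ as $\int\int_O|\det(X_1,\ldots,X_{d+1})|\,\mathrm{d}x\,\mathrm{d}t$, and evaluates the determinant by expressing the $X_i$ in a Lorentz-orthonormal frame $f_1,\ldots,f_d,f_{d+1}$ adapted to $A_t$ (the $f_{d+1}$ being the unit past time-like normal); the block structure then produces the factor $\langle X_{d+1},f_{d+1}\rangle_-=1/\|\grad l\|_-$ times the Lorentzian Gram determinant of the tangential vectors. You instead take the Euclidean coarea formula as a black box and isolate all of the Lorentzian content into the single pointwise identity $\mathrm{d}(A_t)=\bigl(\|\grad l\|_-/\|\nabla^E l\|\bigr)\,\mathrm{d}_E(A_t)$, which you verify by presenting $T_xA_t$ as a graph over $\R^d$ and comparing the determinants $1\pm|w|^2$ of the two first fundamental forms $I\pm ww^{\top}$; your computation $\sqrt{(1-|w|^2)/(1+|w|^2)}=\sqrt{2N_{d+1}^2-1}=\|\grad l\|_-/\|\nabla^E l\|$ checks out, and space-likeness is exactly what guarantees $N_{d+1}\neq 0$ and $|w|<1$. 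What each approach buys: yours is shorter and cleanly separates the measure-theoretic skeleton (standard coarea) from the metric comparison, at the cost of importing the coarea formula and its identification of Hausdorff measure with induced Riemannian volume on $C^1$ level sets; the paper's argument is self-contained (only Fubini and the change-of-variables formula) and makes explicit the two elementary remarks it advertises — that Euclidean and Lorentzian volume agree on $\R^{d+1}$ and that Lorentz isometries preserve it — which is in keeping with the paper's aim of developing the Lorentzian theory from scratch.
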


The Lorentzian coarea formula formula above is 
certainly well-known in more general versions, nevertheless we provide a proof,
 just following the classical one, see e.g.~\cite{Schwartz93}. 
The key elementary remarks are:
1) if we take $d$ space-like vectors with last coordinates equal to $0$ and a vertical
vector, the computation of the volume of the resulting box is obviously the same for the Euclidean 
metric and for the Minkowski metric 2)  linear Lorentzian isometries 
have determinant modulus equal to $1$ so they preserve the volume.

\begin{proof}

The Lorentzian gradient of $l$ is a non-zero time-like vector. 
Without loss of generality we suppose that it is past directed.
Moreover  at a point 
$x_0\in A$ we have  $\frac{\partial l}{\partial x_{d+1}}(x_0)\not=0$.
Up to a add a constant to $l$, let us suppose that $l(x_0)=0$.
By the implicit function theorem, locally there exists a $C^1$ map $g$
such that $x_{d+1}=g(x_1,\ldots,x_d,t) $ and
 $$l(x_1,\ldots,x_d,g(x_1,\ldots,x_d,t))=t.$$ We define a  $C^1$
diffeomorphism $\Phi$ from an open set $O\times (-\epsilon,\epsilon)$,
$O\subset\R^{d}$, to  $A$ by
 $$(x_1,\ldots,x_d,t)\mapsto (x_1,\ldots,x_d,g(x_1,\ldots,x_d,t) ).$$
(Up to decompose $A$
into suitable open sets, we suppose for simplicity that the 
 image of $\Phi$  is the whole $A$.) 
Let us denote $X_i=\frac{\partial \Phi}{\partial x_i}$, $i=1,\ldots,d$
and $X_{d+1}=\frac{\partial \Phi}{\partial t}$. Then \cite[6.2.1]{Schwartz93}
$$V(A)=\int_{-\epsilon}^{\epsilon}\int_O | \operatorname{det} (X_1,\ldots,X_{d+1})|\mbox{d}x_1
\cdots \mbox{d}x_d \mbox{d}t.$$
The vectors $X_1,\ldots,X_d$ belong
to the space-like tangent space $L$ to $A_t$.
Let $f_1,\ldots,f_d$ be an orthonormal basis (for $\langle \cdot,\cdot\rangle_-$)
of $L$, and $f_{d+1}$ be the unit past time-like vector orthogonal to $L$.
We have
$$ \operatorname{det} (X_1,\ldots,X_{d+1})= \operatorname{det} 
\left(\langle X_i,f_j\rangle_-\right)_{i,j=1,\ldots,d+1} $$
 (this is easy to see using a Lorentz linear isometry   sending  
$f_1,\ldots,f_{d+1}$ to $e_1,\ldots,e_d,-e_{d+1}$ with $\{e_i\}$ the standard Euclidean basis ---this isometry has determinant
$1$). As $\langle X_i,f_{d+1}\rangle_-=0$ for $i=1,\ldots,d$, 
$$ \operatorname{det} (X_1,\ldots,X_{d+1})= \langle X_{d+1},f_{d+1}\rangle_- 
\operatorname{det} (\langle X_i,f_j\rangle_-)_{i,j=1,\ldots,d}.
$$

On one hand, 
$$ \langle X_{d+1},f_{d+1}\rangle_-=\left \langle \frac{\partial \Phi}{\partial t} ,
\frac{\grad l}{\|\grad l\|_-}\right\rangle_-
=\frac{1}{\| \grad l\|_-}\left\langle 
\left( \begin{array}{c} 0 \\ \vdots \\ 0 \\ \frac{\partial g}{\partial t}\end{array} \right),\grad h \right\rangle_-
$$ $$= \frac{1}{\|\grad l\|_-} \frac{\partial g}{\partial t}\frac{\partial l}{\partial x_{d+1}}
= \frac{1}{\|\grad l\|_-}.
$$ 
On the other hand, $$D:=\operatorname{det} (\langle X_i,f_j\rangle_-)_{i,j=1,\ldots,d}=
-\operatorname{det} M$$
with 
$$M={}^t \!(X_1,\ldots,X_d,f_{d+1})J(f_1,\ldots,f_d,f_{d+1}).$$
 Note that $D=\operatorname{det}MJ$. So
$$-D^2=\operatorname{det} MJ\times {}^tM=  
\operatorname{det} {}^t \!(X_1,\ldots,X_d,f_{d+1}) J (X_1,\ldots,X_d,f_{d+1}) 
=\operatorname{det} (\langle X_i,X_j\rangle_-)_{i,j=1,\ldots,d}, $$
 finally $|D|=\sqrt{\left|\mathrm{det}\left(\langle \frac{\partial \Phi}{\partial x_i},
\frac{\partial \Phi}{\partial x_j}\rangle_-\right)_{i,j=1,\ldots,d}\right| }$
and $|D| \mbox{d}x_1\cdots \mbox{d}x_d$ is the volume form on 
$A_t$ for the metric induced by the Lorentzian metric.
\end{proof}

\section{Area measures}\label{sec:area}


\subsection{Definition of the area measures}

\subsubsection{Main statement}

The notation is the one of Subsection~\ref{subsub:brush}.
Let $\omega\subset\H^d$ be a Borel set. 
The normal field $N$ is continuous, and if we denote by $N_{\epsilon}$ its restriction to
$K_{(0,\epsilon]}$,
$K_{(0,\epsilon]}(\omega)=N_{\epsilon}^{-1}(\omega)$,  so
$K_{(0,\epsilon]}(\omega)$ is measurable for the  Lebesgue measure, and we denote 
by $V_{\epsilon}(K,\omega)$ its volume.
In other terms,  $V_{\epsilon}(K,\cdot)$
is the push forward of the restriction to $K_{(0,\epsilon]}$ of the Lebesgue measure, which is a Radon measure, and
as $N_{\epsilon}$ is continuous,  $V_{\epsilon}(K,\cdot)$ is a Radon measure on $\H^d$.
All results concerning measure theory in this section are elementary and can be found for example in \cite{Tao10}
or in the first pages of \cite{mat95}. Actually we mainly use these well known facts:
\begin{itemize}[nolistsep]
 \item Radon measures on $\H^d$ are the (unsigned) Borel measures which are finite on any compact,
 \item a Radon measure $\mu$ has the inner regularity property: for any Borel set 
 $\omega$ of $\H^d$, 
 $$\mu(\omega)=\operatorname{sup}\{\mu(K)|K\subset \omega, K \mbox{ compact } \}, $$
 \item for any positive linear functional $I$ on the space of real continuous compactly supported
functions on $\H^d$,
 there exists a unique Radon measure $\mu$ on $\H^d$ such that $I(f)=\int_{\mathbb{H}^d}f\operatorname{d}\mu$ 
(Riesz representation
 theorem).
\end{itemize}

The aim of this subsection is to prove the following result. 
\begin{theorem}\label{prop:def area}
 Let $K$ be an F-convex set in $\mathbb{R}^{d+1}$.
There exist Radon measures $S_0(K,\cdot),\ldots,S_d(K,\cdot)$
on $\H^d$ such that, for any Borel set $\omega$ of $\H^d$ and any $\epsilon >0$,
\begin{equation}\label{eq: mu}V_{\epsilon}(K,\omega)=\frac{1}{d+1}\sum_{i=0}^d \epsilon^{d+1-i}\binom{d+1}{i} S_i(K,\omega).\end{equation}
$S_i(K,\cdot)$ is called the \emph{area measure of order $i$} of $K$.
We have  that $S_0(K,\cdot)$ is given by the volume form of $\H^d$.
\end{theorem}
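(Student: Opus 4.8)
The plan is to establish \eqref{eq: mu} first for smooth sets, where the left-hand side can be computed explicitly through the normal representation of Subsection~\ref{sec: normal}, and then to reduce an arbitrary F-convex set to this case by the local approximation of Lemma~\ref{lem: approx} together with a Vandermonde interpolation in $\epsilon$.

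First I would treat the case where $K$ is $C^2_+$. Then $h_K$ is $C^2$ and the principal radii of curvature $r_1,\dots,r_d$, the eigenvalues of $\nabla^2 h_K-h_K g$, are non-negative by Proposition~\ref{lem: base}. For $t>0$ the level set $K_t=T^{-1}(t)=\partial(K+tB)$ has support function $h_K-t$, and $K+tB$ is again $C^2_+$ since $(\nabla^2 h_K-h_K g)+tg>0$. Taking the cosmological time $l=T$ in the Lorentzian coarea formula of Lemma~\ref{lemma:vol aire} and using $\|\operatorname{grad}T\|_-=1$ gives
\begin{equation*}
V_\epsilon(K,\omega)=\int_0^\epsilon\area\bigl(K_t(\omega)\bigr)\,\mathrm{d}t .
\end{equation*}
On $K_t$ the normal field is the Gauss map, which for a $C^2_+$ set is a diffeomorphism with inverse $\chi_t$, so $K_t(\omega)=\chi_t(\omega)$ and, by \eqref{eq:reversed so}, the Jacobian of $\chi_t$ is $\det S_t^{-1}=\prod_{i=1}^d(r_i+t)$. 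Hence
\begin{equation*}
\area\bigl(K_t(\omega)\bigr)=\int_\omega\prod_{i=1}^d(r_i+t)\,\mathrm{d}\H^d=\sum_{i=0}^d t^{\,d-i}\int_\omega e_i(r_1,\dots,r_d)\,\mathrm{d}\H^d,
\end{equation*}
with $e_i$ the $i$th elementary symmetric function. Integrating in $t$ and comparing the powers of $\epsilon$ with \eqref{eq: mu}, the elementary identity $\tfrac{d+1}{(d-i+1)\binom{d+1}{i}}=\binom{d}{i}^{-1}$ yields
\begin{equation*}
S_i(K,\omega)=\int_\omega\frac{e_i(r_1,\dots,r_d)}{\binom{d}{i}}\,\mathrm{d}\H^d .
\end{equation*}
In particular $S_0(K,\cdot)$ is the volume form of $\H^d$, and each $S_i(K,\cdot)$ is a non-negative Radon measure because $e_i\geq 0$.

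For a general $K$ I would argue by approximation and interpolation. Fix $d+1$ distinct $\epsilon_0,\dots,\epsilon_d>0$; for each of them $V_{\epsilon_j}(K,\cdot)$ is already a Radon measure on $\H^d$. Let $\lambda_{ij}$ be the entries of the inverse of the (invertible) linear system $V_{\epsilon_j}=\tfrac{1}{d+1}\sum_i\epsilon_j^{\,d+1-i}\binom{d+1}{i}S_i$, and \emph{define} $S_i(K,\cdot):=\sum_j\lambda_{ij}V_{\epsilon_j}(K,\cdot)$, a signed Radon measure. It then remains to prove, for every compact $\omega$, that $\epsilon\mapsto V_\epsilon(K,\omega)$ is the polynomial \eqref{eq: mu} with these coefficients and that $S_i(K,\omega)\geq 0$. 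Both facts should follow by applying Lemma~\ref{lem: approx} on a compact neighbourhood of $\omega$ to get $C^2_+$ sets $A_n$ with $h_{A_n}\to h_K$ uniformly there; since $K(\omega)=G^{-1}_{K}(\omega)$ is compact, $K_{(0,\epsilon]}(\omega)$ depends only on $\partial_s K$ over a compact region, so I expect $V_\epsilon(A_n,\omega)\to V_\epsilon(K,\omega)$ for every $\epsilon$. Each $V_\epsilon(A_n,\omega)$ is the polynomial \eqref{eq: mu} with the non-negative coefficients computed above; evaluating at the $\epsilon_j$ and inverting the Vandermonde system then shows $S_i(A_n,\omega)\to S_i(K,\omega)\geq 0$ and that $V_\epsilon(K,\omega)$ equals the corresponding polynomial for all $\epsilon$. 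Finally, inner regularity of the Radon measures involved propagates both the identity \eqref{eq: mu} (for each fixed $\epsilon$, read as an identity of measures) and the non-negativity of $S_i(K,\cdot)$ from compact sets to all Borel sets.

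The step I expect to be the main obstacle is the geometric convergence $V_\epsilon(A_n,\omega)\to V_\epsilon(K,\omega)$: the approximants $A_n$ are only close to $K$ in the support function, i.e. in the Hausdorff sense, over a compact piece of the boundary, whereas $K_{(0,\epsilon]}(\omega)$ is carved out by the merely continuous normal field $N$ and by level sets $K_t$ that are only $C^{1,1}$. The contribution of the normals lying near $\partial\omega$ has to be controlled, which I would do by squeezing $\omega$ between an inner and an outer compact set and invoking the regularity of the measures $V_{\epsilon_j}(K,\cdot)$; once this analytic point is settled, the passage to the coefficients and the measure-theoretic bookkeeping are automatic through the Vandermonde inversion.
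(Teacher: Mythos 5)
Your $C^2_+$ computation is correct and coincides with the paper's Lemma~\ref{lem: cas C2}, and your idea of defining $S_i(K,\cdot)$ directly by Vandermonde inversion of $V_{\epsilon_j}(K,\cdot)$ is the same device the paper uses (inside Proposition~\ref{prop: area cas fuchs}). The genuine gap is exactly the step you flag: the convergence $V_\epsilon(A_n,\omega)\to V_\epsilon(K,\omega)$. As stated, for an arbitrary compact $\omega$, this is not something approximation in the support function can deliver: closeness of $h_{A_n}$ to $h_K$ on a neighbourhood of $\omega$ controls the sets $A_n{}_{(0,\epsilon]}(\omega)$ and $K_{(0,\epsilon]}(\omega)$ only up to a Hausdorff-type error, so at best one can hope for \emph{weak} convergence of the measures $V_\epsilon(A_n,\cdot)$ to $V_\epsilon(K,\cdot)$, which gives convergence only on continuity sets, not on every compact set; your squeezing between an inner and an outer compact set sandwiches the limit between $V_\epsilon(K,\stackrel{\circ}{\omega})$ and $V_\epsilon(K,\omega')$ for $\omega'\supsetneq\omega$, and these need not agree. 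Worse, even the weak convergence is not free: the paper's proof of it (Lemma~\ref{lem:weak conv}) needs two inputs, a pointwise geometric liminf inequality on open sets \emph{and} the convergence of the total masses, and the latter is obtained from the continuity of the \emph{covolume} of Fuchsian convex sets --- a statement that only makes sense because the quotient $\H^d/\Gamma_0$ is compact. In your purely local setting the ``total mass over a compact $\boldsymbol{\omega}$'' is $V_\epsilon(K,\boldsymbol{\omega})$ itself, so proving its convergence is a problem of the same type as the one you are trying to solve; there is a circularity that the squeezing argument does not break.

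This is precisely why the paper inserts the Fuchsian machinery between the smooth case and the general case: Lemma~\ref{lem: fuchs ext} replaces $K$ over $\omega$ by a $\Gamma_0$-invariant convex set $\tilde K_\omega$ without changing $K(\omega')$ for $\omega'\subset\omega$ (so the locality Lemma~\ref{lem: loc meas} applies), Lemma~\ref{lem: fuchs ext conv} converts uniform closeness of support functions on $\omega$ into Hausdorff closeness of the extensions, and then the cocompact quotient supplies the finite total masses needed for weak convergence. Note also that the paper's approximating objects in the Fuchsian step are \emph{polyhedra} (Lemma~\ref{lem: conv pol}), not smooth bodies, and positivity of $S_i^{\Gamma_0}(K,\cdot)$ is read off by testing against non-negative continuous functions rather than against indicators of compact sets --- this is how it sidesteps the failure of setwise convergence that your plan runs into. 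To repair your proposal you would either have to import this Fuchsian extension step, or prove an independent local continuity statement for $\epsilon$-collars of F-convex sets under $C^0$ convergence of support functions; the latter is not in the paper and is not a routine matter.
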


Two of those measures deserve special attention. $S_d(K,\cdot)$ may be called  ``the'' area measure of $K$,
for a reason which will be clear below. The problem of prescribing this measure is the Minkowski problem.
In this paper we will focus on $S_1(K,\cdot)$.

\begin{example}{\rm
For any $p\in \R^{d+1}$ let us consider $K=\C(p)$. Actually
$V_{\epsilon}(\C(p),\omega)$ is invariant under translations, so it suffices to
compute it for  $p=0$. From Lemma~\ref{lemma:vol aire}, using the cosmological time of the future cone
 (the Lorentzian distance to the origin),
which has Lorentzian gradient equal to $1$, 
\begin{equation}\label{eq:vol cone}V_{\epsilon}(\C(p),\omega)=\frac{\epsilon^{d+1}}{d+1} S_0(K,\omega), \end{equation}
that expresses the fact that
all space-like hyperplanes meet $\C(p)$ only at $p$, so the ``curvatures'' are supported only at a single point.
}\end{example}

After some basics results on the $C^1$, $C^2_+$ and polyhedral cases, we will prove a statement 
close to Theorem~\ref{prop:def area} in the Fuchsian case. After that we will prove that,
up to a translation, any compact part of the boundary of an F-convex set can be considered as
a part of a Fuchsian convex set. The proof of Theorem~\ref{prop:def area} 
will follow from the following elementary remark.

\begin{lemma}\label{lem: loc meas}
The area measures defined in Theorem~\ref{prop:def area}  are uniquely defined.
They are even defined locally: if $K$ and $K'$ are two F-convex sets such that the statement 
of Theorem~\ref{prop:def area} holds, and
if 
$\omega$ is a Borel set of $\H^d$ with $K(\omega)=K'(\omega)$, then 
 $S_i(K,\omega)=S_i(K',\omega)$.
\end{lemma}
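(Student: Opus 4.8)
The plan is to derive both assertions from the single identity of Theorem~\ref{prop:def area} together with the explicit description of the ``brush'' $K_{(0,\epsilon]}(\omega)$ recalled in Subsection~\ref{subsub:brush}.

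First I would settle uniqueness. Fix a Borel set $\omega\subset\H^d$. The left-hand side $V_{\epsilon}(K,\omega)$ is the Lebesgue volume of the intrinsically defined set $K_{(0,\epsilon]}(\omega)$, so as a function of $\epsilon$ it makes no reference to the $S_i$. The right-hand side is the polynomial $\epsilon\mapsto\frac{1}{d+1}\sum_{i=0}^{d}\binom{d+1}{i}S_i(K,\omega)\,\epsilon^{d+1-i}$, whose monomials $\epsilon^{1},\dots,\epsilon^{d+1}$ are linearly independent as functions on $(0,\infty)$. Hence the equality for all $\epsilon>0$ forces each coefficient, and therefore each value $S_i(K,\omega)$, to be uniquely determined by $\epsilon\mapsto V_{\epsilon}(K,\omega)$. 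As this holds for every $\omega$, the measures $S_0(K,\cdot),\dots,S_d(K,\cdot)$ are unique.

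For locality, this same coefficient-matching shows it suffices to prove $V_{\epsilon}(K,\omega)=V_{\epsilon}(K',\omega)$ for all $\epsilon>0$ whenever $K(\omega)=K'(\omega)$, which in turn follows from the set-theoretic equality $K_{(0,\epsilon]}(\omega)=K'_{(0,\epsilon]}(\omega)$ in $\R^{d+1}$. Using the projection $r$, the cosmological time $T$ and the normal field $N$, I would first record the parametrization
\begin{equation*}
K_{(0,\epsilon]}(\omega)=\{\,p+t\nu : p\in K(\omega),\ \nu\in G_K(p)\cap\omega,\ t\in(0,\epsilon]\,\}.
\end{equation*}
The inclusion ``$\subseteq$'' holds because any $k$ on the left satisfies $k=r(k)+T(k)N(k)$ with $N(k)\in\omega$ a support normal at $r(k)\in\partial_s K$, so $r(k)\in K(\omega)$; the reverse inclusion holds because for $\nu\in G_K(p)$ the whole ray $p+t\nu$, $t>0$, projects to $p$ (a consequence of $\C(p)\subset K$ and the support-plane characterization of $r$ recalled in Subsection~\ref{subsub:brush}), with $T=t$ since $\nu$ is unit.

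The crux is then to show the parametrizing data coincide for $K$ and $K'$, i.e.
\begin{equation*}
G_K(p)\cap\omega=G_{K'}(p)\cap\omega\qquad\text{for every }p\in K(\omega)=K'(\omega).
\end{equation*}
This is the step I expect to be the main obstacle, since $K(\omega)=K'(\omega)$ is only an equality of point sets and says nothing a priori about normals. I would argue by comparing support functions. Take $\nu\in G_K(p)\cap\omega$. By surjectivity of the Gauss map of $K'$ there is $p'\in\partial_s K'$ with $\nu\in G_{K'}(p')$, and then $p'\in G_{K'}^{-1}(\omega)=K'(\omega)=K(\omega)$; in particular $p'\in K$ and $p\in K'$. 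The support relations $\langle x,\nu\rangle_-\le\langle p,\nu\rangle_-$ on $K$ and $\langle x,\nu\rangle_-\le\langle p',\nu\rangle_-$ on $K'$, applied to $x=p'$ and $x=p$ respectively, give $\langle p,\nu\rangle_-=\langle p',\nu\rangle_-$; hence $\nu$ attains the support value of $K'$ at the point $p\in K'$, i.e. $\nu\in G_{K'}(p)$. By the symmetric argument the two sets agree, whence $K_{(0,\epsilon]}(\omega)=K'_{(0,\epsilon]}(\omega)$, $V_{\epsilon}(K,\omega)=V_{\epsilon}(K',\omega)$, and finally $S_i(K,\omega)=S_i(K',\omega)$ for all $i$.
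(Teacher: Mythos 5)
Your proof is correct and follows the same route as the paper: uniqueness by matching coefficients of the polynomial in $\epsilon$, and locality by reducing to the set equality $K_{(0,\epsilon]}(\omega)=K'_{(0,\epsilon]}(\omega)$. The only difference is that the paper dismisses that set equality as clear, whereas you supply the missing detail (the parametrization of the brush by $(p,\nu,t)$ and the support-function argument showing $G_K(p)\cap\omega=G_{K'}(p)\cap\omega$), which is a sound and worthwhile elaboration of the same idea.
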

\begin{proof}
The uniqueness of the $S_i(K,\cdot)$ follows because  
\eqref{eq: mu} says that $V_{\epsilon}(K,\omega)$ is a polynomial in $\epsilon$.
 $K(\omega)=K'(\omega)$ clearly implies $K_{(0,\epsilon]}(\omega)=K'_{(0,\epsilon]}(\omega)$ hence
$V_{\epsilon}(K,\omega)=V_{\epsilon}(K',\omega)$, which are polynomials by  Theorem~\ref{prop:def area},
hence they have equal coefficients.
\end{proof}

\begin{remark}{\rm
 Due to their local nature, the area measures can be defined for more general convex sets than F-convex sets.
 What is needed is that the restriction of the normal map to level sets of the cosmological time is a proper map
, see Subsection~\ref{subsub:brush}.
}\end{remark}

\begin{remark}{\rm
 From \eqref{eq: mu} we get a definition \`a la Minkowski for  the area measure of an F-convex set:
\begin{equation*}\label{eq: mink area}
\underset{\epsilon\downarrow 0}{\mathrm{lim}}
\frac{V_{\epsilon}(K,\omega)-V_{0}(K,\omega)}{\epsilon}=\underset{\epsilon\downarrow 0}{\mathrm{lim}}
\frac{V_{\epsilon}(K,\omega)}{\epsilon}=S_d(K,\omega).
\end{equation*}}\end{remark}

 \begin{remark}{\rm
 Let $K$ be a $C^1$ F-convex set
 and let $\mathrm{d}K$ be the  volume form on $\partial_{s} K$ given by the Riemannian metric
 induced on $\partial_{s}K$ by the ambient Lorentzian metric.
 Let us denote by $\mbox{Area}(K,\omega)$ the measure (for  $\mathrm{d}K$)
  of the set of points 
 of $\partial_{s} K$ whose support vector belongs to $\omega$, i.e.~ $\mbox{Area}(K,\omega)$ is the push-forward of $\mathrm{d}K$ 
  on $\H^d$:
 $$\mbox{Area}(K,\omega)=\mathrm{d}K(G_K^{-1}(\omega))=\mathrm{d}K(K(\omega))=(G_K)_*\mathrm{d}K(\omega),$$
 and $\mbox{Area}(K,\cdot)$ is a Borel measure because $G_K$ is continuous (Lemma~\ref{lemme:base 1}).
 It is even a Radon measure as finite on any compact set, because if $\omega$ is compact then $K(\omega)$ is compact (see Section \ref{subsub:brush}).
 Now, the cosmological time $T$ of any F-convex set $K$ is $C^1$, with Lorentzian gradient equal to
 $1$, so from Lemma~\ref{lemma:vol aire}:
 \begin{equation}\label{eq:vol int}V_{\epsilon}(K,\omega)=\int_0^{\epsilon} \mathrm{Area}(K_t,\omega) \mbox{d}t.\end{equation}
 }\end{remark}

\begin{remark}\label{rem: gamma 0 inv mes}{\rm With the notation of Remark~\ref{eq:inv Komega}:
\begin{equation}\label{eq:inv mu}
 V_{\epsilon}(K,\gamma_0\omega)=V_{\epsilon}(K,\omega).
\end{equation}
}
\end{remark}

\subsubsection{The $C^2_+$ case}

 Let $K$ be a $C^2_+$ F-convex set. We denote by $s_i$ 
the $i$th elementary symmetric function of the radii of curvature of $K$, i.e.
$$s_i=\binom{d}{i}^{-1}\sum_{1\leq j_1<\cdots<j_i \leq d}r_{j_1}\cdots r_{j_i}.$$
In particular $s_0=1$, $s_1=\frac{1}{d}(r_1+\cdots+r_d)=\frac{1}{d}\mathrm{Trace}(S^{-1})$
and $s_d=r_1\cdots r_d=\mathrm{det} (S^{-1})$, where $S^{-1}$ is the reverse 
shape operator of $\partial K$.

\begin{lemma}\label{lem: cas C2}
 Let $K$ be a $C^2_+$ F-convex set. Then the statement of Theorem~\ref{prop:def area} holds. Moreover 
$$S_i(K,\cdot)= s_i\mathrm{d}\mathbb{H}^d(\cdot).$$
\end{lemma}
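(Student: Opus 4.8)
The plan is to reduce the whole statement to a single change-of-variables computation on the parallel hypersurfaces $K_t=T^{-1}(t)$ and then integrate in $t$ by means of the coarea identity \eqref{eq:vol int}, which is already available for $C^1$ F-convex sets. First I would record how the reverse shape operator behaves under the flow $K\mapsto K+tB$. Since the support function of $K_t$ is $h_K-t$, formula \eqref{eq:reversed so} shows that the reverse second fundamental form of $K_t$ is
\[
\nabla^2(h_K-t)-(h_K-t)g=\left(\nabla^2 h_K-h_K g\right)+tg=\II^{-1}+t\,g .
\]
Hence the reverse shape operator of $K_t$ is $S^{-1}+t\,\mathrm{Id}$, its eigenvalues (the principal radii of curvature of $K_t$) are $r_1+t,\dots,r_d+t$, and each $K_t$ is again $C^2_+$ by \ref{lem princ: epsilon} of Proposition~\ref{lem: base}.

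The key step is the area formula for the Gauss map of $K_t$. At $\eta\in\H^d$ the tangent space $T_\eta\H^d$ equals $\eta^\perp$, which is also the tangent plane to $K_t$ at the point $\chi_t(\eta)$ carrying inward normal $\eta$; under this identification the hyperbolic metric $g$ and the metric induced on $K_t$ are literally the same restriction of $\langle\cdot,\cdot\rangle_-$ to $\eta^\perp$. Since the normal representation $\chi_t$ is the inverse of $G_{K_t}$ with differential $S^{-1}+t\,\mathrm{Id}$ (Subsection~\ref{sec: normal}), the change-of-variables formula gives, for any Borel $\omega\subset\H^d$,
\[
\mathrm{Area}(K_t,\omega)=\int_{K_t(\omega)}\mathrm{d}K_t=\int_\omega\det\!\left(S^{-1}+t\,\mathrm{Id}\right)\mathrm{d}\H^d=\int_\omega\prod_{j=1}^d\left(r_j+t\right)\mathrm{d}\H^d .
\]
Expanding the product as $\prod_j(r_j+t)=\sum_{i=0}^d\binom{d}{i}s_i\,t^{d-i}$ (the defining relation between $s_i$ and the $i$th elementary symmetric polynomial) and inserting this into \eqref{eq:vol int}, Fubini--Tonelli yields
\[
V_\epsilon(K,\omega)=\int_0^\epsilon\mathrm{Area}(K_t,\omega)\,\mathrm{d}t=\sum_{i=0}^d\binom{d}{i}\frac{\epsilon^{d+1-i}}{d+1-i}\int_\omega s_i\,\mathrm{d}\H^d .
\]

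It then remains to identify the coefficients. Setting $S_i(K,\cdot):=s_i\,\mathrm{d}\H^d$ — a Radon measure because $s_i$ is continuous on $\H^d$, hence finite on compact sets — and using the elementary identity $\binom{d}{i}\frac{1}{d+1-i}=\frac{1}{d+1}\binom{d+1}{i}$, the displayed sum becomes exactly $\frac{1}{d+1}\sum_i\epsilon^{d+1-i}\binom{d+1}{i}S_i(K,\omega)$, which is \eqref{eq: mu}; the assertion $S_0(K,\cdot)=\mathrm{d}\H^d$ is the case $i=0$, since $s_0=1$. The only genuinely delicate point is the area formula in the second paragraph: one must check that the two metrics agree under the identification $T_\eta\H^d=\eta^\perp=T_{\chi_t(\eta)}K_t$, so that the Jacobian of $\chi_t$ is precisely $\det(S^{-1}+t\,\mathrm{Id})$ with no spurious metric factor, and that $K_t(\omega)=\chi_t(\omega)$ because $\chi_t=G_{K_t}^{-1}$. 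Everything else is bookkeeping, and the positivity $r_j+t>0$ for $t>0$ guarantees that $\chi_t$ is a genuine diffeomorphism so the change of variables is legitimate.
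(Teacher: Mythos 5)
Your argument is correct and is essentially the paper's own proof: both rest on the identity $S^{-1}_t=S^{-1}+t\,\mathrm{Id}$ coming from \eqref{eq:reversed so}, the area formula $\mathrm{Area}(K_t,\omega)=\int_\omega\det(S^{-1}_t)\,\mathrm{d}\H^d$ for the $C^1$-diffeomorphic Gauss map, the expansion of $\det(S^{-1}+t\,\mathrm{Id})$ in elementary symmetric functions, and integration in $t$ via \eqref{eq:vol int}. The extra care you take in checking that the induced metric on $K_t$ matches $g$ under the identification $T_\eta\H^d=\eta^\perp$ is a legitimate elaboration of a step the paper leaves implicit, not a different route.
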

\begin{proof}
 $K_t$ is  the boundary of $K+tK(\H)$, which is $C^2_+$ by \ref{lem princ: epsilon}
of Corollary~\ref{cor:base}. The Gauss map is a $C^1$ diffeomorphism hence
\begin{equation}\label{eq: aire c2+}
\int_{K_t(\omega)}\mbox{d}K_t=\int_{\omega} \mathrm{det}(S^{-1}_t) \mbox{d}\mathbb{H}^d 
\end{equation}
where $S^{-1}_t$ is the reverse shape operator of the boundary of  $K+tK(\H)$.
Moreover from \eqref{eq:reversed so}  $S^{-1}_t=S^{-1}+t\mathrm{Id}$. The result follows using \eqref{eq:vol int} and
$$\mathrm{det}(S^{-1}+t\mathrm{Id})=\sum_{k=0}^d t^k \binom{d}{k} s_{d-k}. $$
\end{proof}
\begin{remark}{\rm
\eqref{eq: aire c2+} can be written $\mbox{Area}(K,\omega)=S_d(K,\omega)$, that explains the terminology
for ``the'' area measure $S_d$.
}
\end{remark}

\subsubsection{The polyhedral case}\label{subsub:pol}

The following characterization of the area measures for the compact case seemlingly appeared in 
\cite{zel70}, see also \cite{Fir70}.
 Let $P$ be a polyhedral F-convex set.  For a $i$-face $e_i$, we denote by $\lambda_i(e_i)$ the $i$-dimensional volume
of $e_i$ in the Euclidean space isometric to the support plane containing $e_i$.
We also denote by $\nu_n$ the $n$-dimensional Hausdorff measure of $\H^d$.

\begin{lemma}\label{lem: aire poly}
 Let $P$ be a polyhedral F-convex set. Then the statement of Theorem~\ref{prop:def area} holds. Moreover,
for any Borel set $\omega\subset\H^d$,
\begin{equation}\label{eq:area pol}
 S_i(P,\omega)= \binom{d}{i}^{-1}\sum_{e_i}\lambda_i(e_i) \nu_{d-i}(\omega\cap G_P(e_i)).
\end{equation}
where the sum is on all the open $i$-faces $e_i$ of $P$ and $G_P$ is the Gauss map of $P$. 
\end{lemma}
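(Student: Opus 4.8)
The plan is to compute the volume $V_\epsilon(P,\omega)$ directly by slicing the brush region $P_{(0,\epsilon]}(\omega)$ according to the face of $\partial_s P$ onto which each point projects, and then to read off the coefficients of the resulting polynomial in $\epsilon$. First I would reduce to the case $\omega$ compact: since $V_\epsilon(P,\cdot)$ and the candidate right-hand side are Radon measures, and a Radon measure is determined by its values on compact sets by inner regularity, it suffices to treat compact $\omega$, while locality (Lemma~\ref{lem: loc meas}) lets me work face by face. For $\omega$ compact, the local finiteness of the cell decomposition $\{O_i\}$ (Proposition~9.9 in \cite{Bon05}, quoted in Subsection~\ref{subsub back poly}) guarantees that $\omega$ meets only finitely many faces of every dimension, so all sums below are finite and all the measures are finite on compacts.

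Next I would set up the partition. Writing $r$ for the projection and $N$ for the normal field, every $p\in\inn P$ has a unique triple $(r(p),N(p),T(p))$ with $r(p)$ in the relative interior of a unique $i$-face $e_i$ and $N(p)\in G_P(e_i)$. Up to a Lebesgue-null set (points whose projection lands on a lower face or whose normal lands on the relative boundary of some $G_P(e_i)$, all sweeping out sets of dimension $<d+1$), the region $P_{(0,\epsilon]}(\omega)$ is the disjoint union over all faces $e_i$ of the brushes
\[
\operatorname{Br}(e_i)=\{\,k+tN : k\in\operatorname{relint}(e_i),\ N\in\operatorname{relint}(G_P(e_i))\cap\omega,\ 0<t\le\epsilon\,\}.
\]
The map $(k,N,t)\mapsto k+tN$ is injective on each brush, so $V_\epsilon(P,\omega)=\sum_{i,e_i}V(\operatorname{Br}(e_i))$ by the change of variables formula.

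The computational heart, where I expect to spend the effort, is the Jacobian of this parametrization. The key geometric fact is that the linear direction $L=T_k e_i$ of the flat $i$-face $e_i$ and the tangent space $T_N G_P(e_i)$ of the totally geodesic $(d-i)$-dimensional piece $G_P(e_i)\subset\H^d$ are $\langle\cdot,\cdot\rangle_-$-orthogonal complements inside $N^{\perp}$: since $e_i$ lies in every support plane with normal $N\in G_P(e_i)$, the space $L$ is orthogonal to the linear span $W$ of the cone over $G_P(e_i)$ (which is $(d-i+1)$-dimensional by the count in Subsection~\ref{subsub back poly}), so $L=W^{\perp}$ has dimension $i$, while $T_N G_P(e_i)=W\cap N^{\perp}$; together with the unit time-like vector $N$ they form an orthogonal frame. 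Choosing orthonormal bases $u_1,\dots,u_i$ of $L$ and $w_1,\dots,w_{d-i}$ of $T_N G_P(e_i)$ and normal coordinates, the columns of the differential are $u_1,\dots,u_i$ (from $k$), $tw_1,\dots,tw_{d-i}$ (from $N$), and $N$ (from $t$); these are mutually orthogonal, so by the remark preceding Lemma~\ref{lemma:vol aire} (Lorentzian isometries preserve volume, and a Minkowski-orthonormal frame has Euclidean volume one) the Jacobian equals $t^{d-i}$. Hence
\[
V(\operatorname{Br}(e_i))=\lambda_i(e_i)\,\nu_{d-i}(G_P(e_i)\cap\omega)\int_0^\epsilon t^{d-i}\,dt=\lambda_i(e_i)\,\nu_{d-i}(G_P(e_i)\cap\omega)\,\frac{\epsilon^{d-i+1}}{d-i+1}.
\]
Equivalently one can feed $l=T$ into the coarea formula \eqref{eq:vol int}, since $\|\grad T\|_-=1$, and compute the induced area of the level sets $P_t$ the same way.

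Finally I would match coefficients. Summing over faces gives $V_\epsilon(P,\omega)=\sum_{i=0}^d \frac{\epsilon^{d+1-i}}{d-i+1}\sum_{e_i}\lambda_i(e_i)\nu_{d-i}(G_P(e_i)\cap\omega)$, a polynomial in $\epsilon$; comparing with \eqref{eq: mu} forces $\frac{1}{d+1}\binom{d+1}{i}S_i(P,\omega)=\frac{1}{d-i+1}\sum_{e_i}\lambda_i(e_i)\nu_{d-i}(G_P(e_i)\cap\omega)$, and the elementary identity $\frac{d+1}{(d-i+1)\binom{d+1}{i}}=\binom{d}{i}^{-1}$ yields the claimed formula \eqref{eq:area pol}. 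The main obstacle is not any single computation but the careful bookkeeping of the face decomposition: justifying that the brushes partition $P_{(0,\epsilon]}(\omega)$ up to measure zero, that only finitely many faces contribute over compact $\omega$, and that the per-dimension sums are genuine Radon measures, so that the identity extends from compact sets to all Borel $\omega$.
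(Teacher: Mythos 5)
Your proposal is correct and follows essentially the same route as the paper: decompose the brush region $P_{(0,\epsilon]}(\omega)$ by the open faces of $P$, show each face $e_i$ contributes $\lambda_i(e_i)\,\nu_{d-i}(\omega\cap G_P(e_i))\,\epsilon^{d+1-i}/(d+1-i)$, and match coefficients with \eqref{eq: mu}. The only difference is cosmetic: where you compute the Jacobian of $(k,N,t)\mapsto k+tN$ directly from the Minkowski-orthogonal splitting of the face direction, the tangent space of $G_P(e_i)$, and $N$, the paper flattens the face by an isometry and applies Fubini together with the cone volume formula \eqref{eq:vol cone}, and it handles the global sum by countable additivity rather than by your reduction to compact $\omega$.
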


\begin{proof}
 Let $e_i$ be an open $i$-face of $P$ and let $ \omega$ be a Borel subset in the relative interior of $G_P(e_i)$. 
 We have
$$V_{\epsilon}(P,\omega)= \lambda_i(e_i)\frac{\nu_{d-i}(\omega)\epsilon^{d+1-i}}{d+1-i}.$$
 Indeed, up to a volume preserving Lorentzian isometry, 
we can suppose that the hyperplane containing $e_i$ is an horizontal hyperplane,
for which the induced metric for the Euclidean or the Lorentzian structure of  $\R^{d+1}$ are the same.
By Fubini Theorem, 
$$V_{\epsilon}(P,\omega)=V((e_i)_{(0,\epsilon]},\omega)=\int_{e_i} V_{d+1-i}(\C(x)_{(0,\epsilon)}(\omega))) 
\mathrm{d}V_{i}(x)$$
where $V_{k}$ is the volume in $\R^{k}$.
The relation \eqref{eq:vol cone} gives that  
$V_{d+1-i}(\C(x)_{(0,\epsilon)}(\omega))) =\frac{\epsilon^{d+1-i}}{d+1-i}\nu_{d-i}(\omega)$, which is independent
of  $x$.

Now, if $e_i$ and $e_j$ are distinct open faces of $P$, then for any $\omega_i\subset G_P(e_i)$
and $\omega_j\subset G_P(e_j)$, for any positive $\epsilon$, the interiors of $P_{(0,\epsilon]}(\omega_i) $
and $P_{(0,\epsilon]}(\omega_j) $
are disjoint. 
On one hand, $V_{\epsilon}(P,\cdot)$ and $\nu_{d-i}$
are measures on $\H^d$.
On the other hand, the cell decomposition of $\H^d$ given by $P$ has a countable number of cells,
and each face is defined as the intersection of a finite number of cells, hence the decomposition
has a countable number of faces. By the property of countable additivity of measures, we get, for any Borel set $\omega\subset \H^d$:
$$V_{\epsilon}(P,\omega)=\sum_{i=0}^d\frac{1}{d+1-i}\sum_{e_i}\lambda_i(e_i) \nu_{d-i}(\omega\cap G_P(e_i))\epsilon^{d+1-i}.$$
The lemma follows by comparing the coefficients with \eqref{eq: mu}. 
\end{proof}

\subsection{The Fuchsian case}

We prove a ``quotiented'' version of
Theorem~\ref{prop:def area}. By the strong analogy between
Fuchsian convex sets and convex bodies, the argument is a straightforward adaptation of 
Chapter 4 of \cite{Sch93}. 

Let $\Gamma$ be a group of hyperbolic isometries, such that $\H^d/\Gamma$ is a compact hyperbolic manifold. Let
$K$ be a Fuchsian convex set (for the group $\Gamma$). Recall that $V_{\epsilon}(K,\cdot)$ is 
then $\Gamma$ invariant. This permits to introduce a canonical projected Radon measure $V_{\epsilon}^{\Gamma}(K,\cdot)$ on the Borel sets of
$\H^d/\Gamma$. Namely, it is the only measure on $\H^d/\Gamma$ such that if $\omega\subset \H/\Gamma$ is a Borel set and $\psi : \omega \to \H^d$ is a measurable section of the covering projection $\pi:\H^d\to\H^d/\Gamma$, then $\psi_\ast(V_{\epsilon}^{\Gamma}(K,\cdot))= V_{\epsilon}(K,\cdot)$, \cite[Section 3.4]{BF}. In particular it satisfies
$$V_{\epsilon}^{\Gamma}(K,\pi(\omega))= V_{\epsilon}(K,\omega)$$ 
each time $\omega$ meets at most once each orbit of $\Gamma$.

Let us denote by $\mathcal{K}(\Gamma)$ the set 
of $\Gamma$-F-convex sets.
Recall that for $K,K'\in\mathcal{K}(\Gamma)$, the 
Hausdorff distance between them is \cite{FF}
$$ d_H(K,K')=\min\{\lambda \geq 0 |  K'+\lambda K(\H) \subset K,   K+\lambda K(\H)\subset K' \}. $$
If $K\in \mathcal{K}(\Gamma)$,
the \emph{covolume} of $K$,  $\mathrm{covol}_{\Gamma}(K)$, is the volume of $(\F\setminus K)/\Gamma$. 
Note that
\begin{equation}\label{eq:mu et covol}
 V_{\epsilon}^{\Gamma}(K,\H^d/\Gamma)=\mathrm{covol}_{\Gamma}(K_{\epsilon})-\mathrm{covol}_{\Gamma}(K). 
\end{equation}

\begin{lemma}\label{lem:weak conv}
 Let $(K(n))_n$ be a sequence of $\Gamma$-convex sets converging (for $d_H$) to a  $\Gamma$-convex set $K$.
Then $V_{\epsilon}^{\Gamma}(K(n),\cdot)$ weakly converges to $V_{\epsilon}^{\Gamma}(K,\cdot)$.
\end{lemma}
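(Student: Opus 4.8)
The plan is to test the measures against continuous functions and reduce everything to a single dominated-convergence argument on a \emph{fixed} fundamental region. Since $\H^d/\Gamma_0$ is compact, weak convergence of the Radon measures $V_\epsilon^{\Gamma_0}(K(n),\cdot)$ means $\int f\,\d V_\epsilon^{\Gamma_0}(K(n),\cdot)\to\int f\,\d V_\epsilon^{\Gamma_0}(K,\cdot)$ for every $f\in C^0(\H^d/\Gamma_0)$. I would lift $f$ to a $\Gamma_0$-invariant $\tilde f\in C^0(\H^d)$ and, unravelling the definition of $V_\epsilon^{\Gamma_0}$ as a pushforward of Lebesgue measure, rewrite each integral as a volume integral over the collar $K_{(0,\epsilon]}$. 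The point is that the integrand $p\mapsto \tilde f(N_K(p))\mathbf{1}_{K_{(0,\epsilon]}}(p)$ is $\Gamma_0$-invariant: indeed $N_K\circ\gamma=\gamma_0\circ N_K$, the collar is $\Gamma_0$-invariant, and Lorentzian isometries preserve $V$ (Remark~\ref{eq:inv Komega}). Hence its integral over any fundamental domain for the linear $\Gamma_0$-action on $\F$ is the same, and I may integrate over the fixed chimney $\hat D=\cone(F)$, with $F\subset\H^d$ a Borel fundamental domain, getting $\int f\,\d V_\epsilon^{\Gamma_0}(K(n),\cdot)=\int_{\hat D}\tilde f(N_{K(n)}(p))\,\mathbf{1}_{K(n)_{(0,\epsilon]}}(p)\,\d V(p)$, an integral over a domain no longer depending on $n$.

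The key analytic input is the behaviour of cosmological time and of the normal field under $d_H$-convergence. From the support-function description of the Hausdorff distance (see \cite{FF}) one has $d_H(K(n),K)=\sup_{\H^d}|h_{K(n)}-h_K|$, so $h_{K(n)}\to h_K$ uniformly. Writing the cosmological time as $T_K(p)=\inf_{\eta\in\H^d}\bigl(h_K(\eta)-\langle p,\eta\rangle_-\bigr)$, which is just the equivalence $p\in K+tB\iff t\le h_K(\eta)-\langle p,\eta\rangle_-$ for all $\eta$ (using $h_B=-1$), two facts become transparent: $T_K$ is concave in $p$ as an infimum of affine functions, and $|T_{K(n)}(p)-T_K(p)|\le\sup_{\H^d}|h_{K(n)}-h_K|\to 0$ uniformly in $p$. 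Since each $T_K$ is $C^1$ on the interior with $N_K=-\grad T_K$ (Subsection~\ref{subsub:brush}), Rockafellar's theorem on convergence of gradients of convex functions \cite[25.7]{Roc97}, applied to the concave functions $-T_{K(n)}$ converging to $-T_K$, yields $N_{K(n)}(p)\to N_K(p)$ at every interior point $p$.

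With these two convergences in hand I would conclude by dominated convergence on $\hat D$. At every $p$ with $0<T_K(p)<\epsilon$ one has, for $n$ large, $p$ in the interior of $K(n)$ with $0<T_{K(n)}(p)<\epsilon$, so the indicators $\mathbf{1}_{K(n)_{(0,\epsilon]}}$ converge pointwise to $\mathbf{1}_{K_{(0,\epsilon]}}$ off the Lebesgue-null set $\{T_K=\epsilon\}\cup\partial K$, while $\tilde f(N_{K(n)}(p))\to\tilde f(N_K(p))$ by continuity of $\tilde f$. For the dominator, set $L:=K+\epsilon B$; the support-function inequalities give $T_K(p)\le T_{K(n)}(p)+d_H(K(n),K)$ and $T_L=T_K-\epsilon$, whence $K(n)_{(0,\epsilon]}\subset L_{(0,2\epsilon]}$ for all large $n$ (up to a null set). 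Finally $V\bigl(L_{(0,2\epsilon]}\cap\hat D\bigr)=\covol_{\Gamma_0}(L_{2\epsilon})-\covol_{\Gamma_0}(L)<\infty$ by \eqref{eq:mu et covol}, so $\|f\|_\infty\,\mathbf{1}_{L_{(0,2\epsilon]}\cap\hat D}$ is an integrable dominator independent of $n$, and dominated convergence gives $\int f\,\d V_\epsilon^{\Gamma_0}(K(n),\cdot)\to\int f\,\d V_\epsilon^{\Gamma_0}(K,\cdot)$, i.e.\ the asserted weak convergence.

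The one genuinely delicate step is the pointwise convergence $N_{K(n)}\to N_K$ of the normal fields: it is exactly here that the concavity of cosmological time and its $C^1$-regularity on the interior are needed, so that \cite[25.7]{Roc97} upgrades the mere uniform convergence of the support functions (equivalently, of $T$) to convergence of their gradients. Everything else — the reduction to the fixed chimney $\hat D$ and the construction of a uniform dominator — is bookkeeping resting on the $\Gamma_0$-invariance of the integrand and on the elementary comparison of support functions under $d_H$.
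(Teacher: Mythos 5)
Your route is genuinely different from the paper's. The paper verifies the Portmanteau conditions directly: convergence of the total masses via \eqref{eq:mu et covol} and continuity of the covolume, and $\operatorname{Liminf}_n V_{\epsilon}^{\Gamma_0}(K(n),\omega)\geq V_{\epsilon}^{\Gamma_0}(K,\omega)$ for open $\omega$ by a hands-on geometric estimate showing that every point of $K_{(0,\epsilon)}(\tilde\omega)$ eventually belongs to $K(n)_{(0,\epsilon]}(\tilde\omega)$. You instead test against continuous functions, unfold to a fixed fundamental chimney, and combine pointwise convergence of the normal fields with dominated convergence. Most of this is sound: the formula $T_K(p)=\inf_{\eta\in\H^d}\bigl(h_K(\eta)-\langle p,\eta\rangle_-\bigr)$, its concavity, the bound $|T_{K(n)}-T_K|\leq d_H(K(n),K)$, and the appeal to \cite[25.7]{Roc97} (the same tool the paper uses in Lemma~\ref{lem: approx}) are all correct, provided you note that for $n$ large all the $T_{K(n)}$ are finite and concave on a \emph{fixed} neighborhood of any point where $T_K>0$.

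The domination step, however, fails as written. Since $B\subset\overline{\F}$, the set $L:=K+\epsilon B$ is a \emph{future} (inward) thickening, $L\subset K$, and $T_L=T_K-\epsilon$ gives $L_{(0,2\epsilon]}=\{p:\epsilon<T_K(p)\leq 3\epsilon\}$. This set is disjoint from $\{0<T_K\leq\epsilon\}$, which is precisely where the bulk of $K(n)_{(0,\epsilon]}$ lies: any $p$ with $T_{K(n)}(p)=\epsilon/2$ has $T_K(p)$ close to $\epsilon/2$, hence $T_L(p)<0$ and $p\notin L_{(0,2\epsilon]}$. So the claimed inclusion $K(n)_{(0,\epsilon]}\subset L_{(0,2\epsilon]}$ is false; you needed to thicken $K$ toward the \emph{past}. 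The repair is routine: for $d_H(K(n),K)\leq\epsilon$ one has $h_{K(n)}\leq h_K+\epsilon$, whence $K(n)\subset\{T_K\geq-\epsilon\}$, and $K+2\epsilon B\subset K+(d_H+\epsilon)B\subset K(n)+\epsilon B$, whence $K(n)_{(0,\epsilon]}\subset\{T_K\geq-\epsilon\}\setminus\inn(K+2\epsilon B)$. Intersected with the chimney $\hat D\subset\F$ this lies in $(\F\setminus\inn(K+2\epsilon B))\cap\hat D$, whose volume is $\covol_{\Gamma_0}(K+2\epsilon B)<+\infty$, so $\|f\|_\infty$ times its indicator is an admissible integrable dominator independent of $n$. (Alternatively, keep your pointwise convergences and apply the generalized dominated convergence theorem with $h_n=\|f\|_\infty\mathbf{1}_{K(n)_{(0,\epsilon]}\cap\hat D}$, whose integrals converge by \eqref{eq:mu et covol} and continuity of the covolume.) With either repair your argument goes through.
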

\begin{proof}
We have to prove that
\begin{enumerate}[nolistsep]
\item  $V_{\epsilon}^{\Gamma}(K(n),\H^d/\Gamma)$ converges to  $V_{\epsilon}^{\Gamma}(K,\H^d/\Gamma)$,
 \item for any open set $\omega$ of $\H^d/\Gamma$ then $\operatorname{Liminf}_{n\rightarrow +\infty}
V_{\epsilon}^{\Gamma}(K(n),\omega)\geq V_{\epsilon}^{\Gamma}(K,\omega)$.
\end{enumerate}

Note that $K_{\epsilon}(n)=K(n)+\epsilon K(\H)$ so by continuity of the Minkowski addition,
$K_{\epsilon}(n)$ converges to  $K_{\epsilon}$. By continuity of the covolume \cite{FF}, 
the first point follows from \eqref{eq:mu et covol}. Let us prove the second point.
Let $\omega$ be an open set of $\H^d/\Gamma$,  $\tilde{\omega}$ be any of its lift
and let $x\in K_{(0,\epsilon)}(\tilde{\omega})$.

\emph{Fact: for $n$ sufficiently large, $x\in K(n)_{(0,\epsilon]}(\tilde{\omega})$. }

Let us suppose that the Hausdorff distance between $K$ and $K(n)$ is $\delta$,
the orthogonal projection of $x$ onto $\partial K$ is $p$ and $d_L(x,p)=t<\epsilon$.  Let us denote by
$\eta \in\tilde{\omega}$ the vector $(x-p)/t$. As $K+\delta K(\H)\subset K(n)$, the point
$q=p+\delta\eta$ belongs to $K(n)$.  We can suppose that $\delta$ is  small enough so that 
 $\delta < t $
and then $x=p+t\eta$ belongs to $K(n)$. We denote by 
$p_n$ the orthogonal projection of $x$ onto $\partial K(n)$.
By maximization property, 
$d_L(p_n,x)\geq d_L(x,q)=t-\delta$. Note that $p$ and $p_n$ are both in the past cone of $x$.
Up to a translation  we can suppose
that $x=0$. The last equation writes $\|p_n\|_-\geq t-\delta $.
The property  $K(n)+\delta K(\H)\subset K$  implies
$\langle p_n,\eta \rangle_-\leq H_K(\eta)+\delta$ with $H_K$ the extended support function of $K$,
that can be written $\langle p_n,\eta \rangle_-\leq  t+\delta$. 

 We want to show that 
$-p_n/\|p_n\|_-$ is arbitrary close to $\eta$ if $n$ is sufficiently large (recall that
$p_n$ is a past vector), i.e.~that 
$\cosh d_{\H^d}(-p_n/\|p_n\|_-,\eta)$ is close to $1$, i.e.~that
$\langle p_n/\|p_n\|_-,\eta\rangle_- $ is close to $1$. But
$$ \frac{\langle p_n,\eta\rangle_-}{\|p_n\|}\leq \frac{t+\delta}{t-\delta} $$
that goes to $1$ when $\delta$ goes to $0$. On the other hand,
$\|p_n\|_- \leq \langle p_n,\eta\rangle_- $ as it can be easily checked. 
As $\tilde{\omega}$ is open, for $n$ sufficiently large $-p_n/\|p_n\|_-\in \tilde{\omega}$.
Moreover $$\|p_n\|_- \leq \langle p_n,\eta\rangle_-\leq t+\delta  $$ 
that is less than $\epsilon$ if $\delta$ is sufficiently small because $t<\epsilon$,
so $d_L(p_n,x)=\|p_n\|_- <\epsilon$. The fact is proved.

The fact says that $K_{(0,\epsilon)}(\tilde{\omega})\subset \operatorname{Liminf}_{n}K(n)_{(0,\epsilon)}(\tilde{\omega})$,
hence $$V(K_{(0,\epsilon)}(\tilde{\omega}))\leq V( \operatorname{Liminf}_{n}K(n)_{(0,\epsilon)}(\tilde{\omega}))
\leq \operatorname{Liminf}_{n}V(K(n)_{(0,\epsilon)}(\tilde{\omega}))$$ that implies point 2 because the boundary of a 
convex set has zero Lebesgue measure.
\end{proof}

\begin{lemma}\label{lem: conv pol}
 Let $K$ be a $\Gamma$ convex set. Then there exists a sequence of 
$\Gamma$-convex polyhedra converging to $K$.
\end{lemma}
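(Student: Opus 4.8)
The plan is to approximate $K$ from the inside by $\Gamma_0$-invariant \emph{future convex hulls of finitely many orbits}, in direct analogy with the polytopal approximation of convex bodies in Chapter~4 of \cite{Sch93}. The first step is to recast convergence for $d_H$ as uniform convergence of support functions. Since $h_B=-1$, the conditions $K'+\lambda B\subset K$ and $K+\lambda B\subset K'$ read $h_{K'}-h_K\leq\lambda$ and $h_K-h_{K'}\leq\lambda$ on $\H^d$; hence
\begin{equation*}
d_H(K,K')=\sup_{\H^d}|h_K-h_{K'}|,
\end{equation*}
and because the support functions of elements of $\mathcal K(\Gamma_0)$ are $0$-equivariant, this supremum is finite and equals the supremum over a compact fundamental domain. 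So it suffices to produce $\Gamma_0$-convex polyhedra $P_n$ with $\sup_{\H^d}|h_K-h_{P_n}|\to 0$.

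I would build each $P_n$ as follows. If $K=\overline{\F}$ take $P_n=\overline{\F}$ (a future cone of a point, which is already a F-convex polyhedron); otherwise $K\subset\F$ by Remark~\ref{rem: restr fuchs}, so $\partial_sK\subset\F$. Fix a compact fundamental domain $D$ for $\Gamma_0$ in $\H^d$ and $\epsilon>0$. For each $\eta\in D$ choose a supporting point $x_\eta\in\partial_sK$ with $\langle x_\eta,\eta\rangle_-=h_K(\eta)$ (such a point exists by the definition of F-convex set). The map $\eta'\mapsto\langle x_\eta,\eta'\rangle_--h_K(\eta')$ is continuous and vanishes at $\eta$, so it exceeds $-\epsilon$ on a neighborhood $U_\eta$. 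By compactness finitely many $U_{\eta_1},\dots,U_{\eta_m}$ cover $D$; I then set $\{p_i\}=\bigcup_{j=1}^m\Gamma_0\,x_{\eta_j}$ and let $P$ be the F-convex set with support function $H_P(\eta)=\max_i\langle\eta,p_i\rangle_-$, which exists by Lemma~\ref{lem: deter Hcs}. Since $\langle\gamma_0\eta,\gamma_0 p\rangle_-=\langle\eta,p\rangle_-$ and the point set is $\Gamma_0$-invariant, $H_P$ is $0$-equivariant, so $P\in\mathcal K(\Gamma_0)$; and since each $p_i\in K$ forces $\langle\eta,p_i\rangle_-\leq h_K(\eta)$ while some $x_{\eta_j}$ lies in $\{p_i\}$ with $\langle x_{\eta_j},\eta\rangle_->h_K(\eta)-\epsilon$ on $U_{\eta_j}$, I get $0\leq h_K-H_P<\epsilon$ on $D$, hence on all of $\H^d$ by equivariance. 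Taking $\epsilon=1/n$ yields the desired sequence.

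The main obstacle is checking that $H_P$ is a legitimate polyhedral support function in the sense of Subsection~\ref{subsub back poly}, i.e.\ that for every $\eta\in\F$ the value $\sup_i\langle\eta,p_i\rangle_-$ is finite and \emph{attained}. Finiteness is immediate since the sup is bounded by $h_K(\eta)$. For attainment I would use that each orbit $\Gamma_0 x_{\eta_j}$ lies on the hyperboloid $\|x_{\eta_j}\|_-\,\H^d$ and, by cocompactness of the $\Gamma_0$-action, is discrete and escapes to infinity; writing an orbit point as $t z$ with $z\in\H^d$, one has $\langle\eta,tz\rangle_-=-t\|\eta\|_-\cosh d_{\H^d}(\eta/\|\eta\|_-,z)\to-\infty$ as $z\to\infty$. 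Thus only finitely many $p_i$ satisfy $\langle\eta,p_i\rangle_-\geq h_K(\eta)-1$, so the supremum is a maximum over a finite set and $\{p_i\}$ defines a genuine $\Gamma_0$-F-convex polyhedron (this is the Fuchsian instance of the convex-hull-of-an-orbit example). Once this technical point is secured, the estimate above finishes the proof.
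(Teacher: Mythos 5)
Your proof is correct and follows essentially the same route as the paper's: cover a compact fundamental domain by finitely many neighborhoods on which a single supporting point approximates $h_K$ to within $\epsilon$, take the $\Gamma_0$-orbit of those points, and let $P$ be the F-convex polyhedron they span via Lemma~\ref{lem: deter Hcs}. Your verification that the supremum $\sup_i\langle\eta,p_i\rangle_-$ is actually attained (via discreteness of the orbits on hyperboloids and $\langle\eta,tz\rangle_-\to-\infty$) is in fact spelled out more carefully than in the paper, which only appeals to local finiteness of the tessellation.
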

\begin{proof}
 Let $\epsilon >0$,  $h$ be the support function of $K$ and $k_i\in \partial_s K$.
 There exists $\eta\in\H^d$ such that
$\langle k_i,\eta\rangle_-=h(\eta)$. By continuity there exists an open neighborhood $V_i$ of
$\eta$ in $\H^d$ such that $|\langle k_i,\eta' \rangle_--h(\eta')|<\epsilon,\, \forall \eta'\in V_i$.
By cocompactness of $\Gamma$, there exists a finite number of neighborhood $V_i$ as above
such that $\{\Gamma V_i\}$ covers $\H^d$. 
The associated set of points $\{\Gamma k_i\}$ is discrete as  discrete orbits of a finite number of points.

Let us introduce $h_{\epsilon}(\eta)=\operatorname{max}_i\langle k_i,\eta\rangle_-$.
It is easy to see that if $\eta\in V_i$ and $\eta\notin V_j$, then $\langle \eta,k_j\rangle_-<\langle \eta,k_i\rangle_-$.
Moreover each $\eta$ belongs to a finite number of $V_i$ (the tessellation of $\H^d$ by fundamental domains for 
$\Gamma$ is locally finite), hence $h_{\epsilon}$ is well defined.
It is also clearly $\Gamma$ invariant, hence it is the support function 
of  a $\Gamma$ convex polyhedron and by construction, on $\H^d$, $|h_{\epsilon}(\eta)-h(\eta) |<\epsilon$.
\end{proof}

\begin{proposition}\label{prop: area cas fuchs}
 Let $K$ be a $\Gamma$ convex set.
There exists finite  Radon measures $S^{\Gamma}_0(K,\cdot),\ldots,S^{\Gamma}_d(K,\cdot)$
on $\H^d/\Gamma$ such that, for any Borel set $\omega$ of $\H^d/\Gamma$ and any $\epsilon >0$,
\begin{equation}\label{eq: mu fuchs}
V_{\epsilon}^{\Gamma}(K,\omega)=\frac{1}{d+1}\sum_{i=0}^d \epsilon^{d+1-i}\binom{d+1}{i} S_i^{\Gamma}(K,\omega),
\end{equation}
and $S_0^{\Gamma}(K,\cdot)$ is given by the volume form on $\H^d/\Gamma$.

Moreover, if $K(n)$ converges to $K$, then $S_i^{\Gamma}(K(n),\cdot)$
weakly converges to $S_i^{\Gamma}(K,\cdot)$.
\end{proposition}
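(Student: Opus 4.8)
The plan is to establish existence of the area measures $S_i^{\Gamma_0}(K,\cdot)$ together with the weak continuity statement by approximating an arbitrary $\Gamma_0$-convex set $K$ with $\Gamma_0$-convex polyhedra, for which the polynomial formula \eqref{eq: mu fuchs} is already essentially known from the polyhedral computation. The whole argument is the Fuchsian counterpart of Chapter~4 of \cite{Sch93}, so I would keep the classical structure but replace compactness of $\mathbb{S}^d$ by cocompactness of $\Gamma_0$ and Lebesgue volume by the covolume functional.

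First I would reduce the polynomiality to the continuity of the coefficient functionals. By Lemma~\ref{lem: conv pol} choose a sequence $K(n)$ of $\Gamma_0$-convex polyhedra with $K(n)\to K$ for $d_H$. For each polyhedron, the quotiented version of Lemma~\ref{lem: aire poly} gives finite Radon measures $S_i^{\Gamma_0}(K(n),\cdot)$ on $\mathbb H^d/\Gamma_0$ satisfying \eqref{eq: mu fuchs}; finiteness is where cocompactness enters, since a fundamental domain meets only finitely many faces by local finiteness of the decomposition. I would then fix a Borel set $\omega$ (first a relatively compact open one) and use Lemma~\ref{lem:weak conv}, which already gives that $V_{\epsilon}^{\Gamma_0}(K(n),\cdot)$ converges weakly to $V_{\epsilon}^{\Gamma_0}(K,\cdot)$, to pass to the limit in the $\epsilon$-polynomial identity. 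Since each $V_{\epsilon}^{\Gamma_0}(K(n),\omega)$ is a polynomial in $\epsilon$ of degree $d+1$ with a fixed top coefficient (the volume form on $\mathbb H^d/\Gamma_0$ always contributes $S_0^{\Gamma_0}=\mathrm{d}(\mathbb H^d/\Gamma_0)$), convergence at $d+2$ distinct values of $\epsilon$ forces convergence of every coefficient. This yields well-defined limit functionals
\[
S_i^{\Gamma_0}(K,\omega):=\lim_{n\to\infty}S_i^{\Gamma_0}(K(n),\omega),
\]
independent of the chosen approximating sequence because the limiting polynomial $\epsilon\mapsto V_{\epsilon}^{\Gamma_0}(K,\omega)$ depends only on $K$.

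Next I would upgrade each $S_i^{\Gamma_0}(K,\cdot)$ from a set function to a genuine Radon measure. The clean route is the Riesz representation theorem: for a continuous compactly supported $f\geq 0$ on $\mathbb H^d/\Gamma_0$, integrate $f$ against $V_{\epsilon}^{\Gamma_0}(K,\cdot)$, observe from \eqref{eq: mu fuchs} applied at the level of integrals (which holds for simple functions and extends by monotone convergence) that $\int f\,\mathrm{d}V_{\epsilon}^{\Gamma_0}(K,\cdot)$ is again a polynomial in $\epsilon$, and read off the $i$-th coefficient as a positive linear functional $I_i(f)$. Positivity is inherited from positivity of each $V_{\epsilon}^{\Gamma_0}(K(n),\cdot)$ and the weak convergence. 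Riesz then produces a unique Radon measure representing $I_i$, and by construction it agrees with $S_i^{\Gamma_0}(K,\cdot)$ on Borel sets; finiteness follows because $\mathbb H^d/\Gamma_0$ is compact and $V_{\epsilon}^{\Gamma_0}(K,\mathbb H^d/\Gamma_0)$ is finite by \eqref{eq:mu et covol} and continuity of the covolume.

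The weak continuity statement for general $K(n)\to K$ is then the main remaining point, and I expect it to be the genuine obstacle rather than a formality, since a priori the limit measures were only built through one particular polyhedral sequence. The argument I would run is the standard "convergence of polynomials implies convergence of coefficients" trick, now applied directly to the approximating data: for each $n$ fix the already-established $\epsilon$-polynomial $V_{\epsilon}^{\Gamma_0}(K(n),\omega)$, invoke Lemma~\ref{lem:weak conv} to get weak convergence of $V_{\epsilon}^{\Gamma_0}(K(n),\cdot)$ to $V_{\epsilon}^{\Gamma_0}(K,\cdot)$ at each fixed $\epsilon$, and then use that the coefficients of a polynomial depend continuously (indeed linearly, via a Vandermonde inversion) on its values at finitely many nodes. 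Evaluating at $d+2$ fixed values of $\epsilon$ converts weak convergence of the $V_{\epsilon}^{\Gamma_0}(K(n),\cdot)$ into weak convergence of each coefficient measure $S_i^{\Gamma_0}(K(n),\cdot)$ to $S_i^{\Gamma_0}(K,\cdot)$, tested against an arbitrary fixed $f\in C_c$. The delicate step to write carefully is the interchange of the weak limit (in the measure variable) with the polynomial-coefficient extraction (in $\epsilon$): this is legitimate precisely because only finitely many evaluation points are involved, so no uniformity in $\epsilon$ is needed.
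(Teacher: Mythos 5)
Your proposal follows the same route as the paper: polyhedral approximation (Lemma~\ref{lem: conv pol}), weak convergence of the volume measures (Lemma~\ref{lem:weak conv}), extraction of the coefficients by polynomial interpolation, and the Riesz representation theorem, so in substance the two arguments coincide. The one step that does not work as literally written is the intermediate definition $S_i^{\Gamma_0}(K,\omega):=\lim_n S_i^{\Gamma_0}(K(n),\omega)$ for a fixed Borel (even relatively compact open) set $\omega$: weak convergence of $V_{\epsilon}^{\Gamma_0}(K(n),\cdot)$ to $V_{\epsilon}^{\Gamma_0}(K,\cdot)$ does not give convergence of the values on a fixed set unless the limit measure does not charge $\partial\omega$, so that limit need not exist and cannot be used to prove sequence-independence. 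Your subsequent Riesz step, which works with $\int f\,\mathrm{d}V_{\epsilon}^{\Gamma_0}(K(n),\cdot)$ for continuous $f$, is the correct substitute and is where the definition should actually be made. The paper avoids the issue more economically: the Vandermonde inversion on polyhedra produces fixed coefficients $a_{im}$, and $S_i^{\Gamma_0}(K,\cdot)$ is then \emph{defined} for arbitrary $K$ as the signed measure $\sum_m a_{im}V_{n_m}^{\Gamma_0}(K,\cdot)$ built from $K$ itself, with polyhedral approximation used only afterwards to check positivity and the identity \eqref{eq: mu fuchs} by testing against continuous functions and invoking uniqueness in the Riesz theorem; this organization also makes the final weak-continuity claim immediate, since each $S_i^{\Gamma_0}$ is a fixed finite linear combination of the weakly continuous functionals $K\mapsto V_{n_m}^{\Gamma_0}(K,\cdot)$.
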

\begin{proof}
 If $P$ is a $\Gamma$ Fuchsian polyhedron, then \eqref{eq: mu fuchs} is a consequence
of \eqref{eq:area pol}, applied to any lifting of $\omega$.
By polynomial interpolation,
for $d+1$ distinct reals numbers $n_0,\ldots,n_{d}$,  \eqref{eq: mu fuchs}  applied with $\epsilon=n_i$ can be considered as
a solvable system of $d+1$ linear equations with unknowns
$S_0^{\Gamma}(P,\omega),\ldots,S_d^{\Gamma}(P,\omega)$. So there exists
real numbers $a_{im}$ with
$$S_i^{\Gamma}(P,\omega)=\sum_{m=0}^{d}a_{im}V_{n_m}^{\Gamma}(P,\omega).$$
Now let $K$ be any $\Gamma$-convex set. We define 
$$S_i^{\Gamma}(K,\cdot):=\sum_{m=0}^{d}a_{im}V_{n_m}^{\Gamma}(K,\cdot). $$
Clearly $S_i^{\Gamma}(K,\cdot)$ is a finite signed Radon measure on
$\H^d/\Gamma$. From Lemma~\ref{lem: conv pol} we can consider a sequence $P(n)$ of
$\Gamma$-convex polyhedra converging to $K$, and from Lemma~\ref{lem:weak conv},
for any continuous function  $f$ on $\H^d/\Gamma$,
$$\int_{\H^d/\Gamma} f\operatorname{d}S_i^{\Gamma}(P(n),\cdot) \rightarrow 
\int_{\H^d/\Gamma} f\operatorname{d}S_i^{\Gamma}(K,\cdot).$$
It follows that  $f\mapsto \int_{\H^d/\Gamma} f\operatorname{d}S_i^{\Gamma}(K,\cdot)$
is a positive linear functional, hence  $S_i^{\Gamma}(K,\cdot)$ is a Radon measure. 

The statement about weak convergence is clear. Using again polyhedral approximation and the fact that
 \eqref{eq: mu fuchs} is true in the polyhedral case, we see that the functionals on the continuous
functions of $\H^d/\Gamma$ given by integrating with respect to each side of  \eqref{eq: mu fuchs} are equal, hence
the measures are equal by the uniqueness part of the Riesz representation theorem.
We also get the remark on $S_0^{\Gamma}$ from Lemma~\ref{lem: aire poly}.
\end{proof}

\begin{remark}[\textbf{A Steiner formula}]\label{rem:steiner}{\rm 
Let us introduce
$$W_i^{\Gamma}(K)=\frac{1}{d+1}S_{d+1-i}^{\Gamma}(K,\hG)$$
and $W_0^{\Gamma}(K):=\operatorname{covol}_{\Gamma}(K)$. They are
the \emph{$ \Gamma$-quermass integrals} of $K$. Then  \eqref{eq: mu fuchs}  gives the following 
Steiner formula for $ \Gamma$ convex sets:
$$V_{\epsilon}(K)=\sum_{i=1}^{d+1}\epsilon^i \binom{d+1}{i} W_i(K).$$

Note that $S_0^{\Gamma}(K,\H^d/\Gamma)=(d+1)W_{d+1}^{\Gamma}(K)$ is nothing but the volume of $\H^d/\Gamma$,
which is itself related to the Euler characteristic of $\H^d/\Gamma$ if $d$ is even by the Gauss--Bonnet formula \cite{Rat06}.
In the compact Euclidean case, up to a dimensional constant the quermass integrals are the intrinsic volumes, and their sum  
has an integral representation known as Wills functional, see \emph{e.g.}~\cite{kam69}.
}\end{remark}

\begin{remark}[\textbf{Mixed-area}]{\rm  Recall that $\mathcal{K}(\Gamma)$ is the set 
of $\Gamma$-convex sets. 
The \emph{mixed-covolume} $\mathrm{covol}(\cdot,\ldots,\cdot)$ is the unique symmetric $(d+1)$-linear form
on $\mathcal{K}(\Gamma)$, continuous on each variable, such that \cite{FF}
$$\mathrm{covol}(K,\ldots,K)=\mathrm{covol}(K).$$  
For given $K_1,\ldots,K_d\in \mathcal{K}(\Gamma)$, we get an additive functional
$$\mathrm{covol}(\cdot,K_1,\ldots,K_d) : \mathcal{K}(\Gamma)\rightarrow \R, K\mapsto \mathrm{covol}(K,K_1,\ldots,K_d). $$

If we identify the $\Gamma$-convex sets
with their  support functions, we can consider $\mathcal{K}(\Gamma)$ as a subset of $C^0(\H^d/\Gamma)$, 
the set of continuous functions on 
$\H^d/\Gamma$.
Following the classical arguments of the compact case \cite{Ale37},
one can show that 
 $\mathrm{covol}(\cdot,K_1,\ldots,K_d)$ can be  extended to a positive linear functional on $C^0(\H^d/\Gamma)$.
The  first step is to extend $\mathrm{covol}(\cdot,K_1,\ldots,K_d)$
to the subset of $C^0(\H^d/\Gamma)$ of functions which are difference of 
support functions: if $Z=h_1-h_2$ where $h_1$ and $h_2$ are support functions of
$\Gamma$ convex sets, then we define
$$\mathrm{covol}(Z,K_1,\ldots,K_d)=\mathrm{covol}(h_1,K_1,\ldots,K_d)-\mathrm{covol}(h_2,K_1,\ldots,K_d).$$
By the Stone--Weierstrass theorem, any continuous function on $\H^d/\Gamma$ can be uniformly approximated by a
$C^2$ function. Moreover
 any $C^2$  function $Z$ on $\H^d/\Gamma$ is the difference of two support functions:
for $t$ sufficiently large, $Z+t$ satisfies \eqref{eq:hess supp}. Hence 
any continuous function on $\H^d/\Gamma$ can be uniformly approximated by the difference of two support functions.
From this it can be checked that $\mathrm{covol}(\cdot,K_1,\ldots,K_d)$ can be  extended to 
$C^0(\H^d/\Gamma)$ with the required properties.

By the  Riesz representation theorem  there exists 
a unique Radon measure on $\H^d/\Gamma$, the \emph{mixed-area measure}, denoted by $S(K_1,\ldots,K_d;\cdot)$, 
such that, for any $f\in C^0(\H^d/\Gamma)$,
$$\mathrm{covol}(f,K_1,\ldots,K_d)=-\frac{1}{d+1}\int_{\mathbb{H}^d/\Gamma} f(u) \mathrm{d}S(K_1,\ldots,K_d;u). $$ 

The mixed-area measures are generalization of the area measures in the Fuchsian case.
Let us sketch the proof of this fact. Following \cite{FJ38}, p.~29, one can prove 
that 
$$S(K,\cdots,K;\omega)= \underset{\epsilon\downarrow 0}{\mathrm{lim}}\frac{V_{\epsilon}(K,\omega)}{\epsilon}$$

It is clear that $K_{(0,\epsilon+t]}(\omega) $ is the disjoint union 
of         $K_{(0,\epsilon]}(\omega) $ and of $(K_{\epsilon})_{(0,t]}(\omega)$, in particular
$$V_{\epsilon+t}(K,\omega)=V_{t}(K,\omega)+ V_{\epsilon}(K_t,\omega)$$
 hence
the equation above can be written
$$\underset{\epsilon\downarrow 0}{\mathrm{lim}}
\frac{V_{\epsilon+t}(K,\omega)-V_{t}(K,\omega)}{\epsilon}=S(K+tK(\H),\cdots,K+tK(\H),\omega), $$
 with $K(\H)$ the $\Gamma$-convex set bounded by  $\H^d$,  in other terms
$$
 S(K+tK(\H),\cdots,K+tK(\H),\omega)=\frac{d}{d\epsilon}\left(V_{\epsilon}(K,\omega) \right)(t).
$$

On the other hand, 
by properties of the mixed-covolume, $S(K_1,\ldots,K_d;\cdot)$ is linear in each variable,
in particular,
 $$S(K+t K(\H),\ldots,K+t K(\H);\cdot)=\sum_{i=0}^d t^{d-i} \binom{d}{i} S(\underbrace{K,\ldots,K}_{i},K(\H),\ldots,K(\H);\cdot), $$

Integrating the two equations above between $0$ and $\epsilon$ with respect to 
$t$ leads to

$$V_{\epsilon}(K,\omega)=\frac{1}{d+1}\sum_{i=0}^d \epsilon^{d+1-i}\binom{d+1}{i} S(\underbrace{K,\ldots,K}_{i},K(\H),\ldots,K(\H);\omega), $$

Comparing the coefficients with \eqref{eq: mu fuchs} leads to 
$$S(\underbrace{K,\ldots,K}_{i},K(\H),\ldots,K(\H);\cdot)=S_i^{\Gamma}(K,\cdot). $$

}\end{remark}

\begin{remark}\label{rem:meanwidth fuch}{\rm 
 With the notations of Remark~\ref{rem:steiner}

$$W_d(K)=\frac{1}{d+1}S_{1}^{\Gamma}(K,\hG) 
=\int_{\hG}\mathrm{d}S(K,K(\H),\ldots,K(\H))=\covol(K(\H),K,K(\H),\ldots,K(\H))=$$
$$=\covol(K,K(\H),\ldots,K(\H))=-\int_{\hG}h\mathrm{d}S(K(\H),\ldots,K(\H))
=-\int_{\hG}h\mathrm{d}\H^d/\Gamma
$$
(in the $C^2_+$ case, writing the first area measure with the help of the Laplacian --- see \eqref{eq_hn} ---,
it appears that the
formula above is nothing but the Green Formula $\int_{\hG} h\Delta f=\int_{\hG} f\Delta h$ applied to $f=-1$). See also Subsection~\ref{sub mean width}.
 }\end{remark}

\begin{remark}[\textbf{Mean radius of curvature and Hessian of the covolume}]{\rm 
As in the compact Euclidean case, the Hessian of the covolume 
of  $C^{\infty}_+$ Fuchsian convex sets, at the point $K(\H)$, is $( S_1(\cdot),\cdot)$, where
$(\cdot,\cdot)$ is the $L^2$ scalar product on $\H^d/\Gamma$, see
 \cite{FF} --- it acts on the space of $C^{\infty}$ functions on  $\H^d/\Gamma$, i.e.~on the space of 
 $C^{\infty}$ $\Gamma$-hedgehogs.
 }\end{remark}

\subsection{Fuchsian extension}\label{subsub: fuch ext}

\begin{lemma}\label{lem: fuchs ext}
 Let $K$ be an F-convex set and $\omega\subset\H^d$ be a bounded Borel set.
Up to a translation, there exists a Fuchsian convex set $\tilde{K}_{\omega}$ such that, 
for any subset $\omega'$ of $\omega$, 
$$K(\omega')=\tilde{K}_{\omega}(\omega').$$
\end{lemma}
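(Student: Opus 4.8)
The plan is to realize the relevant compact piece of $\partial_s K$ as a genuine part of the boundary of a Fuchsian convex set, choosing both the group and the translation so that the periodic copies forced by invariance never interfere over $\omega$. Write $Y=K(\bar\omega)=G_K^{-1}(\bar\omega)$; since $\bar\omega$ is compact, $Y$ is a compact subset of $\partial_s K$ (Subsection~\ref{subsub:brush}). First I would fix a cocompact group $\Gamma_0$ of linear isometries of $\R^{d+1}$ together with a point $v\in\H^d$ whose open Dirichlet domain $D_v=\{\eta\in\H^d : \dist(\eta,v)<\dist(\eta,\gamma_0 v)\ \forall\gamma_0\neq 1\}$ contains $\bar\omega$. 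This is possible because there exist cocompact lattices whose injectivity radius exceeds the diameter of $\bar\omega$, and after conjugating $\Gamma_0$ by a suitable isometry (which only replaces it by another Fuchsian group) one may arrange $\bar\omega\subset D_v$. Then I translate $K$ deep into the future along $v$, replacing $K$ by $K+\{sv\}$ for $s$ large; this is the translation allowed by the statement, and it is harmless since $K(\omega')$ merely gets translated. After this translation $Y':=Y+sv$ is a compact set of future time-like vectors with $\|k'\|_-$ bounded below.

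Next I would set
\[
H(\eta)=\sup_{\gamma_0\in\Gamma_0}\ \max_{k'\in Y'}\langle \gamma_0 k',\eta\rangle_-,\qquad \eta\in\F,
\]
and define $\tilde K_\omega$ as the F-convex set it supports. Two verifications are routine. First, $H$ is finite and attained on all of $\F$: for future time-like $\eta$ one has $\langle\gamma_0 k',\eta\rangle_-=-\|k'\|_-\cosh\dist(\widehat{\gamma_0 k'},\eta)$, which is bounded above and tends to $-\infty$ as $\gamma_0\to\infty$, so only finitely many $\gamma_0$ compete and each $\gamma_0 Y'$ is compact; thus $H$ is sublinear and Lemma~\ref{lem: deter Hcs} produces $\tilde K_\omega$. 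Second, $H$ is $0$-equivariant, because $\Gamma_0 Y'$ is $\Gamma_0$-invariant and $\Gamma_0$ acts by isometries, so $\tilde K_\omega$ is Fuchsian; and by construction $\Gamma_0 Y'\subset\tilde K_\omega$.

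The heart of the proof, and the step I expect to be the main obstacle, is the domination claim: for every $\eta\in\omega$, every $\gamma_0\neq 1$ and every $k'\in Y'$ one has $\langle \gamma_0 k',\eta\rangle_-<\max_{\tilde k\in Y'}\langle \tilde k,\eta\rangle_-$. For the finitely many $\gamma_0$ whose translate $\gamma_0 v$ is near $\bar\omega$ I would write $k'=k_0'+sv$ and reduce the inequality to
\[
s\bigl(\cosh\dist(\gamma_0 v,\eta)-\cosh\dist(v,\eta)\bigr)>\langle\gamma_0 k_0',\eta\rangle_--\max_{k_0\in Y}\langle k_0,\eta\rangle_-,
\]
where the coefficient of $s$ is bounded below by a positive constant \emph{precisely because} $\bar\omega\subset D_v$, while the right-hand side is bounded above on the relevant compacta, so a large $s$ settles these cases. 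For all remaining $\gamma_0$ the quantity $\langle\gamma_0 k',\eta\rangle_-$ tends to $-\infty$ uniformly in $(k',\eta)$ while $\max_{Y'}\langle\cdot,\eta\rangle_-$ stays bounded below, so the inequality holds for free. Granting domination, for each $\eta\in\omega$ the maximizers of $H(\eta)$ lie exactly in $Y'$ and coincide with the contact set of $K+\{sv\}$ at $\eta$ (note the argument is phrased through contact sets and maximizers, so the non-smooth case is covered automatically); hence $G_{\tilde K_\omega}^{-1}(\{\eta\})=(K+\{sv\})(\{\eta\})$. Taking unions over $\eta\in\omega'$ and recalling $K(\omega')=\bigcup_{\eta\in\omega'}G_K^{-1}(\{\eta\})$ yields $\tilde K_\omega(\omega')=(K+\{sv\})(\omega')$ for every $\omega'\subset\omega$, which is exactly the assertion up to the translation by $sv$.
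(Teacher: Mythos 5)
Your construction is genuinely different from the paper's. The paper builds $\tilde K_\omega$ \emph{dually}: it fixes a compact $\boldsymbol{\omega}$ containing $\omega$ in its interior, translates so that $K(\boldsymbol{\omega})\subset\F$, and defines $\tilde K_\omega$ as the intersection of the future sides of the $\Gamma_0$-orbit of the support half-spaces of $K$ over $\boldsymbol{\omega}$; the delicate point (that no \emph{new} contact points appear over $\stackrel{\circ}{\boldsymbol{\omega}}$) is then handled by a one-sided directional-derivative argument on the support function. You build it \emph{primally}, as the future convex hull of the $\Gamma_0$-orbit of the translated contact set $Y'$, and the delicate point becomes a separation ("domination") estimate. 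Your domination estimate is essentially the paper's "Fact" reinforced by the extra leverage of the translation $sv$ into a Dirichlet domain, and it is sound; just be careful with the order of quantifiers, since $Y'$ and hence the decay estimate for distant $\gamma_0$ depend on $s$ — the clean fix is to first translate $K$ so that $Y\subset\F$ (so $\langle\gamma_0 k_0,\eta\rangle_-\le 0$ for all $\gamma_0$), and only then choose $s$ to handle the finitely many $\gamma_0$ with $\gamma_0 v$ near $\bar\omega$. A pleasant by-product of your route is that you never need the enlarged compact $\boldsymbol{\omega}$ with $\omega$ in its interior.

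There is, however, one genuine gap, and it sits exactly where the paper spends most of its effort. From domination you may conclude that the maximizers of $\langle\cdot,\eta\rangle_-$ \emph{over the generating set} $\Gamma_0 Y'$ lie in $Y'$ and form $(K+sv)(\{\eta\})$; but $G^{-1}_{\tilde K_\omega}(\{\eta\})$ is the set of maximizers over all of $\tilde K_\omega$, and your "hence" silently identifies the two. A priori the face of $\tilde K_\omega$ at $\eta$ could be strictly larger than the convex hull of the maximizing generators. To close this you need the statement: for $\eta$ time-like, the contact set of the future convex hull of a closed set $A\subset\F$ equals the convex hull of $\{a\in A:\langle a,\eta\rangle_-=\sup_A\langle\cdot,\eta\rangle_-\}$. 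This is provable here, but it uses exactly the ingredients you have set up: write a contact point as a limit of Carath\'eodory combinations $\sum_i\lambda_i^n(y_i^n+\ell_i^n)$ with $y_i^n\in\Gamma_0 Y'$, $\ell_i^n\in\overline{\F}$; use that all summands are future-directed (so $\lambda_i^n y_i^n$ stays bounded and no mass escapes to infinity), and use the \emph{uniform} gap $\sup_{\gamma_0\neq 1,k'}\langle\gamma_0 k',\eta\rangle_-\le H(\eta)-\delta$ (which follows from strict domination plus the fact that only finitely many $\gamma_0$ compete) to force the weight on $(\Gamma_0\setminus\{1\})Y'$ and on the $\ell_i^n$ to vanish in the limit. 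With that paragraph supplied, your proof is complete and is a legitimate alternative to the one in the paper.
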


$\tilde{K}_{\omega}$ is a \emph{$\omega$-Fuchsian extension} of $K$. 

\begin{proof}

Let $ \boldsymbol{\omega}$ be a compact set of $\H^d$ containing $\omega$ in its interior.
 As $K(\boldsymbol{\omega})$ is compact (see Subsection~\ref{subsub:brush}), up to a translation, we 
 suppose $K(\boldsymbol{\omega})\subset \F$. This implies that the support function $h_K$ of $K$ is negative on  $ \boldsymbol{\omega}$
(for $x\in K(\boldsymbol{\omega})$ with support vector $\eta\in \boldsymbol{\omega}$ we have $h_K(\eta)=\langle \eta,x\rangle_-<0)$.
  Let $h_0$ be the infimum of $h_K$  on $ \boldsymbol{\omega}$. Let $B_{\rho}$ be the  closed ball of $\H^d$ of radius $\rho$ centered at $e_{d+1}$.

\emph{Fact:  $\exists \rho>0, \forall x\in K(\boldsymbol{\omega}), 
\forall \eta \in \H^d\setminus B_{\rho},
\langle x,\eta\rangle_-\leq h_0$.}

The condition $\langle x,\eta\rangle_-\leq h_0$ can be written
$$\cosh d_{\H^d}\left(\frac{x}{\|x\|_-},\eta\right)\geq \frac{|h_0|}{\|x\|_-}. $$
As $K(\boldsymbol{\omega})$ is compact  and contained in $\F$,
$\{x/\|x\|_- | x\in K(\boldsymbol{\omega}) \}$ is a compact set of $\H^d$, say contained in
$B_r$. Any $\rho$ larger than $\displaystyle r+\frac{|h_0|}{\underset{x\in K(\boldsymbol{\omega})}{\mathrm{inf}}(\|x\|_-)}$
satisfies the wanted condition. The fact is proved.

Let $\Gamma$ be a group of isometries such that $\H^d$ is compact and containing $B_{\rho}$ in a fundamental domain 
(this is always possible, see page 74 of  \cite{Far96}). 
We define
 \begin{equation}\label{eq: fuch ext def}
 \tilde{K}_{\omega}:= \{x\in\R^{d+1}|\langle x,\eta\rangle_-\leq h_K(\gamma_0^{-1}\eta), \forall \eta=\gamma_0\eta_0,\gamma_0\in\Gamma,
\eta_0\in\boldsymbol{\omega}\},
 \end{equation}
i.e.~$\tilde{K}_{\omega}$ is the intersection of the future side of the support planes of 
$K(\omega)$ and of their orbits for the action of $\Gamma$. 
Because of the choice of $\Gamma$, $ K( \boldsymbol{\omega})\subset \tilde{K}_{\omega}$.
Moreover it is clear that the support planes of $K( \boldsymbol{\omega})$ are support planes
of $\tilde{K}_{\omega}$, hence $ K( \boldsymbol{\omega})\subset \tilde{K}_{\omega}( \boldsymbol{\omega})$
(note that the inclusion may be strict). 
Finally  $\tilde{K}_{\omega}$ is different from $\overline{\F}$,  it is 
$\Gamma$-invariant and it is an F-convex set (Lemma~\ref{lem: conv dans Fconv}) 
hence it is a $\Gamma$-F-convex set.

Finally, we prove below  that $ K(\stackrel{\circ}{\boldsymbol{\omega}})=\tilde{K}_{\omega}(\stackrel{\circ}{\boldsymbol{\omega}})$.
Obviously this implies that for any subset $\omega'$ of $\stackrel{\circ}{\boldsymbol{\omega}}$, 
we have $K(\omega')=\tilde{K}_{\omega}(\omega')$.

Suppose that  $ K(\stackrel{\circ}{\boldsymbol{\omega}})\not=\tilde{K}_{\omega}(\stackrel{\circ}{\boldsymbol{\omega}})$.
As $ K(\stackrel{\circ}{\boldsymbol{\omega}})\subset\tilde{K}_{\omega}(\stackrel{\circ}{\boldsymbol{\omega}})$, this means that 
there exists $y\in \partial \tilde{K}_{\omega}$, $y\notin K(\stackrel{\circ}{\boldsymbol{\omega}})$ and
$\eta\in G_{\tilde{K}_{\omega}}(y)\cap \stackrel{\circ}{\boldsymbol{\omega}}$. 
Let $\mathcal{H}$ be the support hyperplane of $K$ orthogonal to $\eta$. 
$\mathcal{H} \cap K=K(\{\eta\})$ is a convex compact set (see Lemma~\ref{lem:pas half space}). Let $x$ be the orthogonal projection
(in $\mathcal{H}$)
of $y$ onto $\mathcal{H} \cap K$. Let us denote by $v$ the normalization of the space-like vector $y-x$ and 
by  $H$ the extended support function  of $K$ (which is equal to the extended support function
of $\tilde{K}_{\omega}$ on $ \boldsymbol{\omega}$). We also denote by 
$H'(\eta;v)$ the one-sided directional derivative of $H$ at $\eta$ in the direction $v$.
By \cite[1.7.2]{Sch93} (see the proof of \cite[3.1]{FF} for the Lorentzian version) 
it is equal to the total support function of  $\mathcal{H} \cap K$ evaluated at $v$, hence it is 
equal to $\langle x,v\rangle_-$. 

As $\eta$ is in the interior of $ \boldsymbol{\omega}$, for small positive $\varepsilon$, 
 the projection of $  \eta+\varepsilon v$ onto $\H^d$ is in $ \boldsymbol{\omega}$.
We want to find the non-negative $\lambda(\varepsilon)$, depending on $\varepsilon$, such that
\begin{equation}\label{eq: supp lem tech}
 \langle x+\lambda(\varepsilon) v,\eta+\varepsilon v\rangle_-=H(\eta+\varepsilon v).
\end{equation}

We get (recall that $\langle v,\eta\rangle_-=0$)
$$\lambda(\varepsilon)=\frac{H(\eta+\varepsilon v)-\langle x,\eta\rangle_-}{\epsilon}-\langle x,v\rangle_-. $$
which is non-negative because  
$$\langle x,\eta+\varepsilon v\rangle_-\leq H(\eta+\varepsilon v),$$
(that only means that $x$ belongs to $K$) and clearly continuous on positive $\varepsilon$.

Moreover
$$\lambda(\varepsilon)=\frac{H(\eta+\varepsilon v)-H(\eta)}{\epsilon}-\langle x,v\rangle_-
 $$
and $\underset{\varepsilon \downarrow 0}{\mathrm{lim}} \lambda(\varepsilon)=H'(\eta;v)-\langle x,v\rangle_-=0$.

Hence one can find $\varepsilon$ such that 
$x+\lambda(\epsilon) v$ is between $x$ and $y$ and different from $y$. 
But \eqref{eq: supp lem tech} says that $x+\lambda(\varepsilon) v$
is the intersection between 
the support hyperplane of $K(\boldsymbol{\omega})$ orthogonal to $\eta+\varepsilon v$  and the line between $x$ and $y$:
$y$ is not on the same side of a support plane of $K(\boldsymbol{\omega})$ (and hence of 
$\tilde{K}_{\omega}$) than $x$, that is impossible.
\end{proof}

\begin{lemma}\label{lem: fuchs ext conv}
Let $K_1, K_2$ be two F-convex sets with extended support functions $H_1, H_2$ 
and $\boldsymbol{\omega}$ a compact set of $\H^d$ and  $\epsilon >0$
with $\mathrm{sup}_{\eta\in\boldsymbol{\omega}}|H_1(\eta)-H_2(\eta)|<\epsilon$. 

Then for any compact set $\omega$ in the interior of $\boldsymbol{\omega}$ there exists an isometry group $\Gamma$ acting cocompactly on $\H^d$ and $\omega$-$\Gamma$ extensions
$\tilde{K_1}$ and $\tilde{K_2}$ of respectively $K_1$ and $K_2$ such that 
$d_H(\tilde{K_1},\tilde{K_2})<\epsilon$.
\end{lemma}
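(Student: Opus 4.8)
The plan is to realize both extensions with a \emph{single} Fuchsian group and then read off the Hausdorff bound directly from the half-space description in the construction of Lemma~\ref{lem: fuchs ext}, never touching the support functions on a full fundamental domain. Recall from the definition of $d_H$ on $\mathcal{K}(\Gamma_0)$ and from $H_B=-\|\cdot\|_-$ that the natural objects to control are the inclusions $\tilde K_1+\lambda B\subseteq\tilde K_2$ and $\tilde K_2+\lambda B\subseteq\tilde K_1$.

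First I would enlarge $\omega$ slightly. Since $H_1,H_2$ are continuous and $\omega$ is compact with $\sup_{\omega}|H_1-H_2|<\epsilon$, the open set $\{|H_1-H_2|<\epsilon\}$ contains $\omega$, so I can choose a compact $\boldsymbol{\omega}$ with $\omega\subseteq\mathrm{int}\,\boldsymbol{\omega}$ and $\delta:=\sup_{\boldsymbol{\omega}}|H_1-H_2|<\epsilon$. Next I would position both sets by a \emph{common} translation: since $K_1(\boldsymbol{\omega})\cup K_2(\boldsymbol{\omega})$ is compact, there is a single future vector $p$ with $K_i(\boldsymbol{\omega})+p\subseteq\F$ for $i=1,2$; replacing $K_i$ by $K_i+p$ shifts both support functions by the same linear term $\langle p,\cdot\rangle_-$, so $\delta$ is unchanged. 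Applying the Fact in the proof of Lemma~\ref{lem: fuchs ext} to each $K_i+p$ yields radii $\rho_1,\rho_2$; taking $\rho\geq\max(\rho_1,\rho_2)$ large enough that also $\boldsymbol{\omega}\subseteq B_\rho$ and choosing one Fuchsian group $\Gamma_0$ with a fundamental domain containing $B_\rho$ produces, via \eqref{eq: fuch ext def}, two $\omega$-$\Gamma_0$ extensions $\tilde K_1,\tilde K_2$ defined by the \emph{same} family of half-spaces $\langle x,\gamma_0\eta_0\rangle_-\leq h_{K_i}(\eta_0)$ over $\gamma_0\in\Gamma_0$, $\eta_0\in\boldsymbol{\omega}$, differing only in the heights.

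The core computation is then the inclusion $\tilde K_1+\delta B\subseteq\tilde K_2$. For $x=y+\delta b$ with $y\in\tilde K_1$ and $b\in B$, and any $\gamma_0\in\Gamma_0$, $\eta_0\in\boldsymbol{\omega}$, the vector $\gamma_0\eta_0$ lies in $\H^d$ so $\|\gamma_0\eta_0\|_-=1$; as $b$ is future time-like with $\|b\|_-\geq 1$, the reverse Cauchy--Schwarz inequality $\langle u,v\rangle_-\leq -\|u\|_-\|v\|_-$ for future vectors gives $\langle b,\gamma_0\eta_0\rangle_-\leq -1$. Hence
\[
\langle x,\gamma_0\eta_0\rangle_- = \langle y,\gamma_0\eta_0\rangle_- + \delta\langle b,\gamma_0\eta_0\rangle_- \leq h_{K_1}(\eta_0)-\delta \leq h_{K_2}(\eta_0),
\]
the last step because $h_{K_1}(\eta_0)-h_{K_2}(\eta_0)\leq\delta$ on $\boldsymbol{\omega}$; this says $x\in\tilde K_2$. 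By the symmetric argument $\tilde K_2+\delta B\subseteq\tilde K_1$, whence $d_H(\tilde K_1,\tilde K_2)\leq\delta<\epsilon$.

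The point that needs the most care is exactly the interplay between the set $\omega$ where the hypothesis holds and the larger set $\boldsymbol{\omega}$ used to build the extension: one must first upgrade $\sup_{\omega}|H_1-H_2|<\epsilon$ to a strict bound on a compact neighborhood $\boldsymbol{\omega}$ (continuity), and then use a common translation $p$ so that this bound survives when $K_1,K_2$ are repositioned for the construction. Everything else---selecting one group $\Gamma_0$ valid for both and the Minkowski-sum estimate---is immediate from the half-space definition, so no control of the support functions on a whole fundamental domain is required.
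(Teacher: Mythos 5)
Your proof is correct and follows essentially the same route as the paper's: a single Fuchsian group whose fundamental domain contains both balls $B_{\rho_1},B_{\rho_2}$, the half-space description \eqref{eq: fuch ext def} with common normals $\gamma_0\eta_0$, and the estimate $\langle b,\gamma_0\eta_0\rangle_-\leq -1$ to get the two inclusions defining $d_H$. Your two preliminary steps (passing from $\omega$ to a compact neighborhood $\boldsymbol{\omega}$ while keeping $\delta<\epsilon$, and using one common translation for both sets) are points the paper leaves implicit, and you are right to make them explicit.
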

\begin{proof}
 Using the same notations as in the proof of Lemma~\ref{lem: fuchs ext}, we take as $\Gamma$
a  group containing $B_{\mathrm{max}(\rho_{K_1},\rho_{K_2})}$ in a fundamental domain.
Let $y\in \tilde{K_1}+\epsilon K(\H)$: $y=x+\epsilon b$ with $x\in \tilde{K_1}$ and $b\in K(\H)$.
Let  $\eta\in\H^d$ such that $\eta=\gamma_0\eta_0$ with $\gamma_0\in\Gamma$
and $\eta_0\in\boldsymbol{\omega}$. We have
$$\langle y,\eta\rangle_-=\langle x,\eta\rangle_-+\epsilon \langle b,\eta\rangle_-
\leq H_1(\eta_0)-\epsilon\leq H_2(\eta_0)=H_2(\gamma_0^{-1}\eta),$$
because $\langle b,\eta\rangle_-\leq -1$, hence $y\in\tilde{K_2}$ by definition \eqref{eq: fuch ext def}, and  $\tilde{K_1}+\epsilon K(\H)\subset \tilde{K_2}$. In the same way 
$\tilde{K_2}+\epsilon K(\H)\subset \tilde{K_1}$, so by definition $d_H(\tilde{K_1},\tilde{K_2})<\epsilon$.
\end{proof}

\subsection{Proof of Theorem~\ref{prop:def area}}

Let $\omega\subset \H^d$ be compact, and consider $\tilde{K}_{\omega}$ as in Lemma~\ref{lem: fuchs ext}
(clearly, $V_{\epsilon}(K,\omega)$ is invariant under translation).
Let us define the following Radon measures on $\omega$: for any Borel
set $b$ contained in $\omega$ and $i\in\{1,\ldots,d\}$
\begin{equation}\label{def si avec quoti}S_i^{\omega}(K,b):= S^{\Gamma}_i(\tilde{K}_{\omega},\overline{b}),\end{equation}
with $\overline{b}$ the image of $b$ for the projection $\H^d\rightarrow\H^d/\Gamma$ and 
$S^{\Gamma}_i(\tilde{K}_{\omega},\overline{b})$ given by Proposition~\ref{prop: area cas fuchs}.
From  Lemma~\ref{lem: loc meas}, this definition does not depend on $\tilde{K}_{\omega}$.

Let $C_c^0(\H^d)$ be the space of continuous functions with compact support on $\H^d$. Let $f\in C_c^0(\H^d)$,
and define
$$F_i(K)(f)=\int_{\omega} f \mathrm{d}S_i^{\omega}(K,\cdot), $$
where  $\omega$ is a compact set such that  $\mathrm{supp} f\subset \omega$. 
It is well defined, because if $\omega'$ is another compact set with 
 $\mathrm{supp} f\subset \omega'$, then $S_i^{\omega}(K,\cdot)$ and 
 $S_i^{\omega'}(K,\cdot)$ coincides on $\omega \cap \omega'$, that follows again from Lemma~\ref{lem: loc meas}.

For any $i\in\{1,\ldots,d\}$, it is easy to see that $F_i(K)$  is a linear functional on $C_c^0(\H^d)$. 
It is moreover a positive functional, so by the
 Riesz representation theorem  there exists a unique Radon measure on 
$\H^d$, that we denote by $S_i(K,\cdot)$, such that $F_i(K)(f)=\int_{\H^d}f \mathrm{d}S_i(K,\cdot)$.

Let $f$ with support in the compact set $\omega$. Recall that $\omega$ is contained in a fundamental domain 
for the action of the group $\Gamma$ fixing $\tilde{K}_{\omega}$. Let us denote by $\bar{\omega}$ (resp. $\overline{f}$) the image  
of $\omega$ (resp. $f$) for the canonical projection $\H^d\rightarrow \H^d/\Gamma$. By Lemma~\ref{lem: fuchs ext},
$V_\epsilon(K,\cdot)=V_\epsilon(\tilde{K}_\omega,\cdot)$ on $\omega$, 
then
$$\int_{\omega} f \mathrm{d}V_\epsilon(K,\cdot)= 
\int_{\omega} f \mathrm{d}V_\epsilon(\tilde{K}_\omega,\cdot)
=\int_{\bar{\omega}} \overline{f} \mathrm{d}V^\Gamma_\epsilon(\tilde{K}_\omega,\cdot)
\stackrel{\eqref{eq: mu fuchs}}{=}\int_{\bar{\omega}} \overline{f} \left(\frac{1}{d+1}\sum_{i=0}^d \epsilon^{d+1-i}\binom{d+1}{i}   S_i^{\Gamma}(\tilde{K}_\omega,\overline{\omega})\right) $$
$$\stackrel{\eqref{def si avec quoti}}{=} \int_{\omega} f \mathrm{d}\left(\frac{1}{d+1}\sum_{i=0}^d \epsilon^{d+1-i}\binom{d+1}{i}   S_i(K,\omega) \right)
$$
and \eqref{eq: mu} follows by the  uniqueness part of the Riesz representation theorem. 

\subsection{Characterizations of the first area measure}

\subsubsection{Distribution characterization}

Let $K$ be a  F-convex set of $\R^{d+1}$ with $C^2$ support function $h$. The \emph{mean radius of curvature} 
$S_1(h)$ of $K$ is the sum of the principal radii of curvature divided by $d$:
\begin{equation}
 \frac{1}{d}\Delta h -  h =S_1(h) \tag{\ref{eq_hn}}
\end{equation}
where $\Delta$ is the Laplacian on the hyperbolic space.

\begin{example}\label{ex: mesure cone}{\rm 
Let $K$ be the future cone of a point $p$. The Hessian of its extended support function is
the null matrix, hence, as expected, its mean radius of curvature is zero. 
}\end{example}

We will generalize \eqref{eq_hn}. For any F-convex set $K$ with support function $h$, 
or more generally for any continuous function $h$ on $\H^d$,
we define $S_1(h)$ by \eqref{eq_hn} considered in the sense of distributions: $\forall f\in C_c^{\infty}(\H^d),$

\begin{equation}\label{eq:distribution action} (S_1(h),f)=\int_{\H^d}f \left(\frac{1}{d}\Delta -1\right)h \mathrm{d}\H^d
:=\int_{\H^d}h \left(\frac{1}{d}\Delta -1\right)f \mathrm{d}\H^d.\end{equation}
Note that $S_1$ is linear with respect to $h$.

\begin{lemma}\label{lem: aire distribution}
 If $h$ is the support function of $K$, then  $S_1(h)=S_1(K,\cdot)$ in the sense of distributions.
\end{lemma}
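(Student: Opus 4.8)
The plan is to reduce the statement to the Fuchsian polyhedral case, where it becomes an explicit (self-adjoint) integration by parts, and then propagate it by continuity. Both sides are \emph{local}: the measure $S_1(K,\cdot)$ by Lemma~\ref{lem: loc meas}, and the distribution $S_1(h)$ because $\frac1d\Delta-1$ is a local operator and $\eqref{eq:distribution action}$ only sees $h$ near $\supp f$. So I would first fix a test function $f\in C_c^\infty(\H^d)$ together with a compact $\boldsymbol\omega$ containing $\supp f$ in its interior, and replace $K$ by a $\boldsymbol\omega$-Fuchsian extension $\tilde K=\tilde K_{\boldsymbol\omega}$ as in Lemma~\ref{lem: fuchs ext}. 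On $\boldsymbol\omega$ the extended support functions of $K$ and $\tilde K$ coincide, whence $(S_1(h_K),f)=(S_1(h_{\tilde K}),f)$; and $K(\omega')=\tilde K(\omega')$ for $\omega'\subset\boldsymbol\omega$ gives $\int f\,\mathrm{d}S_1(K,\cdot)=\int f\,\mathrm{d}S_1(\tilde K,\cdot)$ by Lemma~\ref{lem: loc meas}. Thus it suffices to prove the lemma for a Fuchsian convex set $\tilde K$.

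Next I would approximate $\tilde K$ by $\Gamma_0$-convex polyhedra $P_n$ with $h_{P_n}\to h_{\tilde K}$ uniformly, which exist by Lemma~\ref{lem: conv pol}. Uniform convergence on $\supp f$ immediately yields $(S_1(h_{P_n}),f)=\int h_{P_n}(\frac1d\Delta-1)f\,\mathrm{d}\H^d\to(S_1(h_{\tilde K}),f)$. On the other side, Proposition~\ref{prop: area cas fuchs} provides weak convergence $S_1^{\Gamma_0}(P_n,\cdot)\to S_1^{\Gamma_0}(\tilde K,\cdot)$ on $\H^d/\Gamma_0$; unfolding $f$ to its ($\Gamma_0$-locally finite, hence continuous) periodization $\bar f=\sum_{\gamma_0\in\Gamma_0}f\circ\gamma_0$ and invoking the $\Gamma_0$-invariance of the area measures (Remark~\ref{rem: gamma 0 inv mes}) converts this into $\int f\,\mathrm{d}S_1(P_n,\cdot)\to\int f\,\mathrm{d}S_1(\tilde K,\cdot)$. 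Hence the identity for $\tilde K$ follows once it is known for each $P_n$.

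The heart of the argument is therefore the polyhedral case, which I would settle by a direct computation. Writing the cell decomposition $\{O_i\}$ of $\H^d$ associated to $P$, with contact point $p_i$ over $O_i$, one has $h_P=\langle\cdot,p_i\rangle_-$ on $O_i$, so on the interior of each cell $h_P$ is the support function of the cone $\C(p_i)$, whose mean radius of curvature vanishes (Example~\ref{ex: mesure cone}): $\frac1d\Delta h_P-h_P=0$ there. Integrating $\int_{\H^d}h_P\,\frac1d\Delta f\,\mathrm{d}\H^d$ by parts cell by cell (the decomposition is locally finite, so only finitely many cells meet $\supp f$), the interior terms reconstruct $\int h_P f\,\mathrm{d}\H^d$ while the boundary terms collapse onto the facets $F=O_i\cap O_j$: the $h_P\,\partial_\nu f$ contributions cancel since $h_P$ is continuous across $F$, leaving $(S_1(h_P),f)=\frac1d\sum_F\left(\int_F f\,\mathrm{d}\mathcal H^{d-1}\right)[\partial_\nu h_P]_F$, where $[\partial_\nu h_P]_F$ is the jump of the normal derivative across $F$. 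From $\grad H_P=p_i$ on $O_i$ (Lemma~\ref{lem: grad}) and \eqref{eq:nablanabla}, the tangential gradient jump equals $p_j-p_i$, which is orthogonal to $F$ and space-like (Lemma~\ref{lem: acausal}), so $[\partial_\nu h_P]_F=\|p_j-p_i\|_-$. As $F=O_i\cap O_j$ is exactly the Gauss image of the edge $e_1$ joining the adjacent vertices $p_i,p_j$, this jump is the Lorentzian edge length $\lambda_1(e_1)$, and comparison with \eqref{eq:area pol} yields $(S_1(h_P),f)=\int f\,\mathrm{d}S_1(P,\cdot)$.

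The main obstacle is precisely this polyhedral step: making the cell-by-cell Green's formula rigorous, so that the distribution $(\frac1d\Delta-1)h_P$ is exactly its absolutely continuous part plus the codimension-one interface measure with no lower-dimensional contributions (which is valid because $h_P$ is continuous with bounded gradient), and then identifying the normal-derivative jump with $\|p_j-p_i\|_-$, including the correct sign (positivity of the jump being forced by convexity). A secondary point needing care is the bookkeeping in the reduction and the limit: verifying that the support functions agree on a full neighbourhood of $\supp f$ so that the local operator $\frac1d\Delta-1$ detects no difference, and that the passage between measures on $\H^d$ and on $\H^d/\Gamma_0$ through periodization is legitimate.
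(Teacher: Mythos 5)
Your proof is correct, but it takes a genuinely different route from the paper's at the decisive step. The paper does not go through polyhedra at all: it approximates $K$ by $C^2_+$ F-convex sets $K_n$ using the local approximation Lemma~\ref{lem: approx}, where the identity $S_1(h_{K_n})=S_1(K_n,\cdot)$ is immediate from Lemma~\ref{lem: cas C2} (the first area measure of a $C^2_+$ set is $s_1\,\mathrm{d}\H^d$ with $s_1=\frac1d\Delta h-h$), and then passes to the limit exactly as you do --- uniform convergence of support functions on $\supp f$ for the distributional side, and Fuchsian extensions (Lemma~\ref{lem: fuchs ext conv}) plus weak convergence of Fuchsian area measures (Proposition~\ref{prop: area cas fuchs}) plus locality (Lemma~\ref{lem: loc meas}) for the measure side. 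You instead first replace $K$ by a Fuchsian extension, approximate by $\Gamma_0$-polyhedra via Lemma~\ref{lem: conv pol}, and settle the base case by the cell-by-cell Green's formula with jump terms, recovering \eqref{eq:area pol} for $i=1$; this is precisely the computation the paper carries out only for the elementary example as an illustration \emph{after} the lemma. The trade-off: the paper's smooth base case is essentially free, whereas yours requires extra care (local finiteness of the decomposition $\{O_i\}$ near $\supp f$, absence of lower-dimensional contributions to the distributional Laplacian of the Lipschitz function $h_P$, and the sign of the jump $[\partial_\nu h_P]_F=\|p_i-p_j\|$, whose positivity you correctly extract from convexity); in exchange you get an independent, geometrically explicit verification that the distributional formula reproduces the edge-length weights of Lemma~\ref{lem: aire poly}, which the paper only checks in dimension the simplest case. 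Your unfolding of the test function to the quotient via the locally finite periodization $\bar f=\sum_{\gamma_0}f\circ\gamma_0$, combined with the $\Gamma_0$-invariance of the area measures, is a legitimate substitute for the paper's direct appeal to locality, so the argument goes through.
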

\begin{proof}

Let  $f\in C_c^{\infty}(\H^d)$ and suppose that $\mathrm{supp} f\subset \omega$ with $\omega$ compact.
From Lemma~\ref{lem: approx} we know that there exists $C^2$ support function 
$h_n$ of $C^2_+$ F-convex sets $K_n$ converging  to $h$ uniformly on $\omega$. Hence
$\int_{\H^d}h_n \left(\frac{1}{d}\Delta -1\right)f \mathrm{d}\H^d$
converges to $\int_{\H^d}h \left(\frac{1}{d}\Delta -1\right)f \mathrm{d}\H^d$.

Let us consider $\omega$-Fuchsian extensions of $K_n$ and $K$ 
converging for the Hausdorff distance (Lemma~\ref{lem: fuchs ext conv}).
From
Proposition~\ref{prop: area cas fuchs}, the corresponding first area measures weakly converge. But on 
$\omega$ they are equal to the first area measures of $K_n$ and $K$ respectively  (Lemma~\ref{lem: loc meas}), hence
$\int_{\H^d}f \mathrm{d}S_1(K_n,\cdot)$ converge to $\int_{\H^d}f \mathrm{d}S_1(K,\cdot)$.

By Lemma~\ref{lem: cas C2} we know that for all $n$, 
$\int_{\H^d}h_n \left(\frac{1}{d}\Delta -1\right)f \mathrm{d}\H^d=\int_{\H^d}f \mathrm{d}S_1(K_n,\cdot)$.
This proves the lemma.

\end{proof}

\subsubsection{Polyhedral case}\label{subsubsub pol area}

Let $P$ be an F-convex polyhedron, inducing a decomposition
$C$ of $\H^d$. 
From Lemma~\ref{lem: aire poly}, the first area measure of $P$
is a weight on each facet  $\zeta$ of $C$, equal to
$\frac{1}{d}$ times $\lambda(\zeta)$, the length of the corresponding edge of $P$.
There is a necessary condition on the weights, if there exist $(d-2)$-faces of $C$.
Let $\eta$ be a $(d-2)$-face  contained in a facet $\zeta$ of $C$.
We denote by $u(\eta,\zeta)$ the unit tangent vector (of $\H^d$) orthogonal to $\eta$ and contained in $\zeta$.
We also denote by $u(\eta,\zeta)$ the corresponding space-like vector of Minkowski space.
For any $(d-2)$-face $\eta$,
\begin{equation}\label{eq:sum poly}
\sum  \lambda(\zeta)u(\eta,\zeta)=0 \end{equation}
where the sum is on the facets $\zeta$ containing $\eta$.
A  $(d-2)$-face of $C$
is the set of normal vectors to a $2$-dimensional face $F$ of $P$, say contained in a plane $\mathcal{H}$. 
In $\mathcal{H}$, $F$ is a compact convex polygon, and 
by construction 
$u(\eta,\zeta)$ is an outward unit normal of the edge of $F$ of length $\lambda(\zeta)$. 
The condition stated is 
then well-known: the sum of the weighted sum of the vectors orthogonal to $u(\eta,\zeta)$  (the edges of the polygon)
must close up. 

We will call \emph{polyhedral measure of order one} a Radon measure $\varphi$ on $\H^d$ satisfying the 
properties above, namely:
\begin{enumerate}[nolistsep,label={\bf(\roman{*})}, ref={\bf(\roman{*})}]
 \item the support of $\varphi$ is the set of  facets of a numerable decomposition $C$ of $\H^d$ by 
compact convex polyhedra.
\item For any facet $\zeta$ of $C$, there exists a positive number $\lambda(\zeta)$
such that $\varphi(\omega)=\lambda(\zeta) \nu_{d-1}(\omega)$, for any Borel set $\omega$ of $\zeta$,
\item\label{pol: 3}  for any $(d-2)$-face $\eta$,
\eqref{eq:sum poly} is satisfied.
\end{enumerate}

From Lemma~\ref{lem: aire distribution}, the first area measure of an F-convex polyhedron can also be written 
as in \eqref{eq_hn} in the sense of distribution. 
Let us check it below on the most elementary example.

\begin{example}[\textbf{The elementary example}]{\rm Let $K$ be the elementary example of Example~\ref{ex: elemntary1} (note that
this example is easily generalized in all dimensions). $p_1$ and $p_2$ are two points in $\R^3$, 
related by a space-like segment of length $a$. Let $\gamma^{\bot}$ be the time-like plane orthogonal to
$p_1-p_2$. $\gamma^{\bot}$ separates $\F$ into two regions $\tilde{\O}_1$ and $\tilde{\O}_2$,
such that $p_1-p_2$ is pointed towards $\tilde{\O}_2$. The extended support function of $K$ is the restriction of
$H_i=\langle \cdot,p_i\rangle_-$ on  $\tilde{\O}_i$. Let us denote by $\O_i$ the intersection of
$\tilde{\O}_i$ with $\H^2$, and by $h_i$ the restriction of $H$ to $\O_i$. 
Let $\nu_1$ et $\nu_2$ be the exterior normals to $\O_1$ and
$\O_2$ respectively and note that $\nu_1=-\nu_2$ on
$\partial\O_1=\partial\O_2=\gamma$. Then, for $f\in C_c^{\infty}(\H^d)$,

\begin{eqnarray*}
\ d(S_1(h),f) &=& d (\frac{1}{d}\Delta h - h , f) = (\Delta h - dh , f) =
 \int_{\mathbb H^d} h (\Delta f-df) \\
\ &=& -\int_{\mathbb H^d} dhf - \int_{\O_1} \left\langle \nabla h_1
, \nabla f \right\rangle + \int_{\partial\O_1} h_1 \left\langle
\nabla f , \nu_1 \right\rangle  - \int_{\O_2} \left\langle \nabla
h_2 , \nabla f \right\rangle + \int_{\partial\O_2} h_2
\left\langle \nabla f , \nu_2 \right\rangle \\
\  &=& -\int_{\mathbb H^d} dhf + \int_{\O_1}  f\Delta h_1 -
\int_{\partial\O_1}  f \left\langle \nabla h_1 , \nu_1
\right\rangle + \int_{\O_2}  f\Delta h_2 -
\int_{\partial\O_2}  f \left\langle \nabla h_2 , \nu_2
\right\rangle \\
\ &=& \int_{\O_1}  f (\Delta h_1 - dh_1) + \int_{\O_2}  f
(\Delta h_2 - dh_2) + \int_{\partial\O_1}  f \left\langle \nabla
h_1 - \nabla h_2 , \nu_1 \right\rangle
= \int_{\partial\O_1=\gamma}  f \left\langle \nabla (h_1 - \nabla h_2)
, \nu_1 \right\rangle,
\end{eqnarray*}
because $(\Delta h_i - dh_i)=0$ (see Remark~\ref{ex: mesure cone}).

As 
$$(H_1-H_2)(\eta)=\langle p_1-p_2,\eta\rangle_-,\qquad  \operatorname{grad}_{\eta}(H_1-H_2)=p_1-p_2,$$
and from \eqref{eq:nablanabla}, $$\operatorname{grad}_{\eta}(H_1-H_2)=\nabla_{\eta}(h_1-h_2)-(h_1-h_2)(\eta)\eta.$$
Note that $p_1-p_2=a\nu_1$, so if $\eta\in\gamma$, 
$$(h_1-h_2)(\eta)=\langle p_1-p_2,\eta\rangle_-=0.$$
Finally,
$\grad_{\eta}(h_1-h_2)=p_1-p_2$, and 
$$\langle \nabla (h_1 -  h_2)
, \nu_1 \rangle=\langle p_1 - p_2
, \nu_1 \rangle_-=a,$$ and as expected 
$$(S_1(h),f)=\frac{1}{d}a\int_{\gamma}f.$$

 }
\end{example}

\begin{remark}[\textbf{Relation with measured geodesic laminations}]\label{subsub: strat}
{\rm
It is proved in Proposition~9.1 in \cite{Bon05} that the first area measure of a 
F-regular domain with simplicial singularity is a particular case of so-called 
\emph{measured geodesic stratification}, which are transverse measures generalizing in any dimension 
measured geodesic laminations on $\H^2$ (geodesic stratifications are more general than 
geodesic laminations in any dimension, see Remark~4.18 in \cite{Bon05}).
Those measures are associated to some F-regular domains, 
but it is not known if any F-regular domain gives a transverse measure on $\H^d$.
The reciprocal is true, see Remark~\ref{rem:mess pol}.
}\end{remark}

\section{The Christoffel problem}\label{sec:sol}

Let $\mu$ be a positive Radon measure on $\H^d$. We have seen in Section~\ref{sec:area} that $\mu$ is the first area measure of an F-convex set 
$K$ if and only if the restricted support function $h_K$ of $K$ is a continuous function which satisfies
\begin{equation*}
\frac{1}{d}\Delta h_K -  h_K = \mu
\end{equation*}
in the sense of distribution on $\H^d$, and such that its $1$-homogeneous extension 
$H_K(\eta)=\|\eta\|_-h_K(\eta/\|\eta\|_-)$ 
is a  convex function on $\mathcal F$.
 In this section we will discuss the existence of explicit solutions to the  equation above, 
as well as possible conditions which guarantee the convexity and the uniqueness of the solution.
Those solutions will be compared to a polyhedral construction of a convex solution in \ref{sub:sol pol}.

Due to its specificity, the $d=1$ case will be treated at the end of this section, so all the reminder concerns the $d>1$ case.

\subsection{Regular first area measures}\label{sub:regsol}

Here we look for an explicit solution to \eqref{eq_hn} when $\mu=\phi \mathrm{d}\H^d$ for some function $\phi\in C^{\infty}_c(\H^d)$.
 
We define $k:(0,+\infty)\to(-\infty,0)$ as
\begin{equation}\label{small_k}
	k(\rho)=\frac{\cosh\rho}{v_{d-1}}\int_{+\infty}^{\rho}\frac{\mathrm{d}t}{\sinh^{d-1}(t)\cosh^2(t)},
\end{equation}
with $v_{d-1}$ the area of $\mathbb{S}^{d-1}\subset \R^d$, and we observe that $k$ is solution of the ODE
\begin{equation}\label{ODE-gamma}
	\ddot k(\rho)+\frac{\dot{A}(\rho)}{A(\rho)}\dot k(\rho)-d k(\rho)=0,
\end{equation}
where 
\begin{equation*}
	A(\rho)=\int_{\partial B_\rho(x)}\mathrm{d}A_\rho=v_{d-1}\sinh^{d-1}\rho
\end{equation*}
is the area of the (smooth) geodesic sphere 
\[
\partial B_\rho=\{y\in\hh^d : d_{\hh^d}(x,y)=\rho\}
\]
centered at any point $x\in\hh^d$ and $\mathrm{d}A_\rho$ is the $(d-1)$-dimensional volume measure on 
$\partial B_\rho$. Finally, 
we introduce the kernel function $G:\hh^d\times\hh^d\to\rr\cup\{\infty\}$ given by
\begin{equation}\label{green}
	G(x,y)= k(d_{\hh^d}(x,y)).
\end{equation}
For later purposes observe that there exists positive constants $C_1$ and $C_2$ depending on $d$ such that
\begin{equation}\label{asym_k}
-k(\rho)\stackrel{\rho\to\infty}{\sim} C_1e^{-d\rho},\qquad -k(\rho)\stackrel{\rho\to 0}{\sim} \begin{cases}C_2\rho^{2-d},&\textrm{if }d>2\\-C_2\log(\rho),&\textrm{if }d=2,\end{cases}
\end{equation}
and 
\begin{equation}\label{asym_A}
A(\rho)\stackrel{\rho\to\infty}{\sim} \frac{v_{d-1}}{2^{d-1}} e^{(d-1)\rho},\qquad A(\rho)\stackrel{\rho\to 0}{\sim} v_{d-1}\rho^{d-1}.
\end{equation}
Accordingly, for each fixed $x\in\hh^d$
\begin{equation}\label{GL1}
\int_{\hh^d}|G(x,y)|\mathrm{d}\H^d(y)=\int_0^\infty |k(\rho)|A(\rho)\mathrm{d}\rho <+\infty,	
\end{equation}
so that if $\psi:\hh^d\to\rr$ is a measurable bounded function we can write
\begin{equation}\label{fubini}\begin{aligned}
	\int_{\hh^d}G(x,y)\psi(y)\mathrm{d}\H^d(y)&=
	\int_0^{+\infty}\left(\int_{\partial B_\rho(x)}G(x,z)\psi(z)\mathrm{d}A_\rho(z)\right)
 \mbox{d}\rho \\
	&= \int_0^{+\infty} k(\rho)\int_{\partial B_\rho(x)}\psi(z)\mathrm{d}A_\rho(z)\mathrm{d}\rho\nonumber.
\end{aligned}\end{equation}

\begin{theorem}\label{lem_hn}
Let $\phi\in C^{\infty}_c(\hh^d)$.
Then, a particular solution to \eqref{eq_hn} is given by the function $h_\phi\in C^{\infty}(\hh^d)$ defined as
\begin{equation}\label{th}
	h_\phi(x)=d\int_{\hh^d}G(x,y)\phi(y)\mathrm{d}\H^d(y).
\end{equation}
\end{theorem}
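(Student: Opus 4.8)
The plan is to recognise the kernel $G$ as the Green's function of the operator $L:=\Delta-d$ on $\H^d$ and then to argue by standard potential theory. First I rewrite the target equation: since \eqref{eq_hn} reads $\frac1d\Delta h-h=\phi$, it is equivalent to $(\Delta-d)h=d\phi$, so it suffices to show that the function $h_\phi$ of \eqref{th} satisfies $Lh_\phi=d\phi$. The starting observation is that for a radial function $u(\rho)$ on $\H^d$ one has $\Delta u=\ddot u+\tfrac{\dot A}{A}\dot u$; hence the ODE \eqref{ODE-gamma} satisfied by $k$ says exactly that, for each fixed $y$, the function $x\mapsto G(x,y)=k(d_{\H^d}(x,y))$ solves $LG(\cdot,y)=0$ on $\H^d\setminus\{y\}$.

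Next I would establish that $G(\cdot,y)$ is in fact a fundamental solution of $L$ with pole $y$, namely
\begin{equation*}
(\Delta-d)\,G(\cdot,y)=\delta_y \qquad \text{in } \mathcal D'(\H^d).
\end{equation*}
By \eqref{asym_k} the singularity of $k$ at $\rho=0$ is precisely that of the Newtonian kernel, $-k(\rho)\sim C_2\rho^{2-d}$ with $C_2=\tfrac{1}{(d-2)v_{d-1}}$ read off from \eqref{small_k} (logarithmic when $d=2$), and the normalisation by $v_{d-1}$ in \eqref{small_k} is exactly what makes the resulting point mass have coefficient $1$. To prove the identity I pair $G(\cdot,y)$ with a test function $f\in C_c^\infty(\H^d)$ and, using that $L$ is formally self-adjoint, compute
\begin{equation*}
\langle G(\cdot,y),Lf\rangle=\lim_{\varepsilon\downarrow 0}\int_{d_{\H^d}(x,y)>\varepsilon}G(x,y)\,Lf(x)\,\mathrm d\H^d(x).
\end{equation*}
Applying Green's identity on the region $\{d_{\H^d}(\cdot,y)>\varepsilon\}$ and using $LG(\cdot,y)=0$ there (the two zeroth-order terms cancel), the bulk term drops and only a boundary integral over the geodesic sphere $\partial B_\varepsilon(y)$ survives. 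With the outward normal $\partial_n=-\partial_\rho$ and the asymptotics \eqref{asym_k}--\eqref{asym_A} for $k,\dot k$ and $A$, the term carrying $G\,\partial_n f$ is $O(\varepsilon)$ and vanishes, while the term carrying $f\,\partial_n G$ contributes exactly $f(y)$ in the limit, giving $\langle LG(\cdot,y),f\rangle=f(y)$. This boundary computation, with the correct orientation and the precise constant, is the main technical point of the proof.

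Granting this, I would conclude as follows. Convergence of the integral \eqref{th} for every $x$ follows from \eqref{GL1} and the boundedness of $\phi$; by dominated convergence, using the uniform local integrability of $G$ afforded by the homogeneity of $\H^d$, the function $h_\phi$ is continuous, hence a distribution. Pairing $(\Delta-d)h_\phi$ with a test function and using Fubini together with the fundamental-solution identity yields
\begin{equation*}
(\Delta-d)h_\phi=d\int_{\H^d}(\Delta-d)G(\cdot,y)\,\phi(y)\,\mathrm d\H^d(y)=d\phi,
\end{equation*}
that is $\frac1d\Delta h_\phi-h_\phi=\phi$. Finally, $h_\phi$ solves the elliptic equation $(\Delta-d)h_\phi=d\phi$ with smooth right-hand side and smooth (indeed analytic) coefficients, so interior elliptic regularity (hypoellipticity of $L$) upgrades the distributional solution to $h_\phi\in C^\infty(\H^d)$; alternatively this follows from the classical fact that the Newtonian-type potential of a $C_c^\infty$ density is smooth, after splitting $G$ near the diagonal into its singular Newtonian part and a smooth remainder.
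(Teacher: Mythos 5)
Your proof is correct, but it follows a different route from the paper's. You treat $G(\cdot,y)$ as a fundamental solution: you verify $(\Delta-d)G(\cdot,y)=0$ off the diagonal from the ODE \eqref{ODE-gamma}, then prove $(\Delta-d)G(\cdot,y)=\delta_y$ by Green's identity on $\{d_{\hh^d}(\cdot,y)>\varepsilon\}$ together with the asymptotics \eqref{asym_k}--\eqref{asym_A}, and conclude by Fubini and hypoellipticity. The paper instead follows Helgason: it introduces the spherical mean operator $M_\rho(\phi;x)$, invokes the commutation identity $\Delta_1\mathfrak M_\phi=\Delta_2\mathfrak M_\phi$ to reduce $\Delta_{\hh^d}M_\rho(\phi;x)$ to a radial operator in $\rho$, writes $h_\phi(x)=d\int_0^\infty k(\rho)A(\rho)M_\rho(\phi;x)\,\mathrm{d}\rho$, and integrates by parts twice against the ODE for $k$; the surviving boundary term at $\rho=0$, governed by $\lim_{\rho\to0}\dot k(\rho)A(\rho)=1$, produces $\phi(x)$. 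The two arguments are close in substance --- your small-sphere boundary integral and the paper's $\rho=0$ boundary term are the same normalization computation, and both rest on $\dot kA\to1$ --- but the supporting lemmas differ: the paper needs the nontrivial commutation of the Laplacian with spherical means on $\hh^d$ (Lemma~22 of \cite{He59}), whereas you need only Green's identity on an annulus and the singularity analysis of $k$ at $0$ (including the logarithmic case $d=2$, which you correctly flag). Your version is the more standard potential-theoretic argument and is somewhat more self-contained; the paper's version has the advantage of handing over, as a by-product, the explicit radial computation that is reused elsewhere (e.g.\ in the convexity discussion of Proposition~\ref{convexity_Lopes}). One point to make sure you carry out carefully if you write this up in full: the decay of $k$ at infinity (the $e^{-d\rho}$ asymptotic) is what kills the boundary contribution at $\rho=\infty$ in the paper and, in your setup, what guarantees absolute convergence in \eqref{GL1} and legitimizes the Fubini interchange; it is not purely a local-singularity issue.
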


\begin{remark}\label{rem: helga firey et co}{\rm
We proceed for the proof as in \cite{So81} (similar computations was performed also in \cite{LS06}).
 Actually all these proofs are essentially based on the work of Helgason \cite{He59}, which gave 
the solution of the Poisson problem $\Delta h=\phi$ on $\hh^d$ for compactly supported data $\phi$.\\
In the regular compact case, a different approach was proposed by Firey in \cite{Fir67}. 
Let $\chi_K:\Sph^d\to\R^{d+1}$ 
be the normal representation of a compact convex set $K$ with $C^2$ support function
($\chi_K(\Sph^d)$ is the boundary of $K$ and $\eta$ is an outer normal to $K$ at $\chi_K(\eta)$, 
namely  $\chi_K$ is the Euclidean gradient of the extended support function of $K$). 
Then, once we have defined $\Phi$ the $(-1)$-homogeneous extension of $\phi$, we get that
 $\chi_K$ satisfies the system of uncoupled Poisson equations $\Delta_{\Sph^d}\chi^i=\partial_i\Phi$. Actually, the techniques introduced by Firey to solve this problem seem hardly generalizable to the study of F-convex sets, due to the non compactness of $\hh^d$. Nevertheless, one could try to reproduce Firey's approach to our context, and use Helgason's analysis of the Poisson problem on $\H^d$ to get a proof of Theorem \ref{lem_hn} for smooth compactly supported $\phi$.\\
To conclude this remark, it's worthwhile to recall that Sovertkov proposed also a further method to prove the existence of a solution to \eqref{eq_hn}, \cite{So83}. However this latter is based on a compactness argument which permits to extract a function from the solutions of the problem on a sequence of compact balls exhausting $\hh^d$. Accordingly the obtained solution has no explicit expression.
}
\end{remark}

\begin{proof}
For each $\psi\in C^{2}(\hh^d)$ and for each real $\rho>0$, we introduce the mean value operator $M_\rho(\psi;x)$, defined for $x\in\hh^d$ as
\begin{equation*}
	M_\rho(\psi;x):=\frac1{A(\rho)}\int_{\partial B_\rho(x)}\psi \mbox{d}A_\rho.
\end{equation*}
More generally, one could define $\mathfrak M_\psi:\hh^d\times\hh^d\to\rr$ as 
\[
\mathfrak M_\psi(y,x):=M_{d_{\hh^d}(x,y)}(\psi;x).
\]
According to Lemma 22 in \cite{He59}, it holds that 
\begin{equation}\label{commut}
\Delta_1 \mathfrak M_\phi(x,y)=\Delta_2 \mathfrak M_\phi(x,y),	
\end{equation}
where $\Delta_1$ and $\Delta_2$ are the Laplace-Beltrami operator of $\hh^d$ acting respectively on the
first and second $\hh^d$ component of $\mathfrak M_\phi$. Choosing on $\hh^d$ spherical coordinates $(\rho,\mathbf{\theta})$ 
centered at $x$, standard computations show that (see e.g.~\cite[X.7.2]{Hel62}), for every $\psi\in C^2(\hh^d)$, it holds
\begin{equation}\label{rad_lapl}
\Delta_{\hh^d}\psi(\rho,\mathbb\theta)=\partial_{\rho\rho}\psi(\rho,\mathbb\theta)+\frac{\dot A(\rho)}{A(\rho)}\partial_\rho\psi(\rho,\mathbb\theta)+\frac1{\sinh^2\rho}\Delta_{\partial B_{\rho}(x)}\psi(\rho,\mathbb\theta).
\end{equation}
Accordingly, since $\mathfrak M_\psi(y,x)$ depends on $d_{\hh^d}(x,y)$, but not on the angular coordinates $\mathbb\theta$ of $y$, relations \eqref{commut} and \eqref{rad_lapl} give
\begin{equation}\label{mean radial}
	\Delta_{\hh^d} M_\rho(\psi;x)=\Delta_2\mathfrak M_\psi(y,x)=\Delta_1\mathfrak M_\psi(y,x)=\partial_{\rho\rho}	M_\rho(\psi;x)
	+\frac{\dot A(\rho)}{A(\rho)}\partial_\rho	M_\rho(\psi;x),
\end{equation}
where $y=(\rho,\mathbb\theta)$ is any chosen point on $\partial B_\rho(x)$.\\
Now, from \eqref{green}, \eqref{GL1} and \eqref{th}, $h_\phi$ is well-defined and we have
\begin{equation*}
	h_\phi(x)=d\int_{0}^\infty k(\rho)\int_{\partial B_\rho(x)}\phi(y)\mbox{d}A_\rho(y)\mbox{d}\rho
=d\int_{0}^{+\infty} k(\rho)A(\rho)M_\rho(\phi;x)\mbox{d}\rho.
\end{equation*}
We claim that we can differentiate under the integral sign, i.e. that for all $x_0\in\H^d$
\begin{equation}\label{exchange}
\Delta_{\hh^d}h_\phi(x_0) 
=d\int_{0}^{+\infty} k(\rho)A(\rho)\Delta_{\hh^d}M_\rho(\phi;x_0)\mbox{d}\rho.
\end{equation}
To prove this claim, let $K\subset \H^d$ be a compact set containing $x_0$, endowed with a local coordinate chart, and let $\partial_x^\alpha$ be a partial derivative of order $0\leq|\alpha|\leq 2$. The function $h_\phi$ can be written as 
$$h_\phi(x)=d\int_0^\infty k(\rho)\frac{A(\rho)}{\mathbb{A}(\rho)}\int_{\partial \mathbb B_\rho(0)}\phi(\exp_x(y))d\mathbb A_{\rho}(y) d\rho,$$
where $\mathbb B_\rho(0)$ is the Euclidean ball of radius $\rho$, $\mathbb A_{\rho}$ its area, $d\mathbb A_{\rho}$ is its area measure and $\exp_x$ the exponential map of $\H^d$ at $x$.
Since $\phi\in C^\infty_c$, there exists a constant $R=R(K)>0$ such that 
\begin{equation*}
\int_{\partial \mathbb B_\rho(0)}\phi(\exp_x(y))d\mathbb A_{\rho}(y)=0,\quad\forall (\rho,x)\in[R,\infty)\times K.
\end{equation*}
On the other hand, by compactness there exists a constant $C>0$ such that $|\partial_x^\alpha\phi(\exp_x(y))|\leq C$ for all $(\rho,x)\in[0,R+1]\times K$, $y\in \partial \mathbb B_\rho(0)$, and $0\leq |\alpha|\leq 2$. Then
\begin{equation*}
\left|\partial_x^\alpha\int_{\partial \mathbb B_\rho(0)}\phi(\exp_x(y))d\mathbb A_{\rho}(y)\right| = \left|\int_{\partial \mathbb B_\rho(0)}\partial_x^\alpha\phi(\exp_x(y))d\mathbb A_{\rho}(y)\right|\leq C\mathbb A(\rho).
\end{equation*}
Hence, for all $(\rho,x)\in(0,\infty)\times K$, thanks to \eqref{asym_k} and \eqref{asym_A} we have
$$\left|\partial_x^\alpha\left[k(\rho)\frac{A(\rho)}{\mathbb{A}(\rho)}\int_{\partial \mathbb B_\rho(0)}\phi(\exp_x(y))d\mathbb A_{\rho}(y)\right]\right|\leq C |k(\rho)A(\rho)|\in L^1((0,\infty))$$
so that
$$
\partial_x^\alpha h_\phi(x_0) 
=d\int_{0}^{+\infty}\partial_x^\alpha\left[ k(\rho)A(\rho) M_\rho(\phi;x_0)\right]\mbox{d}\rho
=d\int_{0}^{+\infty} k(\rho)A(\rho)\partial_x^\alpha M_\rho(\phi;x_0)\mbox{d}\rho
$$
and \eqref{exchange} is proven.
Now, thanks to \eqref{mean radial},
\begin{equation*}\begin{aligned}
	&\frac1d\Delta_{\hh^d}h_\phi(x)-h_\phi(x)\\
	&=\int_{0}^{+\infty} k(\rho)A(\rho)\left[\Delta_{\hh^d}M_\rho(\phi;x)-dM_\rho(\phi;x)\right]\mbox{d}\rho\nonumber\\
	&=\int_{0}^{+\infty} k(\rho)A(\rho)\left[\partial_{\rho\rho}	M_\rho(\phi;x)
	+\frac{\dot A(\rho)}{A(\rho)}\partial_\rho	M_\rho(\phi;x)-dM_\rho(\phi;x)\right]\mathrm{d}\rho\nonumber\\
	&=\int_{0}^{+\infty} k(\rho)\partial_{\rho}\left[A(\rho)	\partial_{\rho}M_\rho(\phi;x)\right]\mathrm{d}\rho-d\int_{0}^{+\infty} k(\rho)A(\rho)M_\rho(\phi;x)\mathrm{d}\rho\nonumber
\end{aligned}\end{equation*}
An integration by parts and \eqref{ODE-gamma} yield
\begin{equation*}\begin{aligned}
	&\frac1d\Delta_{\hh^d}h_\phi(x)-h_\phi(x)\\
	&=\left. k(\rho)A(\rho)	\partial_{\rho}M_\rho(\phi;x)\right|^{\rho=+\infty}_{\rho=0}-\int_{0}^{+\infty}\dot k(\rho)A(\rho)	
	\partial_{\rho}M_\rho(\phi;x)\mathrm{d}\rho\nonumber\\
	&-d\int_{0}^{+\infty} k(\rho)A(\rho)M_\rho(\phi;x)\mathrm{d}\rho\nonumber\\
	&=\left. k(\rho)A(\rho)	\partial_{\rho}M_\rho(\phi;x)\right|^{\rho=+\infty}_{\rho=0}
	\left.-\dot k(\rho)A(\rho)M_\rho(\phi;x)\right|^{\rho=+\infty}_{\rho=0}\nonumber\\
	&	+\int_{0}^{+\infty}\partial_{\rho}\left[\dot k(\rho)A(\rho)\right]	
	M_\rho(\phi;x)\mathrm{d}\rho-d\int_{0}^{+\infty} k(\rho)A(\rho)M_\rho(\phi;x)\mathrm{d}\rho\nonumber\\
	&=\left. k(\rho)A(\rho)	\partial_{\rho}M_\rho(\phi;x)\right|^{\rho=+\infty}_{\rho=0}
	\left.-\dot k(\rho)A(\rho)M_\rho(\phi;x)\right|^{\rho=+\infty}_{\rho=0}\nonumber\\
	&	+\int_{0}^{+\infty}\left[A(\rho)\ddot k(\rho)+\dot A(\rho)\dot k(\rho)-\mathrm{d} k(\rho)A(\rho)\right]M_\rho(\phi;x)\mathrm{d}\rho\nonumber\\
		&=\left. k(\rho)A(\rho)	\partial_{\rho}M_\rho(\phi;x)\right|^{\rho=+\infty}_{\rho=0}
	\left.-\dot k(\rho)A(\rho)M_\rho(\phi;x)\right|^{\rho=+\infty}_{\rho=0}\nonumber.
\end{aligned}\end{equation*}
Now, observe that 
\begin{equation}\label{cond_dec_M}
\begin{array}{ll}&\left|\partial_\rho	M_\rho(\phi;x)\right|=
 A(1)^{-1}\left|\partial_\rho\int_{\partial B_1(x)}\phi(\exp_x(\rho\exp_x^{-1}( y))\mathrm{d}A_1(y)\right|\leq \max_{\partial B_\rho(x)}|\nabla\phi|.\\
&\left|M_\rho(\phi;x)\right| \leq \max_{\partial B_\rho(x)}|\phi|.
\end{array}\end{equation}
Moreover, applying l'H\^opital's rule, we get that $\dot k(\rho)=O(k(\rho))$ as $\rho\to\infty$ and 
\[
\lim_{\rho\to 0} k(\rho)A(\rho)=0\quad\textrm{ and }\quad\lim_{\rho\to 0}\dot k(\rho)A(\rho)=1.
\]
Since $\phi\in C^\infty_c$, \eqref{cond_dec_M} implies
\[
\lim_{\rho\to 0} k(\rho)A(\rho)	\partial_{\rho}M_\rho(\phi;x)=\lim_{\rho\to+\infty} k(\rho)A(\rho)
	\partial_{\rho}M_\rho(\phi;x) = \lim_{\rho\to+\infty}
\dot k(\rho)A(\rho)M_\rho(\phi;x) =0
\]
and
\[
\lim_{\rho\to 0} \dot k(\rho)A(\rho)M_\rho(\phi;x) =\phi(x),
\]
so that
\[
\frac1d\Delta_{\hh^d}h_\phi(x)-h_\phi(x)=\phi(x)
\]
as aimed. Finally, since $\phi\in C^\infty$, by standard elliptic regularity we get $h_\phi\in C^\infty(\hh^d)$.
\end{proof}

\begin{remark}[\textbf{Geometric interpretation}]\label{rem:herissons}{\rm
Let $\varphi$ as in Theorem~\ref{lem_hn}.
We do not know if
$h_{\varphi}$ is the support function of an F-convex set. But
the solution \eqref{th} can be written as $h_{\varphi}(x)=\langle x, \chi(x) \rangle_- $
with, for $x\in\F$,
$$\chi(x)= -\frac{d}{v_{d-1}}\int_{\H^d} y \varphi(y)\int_{+\infty}^{\mathrm{acosh}(-\langle \frac{x}{\|x\|_-},y\rangle_-)}
\frac{\mathrm{d}t}{\sinh^{d-1}(t)\cosh^2(t)}\mathrm{d}\H^d(y).$$
This is the normal representation of a $C^2$ F-hedgehog with mean radius of curvature $\varphi$,
see Subsection~\ref{subsec:hedg}. Hedgehogs appear naturally when the Christoffel problem is considered, under different names.
In the smooth setting, they are also called \emph{generalized envelopes}, see \cite{oli92} and the references inside.
See also Remark~\ref{rem:mess pol}.
}\end{remark}

\subsection{Distributions solutions}

Let $\mathcal R(\hh^d)$ be the set of the  Radon measures  $\mu$ on $\hh^d$ and define $\mathcal R^+(\hh^d)$ as the subset of
 measures satisfying the additional condition
\begin{equation}\label{cond_dec_meas}
	\int_{\hh^d\setminus B_1(x_0)}|G(x_0,y)|\mathrm{d}\mu(y)<+\infty
\end{equation}
for some (hence any) $x_0\in\hh^d$.\\
Each $\mu\in\mathcal R^+(\hh)$ can be seen as the distribution called, with a standard abuse of notation, also $\mu\in \mathcal D'(\hh)$, and whose action is given by
\begin{equation}\label{mu-dist}
(\mu,f)=\int_{\hh^d}f(x)\mathrm{d}\mu(x),\qquad \forall f\in \mathcal D(\hh^d)=C^{\infty}_c(\hh^d).
\end{equation}

\begin{remark}{\rm
We note that, in case $\mu=\phi \mathrm{d}\H^d$ is given as a $C^2$ function on $\hh^d$, thanks to \eqref{asym_k} and \eqref{asym_A}, condition \eqref{cond_dec_meas} is implied by 
\begin{equation}\label{cond_dec_e}
e^{-d_{\hh^d}(x_0,x)}\max\{|\phi(x)|;|\nabla\phi(x)|\} \in L^1(\H^d\setminus B_1(x_0)).
\end{equation}
In particular our assumption \eqref{cond_dec_meas} is weaker than the conditions required by Sovertkov \cite{So81} and Lopes de Lima and Soares de Lira \cite{LS06}.}
\end{remark}

\begin{theorem}\label{th_dist}
Let $\mu\in\mathcal{R^+}(\hh^d)$ and consider the equation
\begin{equation}\label{eq_dist}
\frac{1}{d}\Delta h - h = \mu 
\end{equation}
in the sense of distributions on $\hh^d$. Then, a particular solution to \eqref{eq_dist} is given by the distribution $h_\mu\in \mathcal D'(\hh^d)$ defined formally as
\begin{equation}\label{def_h}
h_\mu(x):=d\int_{\hh^d}G(x,y)\mathrm{d}\mu(y),
\end{equation}
and whose action is defined by \eqref{action}.
\end{theorem}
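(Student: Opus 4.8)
The plan is to reduce everything to the smooth, compactly supported case already settled in Theorem~\ref{lem_hn}, exploiting two symmetries. First, the kernel is symmetric: by \eqref{green}, $G(x,y)=k(d_{\hh^d}(x,y))=G(y,x)$. Second, the operator $L:=\frac{1}{d}\Delta-1$ is formally self-adjoint on $\hh^d$ with respect to $\mathrm{d}\H^d$. Writing $h_f:=d\int_{\hh^d}G(\cdot,y)f(y)\,\mathrm{d}\H^d(y)$ for the solution operator of Theorem~\ref{lem_hn} (so that $Lh_f=f$ for every $f\in C_c^\infty(\hh^d)$), I would define the action of the distribution $h_\mu$ of \eqref{def_h} on a test function $f\in\mathcal D(\hh^d)$ by
\begin{equation}\label{action}
(h_\mu,f):=\int_{\hh^d}h_f(y)\,\mathrm{d}\mu(y).
\end{equation}
By the symmetry of $G$ and Fubini's theorem this coincides with the formal expression $d\iint G(x,y)f(x)\,\mathrm{d}\H^d(x)\,\mathrm{d}\mu(y)$ obtained by integrating $h_\mu(x)f(x)$ over $\hh^d$ and interchanging the order of integration, which is what legitimizes reading \eqref{def_h} as a distribution with this action.

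The first, and main, technical point is the well-definedness of \eqref{action}, i.e.~that $\int_{\hh^d}|h_f|\,\mathrm{d}\mu<+\infty$ for each $f\in\mathcal D(\hh^d)$; this is precisely where the hypothesis $\mu\in\mathcal R^+(\hh^d)$, namely condition \eqref{cond_dec_meas}, enters. Fix $f$ with $\supp f\subset B_R(x_0)$. On any compact neighbourhood of $\supp f$ the function $h_f$ is continuous, hence bounded, and $\mu$ is finite there, so this region contributes a finite amount. For the tail I would estimate $|h_f(y)|\leq d\int_{\supp f}|G(y,x)|\,|f(x)|\,\mathrm{d}\H^d(x)$ and compare $G(y,x)$ with $G(x_0,y)$: since $|d_{\hh^d}(y,x)-d_{\hh^d}(y,x_0)|\leq R$ for $x\in\supp f$, the asymptotics \eqref{asym_k} furnish a constant $C(R)$ with $|G(y,x)|\leq C(R)|G(x_0,y)|$ once $d_{\hh^d}(y,x_0)$ is large. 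Hence $|h_f(y)|\leq C(R)\|f\|_{L^1}|G(x_0,y)|$ on the tail, and \eqref{cond_dec_meas} gives $\int_{\hh^d\setminus B_1(x_0)}|h_f|\,\mathrm{d}\mu<+\infty$. The very same domination justifies the use of Fubini in the first paragraph.

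It then remains to verify $Lh_\mu=\mu$ in $\mathcal D'(\hh^d)$, that is $(h_\mu,Lf)=(\mu,f)$ for all $f\in\mathcal D(\hh^d)$, since $L$ is self-adjoint. By \eqref{action} this reads $\int_{\hh^d}h_{Lf}\,\mathrm{d}\mu=\int_{\hh^d}f\,\mathrm{d}\mu$, so it suffices to establish the pointwise identity $h_{Lf}=f$ on $\hh^d$ for every $f\in C_c^\infty(\hh^d)$ (note that $Lf\in C_c^\infty$, as $\Delta$ is local). For this I would test against an arbitrary $\psi\in C_c^\infty(\hh^d)$: since $G$ is locally integrable (near the diagonal $-k(\rho)\sim C_2\rho^{2-d}$ by \eqref{asym_k}, which is integrable against $A(\rho)\sim v_{d-1}\rho^{d-1}$ from \eqref{asym_A}), Fubini together with the symmetry of $G$ gives $\int_{\hh^d}h_{Lf}\,\psi\,\mathrm{d}\H^d=\int_{\hh^d}(Lf)\,h_\psi\,\mathrm{d}\H^d$; self-adjointness of $L$ (the boundary terms at infinity vanish because $f$ has compact support) turns this into $\int_{\hh^d}f\,(Lh_\psi)\,\mathrm{d}\H^d=\int_{\hh^d}f\,\psi\,\mathrm{d}\H^d$, using $Lh_\psi=\psi$ from Theorem~\ref{lem_hn}. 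As $h_{Lf}$ and $f$ are continuous and agree against every such $\psi$, they coincide, and the theorem follows. The only genuine difficulty is the tail estimate of the second paragraph; everything else is a formal duality computation anchored on Theorem~\ref{lem_hn}.
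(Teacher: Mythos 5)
Your proposal is correct and follows essentially the same route as the paper: the action $(h_\mu,f):=\int h_f\,\mathrm{d}\mu$ anchored on Theorem~\ref{lem_hn}, a near/far splitting plus \eqref{cond_dec_meas} for well-definedness, and the duality computation $h_{Lf}=f$ verified by testing against an arbitrary $\psi\in C^\infty_c$ via Fubini and self-adjointness of $L$. The only (cosmetic) divergence is in the tail estimate, where you dominate $|G(y,x)|$ by $C(R)|G(x_0,y)|$ using the exponential asymptotics \eqref{asym_k} rather than invoking the monotonicity of $k$ directly; your version is in fact the more carefully justified of the two.
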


\begin{corollary}\label{coro_C0}
Let $\phi\in C^{k,\alpha}(\hh^d)$, $0\leq k$, $0\leq\alpha<1$. Assume that there exists $x_0\in\hh^d$ such that
\begin{equation*}\label{cond_dec}
	\int_{\hh^d}|G(x_0,y)|\phi(y)\mathrm{d}\H^d(y)<+\infty
\end{equation*}
Then \eqref{eq_hn} has a solution given by
\begin{equation*}
	h_\phi(x)=d\int_{\hh^d}G(x,y)\phi(y)\mathrm{d}\hh^d(y).
\end{equation*}
Moreover, $h_\phi\in C^{k+2,\alpha}(\hh^d)$ if $\alpha>0$ and $h_\phi\in C^{1,\beta}(\hh^d)$ for all $\beta<1$ if $\alpha=k=0$.
\end{corollary}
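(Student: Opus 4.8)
The plan is to deduce the corollary from Theorem~\ref{th_dist} together with standard interior elliptic regularity, so that essentially no new PDE work is required beyond what has already been done for general Radon measures. First I would check that $\mu:=\phi\,\mathrm{d}\H^d$ belongs to $\mathcal R^+(\hh^d)$: the standing hypothesis $\int_{\hh^d}|G(x_0,y)|\phi(y)\,\mathrm{d}\H^d(y)<+\infty$ is precisely condition \eqref{cond_dec_meas} for this $\mu$ at the point $x_0$. Moreover, using the asymptotics \eqref{asym_k} of $k$ and the triangle inequality $d_{\hh^d}(x,y)\geq d_{\hh^d}(x_0,y)-d_{\hh^d}(x,x_0)$, one gets $|G(x,y)|\leq C(x)\,|G(x_0,y)|$ for $y$ outside a fixed ball, while near $x$ the singularity $\rho^{2-d}$ of $k$ is integrable by \eqref{GL1}; hence the integral in \eqref{def_h} converges for \emph{every} $x\in\hh^d$ and defines $h_\phi$ as a genuine, finite-valued function coinciding with the distribution produced by Theorem~\ref{th_dist}. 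In particular $h_\phi$ solves $\frac1d\Delta h-h=\phi$ in $\mathcal D'(\hh^d)$.

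The second and main step is the regularity bootstrap. Write $L:=\frac1d\Delta-1$, a second-order, uniformly elliptic operator with smooth coefficients in any local chart of $\hh^d$, and recall that $Lh_\phi=\phi$ in the sense of distributions. If $\alpha>0$ I would apply interior Schauder estimates on each geodesic ball $B\Subset\hh^d$: a distributional solution of $Lu=f$ with $f\in C^{k,\alpha}$ and smooth coefficients lies in $C^{k+2,\alpha}$ on every relatively compact subdomain. Covering $\hh^d$ by such balls yields $h_\phi\in C^{k+2,\alpha}(\hh^d)$. If instead $k=\alpha=0$, then $\phi$ is merely continuous; here I would use the interior $L^p$ estimates to get $h_\phi\in W^{2,p}_{\mathrm{loc}}(\hh^d)$ for every $p<\infty$, and then the Sobolev--Morrey embedding gives $h_\phi\in C^{1,\beta}_{\mathrm{loc}}(\hh^d)$ for all $\beta<1$, which is exactly the claimed conclusion.

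I expect the substantive difficulties to be entirely contained in Theorem~\ref{th_dist}; what remains here is bookkeeping. The two points that deserve care are: (i) certifying that the \emph{formal} integral \eqref{def_h} represents a classical locally integrable function rather than a mere distribution, which is what licenses pointwise regularity statements, and is handled by the decay of $G$ just described; and (ii) the endpoint case $k=\alpha=0$, where one cannot upgrade to $C^2$ because continuity of the right-hand side is insufficient for Schauder estimates---this is the classical endpoint loss and is the reason only $C^{1,\beta}$ for every $\beta<1$ is asserted. No global (uniform) H\"older bounds are needed, since the statement concerns the local regularity class, which the interior estimates provide on the homogeneous space $\hh^d$.
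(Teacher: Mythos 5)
Your proposal is correct and follows essentially the same route as the paper: take $\mu=\phi\,\mathrm{d}\H^d$ in Theorem~\ref{th_dist} to get the distributional solution, then apply interior Schauder estimates when $\alpha>0$ and interior $L^p$ estimates plus Sobolev embedding when $k=\alpha=0$ (the paper cites Theorem~3.54 and Theorem~2.10 of \cite{Au98} for exactly these two steps). Your additional verification that the integral defining $h_\phi$ converges at every $x$, not only at $x_0$, is a sensible piece of bookkeeping that the paper leaves implicit in the remark ``for some (hence any) $x_0$'' following \eqref{cond_dec_meas}.
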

\begin{remark}{\rm
It is not hard to see (cf. \eqref{well-def}) that if $\phi$ is $\Gamma$-invariant, 
then also the solution $h_\phi$ is $\Gamma$ invariant, see Subsection~\ref{sub:fuchssol} and the proof of Theorem \ref{th_smooth} for more details. 
On the other hand if $K$ is a $C^2_+$ $\tau$-F-convex set, it follows from Lemma~\ref{lem: hess gamma inv} (or more generally from Remark~\ref{rem: gamma 0 inv mes})
that its mean radius of curvature  is $\Gamma$-invariant. 
In particular the support function of a $\tau$-F-convex set can not be recovered by Corollary~\ref{coro_C0}. 
This is a first evidence of the non-uniqueness of the solutions, that will be further discussed in the subsequent sections.
}\end{remark}

\begin{proof}[Proof of Theorem~\ref{th_dist}]
Given $\mu\in\mathcal R^+(\hh^d)$, the distribution $h\in \mathcal D'(\hh^d)$ is a solution to \eqref{eq_dist} if and only if
\[
(h,\frac{1}{d}\Delta f-f)=(\mu,f)\qquad \forall f\in \mathcal D(\hh^d).
\]
Define formally 
\begin{equation*}
h_\mu(x):=d\int_{\hh^d}G(x,y)\mathrm{d}\mu(y). 	
\end{equation*}
We claim that $h_\mu\in \mathcal D'(\hh^d)$, its action being defined by 
\begin{equation}\label{action}
(h_\mu,f):=(\mu,h_f)=\int_{\hh^d} h_f(x)\mathrm{d}\mu(x),
\end{equation}
where $h_f(x)=d\int_{\hh^d}G(x,y)f(y)\mathrm{d}{\hh^d}(y)$ is the smooth solution to $\frac{1}{d}\Delta h_f-h_f=f$ given by Theorem \ref{lem_hn}. To this end, note that for each compact set $K\in\H^d$ and $f\in \mathcal D (K)$, 
\begin{equation*}
	\left|h_f(x)\right|
	\leq\begin{cases}d|K|\left\|f\right\|_\infty\left|k(d_{\hh^d}(x),\operatorname{supp}f)\right|,&\textrm{if}\ d_{\hh^d}(x,K)>1,\\
	d\left\|f\right\|_\infty\left\|G(x,\cdot)\right\|_{L^1(\hh^d)},&\textrm{if}\ d_{\hh^d}(x,K)\leq 1\end{cases},
\end{equation*}
where $|K|$ is the hyperbolic volume of $K$. Then, choosing $x_0$ in the interior of $K$, thanks to \eqref{cond_dec_meas} and to the monotonicity of $k$ we have
\begin{equation}\label{well-defi}\begin{aligned}
	&\int_{\hh^d} h_f(x)\mathrm{d}\mu(x)\\ 
	&\leq d\left\|f\right\|_\infty\left[\mu\left(\{x:d_{\hh^d}(x,K)\leq 1\}\right)\left\|G(x,\cdot)\right\|_{L^1(\hh^d)} 
	+ |K|\int_{\{x:d_{\hh^d}(x,K)> 1\}}|k(d_{\hh^d}(x,K))|\mathrm{d}\mu(x)\right]\\
	&\leq d\left\|f\right\|_\infty\left[\mu\left(\{x:d_{\hh^d}(x,K)\leq 1\}\right)\left\|G(x,\cdot)\right\|_{L^1(\hh^d)} 
	+ |K|\int_{\{x:d_{\hh^d}(x,K)> 1\}}|G(x,x_0)|\mathrm{d}\mu(x)\right]\\
	&\leq C_K\left\|f\right\|_\infty<+\infty,
\end{aligned}\end{equation}
where the constant
$$
C_K:=d\left[\mu\left(\{x:d_{\hh^d}(x,K)\leq 1\}\right)\left\|G(x,\cdot)\right\|_{L^1(\hh^d)} 
	+ |K|\int_{\H^d\setminus B_1(x_0)}|G(x,x_0)|\mathrm{d}\mu(x)\right]
$$
is independent of $f$. Then \eqref{action} is well defined and the functional $h_\mu$ on $\mathcal D(\hh^d)$ is linear by construction and continuous because of \eqref{well-defi}. Also, it's worthwhile to observe that \eqref{action} is the natural action for $h_\mu$, 
as it is shown by the case $\mu=\phi \mathrm{d}{\hh^d}$ when $\phi\in C^2_c(\hh^d)$. \\
We want to prove that $h_\mu$ is a solution of \eqref{eq_dist}. To this end, let $f_1,f_2\in \mathcal D(\hh^d)=C^{\infty}_c(\hh^d)$ and compute
\begin{equation}\begin{aligned}\label{equiv}
	&=\int_{\hh^d} \left(d\int_{\hh^d}G(x,y)f_2(x)\mathrm{d}{\hh^d}(x)\right)\left[\frac1d\Delta f_1-f_1\right](y)\mathrm{d}{\hh^d}(y)\\
	&=\int_{\hh^d} h_{f_2}(y)\left[\frac1d\Delta f_1-f_1\right](y)\mathrm{d}{\hh^d}(y)\\
	&=\int_{\hh^d}\left[\frac1d\Delta h_{f_2}(y)-h_{f_2}(y)\right]f_1(y){\mathrm{d}\hh^d}(y)\\
	&=\int_{\hh^d}f_2(y)f_1(y)\mathrm{d}{\hh^d}(y),	
\end{aligned}\end{equation}
where we have applied Fubini and Stokes' theorems, and $h_{f_2}\in C^{\infty}(\hh^d)$ is the solution given by Theorem \ref{lem_hn}. Since $f_2\in \mathcal D(\hh^d)$ is arbitrary, \eqref{equiv} says that for a $f\in\mathcal D(\hh^d)$ one has $h_{\frac1d\Delta f-f}=f$. Then
\begin{equation}
	(\frac{1}{d}\Delta h_\mu-h_\mu,f)= (h_\mu,\frac1d\Delta f -f)
 =\int_{\hh^d} h_{\frac1d\Delta f-f}(x)\mathrm{d}\mu(x)
	=\int_{\hh^d} f(x) \mathrm{d}\mu(x) = (\mu,f).
\end{equation}
\end{proof}

\begin{proof}[Proof of Corollary~\ref{coro_C0}]
Let $\mu=\phi \mathrm{d}{\hh^d}$. Then, according to Theorem \ref{th_dist}, 
\[
h_\phi(x):=d\int_{\hh^d}G(x,y)\mathrm{d}\mu(y)=d\int_{\hh^d}G(x,y)\phi(y)\mathrm{d}{\hh^d}(y)
\]
is a distribution solution to  \eqref{eq_dist}. 
If $\phi\in C^{k,\alpha}(\hh^d)$ for $\alpha>0$, 
then the conclusion follows directly from Theorem~3.54 in \cite{Au98}. 
More generally, if $\phi\in C^{0}(\hh^d)$, then clearly $\phi\in L^p_{loc}(\hh^d)$ for 
all $p<\infty$. Applying again Theorem 3.54 in \cite{Au98} we get that $\phi\in W^{2,p}_{loc}(\hh^d)$. 
Hence, up to choose $p$ large enough, we get that $\phi\in C^{1,\beta}(\hh^d)$ 
for all $\beta<1$ thanks to Sobolev embedding (see Theorem 2.10 in \cite{Au98}).
\end{proof}

\begin{example}[\textbf{The elementary example}]\label{rem:elementary dist}{\rm 
We are given a measure on $\H^2$ which is a weight $a$ on a geodesic $\gamma$.
It separates $\H^2$ into $\O_1$  and $\O_2$.
Let us denote by $v$ the unit space-like vector orthogonal to the time-like hyperplane defining $\gamma$ and pointing
to $\O_2$. 
Let $h_\mu$ be the analytic solution proposed in \eqref{def_h}. 
Since $h_\mu|_{\O_i}$ is smooth, 
it makes sense to write
\begin{equation*}
	h_\mu|_{\O_i}(x)=\int_{\hh^d}G(x,y)\mathrm{d}\mu(y)=\int_{-\infty}^\infty a k(d_{\hh^2}(x,\gamma(t)))\mathrm{d}t.
\end{equation*}
It is clear that $h_\mu|_{\O_i}(x)$ depends only on $d_{\hh^2}(x,\gamma)$. First of all, in dimension $d=2$ by \eqref{small_k} we have the explicit expression
\[
k(\rho)=\frac1{2\pi}\left[1+\frac{\cosh(\rho)}{2}\log\left(\frac{\cosh(\rho)-1}{\cosh(\rho)+1}\right)\right]
\]
By the hyperbolic Pythagorean theorem \cite{Thurcour1}
\[
\cosh (d_{\H^2}(x,\gamma(t))=\cosh(t) b(x),
\]
where $b(x):=\cosh(d_{\mathbb{H}^2}(x,\gamma))$ is independent of $t$. Note also that $b(x)$ has the following geometric interpretation:
 $\sinh(d_{\mathbb{H}^2}(x,\gamma))=\epsilon\langle x,v \rangle_-$, where $\epsilon=1$ if $x$ and $v$ are on the 
same side of $\gamma$, and $\epsilon=-1$ otherwise. So
$$\cosh(d_{\mathbb{H}^2}(x,\gamma))=\sqrt{1+\langle x,v\rangle_-^2}. $$
Consider the halfspace model for $\H^2$, i.e. $\H^2=\{(u,w)\in\rr^2\ :\ y>0\}$ endowed with the (conformally Euclidean) 
metric $w^{-2}(du^2+dw^2)$. Without loss of generality, we can suppose that $\gamma(t)=(0,e^t)$.
 With this choice for the coordinates system and for the geodesic, it is easy to obtain 
\[
\sinh(d_{\mathbb{H}^2}((u,w),\gamma))=\epsilon\langle x,v \rangle_-=\frac{|u|}w.
\]
Then
\begin{equation*}
\begin{aligned}
	h_\mu|_{\Omega_i}(x)
	&=\int_{-\infty}^\infty a k(d_{\hh^2}(x,\gamma(t)))\mathrm{d}t\\
	&=\frac{a}{2\pi}\int_{-\infty}^\infty\left[1+\frac{\cosh(t)b(x)}{2}\log\left(\frac{\cosh(t)b(x)-1}{\cosh(t)b(x)+1}\right)\right]\mathrm{d}t\\
	&=\frac{a}{\pi}\left[(b^2(x)-1)^{1/2}\arctan\left((b^2(x)-1)^{-1/2}\right)-1\right]\\
	&=\frac{a}{\pi}\left[\langle x,v \rangle_-\arctan\left((\langle x,v \rangle_-)^{-1}\right)-1\right]\\
	&=\frac{a}{\pi}\left[\frac uw\arctan\left(\frac wu\right)-1\right].
\end{aligned}
\end{equation*}
On the one hand, using the conformal structure of $\H^2$, one can check that as expected
\[
(\Delta h_\mu - 2h_\mu)|_{\O_i}(u,w)=w^2\left(\frac{\partial^2}{\partial u^2}+\frac{\partial^2}{\partial w^2}\right)h_\mu|_{\O_i}(u,w) -2h_\mu|_{\O_i}(u,w) = 0
\]
for $i=1,2$. On the other hand, we have for instance that
\[
\begin{aligned}
(\nabla^2h_\mu|_{\mathcal{O}_i}-gh_\mu|_{\mathcal{O}_i})\left(\frac{\partial}{\partial w},\frac{\partial}{\partial w}\right)
&=\frac{\partial^2}{\partial w^2}h_\mu|_{\mathcal{O}_i}+\frac1{w}\frac{\partial}{\partial_w}h_\mu|_{\Omega_i}-\frac1{w^2}h_\mu|_{\mathcal{O}_i}\\
&=\frac{a}{\pi}\frac{(1-z^2)}{2w^2(1+z^2)^2},
\end{aligned}
\]
where $z=u/w$, and the latter expression is negative for $z$ large enough, which proves that $h_\mu$ is not the support function of a convex set.
But we know that there exists a convex solution, see Example~\ref{ex: elemntary1}. So  \eqref{def_h} 
does not reach all convex solutions. This example is continued in Example~\ref{rem:elementary pol ex} and Example~\ref{ex : surface courbure 0}.

}\end{example}

\subsection{Fuchsian solutions}\label{sub:fuchssol}
 Throughout this section we will use overlined letters to denote objects defined on the compact hyperbolic manifold 
$\hG$. For instance, given $\bar\phi:\hh^d/\Gamma\to\rr$ we can define $\phi:\hh^d\to\rr$ as 
$\phi=\Pi_{\Gamma}\circ \bar\phi$, where $\Pi_{\Gamma}:\hh^d\to\hG$ 
is the covering projection. The precise meaning of overlined symbols will be specified time by time.

\begin{theorem}\label{th_smooth}
Let $0<\bar\phi\in C^{k,\alpha}(\hG)$ for some $k\geq0$ and $0\leq\alpha<1$. Then the equation
\begin{equation}\label{main_eq}
	\frac{1}{d}\Delta \bar h - \bar h =\bar \phi
\end{equation}
on $\hh^d/\Gamma$ has a unique solution $\bar h_{\bar\phi}$ defined for all $\bar x\in \hh/\Gamma$ as
\begin{equation}\label{h}
\bar h_{\bar\phi}(\bar  x)=d\int_{\hh^d}G(x,y)\phi(y)\mathrm{d}{\hh^d}(y),
\end{equation}
where $x\in \Pi_{\Gamma}^{-1}(\bar x)$ and $\phi=\bar\phi\circ P_{\Gamma}$.\\
Moreover, $\bar h_{\bar\phi}\in C^{k+2,\alpha}(\hh^d)$ if $\alpha>0$ and $\bar h_{\bar\phi}\in C^{1,\beta}(\hh^d)$ for all $\beta<1$ if $\alpha=k=0$.
\end{theorem}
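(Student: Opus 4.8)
The plan is to deduce existence and regularity directly from the results already established on $\hh^d$, and to prove uniqueness by a separate energy argument in which the compactness of $\hG$ is the decisive ingredient. Set $\phi=\bar\phi\circ P_{\Gamma_0}$; since $\bar\phi$ is continuous on the compact manifold $\hG$ it is bounded, so $\phi$ is a bounded $\Gamma_0$-invariant function on $\hh^d$, and as $P_{\Gamma_0}$ is a local isometry, $\phi\in C^{k,\alpha}(\hh^d)$ with the same local Hölder data as $\bar\phi$.

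First I would check that $\mu:=\phi\,\mathrm{d}{\hh^d}$ belongs to $\mathcal R^+(\hh^d)$: it is clearly a Radon measure, and for any $x_0\in\hh^d$,
\[
\int_{\hh^d\setminus B_1(x_0)}|G(x_0,y)|\,\mathrm{d}\mu(y)\le \Big(\sup_{\hh^d}|\phi|\Big)\int_{\hh^d}|G(x_0,y)|\,\mathrm{d}{\hh^d}(y)<+\infty
\]
by \eqref{GL1}, so condition \eqref{cond_dec_meas} holds. Hence Theorem~\ref{th_dist} and Corollary~\ref{coro_C0} apply: the function $h_\phi(x)=d\int_{\hh^d}G(x,y)\phi(y)\,\mathrm{d}{\hh^d}(y)$ is a (distributional) solution of $\frac1d\Delta h-h=\phi$ on $\hh^d$, with the stated regularity ($C^{k+2,\alpha}$ if $\alpha>0$, and $C^{1,\beta}$ for every $\beta<1$ if $\alpha=k=0$). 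Next I would verify that $h_\phi$ is $\Gamma_0$-invariant, which both makes \eqref{h} independent of the chosen lift $x$ and lets the function descend to $\hG$: for $\gamma_0\in\Gamma_0$, the substitution $y=\gamma_0 z$ together with $G(\gamma_0 x,\gamma_0 z)=k(d_{\hh^d}(\gamma_0 x,\gamma_0 z))=k(d_{\hh^d}(x,z))=G(x,z)$, the invariance $\phi(\gamma_0 z)=\phi(z)$, and the invariance of the volume form under the isometry $\gamma_0$ yield $h_\phi(\gamma_0 x)=h_\phi(x)$. Since $h_\phi$ is invariant and solves the equation on $\hh^d$ (classically when $\alpha>0$, weakly otherwise), the induced function $\bar h_{\bar\phi}$ solves $\frac1d\Delta\bar h-\bar h=\bar\phi$ on $\hG$ with the asserted regularity.

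The only genuinely new point is uniqueness, and here the compactness of $\hG$ is essential. If $\bar h_1,\bar h_2$ are two solutions, their difference $\bar u$ solves the homogeneous equation $\frac1d\Delta\bar u-\bar u=0$, i.e.\ $\Delta\bar u=d\bar u$; since the right-hand side is smooth, elliptic regularity forces $\bar u$ to be smooth (so the estimate below is legitimate regardless of the regularity class in which uniqueness is sought). Integrating by parts on the closed manifold $\hG$ (Green's formula, with no boundary term) gives
\[
d\int_{\hG}\bar u^2\,\mathrm{d}{\hh^d}=\int_{\hG}\bar u\,\Delta\bar u\,\mathrm{d}{\hh^d}=-\int_{\hG}|\nabla\bar u|^2\,\mathrm{d}{\hh^d}\le 0.
\]
As $d>0$, both sides must vanish, whence $\bar u\equiv 0$; equivalently, $\Delta=\operatorname{div}\nabla$ has nonpositive spectrum on $\hG$, so the positive number $d$ is not an eigenvalue. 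This gives uniqueness and completes the proof.

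I expect existence and regularity to be essentially bookkeeping on top of Theorem~\ref{th_dist} and Corollary~\ref{coro_C0}; the one place demanding care is the descent of the equation from $\hh^d$ to $\hG$ in the borderline case $\alpha=k=0$, where $h_\phi$ is only $C^{1,\beta}$ and one must argue in the weak ($W^{2,p}_{\mathrm{loc}}$, or distributional) formulation before passing to the quotient. Note that positivity of $\bar\phi$ is not used in this argument; only boundedness is needed, the sign condition being relevant instead for the convexity questions treated elsewhere.
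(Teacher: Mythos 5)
Your proof is correct, and the existence, invariance, and regularity parts follow the paper's route essentially verbatim: lift $\bar\phi$ to a bounded $\Gamma_0$-invariant $\phi$, check the integrability condition so that Theorem~\ref{th_dist} and Corollary~\ref{coro_C0} apply, verify $\Gamma_0$-invariance of $h_\phi$ by the substitution $y\mapsto\gamma_0 y$ together with the isometry invariance of $G$ and of the volume form, and descend through the local isometry $P_{\Gamma_0}$. Where you genuinely diverge is uniqueness. The paper's own proof of Theorem~\ref{th_smooth} does not address uniqueness at all; that claim is only justified later, in Proposition~\ref{pr_unicity-fuchsian}, by a duality argument in which two distributional solutions are paired against $\frac1d\Delta\bar h_{\bar\eta}-\bar h_{\bar\eta}=\bar\eta$ for arbitrary smooth $\bar\eta$, using the existence part of the theorem as input. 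Your energy argument --- elliptic regularity to upgrade the difference to a smooth solution of $\Delta\bar u=d\,\bar u$, then $d\int_{\hG}\bar u^2=-\int_{\hG}|\nabla\bar u|^2\le 0$ on the closed manifold, forcing $\bar u\equiv0$ since $d>0$ cannot be an eigenvalue of the nonpositive operator $\Delta$ --- is self-contained, covers distributional solutions, and is arguably cleaner since it does not presuppose solvability for arbitrary right-hand sides. Your two closing observations are also accurate: the positivity of $\bar\phi$ plays no role here (the paper's displayed bounds $0<\bar\phi_\ast\le\phi$ are never actually used), and in the borderline case $\alpha=k=0$ the descent of the equation to $\hG$ must indeed be performed in the weak or $W^{2,p}_{\mathrm{loc}}$ formulation, a point the paper glosses over.
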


\begin{proof}
Consider $\phi=\bar\phi\circ P_{\Gamma}\in C^{k,\alpha}(\hh^d)$. By definition $\phi$ is $\Gamma$-invariant, i.e. $\phi(x)=\phi(\gamma x)$ for all $\gamma\in\Gamma$ and $x\in\hh^d$. Moreover
\[
0<\bar \phi_\ast:=\min_{\hG}\bar  \phi\leq\phi\leq\max_{\hG}\bar \phi=:\bar \phi^\ast<\infty,
\]
so that condition \eqref{cond_dec_meas} is satisfied. Let 
\[
h_\phi(x)=d\int_{\hh^d}G(x,y)\phi(y)\mathrm{d}{\hh^d}(y),
\]
be the solution to equation \eqref{eq_hn} given by Theorem \ref{lem_hn} and Corollary \ref{coro_C0}. Then $h_\phi$ is $\Gamma$-invariant. In fact, for all $x\in\hh^d$ and $\gamma\in\Gamma$ it holds
\begin{equation}\label{well-def}\begin{aligned}
	h_\phi(\gamma x)&=d\int_{\hh^d}G(\gamma x,y)\phi(y)\mathrm{d}{\hh^d}(y)\\
	&=d\int_{\hh^d}G(\gamma x,\gamma y)\phi(\gamma y)\mathrm{d}{\hh^d}(\gamma y)\qquad \textrm{(by a change of variable)}\\
	&=d\int_{\hh^d}G(x,y)\phi(\gamma y)\mathrm{d}{\hh^d}( y)\qquad \textrm{(since $\gamma$ is an isometry of $\hh^d$)}\\
	&=d\int_{\hh^d}G(x,y)\phi(y)\mathrm{d}{\hh^d}(y)\qquad \textrm{(by construction of $\phi$)}\\
	&=h_\phi(x).
\end{aligned}\end{equation}
Accordingly, $\bar h_{\bar\phi}=h_\phi\circ P_\gamma^{-1}$ is a well defined function on $\hG$, it has the form given in \eqref{h} and it is a solution of \eqref{main_eq} since $P_{\Gamma}$ is a (local) Riemannian isometry.
 
\end{proof}
Now, let $\mathcal R(\hG)$ be the set of the positive finite Radon measures on $\hG$. As for \eqref{mu-dist} 
we have \linebreak$\mathcal{R}(\hG)\subset\mathcal{D}'(\hG)$, the space of distributions on $\hG$. Then, given $\bar \mu\in\mathcal R(\hG)$, we can consider the equation
\begin{equation}\label{eq_dist_Gamma}
	\frac{1}{d}\Delta \bar h -  \bar h = \bar \mu,\qquad\textrm{in } \mathcal D'(\hG).
\end{equation}
We want to show that, as in the regular case, a solution to this latter can be obtained by projecting to $\hG$ a solution of \eqref{eq_dist}.

Let $\epsilon>0$ such that $B_\epsilon(\bar x)\subset\hG$ is a convex geodesic ball for each $\bar x\in\hG$. The compactness of $\hG$ implies that such an $\epsilon$ exists, and that the open covering $\{B_\epsilon(\bar x)\}_{\bar x\in\hG}$ admits a finite subcovering $\{B_\epsilon(\bar x_j)\}_{j\in J}$, $|J|<\infty$. 
Fix points $x_j$ in the fibers over $\bar x_j$, i.e. $P(x_j)=\bar x_j$ for all $j\in J$. Then $\{B_{\epsilon}(\gamma x_j)\}_{\gamma\in\Gamma,j\in J}$ is a locally finite open covering of $\hh^d$ such that $B_{\epsilon}(\gamma x_j)\cap B_{\epsilon}( x_j)=\emptyset$ for all $\gamma\in\Gamma$ and all $j\in J$.\\
Given $\overline{\mu}\in\mathcal R(\hG)$, we can define 
$\mu:=P_{\Gamma}^{\ast}\bar\mu\in\mathcal{R^+}(\hh^d)$ as the pull-back measure of $\bar\mu$ through the projection $P_{\Gamma}$. 
Since $P_{\Gamma}$ is a Riemannian submersion, $\mu$ is well-defined. Namely, one can first define the action of $\mu$ on Borel-measurable set $A\subset B_\epsilon (\gamma x_j)$ 
for some $\gamma\in\Gamma$ and $j\in J$, as $\mu(A)=\bar \mu(P_{\Gamma}(A))$. For general $A\subset\hh^d$, one uses the sheaf property of distributions.

We note that $\mu$ is $\Gamma$-invariant, i.e. 
\begin{equation}\label{A-inv}
\mu(\gamma A):=\mu(\{\gamma x:x\in A\})=\mu(A)	
\end{equation}
for every measurable set $A$. In fact $\gamma$ acts as an isometry on $\hh^d$, and \eqref{A-inv} is true by definition for any $A\subset B_\epsilon (\gamma x_j)$.\\
Similarly, consider a distribution $ T\in\mathcal D'(\hh^d)$ which is $\Gamma$-invariant, i.e.~such that 
$(T,f)=(T,f\circ\gamma)$ for all $\gamma\in\Gamma$ and for every $f\in\mathcal D(\hh^d)$. Then $T$ naturally induces a distribution $\bar T=P_{\Gamma,\ast} T\in\mathcal D(\hG)$ 
as follows. Let $\bar f\in\mathcal{D}(\hG)$. If $\operatorname{supp}\bar f\subset B_\epsilon(\bar x_j)$ for some $j\in J$, 
then we set $(\bar T,\bar f)=(T, \bar f\circ {P_\gamma}|_{B_\epsilon(\gamma x_j)})$ for some $\gamma\in\Gamma$. 
The definition is independent of the choice $\gamma$ because of the $\Gamma$-invariance of $T$. 
For general $\bar f\in\mathcal{D}(\hG)$ we use, as above, the sheaf property of $\mathcal D'(\hG)$.

\begin{theorem}\label{sol_fuchs_distr}
Let $\bar\mu\in\mathcal R(\hG)$. Then a distribution solution to \eqref{eq_dist_Gamma} is given by the distribution $\bar h_{\bar \mu}\in \mathcal D'(\hG)$ defined as 
\begin{equation*}
\bar h_{\bar \mu} = P_{\gamma,\ast} h_{\mu},
\end{equation*}
where $\mu=P_{\Gamma}^\ast\bar\mu\in\mathcal R^+(\hh^d)$ and $h_\mu$ is the distribution solution to equation \eqref{main_eq} given in Theorem \ref{th_dist}.
\end{theorem}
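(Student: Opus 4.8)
The plan is to deduce the statement from the distributional identity $\frac1d\Delta h_\mu-h_\mu=\mu$ on $\hh^d$ furnished by Theorem~\ref{th_dist}, by showing that the push-forward $P_{\Gamma_0,\ast}$ intertwines the operator $L:=\frac1d\Delta-1$ and carries $\mu$ to $\bar\mu$. Note first that $\mu=P_{\Gamma_0}^\ast\bar\mu\in\mathcal R^+(\hh^d)$, so that $h_\mu$ is indeed defined by Theorem~\ref{th_dist}: by cocompactness the $\Gamma_0$-orbit of a fundamental domain has volume growth of order $e^{(d-1)\rho}$, which is dominated by the decay $|G|\sim e^{-d\rho}$ from \eqref{asym_k}, so that \eqref{cond_dec_meas} holds for $\mu$.

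Before taking the push-forward I would check that $h_\mu$ is $\Gamma_0$-invariant as a distribution, which is exactly the hypothesis under which $P_{\Gamma_0,\ast}h_\mu$ was defined. For $f\in\mathcal D(\hh^d)$ and $\gamma\in\Gamma_0$, the smooth solution $h_f$ of Theorem~\ref{lem_hn} satisfies $h_{f\circ\gamma}=h_f\circ\gamma$: performing the change of variable $z=\gamma y$ and using that $\gamma$ is an isometry of $\hh^d$ (so $\mathrm{d}{\hh^d}$ is preserved) together with the fact that $G(x,y)=k(d_{\hh^d}(x,y))$ depends only on the hyperbolic distance (so $G(x,\gamma^{-1}z)=G(\gamma x,z)$), one gets $h_{f\circ\gamma}(x)=h_f(\gamma x)$. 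Then by the defining action \eqref{action} and the $\Gamma_0$-invariance \eqref{A-inv} of $\mu$,
\[
(h_\mu,f\circ\gamma)=(\mu,h_{f\circ\gamma})=(\mu,h_f\circ\gamma)=(\mu,h_f)=(h_\mu,f),
\]
so $h_\mu$ is $\Gamma_0$-invariant and $\bar h_{\bar\mu}:=P_{\Gamma_0,\ast}h_\mu\in\mathcal D'(\hG)$ is legitimate.

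The heart of the argument is a localization, needed because a global lift of a test function on $\hG$ is $\Gamma_0$-invariant and hence \emph{not} compactly supported on $\hh^d$, so one cannot test $h_\mu$ against it directly. Using the finite covering $\{B_\epsilon(\bar x_j)\}_{j\in J}$ and a subordinate partition of unity, I would write any $\bar f\in\mathcal D(\hG)$ as a finite sum of pieces each supported in a single normal ball; by linearity it suffices to treat $\bar f$ with $\operatorname{supp}\bar f\subset B_\epsilon(\bar x_j)$. Let $\ell_j(\bar f)\in\mathcal D(\hh^d)$ denote the lift of $\bar f$ to the sheet $B_\epsilon(x_j)$, extended by zero. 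Then the definitions of $P_{\Gamma_0,\ast}$ and of $\mu=P_{\Gamma_0}^\ast\bar\mu$ give $(P_{\Gamma_0,\ast}T,\bar f)=(T,\ell_j(\bar f))$ for every $\Gamma_0$-invariant $T$, and $(\mu,\ell_j(\bar f))=(\bar\mu,\bar f)$; moreover, since $P_{\Gamma_0}$ restricts to a Riemannian isometry on the sheet and $L$ is a local operator, lifting commutes with $L$, i.e. $\ell_j(L\bar f)=L\,\ell_j(\bar f)$, both sides lying in $\mathcal D(\hh^d)$ with support in $B_\epsilon(x_j)$.

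Combining these facts, and using that $L$ is formally self-adjoint on both $\hG$ and $\hh^d$, I would compute
\begin{align*}
\left(\tfrac1d\Delta\bar h_{\bar\mu}-\bar h_{\bar\mu},\bar f\right)
&=(\bar h_{\bar\mu},L\bar f)=(P_{\Gamma_0,\ast}h_\mu,L\bar f)=(h_\mu,\ell_j(L\bar f))\\
&=(h_\mu,L\,\ell_j(\bar f))=(Lh_\mu,\ell_j(\bar f))=(\mu,\ell_j(\bar f))=(\bar\mu,\bar f),
\end{align*}
where $Lh_\mu=\mu$ is Theorem~\ref{th_dist}. As $\bar f$ was a single piece of the partition-of-unity decomposition, linearity gives $\frac1d\Delta\bar h_{\bar\mu}-\bar h_{\bar\mu}=\bar\mu$ in $\mathcal D'(\hG)$, as claimed. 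The main point requiring care — and the only real obstacle — is the bookkeeping in this localization: one must verify that all support conditions are respected so that each $\ell_j$ indeed produces an element of $\mathcal D(\hh^d)$ and genuinely intertwines $L$, which is precisely where the local-isometry property of the covering $P_{\Gamma_0}$ enters.
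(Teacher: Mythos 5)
Your proposal is correct and follows essentially the same route as the paper: verify $\mu=P_{\Gamma_0}^\ast\bar\mu\in\mathcal R^+(\hh^d)$, establish $\Gamma_0$-invariance of $h_\mu$ via the identity $h_{f\circ\gamma}=h_f\circ\gamma$, and then check the equation against test functions supported in a single normal ball, reducing the general case by localization. The only cosmetic differences are that you invoke the conclusion of Theorem~\ref{th_dist} directly (via formal self-adjointness of $\frac1d\Delta-1$) where the paper unwinds the action \eqref{action} and the identity $h_{\frac1d\Delta f-f}=f$, and that you make the sheaf-property step explicit with a partition of unity.
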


\begin{proof}
Given $\bar \mu\in\mathcal R(\hG)$, we define the "lifting" $\mu:=P^\ast_\gamma\bar\mu\in\mathcal{R^+}(\hh^d)$. Consider the equation 
\begin{equation}\label{eq_barmu}
\frac{1}{d}\Delta h- h=\mu\qquad\textrm{ in }\mathcal D'(\hh^d),
\end{equation}
and let $ h_{\mu}$ be the solution to \eqref{eq_barmu} defined in \eqref{def_h}. Such a solution exists since \eqref{cond_dec_meas} is satisfied because of the $\Gamma$-invariance of $\mu$ and \eqref{GL1}. We have that $ h_{\mu}$ is $\Gamma$-invariant. In fact, if $f\in C^{\infty}_c(\hh^d)$, then reasoning as in \eqref{well-def} we get
\begin{equation*}
h_{f\circ\gamma}(x)= d\int_{\hh^d}G(x,y)f(\gamma y)\mathrm{d}\H^d =d \int_{\hh^d}G(\gamma x,\gamma y)f(\gamma y)\mathrm{d}\H^d
= d\int_{\hh^d}G(\gamma x,y)f(y)\mathrm{d}\H^d(y) = h_{f}(\gamma x)=(h_{f}\circ \gamma)(x)
\end{equation*}
and this latter, together with \eqref{action}, yields
\begin{equation*}
	( h_{\mu},f\circ\gamma)=(\mu,h_{f\circ\gamma})=(\mu,h_{f}\circ\gamma)=(\mu,h_{f})=(h_{\mu},f),
\end{equation*}
by the $\Gamma$-invariance of  $\mu$.\\
Since $ h_{\mu}$ is $\Gamma$-invariant, we can define a distribution $\bar h_{\bar \mu}\in \mathcal D'(\hG)$ as 
\begin{equation}\label{sol_eq_dist_gamma}
\bar h_{\bar \mu} = P_{\gamma,\ast} h_{\mu}.
\end{equation}
Finally, we want to prove that $\bar h_{\bar \mu}$ is a solution to \eqref{eq_dist_Gamma}, i.e. that
\begin{equation}
	(\bar h_{\bar \mu},\frac{1}{d}\Delta\bar f -\bar f)=(\bar \mu,\bar f),\qquad\forall\bar f\in\mathcal{D}'(\hG). 
\end{equation}
To this end, suppose first that $\operatorname{supp}\bar f\subset B_\epsilon(\bar x_j)$ for some $j\in J$. 
Then $\operatorname{supp}(\frac{1}{d}\Delta\bar f-\bar f)\subset B_\epsilon(\bar x_j)$ and
\[
(\bar h_{\bar \mu},\frac{1}{d}\Delta\bar f -\bar f)= ( h_{\mu},\frac{1}{d}\Delta f -f),
\]
where $f= \bar f\circ P_{\Gamma}|_{B_\epsilon(x_j)}\in C^{\infty}_c(B_\epsilon(x_j))$. Moreover, by definition of $ h_{\mu}$
\[
(h_{\mu},\frac{1}{d}\Delta f -f)=(\mu,h_{\frac{1}{d}\Delta f -f}).
\]
Thanks to \eqref{equiv}, we know that since $f\in C^{\infty}_c(\hh^d)$ it holds $h_{\frac{1}{d}\Delta f -f}=f$. Finally, since $f$ is compactly supported in $B_\epsilon(x_j)$, we have
\[
(\mu,h_{\Delta f -df})=(\mu,f)=(\bar \mu,\bar f),
\]
which concludes the proof when $\operatorname{supp}\bar f\subset B_\epsilon(\bar x_j)$. The case of general $f\in\mathcal{D}'(\hG)$ follows by the sheaf property of distributions.
\end{proof}


\subsection{Polyhedral solution}\label{sub:sol pol}

Recall notations and definitions from Subsubsection~\ref{subsubsub pol area}.

\begin{theorem}\label{thm:pol}\textbf{(General case)} Let $\varphi$ be a polyhedral measure of order one on $\H^d$.
If the numbers $\lambda(\zeta)$ are uniformly bounded from below by a positive constant, then
$\varphi$ is the first area measure of a polyhedral F-convex set.

\textbf{(Invariant case)} Let $\overline{\varphi}$ be a Radon measure on a compact hyperbolic manifold $\H^d/\Gamma$
such that  a lift $\varphi $ of $\overline{\varphi}$ is a polyhedral measure of order one on $\H^d$.
Then there exists a cocycle $\tau$ such that $\varphi$ is the first area measure of a polyhedral $\tau$-F-convex set.
\end{theorem}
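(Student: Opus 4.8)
The plan is to reconstruct the polyhedron from the combinatorial datum $C$ and the prescribed edge lengths $\lambda(\zeta)$, following the classical construction of \cite{Sch77,MM96}; the reconstruction is a developing (monodromy) argument for the vertices. For each facet $\zeta$ of $C$, separating two cells $O_i$ and $O_j$, let $W_\zeta$ be the time-like hyperplane spanned by the cone over $\zeta$ and let $n_\zeta$ be the space-like unit normal to $W_\zeta$, oriented so as to point from the $O_j$-side towards the $O_i$-side. I would then prescribe the vertices $p_i$ (one per cell $O_i$) by the difference equation $p_i-p_j=\lambda(\zeta)\,n_\zeta$. Since $n_\zeta$ is orthogonal to $W_\zeta\supset\zeta$, one has $\langle\eta,p_i-p_j\rangle_-=0$ for every $\eta\in\zeta$, which is exactly the requirement that the candidate support function pass continuously from $\langle\cdot,p_j\rangle_-$ on $O_j$ to $\langle\cdot,p_i\rangle_-$ on $O_i$ across $\zeta$; moreover $\langle p_i-p_j,p_i-p_j\rangle_-=\lambda(\zeta)^2$, so the edge $[p_i,p_j]$ is space-like of length $\lambda(\zeta)$, as it must be.

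Solving this difference equation is a coboundary problem on the dual graph of $C$ (one vertex per cell, one edge per facet), valued in $\R^{d+1}$: I would regard $\zeta\mapsto\lambda(\zeta)n_\zeta$ as an $\R^{d+1}$-valued cochain and seek a $0$-cochain $i\mapsto p_i$ with the prescribed coboundary. The only obstruction is the total displacement around closed loops. Because $\H^d$ is simply connected and $C$ is a locally finite decomposition by compact convex cells, the dual $2$-complex (cells, facets, $(d-2)$-faces) is simply connected, so every loop is generated by the elementary loops around the $(d-2)$-faces. Around such a face $\eta$ the displacement is $\sum_\zeta\lambda(\zeta)n_\zeta$, the sum being over the facets $\zeta$ containing $\eta$. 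All these $n_\zeta$ lie in the space-like $2$-plane $\mathcal H=U^\perp$, where $U$ is the span of the cone over $\eta$, and within $\mathcal H$ the direction $n_\zeta$ differs from the outward normal $u(\eta,\zeta)$ of \eqref{eq:sum poly} by a fixed $90^{\circ}$ rotation; hence this displacement vanishes precisely by the closing condition \eqref{eq:sum poly} in the definition of a polyhedral measure of order one. Thus the equation is solvable and determines $\{p_i\}$ up to a global translation.

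With the $p_i$ at hand I would set $H(\eta)=\sup_i\langle\eta,p_i\rangle_-$ on $\F$. As a supremum of linear forms, $H$ is automatically $1$-homogeneous and convex, hence sublinear wherever it is finite, so convexity comes for free. The crucial point, and the main obstacle, is to show that for every $\eta\in\F$ the supremum is finite and attained, the maximum being realized by $p_i$ exactly when $\eta$ lies in $O_i$. This is where the hypothesis $\lambda(\zeta)\geq c>0$ enters: it keeps the vertices $p_i$ from accumulating and makes the reconstructed hypersurface proper, so that, together with the local convexity across each facet (guaranteed by the orientation of $n_\zeta$ and the convexity of the cells $O_i$), the maximum is attained cell by cell and the Gauss image decomposition of the resulting set is $C$. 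Granting this, Lemma~\ref{lem: deter Hcs} shows that $H$ is the support function of a polyhedral F-convex set $P$ whose edge dual to each facet $\zeta$ has length $\lambda(\zeta)$, and then \eqref{eq:area pol} in Lemma~\ref{lem: aire poly} gives $S_1(P,\cdot)=\varphi$, settling the general case.

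For the invariant case, the $\Gamma_0$-invariance of $\varphi$ together with the compactness of $\H^d/\Gamma_0$ makes $C$ invariant with finitely many cells modulo $\Gamma_0$, so $\lambda$ takes finitely many positive values and is automatically bounded below; the general case then yields a polyhedral F-convex set $P$. It remains to produce the cocycle. The developing data is $\Gamma_0$-equivariant, since $n_{\gamma_0\zeta}=\gamma_0 n_\zeta$ and $\lambda(\gamma_0\zeta)=\lambda(\zeta)$; hence for each $\gamma_0\in\Gamma_0$ the family $\gamma_0^{-1}p_{\gamma_0\cdot i}$ again solves the difference equation, and by uniqueness up to translation there is a vector with $p_{\gamma_0\cdot i}=\gamma_0 p_i+\tau_{\gamma_0}$ for all $i$. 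Comparing the two ways of evaluating $p_{(\gamma_0\mu_0)\cdot i}$ yields $\tau_{\gamma_0\mu_0}=\tau_{\gamma_0}+\gamma_0\tau_{\mu_0}$, that is, $\tau\in Z^1(\Gamma_0,\R^{d+1})$ is a cocycle in the sense of \eqref{eq:cocycle}. Consequently $P$ is invariant under the group $\Gamma_\tau$, so it is a polyhedral $\tau$-F-convex set with first area measure $\varphi$, which finishes the proof.
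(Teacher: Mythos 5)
Your construction is the same as the paper's: you develop the vertices by summing $\lambda(\zeta)\,n_\zeta$ along paths in the dual graph, kill the monodromy around $(d-2)$-faces using \eqref{eq:sum poly} (after rotating $u(\eta,\zeta)$ by $90^{\circ}$ inside the $2$-plane orthogonal to the face), take $H=\sup_i\langle\cdot,p_i\rangle_-$, and in the invariant case extract the cocycle from equivariance of the developing data exactly as the paper does ($\tau_{\gamma_0}=X(\gamma_0\xi_b)$). Your observation that cocompactness makes the lower bound on $\lambda$ automatic in the invariant case is correct and a nice addition.

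However, there is a genuine gap at the step you yourself flag as ``the crucial point'': you never prove that for $\eta$ in a cell $O_i$ the supremum $\sup_j\langle\eta,p_j\rangle_-$ is finite and attained at $p_i$; you write ``Granting this\ldots'' and gesture at properness/non-accumulation of the vertices coming from $\lambda(\zeta)\geq c>0$. That is not where the argument lives, and trying to close the gap via properness would send you down a harder road. The paper closes it with a one-line monotonicity computation: for $\eta\in\xi_1$ and any other cell $\xi_2$, choose a path of cells from $\xi_1$ to $\xi_2$ along which each crossed wall separates $\eta$ from the next cell; then every increment $\lambda(\xi_i\cap\xi_{i+1})\langle v(\xi_i,\xi_{i+1}),\eta\rangle_-$ is strictly negative because $v(\xi_i,\xi_{i+1})$ points toward $\xi_{i+1}$ and hence away from $\eta$, so $\langle X(\xi_2),\eta\rangle_-<\langle X(\xi_1),\eta\rangle_-$. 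This simultaneously gives finiteness, attainment, and the fact that the Gauss decomposition of the resulting set is $C$, with no appeal to the lower bound on $\lambda$. The hypothesis $\lambda(\zeta)\geq c>0$ is used for something else entirely: adjacent vertices differ by $\lambda(\zeta)$ times a unit space-like vector, whose Euclidean norm is at least $1$, so the vertex set $\{X(\xi)\}$ is discrete --- which is part of the definition of a F-convex polyhedron in Subsection~\ref{subsub back poly} and is needed before Lemma~\ref{lem: deter Hcs} and Lemma~\ref{lem: aire poly} can be invoked. Supplying the monotonicity argument (and being explicit about choosing a path whose walls all separate $\eta$ from the target cell) would complete your proof.
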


\begin{remark}[\textbf{The $d=1$ case}]\label{rem pol d=1}{\rm  In this case, the condition \ref{pol: 3} in the definition of polyhedral measure of order one
is void.  The measure is only the data of a countable numbers of points on the non-compact one 
dimensional manifold $\H^1$, with positive weights. From it we construct 
a space-like polygon with edge length the weights. The proof is then the same as the proof of
Minkowski theorem for plane convex compact polygons. See Figure~\ref{fig:pol 3}.
The invariant case is the data of a finite number of points with positive weight 
on a circle of length $t$. We construct a space-like polygon invariant under a group of
isometry whose linear part is the group $\Gamma$ generated by \eqref{eq:boost}. 
From Lemma~\ref{lem:quasi dim}, the polygon is the translate of a $\Gamma$ polygon.
}\end{remark}

\begin{figure}[ht]
\begin{center}
\input 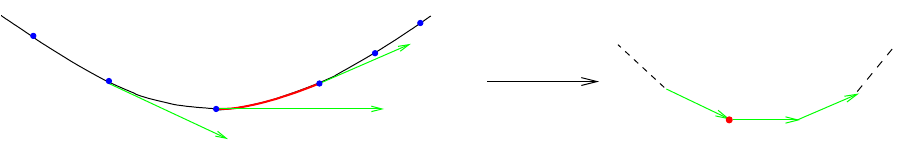_t
\end{center}
\caption{Proof of Theorem~\ref{thm:pol} in the $d=1$ case. The point on the right hand picture is chosen arbitrarily.
$v_k$ is a unit vector orthogonal to the point of $\H^1$ of weight $a_k$. }\label{fig:pol 3}
\end{figure}

\begin{example}[\textbf{The elementary example}]\label{rem:elementary pol ex}{\rm
Before the general proof, let us illustrate the method  with the elementary example, see Example~\ref{ex: elemntary1}.
We are given a measure on $\H^2$ which is a weight $a$ on a geodesic $\gamma$ (this generalizes immediately
to any dimension, taking a totally geodesic hypersurface instead of a geodesic).
It separates $\H^2$ into $\mathcal{O}_1$  and $\O_2$.
Let us denote by $v$ the unit space-like vector orthogonal to the time-like hyperplane defining $\gamma$ and pointing
to $\O_2$. Choose any point $p_1\in\R^3$, and let us denote by $p_2$ the point 
$p_1+av$. Then the wanted F-convex set is the union of the future cones of the points of the segment
$[p_1,p_2]$. Compare with the analytical solution, Example~\ref{rem:elementary dist}
} 
\end{example}

\begin{proof}

Choose an arbitrarily cell of $C$ and denote it by $\xi_b$.
For any other cell $\xi$, let us define the following vector of $\R^{d+1}$:
if $\xi=\xi_b$ then $X(\xi)=0$, otherwise
$$ X(\xi)=\sum_{i=1}^n \lambda(\xi_{i}\cap\xi_{i+1}) v(\xi_{i},\xi_{i+1}),$$
where
\begin{itemize}[nolistsep,label={\bf(\roman{*})}, ref={\bf(\roman{*})}]
\item[\textbullet] $(\xi_1=\xi_b,\ldots,\xi_n=\xi)$ is a path of cells of $C$, with $\xi_{i}\cap \xi_{i+1}$ a 
codimension $1$ cell of $C$;
\item[\textbullet] $v(\xi_{i},\xi_{i+1})$ is the unit space-like vector normal to the hyperplane of $\R^{d+1}$ defined by $\xi_{i}\cap \xi_{i+1}$,
pointing toward $\xi_{i+1}$.
\end{itemize}

\emph{Fact: $X(\xi)$ does not depend on the choice of the path between $\xi_b$ and $\xi$.}
As $\H^d$ is simply connected, we can go from one path to the other by a finite number of operations
as shown in Figure~\ref{fig: not pol 2}. Clearly the deformation on the left hand figure leaves $X(\xi)$ unchanged
as  $v(\xi_{i},\xi_{i+1})=-v(\xi_{i+1},\xi_{i})$. The deformation on the right hand figure consists of 
changing cells sharing a codimension $2$ cell $\zeta$ by the other cells sharing $\zeta$. Then the result follows from condition
\eqref{eq:sum poly} because  $v(\xi_{i},\xi_{i+1})$ is orthogonal to $u(\zeta,\xi_{i}\cap \xi_{i+1})$. The fact is proved.

\emph{Fact: the set of $X(\xi)$ is discrete.}
Between two points $X(\xi)$ and $X(\xi')$ there is at least a space-like segment given by a vector $\lambda v$, with
$\lambda$ greater than a given positive constant by assumption, and $v$ a unit space-like vector, so its Euclidean norm is $\geq 1$.
The fact is proved.

Let us define, for $\eta\in\F$
$$H(\eta)=\operatorname{max}_{\xi}\langle\eta,X(\xi)\rangle_-.$$
Let $\eta\in \xi_1 $ and $\eta\notin \xi_2$. For  a path of cells $\xi_i$ between $\xi_1 $ and $\xi_2 $,
as $v(\xi_{i},\xi_{i+1})$ points toward $\xi_{i+1}$, $\langle v(\xi_{i},\xi_{i+1}),\eta\rangle_-$ is negative, hence
$$\langle X(\xi_2),\eta\rangle_-=\langle X(\xi_1),\eta\rangle_-+\sum_i \lambda(\xi_{i}\cap\xi_{i+1}) \langle v(\xi_{i},\xi_{i+1}),
\eta\rangle_-<\langle X(\xi_1),\eta\rangle_- $$
so $H(\eta)=\langle\eta,X(\xi_1)\rangle_-$. This says that the decomposition of $\H^d$ induced by $H$ is $C$.
$H$ is the extended support function of the wanted polyhedron, because 
if $\xi$ and $\xi'$ share a codimension $2$ cell, then  there is an edge joining $X(\xi)$ to $X(\xi')$.
This edge is $ X(\xi)-X(\xi')= \lambda(\xi\cap\xi')  v(\xi,\xi')$ and has length $\lambda(\xi\cap\xi') $.
The general part of the theorem is proved.
Note that if the base cell $\xi_b$ is changed, the resulting polyhedron will differ from the former one by a translation.

Now suppose that the data of the cellulation and the $\lambda$ are invariant under the action of 
$\Gamma$. To each $\gamma_0\in\Gamma$ we associate  $\tau_{\gamma_0}:=X(\gamma_0\xi_b)$.
For $\mu_0\in\Gamma$, the path from $\xi_b$ to $\gamma_0\mu_0\xi_b$
is the path from $\xi_b$ to $\gamma_0\xi_b$ followed by the image under $\gamma_0$ of the path from 
$\xi_b$ to $\mu_0\xi_b$. Moreover it is easily checked from the definition of $X$ that
 $$\gamma_0 X(\xi)=\sum_{i=1}^n \lambda(\xi_{i}\cap\xi_{i+1}) \gamma_0 v(\xi_{i},\xi_{i+1})$$
$$= \sum_{i=1}^n \lambda(\xi_{i}\cap\xi_{i+1})  v(\gamma_0\xi_{i},\gamma_0\xi_{i+1}) $$
$$= \sum_{i=1}^n \lambda(\gamma_0\xi_{i}\cap\gamma_0\xi_{i+1})  v(\gamma_0\xi_{i},\gamma_0\xi_{i+1}), $$
i.e.~$\gamma_0 X(\xi)$ is the realization of the path from $\gamma_0\xi_b$ to $\gamma_0\xi$.
Hence $$\tau_{\gamma_0\mu_0}=X(\gamma_0\mu_0\xi_b)=X(\gamma_0\xi_b)
+\gamma_0X(\mu_0\xi_b)=\tau_{\gamma_0}+\gamma_0\tau_{\mu_0},$$ and the cocycle condition \eqref{eq:cocycle} is satisfied.
Finally
$$\gamma X(\xi)=\gamma_0 X(\xi)+\tau_{\gamma_0}=\gamma_0 X(\xi)+X(\gamma_0\xi_b)$$
is the sum of the realization of the path from $\xi_b$ to $\gamma_0\xi_b$ followed by the 
path from $\gamma_0\xi_b$ to $\gamma_0\xi$, i.e.~it is the realization of the path from 
$\xi_b$ to $\gamma_0\xi$, hence a vertex of $P$. The set of vertices of $P$ is $\Gamma_{\tau}$ invariant, and so
is $P$.

\begin{figure}[ht]
\begin{center}
\input 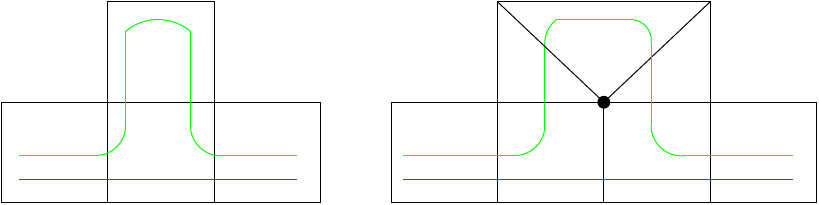_t
\end{center}
\caption{The two kinds of operations to go from a path of cell to another (proof of  Theorem~\ref{thm:pol}).}\label{fig: not pol 2}
\end{figure}
\end{proof}

\begin{figure}[ht]
\begin{center}
\input 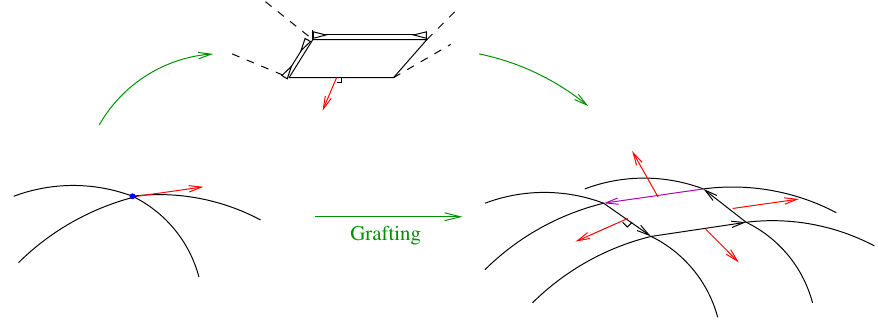_t
\end{center}
\caption{To Remark~\ref{rem:grafting}. Grafting and intrinsic meaning of condition \eqref{eq:sum poly}.}\label{fig: not pol 1}
\end{figure}

\begin{remark}[\textbf{A classical construction}]\label{rem:mess pol}{\rm 
The analog of Theorem~\ref{thm:pol} in the compact Euclidean case was
solved in \cite{Sch77}. We almost repeated this proof in the first part of the proof of Theorem~\ref{thm:pol} above, up to obvious changes.
Note that the argument is classical and appears in some places in polyhedral geometry, without mention
to the Christoffel problem, see \cite{FI3}.
Here polyhedral hedgehogs appear naturally under the name  \emph{virtual polytopes}, as realizations
of signed polyhedral measure of order one.

The striking fact is that the construction in the proof of Theorem~\ref{thm:pol} also appears in the following.
Inspiring on the $d=2$ construction of
G.~Mess \cite{Mes07,Mes07+},  F.~Bonsante shows in \cite{Bon05} how to construct
an F-regular domain from a measured geodesic stratification, see Remark~\ref{subsub: strat}
(in this setting, in $d=2$, the analog of condition \eqref{eq:sum poly} is void, but
it holds for $d>2$). The second part of the proof of Theorem~\ref{thm:pol} comes from those references.
Actually the basement of the construction is contained in the $d=1$ case (Remark~\ref{rem pol d=1}). 
}
\end{remark}

\begin{remark}[\textbf{Graftings}]\label{rem:grafting}{\rm 
 Let $\H^2/\Gamma$ be a compact hyperbolic surface, and let $\sigma$ be a simple closed geodesic on it.
Assign a positive weight $a$ to $\sigma$. It lifts on $\H^2$ to an infinite number of disjoint geodesics, with the same
weight $a$. From the construction mentioned above  \cite{Mes07,Bon05}, one can construct a domain $\Omega_{\tau}$.
Let $\tilde{S}_1$ be the level surface  for the cosmological time of $\Omega_{\tau}$. We get a compact surface
$\tilde{S}_1/\Gamma_{\tau}$, and this way to go from $\H^2/\Gamma$ to $\tilde{S}_1/\Gamma_{\tau}$ is a geometric realization of 
a \emph{grafting} of 
$\H^2/\Gamma$ along $\sigma$. Grafting are more generally defined along a measured geodesic lamination on a 
hyperbolic surface. The same procedure applied to a $\tau$-F-convex polyhedron
is the geometric realization of a grafting, not along disjoint geodesic but along a cellulation of the hyperbolic surface. See Figure~\ref{fig: not pol 1}.
}
\end{remark}

\begin{remark}[\textbf{Fuchsian condition}]{\rm 
The polyhedral case is absent from Theorem~\ref{thm: base general}
because  the polyhedral surface given by Theorem~\ref{thm:pol} should not be Fuchsian in
general. Are there conditions on the measure to be the first area measure of a convex Fuchsian polyhedron?
Can these conditions be stated in term of grafting in $d=2$?
}
\end{remark}

\subsection{Convexity of solutions}\label{sec:convexity}

In sections from \ref{sub:regsol} to \ref{sub:fuchssol} we have described how to obtain a general 
analytic solution to equation \eqref{main_eq}. Actually, by a geometrical point of view we are mainly interested in special solutions which are restriction to 
$\hh^d$ of convex functions on $\mathcal{F}$. 
Hence, in this section we discuss some conditions which ensure the convexity of the solution 
$h_\mu$ given in \eqref{def_h}.

A first general necessary and sufficient convexity condition for classical convex body was 
given by Firey in Theorem 2 of \cite{Fir68}. There the convexity was showed to be equivalent 
to the positivity of a particular quadratic form. As already observed in \cite{LS06}, 
Firey's approach seems unlikely generalizable to F-convex set, since it is based on applications 
of the Stokes' theorem on the compact sphere. Nevertheless, a similar condition can be given also 
in our case. We suppose here that the solution $h_\mu$ given in \eqref{def_h} is continuous 
(this is without loss of generality, since support functions of convex sets are necessarily 
continuous; see also Proposition \ref{pr_rigidity}). 
By Section \ref{sub: extended}, 
we know that $h_\mu$ is the restricted support function of a convex set if and only if 
its extended support function $H_\mu(\eta)=\|\eta\|_-h_\mu(\eta/\|\eta\|_-)$ is convex, which is 
in turn equivalent to $H_\mu$ being subadditive, i.e.
\[
H_\mu(\eta+\nu)\leq H_\mu(\eta)+H_\mu(\nu).
\] 
We note that $H_\mu$ can be written in the form
\[
H_\mu(\eta) = \int_{\hh^d} \|\eta\|_-G\left(\frac\eta{\|\eta\|_-},y\right)\mathrm{d}\mu(y) = \int_{\hh^d} \Gamma(\eta,y)\mathrm{d}\mu(y),
\]
where
\begin{equation*}\begin{aligned}
\Gamma(\eta,y) &= \|\eta\|_- k\left(d_{\hh^d}\left(\frac\eta{\|\eta\|_-},y\right)\right) = \|\eta\|_- 
k\left(\operatorname{acosh} \left(-\|\eta\|_-^{-1}\left\langle \eta,y\right\rangle_-\right)\right)\\
&= -\frac{\left\langle \eta,y\right\rangle_-}{v_{d-1}}
\int_{+\infty}^{\operatorname{acosh}\left(-\|\eta\|_-^{-1}\left\langle \eta,y\right\rangle_-\right)}
\frac{\mathrm{d}q}{\sinh^{d-1}q\cosh^2q}
\end{aligned}\end{equation*}
is defined for all $\eta\in \mathcal F$ and $y\in\hh^d\subset\mathcal F$. Hence we get the following
\begin{proposition}\label{prop: conv cond}
Let $\mu\in\mathcal R^+(\hh^d)$. Then  $h_\mu$ defined formally
as in \eqref{def_h} is the restricted support function of a $F$-convex set if and only if
$$ \left| \int_{\mathbb H^d}G(x,y)\mathrm{d}\mu(y)\right|<+\infty,\quad\forall x\in\H^d,$$
and
\begin{equation}\label{convexity_Firey}
\int_{\hh^d} \Lambda(\eta,\nu,y)\mathrm{d}\mu(y) \geq 0,
\end{equation}
for all $\eta,\nu\in \mathcal F$, where
\[
\Lambda(\eta,\nu,y)=\Gamma(\eta,y)+\Gamma(\nu,y)-\Gamma(\eta+\nu,y).
\]
\end{proposition}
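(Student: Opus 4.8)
The plan is to reduce the whole statement to the characterization of support functions from Subsection~\ref{sub: extended}: by Lemma~\ref{lem: deter Hcs} and the discussion there, a function $h$ on $\H^d$ is the restricted support function of an F-convex set if and only if its $1$-homogeneous extension $H$ to $\F$ is sublinear, i.e.\ $1$-homogeneous and subadditive (for a $1$-homogeneous function subadditivity coincides with convexity, which in turn forces continuity, so the continuity assumption made before the statement is free). Thus the entire content is the translation of ``$H_\mu$ is sublinear'' into the two displayed conditions. First I would record, as already computed just before the statement, that the extended support function of $h_\mu$ is $H_\mu(\eta)=\int_{\H^d}\Gamma(\eta,y)\mathrm{d}\mu(y)$, and that $H_\mu$ is automatically $1$-homogeneous because $\Gamma(\lambda\eta,y)=\lambda\Gamma(\eta,y)$ for $\lambda>0$ (indeed $\|\lambda\eta\|_-=\lambda\|\eta\|_-$ while $\lambda\eta/\|\lambda\eta\|_-=\eta/\|\eta\|_-$). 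So $1$-homogeneity is never in question, and only finiteness and subadditivity remain to be matched with the two hypotheses.

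For the finiteness condition, since $k<0$ one has $G(x,y)\le 0$ and $\Gamma(\eta,y)\le 0$, so the first displayed requirement $|\int G(x,y)\mathrm{d}\mu(y)|<\infty$ for all $x$ is equivalent to $\int|G(x,y)|\mathrm{d}\mu(y)<\infty$, i.e.\ to $h_\mu$ being finite on $\H^d$; by $1$-homogeneity this is the same as $H_\mu(\eta)$ being finite for every $\eta\in\F$. I would note that this is clearly necessary, because the extended support function of an F-convex set is finite, the supremum in \eqref{def:sup func} being a maximum by definition of F-convex set.

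For subadditivity, once $H_\mu(\eta)$, $H_\mu(\nu)$ and $H_\mu(\eta+\nu)$ are finite the inequality $H_\mu(\eta+\nu)\le H_\mu(\eta)+H_\mu(\nu)$ may be rewritten, splitting the three finite integrals, as $\int_{\H^d}\big(\Gamma(\eta,y)+\Gamma(\nu,y)-\Gamma(\eta+\nu,y)\big)\mathrm{d}\mu(y)\ge 0$, which is exactly \eqref{convexity_Firey}. Assuming both conditions, $H_\mu$ is then sublinear and Lemma~\ref{lem: deter Hcs} yields that $h_\mu$ is the restricted support function of an F-convex set; conversely, if $h_\mu$ is such a support function then $H_\mu=H_K$ is finite and subadditive, giving back both conditions. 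This closes the equivalence in both directions.

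The argument is essentially a dictionary translation, so I do not expect a deep obstacle; the only point requiring genuine care is the legitimacy of decomposing $\int_{\H^d}\Lambda(\eta,\nu,y)\mathrm{d}\mu(y)$ into the three separate integrals and rearranging, which without control could produce an indeterminate $\infty-\infty$. This is precisely why the finiteness requirement must be isolated as a hypothesis of its own rather than absorbed into the convexity inequality, and I would make explicit that under the first condition each of $\int\Gamma(\eta,\cdot)\mathrm{d}\mu$, $\int\Gamma(\nu,\cdot)\mathrm{d}\mu$, $\int\Gamma(\eta+\nu,\cdot)\mathrm{d}\mu$ is finite (two nonpositive, one nonnegative), so the splitting is valid and the two formulations are genuinely equivalent.
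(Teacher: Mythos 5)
Your proposal is correct and follows essentially the same route as the paper: the paper's justification is precisely the paragraph preceding the statement, which reduces the claim to sublinearity of the $1$-homogeneous extension $H_\mu=\int_{\H^d}\Gamma(\cdot,y)\,\mathrm{d}\mu(y)$ via Lemma~\ref{lem: deter Hcs} and Subsection~\ref{sub: extended}, identifying the first condition with finiteness and the second with subadditivity. Your added care about the sign of $\Gamma$ and the legitimacy of splitting $\int\Lambda\,\mathrm{d}\mu$ into three finite integrals is a welcome precision the paper leaves implicit.
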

\bigskip

In case $h_\mu\in C^2$, $\mu=\phi \mathrm{d}\H^d$ for some continuous function $\phi$, 
and the expression of $h=h_\phi$ is given by \eqref{h}. Thanks to Proposition~\ref{lem: base}, 
we know that $h_\phi$ is the restricted support function of a $F$-convex if and only if 
$\nabla^2h_\phi-h_\phi g\geq0$. In \cite{LS06}, the authors computed explicitly this expression. For completeness we report here, with minor changes, their computations.\\

Let $\nabla_1^2 G$ be the Hessian of $G:\hh^d\times\hh^d\to\rr$ with respect to the first component. Then
\begin{equation}\label{hess h}
	\left(\nabla^2 h|_x- h(x) g|_x\right)(X,X) = \int_{\hh^d\setminus\{x\}}\left[\nabla^2_1 G|_{(x,y)}(X,X) - |X|^2G(x,y)\right]\phi(y)\mathrm{d}\H^d(y),
\end{equation}
 for all $X\in T_x\hh^d$, with $|X|^2=g(X,X)$. 
Since $G(x,y)=k(\rho_y(x))$, we have that
\begin{equation*}
\nabla^2_1G|_{(x,y)}=\ddot k(\rho_y(x))\mathrm{d}\rho_y\otimes \mathrm{d}\rho_y|_{x}+\dot k(\rho_y(x))\nabla^2 \rho_y|_{x}.	
\end{equation*}

Computing explicitly $\dot k$ and $\ddot k$ and using \eqref{eq: hessien rho}
\begin{equation*}\begin{aligned}
\nabla^2_1G|_{(x,y)}=\left(k(\rho_y(x))+\frac{1}{v_{d-1}\sinh^d(\rho_y(x))}\right) g-\frac{d}{v_{d-1}\sinh^d(\rho_y(x))}\mathrm{d}
\rho_y\otimes \mathrm{d}\rho_y|_{x}.
\end{aligned}\end{equation*}
Accordingly, \eqref{hess h} yields
\begin{equation*}\begin{aligned}
&\left(\nabla^2 h|_x- h(x) g|_x\right)(X,X)\\
&=|X|^2\int_0^{\infty}\frac{1}{v_{d-1}\sinh^d(\rho_y(x))}\int_{\partial B_\rho(x)}\phi(y)\mathrm{d}A_\rho(y)\mathrm{d}\rho \\
&-\int_0^{\infty}\frac{d}{v_{d-1}\sinh^d(\rho_y(x))}\int_{\partial B_\rho(x)} g|_x(\nabla\rho_y,X)^2\phi(y)\mathrm{d}A_{\rho}(y)\mathrm{d}\rho\\
&=\int_0^{\infty}\frac{1}{v_{d-1}\sinh^d(\rho_y(x))}\int_{\partial B_\rho(x)}\left[|X|^2-d g_{\hh^d}|_x(\nabla\rho_y,X)^2
\right]\phi(y)\mathrm{d}A_\rho(y)\mathrm{d}\rho.
\end{aligned}\end{equation*}

\begin{proposition}\label{convexity_Lopes}
Let $\phi\in C^2(\hh^d)$. The function $h_\phi$, defined as in \eqref{h}, is the restricted support function of a $F$-convex set if and only if
\begin{equation*}
0\leq\int_0^{\infty}\frac{1}{v_{d-1}\sinh^d(\rho_y(x))}\int_{\partial B_\rho(x)}\left[|X|^2-d g_{\hh^d}|_x(\nabla\rho_y,X)^2
\right]\phi(y)\mathrm{d}A_\rho(y)\mathrm{d}\rho,
\end{equation*}
for all $x\in \hh^d$ and all $X\in T_x\hh^d$.
\end{proposition}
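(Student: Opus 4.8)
The plan is to reduce the geometric convexity statement to a pointwise condition on the Hessian of $h_\phi$, and then to identify that condition with the displayed integral inequality by differentiating the defining integral \eqref{h} twice. Throughout one works under the standing hypothesis that $h_\phi$ is well defined, i.e. that $\phi\,\mathrm{d}\hh^d\in\mathcal R^+(\hh^d)$ (equivalently that \eqref{cond_dec_meas}, or the sufficient condition \eqref{cond_dec_e}, holds), which is implicit in the statement since one requires the formula \eqref{h} to make sense.

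First I would record the needed regularity: since $\phi\in C^2(\hh^d)\subset C^{1,\alpha}(\hh^d)$ for every $\alpha<1$, Corollary~\ref{coro_C0} gives $h_\phi\in C^{3,\alpha}(\hh^d)$, in particular $h_\phi\in C^2(\hh^d)$, and by Theorem~\ref{lem_hn} it solves $\frac1d\Delta h_\phi-h_\phi=\phi$, so its Hessian is a genuine continuous tensor field. Proposition~\ref{lem: base} then translates geometry into analysis: by \ref{lem princ: radneg}, if the $C^2$ function $h_\phi$ is the restricted support function of a F-convex set then $\nabla^2 h_\phi-h_\phi g\geq 0$, and conversely by \ref{lem princ: c2 deter} any $C^2$ function with $\nabla^2 h_\phi-h_\phi g\geq 0$ is such a support function. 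Hence $h_\phi$ is a restricted support function if and only if $(\nabla^2 h_\phi-h_\phi g)(X,X)\geq 0$ for every $x\in\hh^d$ and every $X\in T_x\hh^d$, and the whole problem becomes the evaluation of this quadratic form.

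It then remains to compute the quadratic form. I would differentiate \eqref{h} under the integral to obtain \eqref{hess h}, insert $G(x,y)=k(\rho_y(x))$, and use the radial Hessian identity $\nabla_1^2 G=\ddot k\,\mathrm{d}\rho_y\otimes\mathrm{d}\rho_y+\dot k\,\nabla^2\rho_y$ together with \eqref{eq: hessien rho}. A direct computation of $\dot k$ and $\ddot k$ from \eqref{small_k} (each differentiation producing a $k$-term plus an explicit term in $\sinh^{-(d-1)}\rho$) collapses the coefficient of $g$ to $k+(v_{d-1}\sinh^d\rho)^{-1}$ and that of $\mathrm{d}\rho_y\otimes\mathrm{d}\rho_y$ to $-d\,(v_{d-1}\sinh^d\rho)^{-1}$; the $k(\rho_y(x))$ contribution cancels exactly against the $-|X|^2 G(x,y)$ term coming from $-h_\phi g$. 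What survives is the integrand $|X|^2-d\,g(\nabla\rho_y,X)^2$ weighted by $(v_{d-1}\sinh^d\rho_y(x))^{-1}$, and passing to geodesic polar coordinates centred at $x$ with $\mathrm{d}\hh^d=\mathrm{d}A_\rho\,\mathrm{d}\rho$ yields precisely the claimed formula.

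The delicate point, and the step I expect to be the main obstacle, is justifying the \emph{second} differentiation under the integral sign: $\nabla_1^2 G$ is of size $\rho^{-d}$ near the diagonal, so against the volume element $\sim\rho^{d-1}\mathrm{d}\rho$ it fails to be absolutely integrable. The resolution is the angular cancellation built into the integrand. Averaging $g(\nabla\rho_y,X)^2$ over the unit sphere of directions at $x$ gives $|X|^2/d$, so by the rotational symmetry of $\partial B_\rho(x)$ the quantity $\int_{\partial B_\rho(x)}\left[|X|^2-d\,g(\nabla\rho_y,X)^2\right]\mathrm{d}A_\rho$ vanishes identically; the genuine contribution therefore comes only from the variation of $\phi$, which by its $C^1$-continuity is $O(\rho)\cdot A(\rho)=O(\rho^d)$, and dividing by $\sinh^d\rho\sim\rho^d$ leaves a bounded, integrable quantity near $\rho=0$. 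Rigorously I would carry out the differentiation on $\hh^d\setminus B_\varepsilon(x)$, where everything is classical, and let $\varepsilon\to 0$, checking that the boundary contributions on $\partial B_\varepsilon(x)$ vanish thanks to exactly this cancellation and to the asymptotics \eqref{asym_k}, \eqref{asym_A}; convergence of the outer $\mathrm{d}\rho$-integral at $+\infty$ likewise follows from \eqref{asym_k}. The remaining manipulations ($\dot k$, $\ddot k$, and the trigonometric simplifications) are routine.
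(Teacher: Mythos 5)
Your proposal is correct and follows essentially the same route as the paper: reduce convexity to the pointwise condition $\nabla^2 h_\phi-h_\phi g\geq 0$ via Proposition~\ref{lem: base}, then differentiate \eqref{h} twice under the integral, writing $\nabla^2_1G=\ddot k\,\mathrm{d}\rho_y\otimes\mathrm{d}\rho_y+\dot k\,\nabla^2\rho_y$ and simplifying with \eqref{eq: hessien rho} so that the $k$-terms cancel against $-h_\phi g$. Your additional care in justifying the second differentiation near the diagonal (via the vanishing of $\int_{\partial B_\rho(x)}[|X|^2-d\,g(\nabla\rho_y,X)^2]\,\mathrm{d}A_\rho$ and the $C^1$ modulus of $\phi$) addresses an integrability point the paper passes over silently, and is a welcome refinement rather than a deviation.
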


\begin{remark}{\rm
The last expression corresponds to the quadratic form $Q_{u'}(u'')$ computed in \cite{LS06}, 
where $u'=x$ and $u''=X$. This is easily seen using the explicit form of $k$ and the relations $\cosh \rho_y(x) = -\left\langle x,y\right\rangle$ and 
$\left\langle \nabla\rho_y(x),X\right\rangle\sinh\rho_y(x)=-\left\langle X,y\right\rangle$ obtained at page 93 in \cite{LS06}.
 Here $y\in\hh^d$ is identified with $y\in T_x\mathcal F$.
}\end{remark}

\begin{remark}[\textbf{Sufficient conditions}]\label{rem:sufficient conditions}{\rm

The convexity conditions \eqref{convexity_Firey} and 
the one of Proposition~\ref{convexity_Lopes} are sharp, but pretty involved and hard to check. 
In the case of compact convex bodies in the Euclidean space, a more direct approach was proposed by Guan and Ma \cite{GM03}, following Pogorelov, but it does not seems suitable to be adapted to our setting. 
In fact in the classical setting one has that the restricted support function $h_K$ of a regular convex body $K$ satisfies the convexity condition 
${}^{\mathbb S^d}\nabla^2 h_K + h_K g_{\mathbb S^d} \geq 0$ as a quadratic form on ${\mathbb S^d}$. 
Using the fact that the Hessian $\operatorname{Hess} (H_K)$ of the total support function $H_K(x):=|x|h_K(x/|x|)$
is $(-1)$-homogeneous and, in $\R^{d+1}$,
\[
\begin{aligned}
\hess(\hess H_K (e_i,e_i))(e_j,e_j)=\hess(\hess H_K (e_j,e_j))(e_i,e_i),
\end{aligned}
\]
one obtains the symmetry relation
\[
\begin{aligned}
&{}^{\mathbb S^d}\nabla^2\left({}^{\mathbb S^d}\nabla^2 h_K (e_i,e_i) + h_K\right)(e_j,e_j)+{}^{\mathbb S^d}\nabla^2 h_K (e_j,e_j)
\\&={}^{\mathbb S^d}\nabla^2\left({}^{\mathbb S^d}\nabla^2 h_K (e_j,e_j) + h_K\right)(e_i,e_i)+{}^{\mathbb S^d}\nabla^2 h_K (e_i,e_i),
\end{aligned}
\]
for all $i=1,\dots,d$, where $\{e_i\}_{i=1}^d$ is a local orthonormal frame in a neighborhood 
of any point $x\in{\mathbb S^d}$. Choosing the point $x$ and the direction $e_1$ such that
${}^{\mathbb S^d}\nabla^2 h_K (e_1,e_1)|_x + h_K(x)$ is a minimum of the curvature radius of $K$, an application of the maximum principle gives that 
\[
 {}^{\mathbb S^d}\nabla^2 h_K (e_1,e_1)|_x + h_K(x)\geq\phi-{}^{\mathbb S^d}\nabla^2 \phi (e_1,e_1),
\]
where $\phi$ is the mean radius of curvature of $K$. This proves that $K$ is convex provided 
\begin{equation}\label{cond_GuanMa}
  \phi(x)-{}^{\mathbb S^d}\nabla^2 \phi (e_i,e_i)|_x \geq 0, 
\end{equation}
for all $x$ and $i$.

Because of the different sign in the decomposition of the Euclidean Hessian in our setting \eqref{restr hessien}, we get instead that 
\[
 \nabla^2 h_K (e_1,e_1)|_x - h_K(x)\leq\nabla^2 \phi (e_1,e_1)+\phi,
\]
from which it seems impossible to get any useful conclusion.\\
It has to be noted that in \cite{GM03} a further sufficient condition for the existence of a convex solution is given for the classical compact problem. In particular it is there asked for $\phi^{-1}$ to be a solution of $\nabla^2_{\mathbb S^d}\phi^{-1}+\phi^{-1}g_{\mathbb S^d}\geq 0$ in the sense of quadratic form (actually the more general Christoffel-Minkowski problem is treated). Once again, the techniques used in \cite{GM03} seem require the compactness of the underlying space $\mathbb S^d$, so that a generalization of their proof to our setting seems definitely non-trivial. Nevertheless is natural to wonder whether there exist conditions on $\nabla^2_{\H^d}\phi^{-1}-\phi^{-1}g_{\H^d}$ which imply the existence of an F-convex solution to the Christoffel problem.
}
\end{remark}

\begin{remark}[\textbf{Curvatures close to a constant function}]{\rm
Finally, we note that condition \eqref{cond_GuanMa} is verified if $\phi$ is $C^2$ close to a constant function. 
In the same order of idea, suppose that $0<\bar\phi:\H^d/\Gamma\to \mathbb R$ is $C^\alpha$ close enough to 
a positive constant function $\bar\phi_\ast>0$. The unique $\Gamma$ invariant solution $\bar h_\ast$ to 
\[
 \frac{1}{d}\Delta \bar h_\ast - \bar h_\ast = \bar\phi_\ast
\]
on $\H^d/\Gamma$ is the constant function $\bar h_\ast= -\bar\phi_\ast$. 
Consider the unique $\Gamma$ invariant solution $\bar h$ to $\frac{1}{d}\Delta\bar h - \bar h = \bar\phi$, 
which, by Theorem \ref{th_smooth} and with notation introduced therein, is given as
\begin{equation*}
\bar h(\bar  x)=\int_{\hh^d}G(x,y)\phi(y)\mathrm{d}{\hh^d}(y).
\end{equation*}
Since $\bar\phi$ is $C^\alpha$ close to $\bar\phi_\ast$ and $G(x,\cdot)\in L^1(\H^d)$, also $\bar h$ is $C^0$ close to $\bar h_\ast$. 
Then, by Schauder estimates (see for instance Section 3.6.3 in \cite{Au98}), $\bar h$ is $C^{2,\alpha}$ close to $h_\ast$, and it is then the restriction to $\H^d$ of a convex function on $\mathcal F$. This proves the following

\begin{proposition}
 Let $0<\bar\phi:\H^d/\Gamma\to \mathbb R$. Fix constants $0<\alpha\leq 1$ and $\bar\phi_\ast>0$. 
There exists a constant $c=c(\alpha,\bar\phi)$ such that if $\left\|\bar\phi-\bar\phi_\ast\right\|_{C^\alpha}<c$, 
then $\bar\phi$ is the restricted support function of a $\Gamma$ invariant F-convex set.
\end{proposition}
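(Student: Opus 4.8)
The plan is to solve the linear equation \eqref{main_eq} with datum $\bar\phi$, to show that its $\Gamma_0$-invariant solution is a small $C^2$ perturbation of the constant solution associated with $\bar\phi_\ast$, and then to conclude convexity from the \emph{openness} of the strict positivity condition appearing in Proposition~\ref{lem: base}. First I would record the model solution: since $\bar\phi_\ast$ is a positive constant, the unique $\Gamma_0$-invariant solution of $\frac1d\Delta\bar h_\ast-\bar h_\ast=\bar\phi_\ast$ on $\hG$ is the constant $\bar h_\ast=-\bar\phi_\ast$. For its lift to $\H^d$ (still denoted $\bar h_\ast$), the decomposition \eqref{eq:II-1} gives $\nabla^2\bar h_\ast-\bar h_\ast g=\bar\phi_\ast\,g$, which is strictly positive definite because $\bar\phi_\ast>0$.

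Next, for $\bar\phi$ as in the statement, Theorem~\ref{th_smooth} provides the unique $\Gamma_0$-invariant solution $\bar h=\bar h_{\bar\phi}$ of \eqref{main_eq}, with the integral representation \eqref{h}. By linearity of the equation, $\bar h-\bar h_\ast$ solves $\frac1d\Delta(\bar h-\bar h_\ast)-(\bar h-\bar h_\ast)=\bar\phi-\bar\phi_\ast$. I would first bound this difference in $C^0$: writing $(\bar h-\bar h_\ast)(\bar x)=d\int_{\H^d}G(x,y)(\phi-\phi_\ast)(y)\,\mathrm{d}\H^d(y)$ and using $d\,\|G(x,\cdot)\|_{L^1(\H^d)}=1$ (which follows from \eqref{GL1} together with the fact that the constant solution $-\phi_\ast$ forces $d\int_{\H^d}G(x,\cdot)\,\mathrm{d}\H^d=-1$), one gets $\|\bar h-\bar h_\ast\|_{C^0}\le\|\bar\phi-\bar\phi_\ast\|_{C^0}$. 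Then, since $\frac1d\Delta-1$ is uniformly elliptic on the compact manifold $\hG$ and $-1$ lies outside the spectrum of $\frac1d\Delta$ (so the operator is invertible), global Schauder estimates upgrade this to $\|\bar h-\bar h_\ast\|_{C^{2,\alpha}(\hG)}\le C\,\|\bar\phi-\bar\phi_\ast\|_{C^{\alpha}(\hG)}$ for a constant $C$ depending only on $\alpha$ and the geometry of $\hG$. Equivalently, one may bypass the $C^0$ step and invoke the bounded inverse $L^{-1}\colon C^{0,\alpha}(\hG)\to C^{2,\alpha}(\hG)$ of $L=\frac1d\Delta-1$ directly.

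Finally I would close the argument by openness. Choosing $c$ small enough that $C c$ is below the spectral gap $\bar\phi_\ast$ of $\bar\phi_\ast g$, the $C^2$-closeness of $\bar h$ to $\bar h_\ast$ guarantees that the symmetric tensor $\nabla^2\bar h-\bar h g=\bar\phi_\ast g+\big(\nabla^2(\bar h-\bar h_\ast)-(\bar h-\bar h_\ast)g\big)$ is a uniform $C^0$ perturbation of $\bar\phi_\ast g>0$, hence remains strictly positive definite at every point, say $\geq\tfrac12\bar\phi_\ast g>0$. By item~\ref{lem princ: c2+ deter} of Proposition~\ref{lem: base}, the lift $h$ of $\bar h$ is then the support function of a $C^2_+$ F-convex set with positive principal radii of curvature; because $h$ is $\Gamma_0$-invariant, its normal representation is equivariant by \eqref{eq:eq grad} (cf.\ Remark~\ref{ex: gamma inv supp fct}), so this set is setwise invariant under $\Gamma_0$, i.e.\ a $0$-F-convex set, and $\bar\phi$ is its mean radius of curvature by \eqref{eq_hn}. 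The only genuinely non-formal ingredient here is the elliptic (Schauder) estimate on the compact quotient; everything else reduces to linear algebra and the openness of the positive-definiteness condition.
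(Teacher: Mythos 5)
Your proposal is correct and follows essentially the same route as the paper: compare with the constant solution $\bar h_\ast=-\bar\phi_\ast$, get $C^0$ closeness from the $L^1$ bound on the kernel $G$, upgrade to $C^{2,\alpha}$ closeness by Schauder estimates on the compact quotient, and conclude convexity from the strict positivity of $\nabla^2\bar h_\ast-\bar h_\ast g=\bar\phi_\ast g$. The only additions are welcome explicitations the paper leaves implicit (the normalization $d\left\|G(x,\cdot)\right\|_{L^1}=1$ and the final openness-of-positivity step via Proposition~\ref{lem: base}).
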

}\end{remark}

\subsection{Uniqueness}

In this section with find conditions under which F-convex sets are uniquely determined by their first area measure. This is obviously not true in general, considering F-convex sets differing by
a translation, whose restricted support functions are given in Example~\ref{ex:restricted lin}. Below is a more elaborated example.

\begin{example}[\textbf{Fuchsian and quasi-Fuchsian F-convex sets with same mean radius of curvature}]{\rm
A  nontrivial example can be constructed as follows. Let $\tau$ be a cocycle which is not a coboundary. Let $K$ be a 
$C^2_+$ $\tau$-F-convex set, with mean radius of curvature $\phi$. $\phi$ is $\Gamma$ invariant 
and we know by Theorem~\ref{th_smooth} that there exists a $\Gamma$ invariant solution $h_0$. 
We do not know if $h_0$ is convex, but for any $t>0$, $K+tK(\H)$ is a $C^2_+$ $\tau$-F-convex set, 
with mean radius of curvature $\phi+t$, and the corresponding Fuchsian solution is
$h_0-t$. If $t$ is sufficiently large, $\nabla^2 (h_0-t)-(h_0- t)g>0$, and 
$h_0-t$ is the support function of a $\Gamma$ invariant F-convex set with same mean radius of curvature than
 $K+tK(\H)$.
 }\end{example}

\subsubsection{An elementary case}\label{unique-convex}

So far we have seen some uniqueness results for analytic solutions to equation \eqref{eq_dist}. As a matter of fact, 
we are interested in a smaller class of solutions which are restricted support functions of some F-convex set. As one expects, convexity gives further information on the uniqueness of the solution. 
A special situation occurs when the first area measure $\mu$ of some F-convex set 
$K$ is zero in some open domain $\Omega\subset\hh^d$. 
In this case we have that the restricted support function $h_K$ satisfies the homogeneous equation $\frac{1}{d}\Delta h_K - h_K =0$ 
in the sense of distributions on $\Omega$. By elliptic regularity we have that $h_K|_\Omega\in C^\infty(\Omega)$. 
In particular, it makes sense to consider the Hessian $\nabla^2h_K$ of $h_K$. Hence we have that, at each point $x\in\Omega$, 
the quadratic form $\nabla^2h_K - h_Kg$ is trace-null, and furthermore all its eigenvalues are nonnegative by convexity condition. 
This yields that $\nabla^2h_K - h_Kg\equiv 0$ in $\Omega$, which in turn gives that the extended support function 
$H_K(\eta)$ has null 
Hessian on $\{\eta\in\mathcal F : \eta/\|\eta\|_-\in \Omega\}$. 
Hence, $h_K|_\Omega$ is the restriction to $\hh^d$ of a linear function on $\mathbb{R}^{d+1}$.

The remarks above gives an elementary condition for uniqueness:
\begin{lemma}
 Let $H_1$ and $H_2$ be the extended support functions of two
F-convex sets with the same first area measure. If $H_1-H_2$ is convex, then they differ by the restriction of a linear form to $\H^d$.
\end{lemma}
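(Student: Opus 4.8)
The plan is to reduce the statement to the homogeneous situation already analysed at the beginning of this subsection. Write $h:=h_1-h_2$ for the difference of the restricted support functions and $H:=H_1-H_2$ for the difference of the extended support functions, so that $H$ is the $1$-extension of $h$. Since the distributional first area measure operator $S_1$ is linear in its argument (see \eqref{eq:distribution action}), and since by Lemma~\ref{lem: aire distribution} one has $S_1(h_i)=S_1(K_i,\cdot)$, the equality of the two first area measures gives
\[
\tfrac1d\Delta h - h \;=\; S_1(h_1)-S_1(h_2)\;=\;S_1(K_1,\cdot)-S_1(K_2,\cdot)\;=\;0
\]
in the sense of distributions on all of $\H^d$. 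First I would invoke elliptic regularity for this homogeneous linear equation to upgrade $h$ to a $C^\infty$ function on $\H^d$, so that its Hessian may be computed classically.

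Next I would bring in the convexity hypothesis. As $H$ is convex and $h$ is now $C^2$, formula \eqref{eq:II-1} identifies $\Hess H$ with $\nabla^2 h-hg$, whence $\nabla^2 h-hg\geq 0$ as a quadratic form at every point of $\H^d$; by Proposition~\ref{lem: base}\ref{lem princ: c2 deter} this even exhibits $h$ as the support function of a F-convex set whose first area measure vanishes, placing us exactly in the situation treated above with $\Omega=\H^d$. Taking the $g$-trace of $\nabla^2 h-hg$ and recalling $\operatorname{tr}_g g=d$ gives $\operatorname{tr}_g(\nabla^2 h-hg)=\Delta h-dh$, which equals $d(\tfrac1d\Delta h-h)=0$. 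A symmetric positive semidefinite form with vanishing trace has all eigenvalues zero, so $\nabla^2 h-hg\equiv 0$ on $\H^d$, that is $\Hess H\equiv 0$.

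Finally I would conclude that $H$ is linear. Since $\Hess H=0$ on $\F$, the Lorentzian gradient $\grad H$ is a constant vector $p\in\R^{d+1}$, and Euler's relation \eqref{eq:norm2} then yields $H(\eta)=\langle\grad_\eta H,\eta\rangle_-=\langle p,\eta\rangle_-$ for every $\eta\in\F$. Restricting to $\H^d$ shows that $h_1-h_2$ is the restriction of the linear form $\langle\cdot,p\rangle_-$, which is the claim.

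The only genuinely delicate point is the regularity step: one must be sure that the distributional identity $\tfrac1d\Delta h-h=0$ forces $h$ to be smooth, so that the subsequent pointwise trace computation and the linear-algebra argument are legitimate. Once that is secured the rest is immediate, the convexity hypothesis entering solely to guarantee the semidefiniteness which, combined with the trace-free condition, annihilates the reverse second fundamental form.
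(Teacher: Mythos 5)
Your proof is correct and follows essentially the same route as the paper: the paper derives the lemma directly from the preceding paragraph of Subsection~\ref{unique-convex}, namely that $S_1(h_1-h_2)=0$ by linearity, elliptic regularity makes $h_1-h_2$ smooth, and the convexity hypothesis combined with the trace-free condition forces $\nabla^2(h_1-h_2)-(h_1-h_2)g\equiv 0$, hence a linear $H_1-H_2$. The detour through Proposition~\ref{lem: base}\ref{lem princ: c2 deter} is unnecessary but harmless.
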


This also gives the following characterization.
\begin{lemma}
 An F-convex set whose first area measure is a polyhedral measure of order one is an F-convex polyhedron.
\end{lemma}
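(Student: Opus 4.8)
The plan is to exploit exactly the argument of the preceding ``elementary case'': wherever the first area measure vanishes on an open set, the support function is there the restriction of a linear form. Write $\varphi=S_1(K,\cdot)$ for the given polyhedral measure of order one, with underlying decomposition $C$ of $\H^d$ into compact convex cells and positive facet weights $\lambda(\zeta)$. By definition the support of $\varphi$ is the $(d-1)$-skeleton of $C$, so $\varphi$ vanishes on the interior $\inn(O)$ of every top-dimensional cell $O$ of $C$. On each such $\inn(O)$ the restricted support function $h_K$ solves $\frac1d\Delta h_K-h_K=0$ in the sense of distributions; by elliptic regularity $h_K$ is smooth there, and since $K$ is F-convex the form $\nabla^2 h_K-h_K g$ is positive semidefinite (Proposition~\ref{lem: base}) while its trace equals $\Delta h_K-d\,h_K=0$. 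Hence $\nabla^2 h_K-h_K g\equiv 0$ on $\inn(O)$, so the $1$-extension $H_K$ has vanishing Hessian on the open cone over $\inn(O)$ and therefore coincides there with a linear form $\langle\cdot,p_O\rangle_-$ for some $p_O\in\R^{d+1}$.

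Next I would identify the $p_O$ geometrically and assemble them. Since $\grad_\eta H_K=p_O$ is constant on the open cone over $O$, Lemma~\ref{lem: grad} shows that for every such $\eta$ the space-like support plane with normal $\eta$ meets $K$ exactly at $p_O$; in particular $p_O\in\partial_s K\subset K$. By continuity of $H_K$ the identity $H_K=\langle\cdot,p_O\rangle_-$ persists on the closed cone over $\overline O$, so every $\eta\in\F$ satisfies $H_K(\eta)=\langle\eta,p_{O}\rangle_-$ for any cell $O$ containing $\eta/\|\eta\|_-$. Because each $p_O$ lies in $K$ we also have $\langle\cdot,p_O\rangle_-\le H_K$ everywhere, and combining the two inequalities yields
\begin{equation*}
H_K(\eta)=\max_{O}\,\langle\eta,p_O\rangle_-,\qquad \eta\in\F,
\end{equation*}
the maximum being taken over the (countably many) top cells of $C$.

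Finally I would check that this presentation of $H_K$ is precisely the one defining a F-convex polyhedron in Subsection~\ref{subsub back poly}: the supremum is finite and attained because $H_K$ is the support function of the F-convex set $K$, and the vertices are pairwise acausal by Lemma~\ref{lem: acausal}. The remaining---and I expect main---point is that the vertex set $\{p_O\}$ is \emph{discrete}. Here I would combine the positivity of the weights with the local finiteness of $C$ (the cited Proposition~9.9 and Remark~9.10 of \cite{Bon05}): two cells $O,O'$ adjacent across a facet $\zeta$ give, by the polyhedral area formula \eqref{eq:area pol} applied to $K$ itself, an edge $[p_O,p_{O'}]\subset\partial_s K$ of space-like length $d\,\lambda(\zeta)>0$, so distinct cells yield distinct vertices; and since $K(\omega)=G_K^{-1}(\omega)$ is compact for compact $\omega$ (Subsection~\ref{subsub:brush}) while only finitely many cells of the locally finite $C$ meet $\omega$, only finitely many vertices have Gauss image meeting any fixed compact set. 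The delicate part is to rule out vertices accumulating in $\R^{d+1}$ while their normal cells escape to infinity in $\H^d$, a scenario not excluded a priori once the weights are not uniformly bounded below; I would exclude it using that $K$ is genuinely F-convex, so the boundary retraction $r$ is continuous and the normal field is proper (Subsection~\ref{subsub:brush}), forcing any such limit point to lie on $\partial_s K$ and to be a vertex itself, which contradicts the local finiteness of the induced decomposition. Once discreteness is established, $K$ is the F-convex polyhedron with vertex set $\{p_O\}$, as claimed.
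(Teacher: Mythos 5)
Your core argument is exactly the paper's: the measure vanishes on each open top-dimensional cell $O$, so there $\nabla^2 h_K-h_Kg$ is positive semidefinite with vanishing trace, hence zero, hence $H_K$ agrees with a linear form $\langle\cdot,p_O\rangle_-$ on the cone over $O$, and since each $p_O\in K$ one gets $H_K=\max_O\langle\cdot,p_O\rangle_-$ with the maximum attained. The paper states the lemma as an immediate consequence of the preceding ``elementary case'' discussion and writes nothing more, so up to this point you have reproduced its (implicit) proof.

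Where you depart from the paper is in worrying about the discreteness of $\{p_O\}$, which the definition of a F-convex polyhedron in Subsection~\ref{subsub back poly} does require and which the paper passes over in silence. You are right that this is the delicate point, but your proposed argument for it does not work. If $p_{O_n}\to p$ with distinct cells $O_n$, then by local finiteness the $O_n$ leave every compact subset of $\H^d$, so the corresponding normals leave every compact set as well; the properness of the normal field and the compactness of $K(\omega)$ for compact $\omega$ (Subsection~\ref{subsub:brush}) therefore give no control whatsoever on the $p_{O_n}$. Moreover the limit $p$ lies in $\partial K$ but in general only on a light-like support plane, not in $\partial_s K$: no space-like support plane need touch $p$, so $p$ is not a vertex and no contradiction with local finiteness arises. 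In fact the scenario you try to exclude can occur. Already for $d=1$, decompose $\H^1$ into the arcs between consecutive integer parameters and put weight $\lambda_n=e^{-2|n|}$ at the $n$-th point; the increments $p_{n+1}-p_n=\lambda_{n+1}v_{n+1}$ have Euclidean norm of order $e^{-|n|}$ because the unit space-like normal $v_n=(\cosh n,\sinh n)$ has Euclidean norm of order $e^{|n|}$, so the vertices accumulate at a point of $\partial K\setminus\partial_s K$ while the support function is still a maximum of the $\langle\cdot,p_n\rangle_-$ attained everywhere. This is precisely why Theorem~\ref{thm:pol} assumes the weights $\lambda(\zeta)$ to be uniformly bounded below. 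So either ``discrete'' in the definition of F-convex polyhedron is to be read loosely (countable, with the induced decomposition of $\H^d$ locally finite), in which case your argument is complete once you reach $H_K=\max_O\langle\cdot,p_O\rangle_-$ and the final paragraph is unnecessary, or genuine topological discreteness is wanted, in which case it fails without a lower bound on the weights and cannot be rescued by the properness argument you invoke.
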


In Section~\ref{sec:qf sol}, we will show many hypersurfaces with zero mean radius of curvature,
but they will not be explicit.

\begin{example}[\textbf{A surface with zero mean radius of curvature}]\label{ex : surface courbure 0}{\rm
From Example~\ref{rem:elementary dist}, we got a function $h$ on an open set $\mathcal{O}$ of $\H^2$ such that
its normal representation has zero mean radius of curvature. 
Up to a constant, the $1$-extension $H$ of $h$ has the form
$$H(x)= \langle x,v\rangle_-\arctan \left(\frac{\|x\|_-}{\langle x,v\rangle_-}\right)+\langle x,\frac{x}{\|x\|_-}\rangle_- $$
(here one can another time check that the wave operator of $H$ restricted to $\H^2$ is zero)
and one can compute its Lorentzian gradient restricted to $\H^2$. 
Taking for $v$ the vector with coordinates $(1,0,0)$, and using 
 the parametrization of $\mathcal{O}$ with coordinates
 $\left( \begin{array}{c}\sinh(t)\cos(\theta)\\ \sinh(t)\sin(\theta)\\ \cosh(t)\end{array} \right) $, for $t>0, -\pi/2<\theta<\pi/2$,  we get the following normal representation, drawn in Figure~\ref{fig: courb 0},
$$\chi(t, \theta)=\left( \begin{array}{c}
\arctan\left(\frac{1}{\sinh(t)\cos(\theta)}\right) \\
\frac{\sinh(t)\sin(\theta)}{1+\sinh(t)^2\cos(\theta)^2} \\
\frac{\cosh(t)}{\sqrt{1+\sinh(t)^2\cos(\theta)^2}}
\end{array} \right).$$
Note that at the points where the radii $r_i$ of curvature are not zero, multiplying by $r_1r_2$,  $r_1+r_2=0$ implies
$1/r_1+1/r_2=0$, and $1/r_1$ are the principal curvatures of the surface, hence the surface has mean curvature zero.

 }\end{example}

 \begin{figure}[htp]
   \centering
  \subfloat[The zero mean radius of curvature surface.]{\label{a}\includegraphics[scale=0.6]{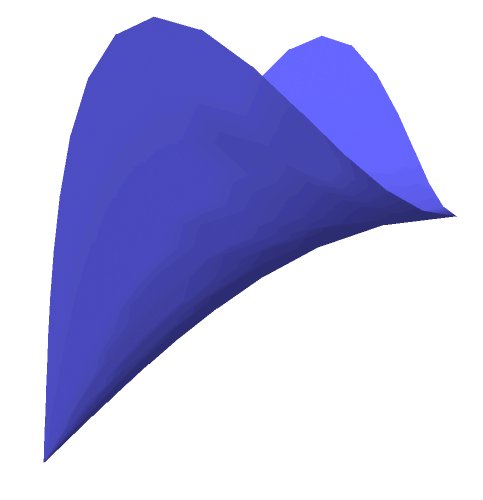} }   \\     
   \subfloat[The curve is the intersection of the surface with the $\{x_2=0\}$ plane. 
It can also be obtained from the function $h:\R^+ \rightarrow \R$, $h(t)=\sinh(t)\arctan\left(\frac{1}{\sinh(t)}\right)-1$ 
and formula \eqref{eq:eq courbe}.
Its radius of curvature is $\frac{-1}{\cosh(t)^2}$.]{\label{b}\includegraphics[scale=0.4]{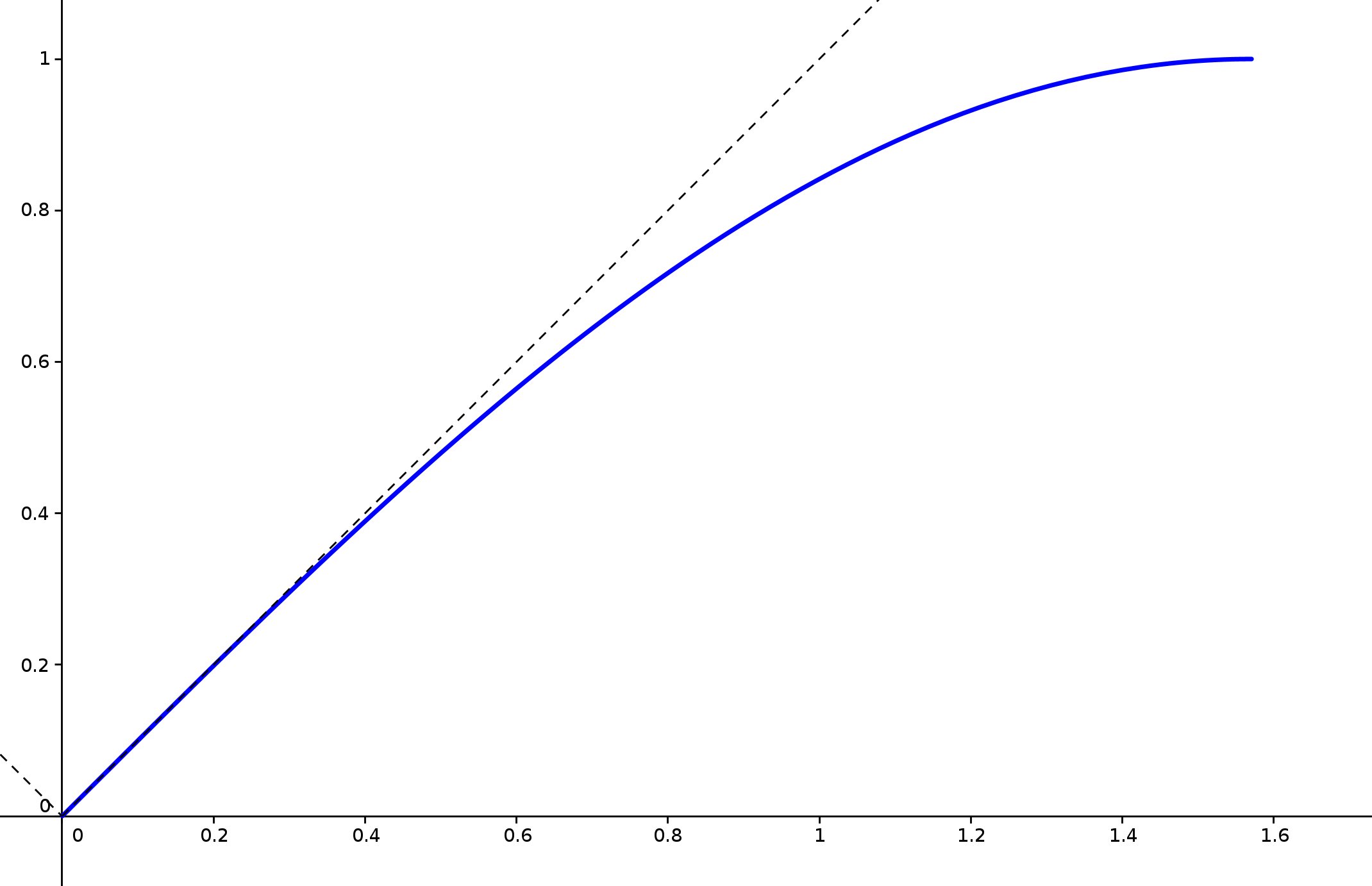}  }
   \caption{To Example~\ref{ex : surface courbure 0}.}
   \label{fig: courb 0}
 \end{figure}

\subsubsection{Sovertkov condition for uniqueness}

In \cite{So81}, the author proved the uniqueness among smooth solutions which do not grow too much. 
An easy observation gives that Sovertkov's result holds as well for distribution solutions.

\begin{theorem}\label{th_sov_uniq}
Let $\mu$ be a positive radon measure on $\hh^d$ and let $\zeta:\partial \hh^d\to \mathbb{R}$ be a function defined on the hyperbolic boundary at infinity. There is at most one continuous distribution solution $h$ to the equation \eqref{eq_dist} satisfying
\begin{equation}\label{subconical}
\forall \theta,\quad \underset{\rho\rightarrow +\infty}{\mathrm{lim}}\frac{h(\rho,\mathbf{\theta})}{\cosh (\rho)}=\zeta(\theta). 
\end{equation}
\end{theorem}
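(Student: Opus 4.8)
The plan is to deduce uniqueness from a maximum principle for an auxiliary divergence-form equation, after first removing the inhomogeneity. Suppose $h_1,h_2$ are two continuous distribution solutions of \eqref{eq_dist} satisfying \eqref{subconical} with the same $\zeta$, and set $w=h_1-h_2$. Subtracting the two equations, $w$ is a continuous distribution solution of the homogeneous equation $\frac1d\Delta w-w=0$ on $\hh^d$; since $\Delta-d$ is elliptic with analytic coefficients and the right-hand side is now $0$, standard elliptic regularity gives $w\in C^\infty(\hh^d)$ (in fact real-analytic). This is the ``easy observation'' reducing the distributional statement to the classical smooth one. Moreover \eqref{subconical} applied to $h_1$ and $h_2$ gives, for every $\theta$, $\lim_{\rho\to+\infty} w(\rho,\theta)/\cosh\rho=0$. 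The goal is to prove $w\equiv 0$.

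The key structural remark is that $\phi_0:=\cosh\rho$ is itself a positive solution of $\frac1d\Delta\phi_0-\phi_0=0$: by \eqref{eq:nabla cosh} (with $\alpha=1$) one has $\nabla^2\cosh\rho=\cosh\rho\,g$, hence $\Delta\cosh\rho=d\cosh\rho$; geometrically $\phi_0$ is the support function of the future cone of a point, whose mean radius of curvature vanishes (Example~\ref{ex: mesure cone}). Writing $w=\phi_0 f$ and using $\Delta(\phi_0 f)=f\Delta\phi_0+\phi_0\Delta f+2\langle\nabla\phi_0,\nabla f\rangle$ together with $\Delta\phi_0=d\phi_0$, the zeroth-order terms cancel and $f$ solves
\[
\Delta f+2\langle\nabla\log\phi_0,\nabla f\rangle=0,\qquad\text{equivalently}\qquad \operatorname{div}\!\left(\phi_0^2\,\nabla f\right)=0.
\]
This is a uniformly elliptic equation with no zeroth-order term, so $f$ obeys both the weak and the strong maximum principle on every geodesic ball $B_R(e_{d+1})$; in particular $\sup_{B_R}f=\sup_{\partial B_R}f$ is nondecreasing in $R$, the strong maximum principle forbids an interior extremum unless $f$ is constant, and a nonzero constant is excluded by the radial decay $f(\rho,\theta)=w(\rho,\theta)/\cosh\rho\to0$. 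Thus it remains only to control the behaviour of $f$ as $R\to+\infty$.

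Here lies the main obstacle: hypothesis \eqref{subconical} provides only \emph{pointwise} (radial) decay $f(\rho,\theta)\to0$, whereas the maximum principle on $\partial B_R(e_{d+1})$ would want $\sup_\theta f(R,\theta)\to0$, i.e. uniformity in the direction $\theta$. Passing from the former to a genuine global bound is a Phragm\'en--Lindel\"of type problem, and it is exactly this analysis that was carried out by Sovertkov in \cite{So81} for smooth solutions, using the bounded radial harmonic functions of $\operatorname{div}(\phi_0^2\nabla\,\cdot\,)$ (the constants together with the decaying solution proportional to $k(\rho)/\cosh\rho$, with $k$ defined in \eqref{small_k}) as comparison barriers on large annuli. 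Since $w$ is smooth by the first step, Sovertkov's uniqueness theorem applies verbatim and yields $f\equiv0$, hence $w\equiv0$ and $h_1=h_2$. I would present the argument in this order, citing \cite{So81} for the smooth case and stressing that our only addition is the regularity reduction at the start; were a self-contained proof desired, the Phragm\'en--Lindel\"of step is the single point requiring real work.
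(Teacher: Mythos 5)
Your reduction is exactly the paper's first step: subtract the two solutions, observe that $h_3=h_1-h_2$ solves the homogeneous equation $\frac1d\Delta h_3-h_3=0$, invoke elliptic regularity to make $h_3$ smooth, and note that \eqref{subconical} gives $h_3(\rho,\theta)/\cosh\rho\to0$ along each ray. Where you diverge is in how the comparison with the positive solution $\cosh\rho$ is packaged. You perform the ground-state transform $h_3=\cosh\rho\cdot f$, obtain $\operatorname{div}(\cosh^2\rho\,\nabla f)=0$, and then defer the decisive Phragm\'en--Lindel\"of step wholesale to \cite{So81}. The paper instead uses $\cosh\rho$ directly as a barrier: it sets $h^{(\epsilon)}_\pm=\epsilon(\cosh\rho+1)\pm h_3$, computes $\frac1d\Delta h^{(\epsilon)}_\pm-h^{(\epsilon)}_\pm=-\epsilon<0$, so that at any interior minimum point $\Delta h^{(\epsilon)}_\pm\geq0$ forces $h^{(\epsilon)}_\pm\geq\epsilon>0$ there, while along each ray $h^{(\epsilon)}_\pm=(\cosh\rho+1)\bigl(\epsilon\pm h_3/(\cosh\rho+1)\bigr)\to+\infty$; the maximum principle then gives $|h_3|<\epsilon(\cosh\rho+1)$ for every $\epsilon>0$, hence $h_3\equiv0$. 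These are two faces of the same idea, and the barrier version shows that no comparison on large annuli with $k(\rho)/\cosh\rho$ is actually needed --- the strict inequality $-\epsilon<0$ does all the work --- so the step you reserve as ``the single point requiring real work'' is shorter than you anticipate. One caveat: the difficulty you correctly isolate, namely that \eqref{subconical} is only pointwise in $\theta$ (cf. the paper's own remark exhibiting a smooth function with finite radial limits but unbounded boundary behaviour), so that positivity of $h^{(\epsilon)}_\pm$ near infinity is not uniform and the attainment of a global minimum is not automatic, is present in the paper's argument as well and is passed over there without comment; your proposal at least names this issue explicitly, even though neither text resolves it beyond the appeal to \cite{So81}.
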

By  Lemma~\ref{lem:h infini}, the result above has a clear geometric meaning: two F-convex sets with the same
first area measure are equal if for any null direction $\ell$ they have the same support
 plane at infinity directed by $\ell$. In particular, if $\zeta$ is continuous, the two convex sets must be contained
in the future cone of a point.

\begin{proof}
Let $h_1,h_2\in \mathcal D'(\hh^n)$ be two continuous functions satisfying \eqref{subconical} and
\[
\Delta h_1 -dh_1 = \mu = \Delta h_2-dh_2
\]
in $\mathcal D'(\hh^n)$. Then, $h_3=h_1-h_2$ satisfies 
\begin{equation*}
\forall \theta,\quad \underset{\rho\rightarrow +\infty}{\mathrm{lim}}\frac{h_3(\rho,\mathbf{\theta})}{\cosh (\rho)}= 0,
\end{equation*}
by the linearity of the equation 
\begin{equation}\label{homogen}
\frac{1}{d}\Delta h_3 -h_3 = 0,
\end{equation}
and by elliptic regularity $h_3\in C^\infty (\hh^n)$, \cite{Au98}. Hence we can proceed as in \cite{So81} to prove that $h_3=0$. Namely, let $\epsilon>0$ and define the smooth functions
\[
h^{(\epsilon)}_{\pm}(\rho,\mathbf{\theta}):=\epsilon\left(\cosh(\rho)+1\right)\pm h_3(\rho,\mathbf{\theta}).
\]
Both $h^{(\epsilon)}_\pm$ satisfy 
\begin{equation*}
\underset{\rho\rightarrow +\infty}{\mathrm{lim}}h^{(\epsilon)}_\pm(\rho,\mathbf{\theta})
=\underset{\rho\rightarrow +\infty}{\mathrm{lim}}\left(\cosh(\rho)+1\right)\left(\epsilon\pm \frac{h_3(\rho,\mathbf{\theta})}{\cosh(\rho)+1}\right)>0,
\end{equation*}
for all $\mathbf{\theta}$, and 
\begin{equation*}
\frac{1}{d}\Delta h^{(\epsilon)}_\pm -h^{(\epsilon)}_\pm = -\epsilon d < 0.
\end{equation*}
By the maximum principle we thus get that $h^{(\epsilon)}_\pm$ are strictly positive for all $\epsilon$, that is
\[
|h_3(\rho,\mathbf{\theta})|<\epsilon\left(\cosh(\rho)+1\right)
\]
for all $\epsilon>0$ and $(\rho,\mathbf{\theta})\in \hh^d$. This proves the claim.
\end{proof}


\subsubsection{Non-uniqueness}\label{sec_non-uniq}

Reasoning as in the proof of Theorem \ref{th_sov_uniq}, it is possible to get a characterization of non-unique solutions. In fact, let $h_1$ and $h_2$ be two distributions solutions to the equation \eqref{eq_dist} for some positive Radon measure  $\mu$ on $\hh^d$. Then $h=h_1-h_2$ satisfies the homogeneous equation \eqref{homogen} and is hence smooth by elliptic regularity. This elementary observation easily implies the following

\begin{proposition}\label{pr_rigidity}
Let $\mu\in\mathcal R(\hh^d)^+$ and let $h_\mu$ be the distribution solution to equation \eqref{eq_dist} defined in \eqref{def_h}. If $h_\mu\in \mathcal D'(\hh^d)\setminus C^{0}(\hh^d)$, then there exists no F-convex set $K$ with $\mu$ as first area measure.
\end{proposition}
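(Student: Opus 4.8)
The plan is to exploit the fundamental fact, already displayed above in the remarks preceding this proposition, that the difference of any two distribution solutions to \eqref{eq_dist} satisfies the \emph{homogeneous} equation \eqref{homogen}, and is therefore smooth by elliptic regularity (Theorem~3.54 in \cite{Au98}). This reduces the non-uniqueness question to a regularity dichotomy: the solution $h_\mu$ defined formally in \eqref{def_h} is \emph{canonical} in the sense that any other solution differs from it by a smooth function.

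The argument I would give is by contradiction, and runs as follows. Suppose, contrary to the claim, that there exists a F-convex set $K$ with first area measure $\mu$. By the characterization of the first area measure established in Section~\ref{sec:area} (Lemma~\ref{lem: aire distribution}), the restricted support function $h_K$ of $K$ is a solution to \eqref{eq_dist} in the sense of distributions. Moreover, support functions of convex sets are necessarily continuous, so $h_K\in C^0(\hh^d)$; this is precisely the observation recalled at the beginning of Subsection~\ref{sec:convexity}. Now consider the difference $w:=h_K-h_\mu$. Since both $h_K$ and $h_\mu$ solve \eqref{eq_dist} with the same right-hand side $\mu$, linearity gives that $w$ satisfies the homogeneous equation
\begin{equation*}
\frac{1}{d}\Delta w - w = 0
\end{equation*}
in $\mathcal D'(\hh^d)$. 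By elliptic regularity, $w\in C^\infty(\hh^d)$, and in particular $w$ is continuous.

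Then I would conclude immediately: $h_\mu = h_K - w$ is the difference of a continuous function $h_K$ and a smooth (hence continuous) function $w$, so $h_\mu\in C^0(\hh^d)$. This directly contradicts the hypothesis $h_\mu\in\mathcal D'(\hh^d)\setminus C^0(\hh^d)$, completing the proof. In short, the proposition is the contrapositive of the statement ``if a convex realization exists then the canonical solution $h_\mu$ is automatically continuous,'' and the only substantive ingredients are (i) that $h_\mu$ solves \eqref{eq_dist} (Theorem~\ref{th_dist}), (ii) that a convex realization would furnish a \emph{continuous} solution to the same equation (Lemma~\ref{lem: aire distribution} together with the continuity of support functions), and (iii) that two solutions differ by a smooth function via the homogeneous equation \eqref{homogen}.

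I do not anticipate a genuine obstacle here, since every ingredient has already been assembled: the regularity of differences of solutions is exactly the ``elementary observation'' flagged in the text just before the statement, and the continuity of convex support functions is standard. The only point deserving a word of care is the tacit identification of the abstract distribution $h_\mu$ with a classical function when testing continuity — one should make sure that ``$h_\mu\in\mathcal D'(\hh^d)\setminus C^0(\hh^d)$'' is read as the assertion that $h_\mu$ is \emph{not represented by} a continuous function, so that the existence of a continuous representative (forced by $h_\mu=h_K-w$ with $h_K,w$ continuous) is the sought contradiction. With that reading the proof is a two-line consequence of the preceding machinery.
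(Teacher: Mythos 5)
Your argument is exactly the paper's: assume a convex realization $K$ exists, note that $h_K$ is a continuous solution to \eqref{eq_dist}, observe that $h_\mu-h_K$ solves the homogeneous equation and is therefore smooth by elliptic regularity, and conclude that $h_\mu$ would then be continuous, contradicting the hypothesis. The proposal is correct and matches the paper's proof in both structure and ingredients.
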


\begin{proof}
By contradiction, suppose such a convex $K$ exists. Then its restricted support function $h_K$ is a continuous solution to \eqref{eq_dist}. But $h_\mu-h_K\in C^\infty(\hh^d)$ by elliptic regularity, and this gives us a contradiction.
\end{proof}

\begin{example}{\rm Let $\mu=\delta_y$ be the Dirac distribution at the point $y\in\hh^d$. Then the solution to \eqref{main_eq} proposed in \eqref{def_h} is 
\[
h_\delta(x)= G(x,y) \in D'(\hh^d)\setminus C^{0}(\hh^d).
\]
Hence, by Proposition~\ref{pr_rigidity}, there is no F-convex set with first area measure $\delta_y$. 
On the other hand, this result is not surprising, since by Section \ref{unique-convex} we know that a continuous solution $h$ to 
$\frac{1}{d}\Delta h - h = \delta_y$ that is restriction to $\H^d$ of a convex function, has to be the restriction of a linear function on $\hh^d\setminus\{y\}$, hence on all of $\hh^d$ by continuity.
}\end{example}


 \subsection{Proof of Theorem~\ref{thm: base general}}

The uniqueness is a consequence of 
Theorem~\ref{th_sov_uniq} 
together with Lemma~\ref{lem: petit o} (it will also follows from Corollary~\ref{cor_unicity-fuchsian}).

The first part of Theorem~\ref{thm: base general} follows from Theorem~\ref{sol_fuchs_distr}, the second from
Proposition~\ref{prop: conv cond}
and the third from Theorem~\ref{th_smooth}.


\subsection{The $d=1$ case}
We specify here the analytical results of the previous section to the one dimensional setting, where an almost complete picture can be given.
Actually, the first area measure is also the last area measure, so in $d=1$ 
there is a unique Christoffel--Minkowski problem.
In fact in this case we have $\hh^1=\mathbb{R}^1$ (see Subsection~\ref{sub:1d}), 
and the first area measure is a positive Radon measure $\mu$ on $\mathbb R$. 
Accordingly, equation \eqref{main_eq} reads
\begin{equation}\label{eq_1D}
h''(t)-h(t) = \mu,\quad\textrm{in }\mathcal D'(\rr)
\end{equation}
in the sense of distributions, that is 
\[
\int_ {-\infty}^\infty h(s)(f''(s)-f(s))\mathrm{d}s = \int_ {-\infty}^\infty f(s)\mathrm{d}\mu(s),\quad\forall f\in C^{\infty}_c(\rr). 
\]
Assume that $\mu\in\mathcal R^+(\hh^1)$, that is 
\begin{equation}\label{growth_1D}
\int_{-\infty}^\infty e^{-|t|}\mathrm{d}\mu(t)<\infty.
\end{equation}
Reasoning as in the previous sections, we get that a particular solution to \eqref{eq_1D} takes the form
\[
h_\mu(t)=-\int_{-\infty}^\infty \frac{e^{-|s-t|}}2\mathrm{d}\mu(s),
\]
where the distribution $h_\mu\in\mathcal D'(\rr)$ is defined by
\[
(h_\mu,f):=(\mu,h_f)=-\int_{-\infty}^\infty\left(\int_{-\infty}^\infty \frac{e^{-|s-t|}}2f(s)\mathrm{d}s\right) \mathrm{d}\mu(t),\quad\forall f\in C^{\infty}_c(\rr),
\]
and is well-defined because of \eqref{growth_1D}. In fact an integration by parts yields
\begin{equation*}\begin{aligned}
({h_\mu}''-h_\mu,f)
&=(h_\mu,f''-f)
=-\int_{-\infty}^\infty\left(\int_{-\infty}^\infty \frac{e^{-|s-t|}}2(f''(s)-f(s))\mathrm{d}s\right) \mathrm{d}\mu(t)
=\int_{-\infty}^\infty f(t)\mathrm{d}\mu(t).
\end{aligned}\end{equation*}
We note that, thanks to condition \eqref{growth_1D}, even if the function $f:=-e^{-|s|}/2$ is not compactly supported, the convolution $h_\mu = f\ast\mu$ inherits the continuity property of $f$.\\
Considering also solutions to the homogeneous equation $h''=h$, we get that for $\mu\in\mathcal R^+(\hh^1)$ all solutions to equations \eqref{eq_1D} are continuous and can be written as
\begin{equation}\label{1D_general_sol}
h_\mu(t)=-\int_{-\infty}^\infty \frac{e^{-|s-t|}}2\mathrm{d}\mu(s)+A\cosh(t)+B\sinh(t),\quad A,B\in\rr.
\end{equation}
When $\mu=\phi(t)\mathrm{d}t$ for some $\phi\in C^0(\rr)$, then assumption \eqref{growth_1D} can be skipped. In fact the general solution to equation 
\begin{equation}\label{eq_1D_phi}
h''(t)-h(t) = \phi(t)
\end{equation}
can be also written in the form
\begin{equation}\label{1D_general_sol_2}
h_\phi = \int_1^t\sinh(t-s)\phi(s)\mathrm{d}s+C\cosh(t)+D\sinh(t),\quad C,D\in\rr,
\end{equation}
which makes sense for any continuous function $\phi$ without growth assumption. We note that, when
\[
\int_{-\infty}^\infty e^{-|t|}\phi(t)\mathrm{d}t<\infty,
\]
the expression in \eqref{1D_general_sol_2} and in \eqref{1D_general_sol} are the same up to set
\begin{equation*}\begin{aligned}
&A= C + \frac12\int_1^\infty e^{-s}\phi(s)\mathrm{d}s + \frac12\int_{-\infty}^1 e^{s}\phi(s)\mathrm{d}s,\\
&B= D + \frac12\int_1^\infty e^{-s}\phi(s)\mathrm{d}s - \frac12\int_{-\infty}^1 e^{s}\phi(s)\mathrm{d}s.
\end{aligned}\end{equation*}\bigskip

\noindent Since the problem is one dimensional, equation \eqref{eq_1D_phi} can be interpreted also as
\[
\nabla^2h-gh=\phi\geq0
\]
hence all the solutions given in \eqref{1D_general_sol_2} are automatically restrictions
 to $\hh^1$ of convex functions on $\mathcal F$.
 
When $\mu\in\mathcal R(\hh^1)$ is a positive measure, 
one expects to get the same conclusion for solutions of \eqref{eq_1D}, since, 
roughly speaking, $\nabla^2h-gh=\mu>0$ in the sense of distribution. 
To prove this, thanks to Lemma~\ref{lem:1d}, it is enough to show that 
\[
h_\mu(t+\alpha)+h_\mu(t-\alpha)\geq 2\cosh(\alpha)h_\mu(t)
\]
for all $t,\alpha\in\rr$. We let $t$ be fixed, and since this latter is an even condition, 
we can assume without loss of generality that $\alpha>0$.
Then, an explicit computation gives that
\begin{equation*}\begin{aligned}
2\cosh(\alpha)h_\mu(t)-h_\mu(t+\alpha)-h_\mu(t-\alpha)=\int_{-\infty}^\infty\left[-\cosh(\alpha)e^{-|t-s|}+\frac{e^{-|t+\alpha-s|}}2+\frac{e^{-|t-\alpha-s|}}2\right]\mathrm{d}\mu(s)\leq 0,
\end{aligned}\end{equation*}
since
\begin{equation*}\begin{aligned}
\left[-\cosh(\alpha)e^{-|t-s|}+\frac{e^{-|t+\alpha-s|}}2+\frac{e^{-|t-\alpha-s|}}2\right]=
\begin{cases} 0, &\textrm{if }|t-s|\geq\alpha,\\
              \sinh(|t-s|-\alpha)\leq0, &\textrm{if }|t-s|<\alpha. 
              \end{cases} 
\end{aligned}\end{equation*}

\begin{example}{\rm
To end this section, we remark that here we get also an explicit expression 
for the Elementary example of Example~\ref{ex: elemntary1} in the $d=1$ case. 
This is no more true in higher dimension, as we  discussed in Example~\ref{rem:elementary pol ex}. 
In fact, in one dimension we have to consider a measure concentrated in a point, that is for instance $\mu=\delta_0$, the Dirac mass at the origin. Hence a special solution $h_{\delta_0}$ given by \eqref{1D_general_sol} is 
\[
h_{\delta_0}(t)=\frac{e^{-|t|}}2,
\]
which is the restriction to $\H^1$ of the $1$-homogeneous piecewise linear function $H$ on $\mathbb R^2$ defined as
\[
H(x_1,x_2)=\begin{cases}x_2+x_1,&\textrm{if }x_1<0\\
x_2-x_1,&\textrm{if }x_1\geq 0.
\end{cases}
\]
}
\end{example}

\section{Quasi--Fuchsian solutions}\label{sec:qf sol}

\subsection{Uniqueness of solution}

We start this section with the simple proof of the fact that the solution to \eqref{eq_dist_Gamma}
is unique in the quasi-Fuchsian case. Actually, it can be shown that this result follows from
Theorem~\ref{th_sov_uniq}, see Remark~\ref{rem: bord infini domaine}.

\begin{proposition}\label{pr_unicity-fuchsian}
Given $\bar \mu\in \mathcal R(\hG)$, the equation \eqref{eq_dist_Gamma} has a unique solution $\bar h_{\bar\mu}$ in the sense of distributions, whose explicit expression is given in \eqref{sol_eq_dist_gamma}.
\end{proposition}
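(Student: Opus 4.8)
The plan is to separate existence from uniqueness. Existence, together with the explicit formula \eqref{sol_eq_dist_gamma}, is already furnished by Theorem~\ref{sol_fuchs_distr}, so the only thing left to establish is that \eqref{eq_dist_Gamma} cannot have two distinct distributional solutions on $\hG$. To this end I would take two solutions $\bar h_1,\bar h_2\in\mathcal D'(\hG)$ and set $\bar h:=\bar h_1-\bar h_2$. By linearity of the equation, $\bar h$ solves the homogeneous problem $\frac1d\Delta\bar h-\bar h=0$ in $\mathcal D'(\hG)$, and the entire difficulty is reduced to showing that this homogeneous equation has only the trivial solution.

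First I would upgrade the regularity of $\bar h$. The operator $\frac1d\Delta-1$ is elliptic (with smooth coefficients on the compact manifold $\hG$), so by elliptic regularity any distributional solution of $\frac1d\Delta\bar h-\bar h=0$ is in fact a smooth classical solution, exactly as invoked for the analogous step in the proof of Theorem~\ref{th_sov_uniq}. Having $\bar h\in C^\infty(\hG)$ is what makes the remaining argument legitimate, since it allows integration by parts and the use of the maximum principle.

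Then I would exploit the compactness of $\hG$ and the absence of boundary. The cleanest route is energy: multiplying the homogeneous equation by $\bar h$ and integrating over $\hG$, Green's formula (no boundary terms) yields
\begin{equation*}
0=\int_{\hG}\bar h\left(\tfrac1d\Delta\bar h-\bar h\right)=-\frac1d\int_{\hG}|\nabla\bar h|^2-\int_{\hG}\bar h^2 ,
\end{equation*}
where both summands on the right are nonpositive; hence each vanishes and in particular $\int_{\hG}\bar h^2=0$, forcing $\bar h\equiv0$. Equivalently, one may argue by the maximum principle: at a point where $\bar h$ attains its maximum one has $\Delta\bar h\le0$, so $\bar h=\frac1d\Delta\bar h\le0$ there, while at a minimum $\bar h\ge0$, whence $\bar h\equiv0$. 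Either formulation expresses the fact that $-\frac1d\Delta+1$ has all its eigenvalues $\ge1>0$ on the compact manifold, so its kernel is trivial.

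I do not expect a genuine obstacle here: the statement is a routine compact-manifold uniqueness result, and the quotient $\hG$ being compact is precisely what removes the growth/boundary-at-infinity subtleties that made the uniqueness argument delicate on the non-compact $\hh^d$ (compare Theorem~\ref{th_sov_uniq}, where one had to impose the asymptotic condition \eqref{subconical}). The one point demanding a little care is invoking elliptic regularity on the quotient before any pointwise or integration-by-parts manipulation; once $\bar h$ is known to be smooth, the conclusion $\bar h_1=\bar h_2$ is immediate.
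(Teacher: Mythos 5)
Your proof is correct, but it follows a genuinely different route from the paper's. You reduce to the homogeneous equation $\frac1d\Delta\bar h-\bar h=0$ for the difference of two solutions, upgrade to smoothness by elliptic regularity, and then kill the difference either by the energy identity $0=-\frac1d\int_{\hG}|\nabla\bar h|^2-\int_{\hG}\bar h^2$ or by the maximum principle on the compact quotient; this is the standard spectral observation that $-\frac1d\Delta+1$ has trivial kernel. The paper instead runs a duality argument that never leaves the distributional level: for an arbitrary test function $\bar\eta\in C^\infty(\hG)$ it takes the smooth solution $\bar h_{\bar\eta}$ of $\frac1d\Delta\bar h_{\bar\eta}-\bar h_{\bar\eta}=\bar\eta$ supplied by Theorem~\ref{th_smooth}, and uses the formal self-adjointness of $\frac1d\Delta-1$ to compute $(\bar T_i,\bar\eta)=(\frac1d\Delta\bar T_i-\bar T_i,\bar h_{\bar\eta})=(\bar\mu,\bar h_{\bar\eta})$ for both solutions, so they agree on every $\bar\eta$. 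The paper's route is shorter because the existence theorem is already in hand, and it avoids invoking elliptic regularity for distributions on the quotient (though it tacitly uses that Theorem~\ref{th_smooth}, stated for positive data, extends to arbitrary smooth $\bar\eta$ by linearity, e.g.\ writing $\bar\eta=(\bar\eta+C)-C$). Your route buys something too: it proves uniqueness independently of the existence theory and makes the underlying positivity of the operator explicit. Both arguments are valid; the only point of care in yours, which you correctly flag, is establishing smoothness of the difference before integrating by parts or evaluating at an extremum.
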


\begin{proof}
Let $\bar T_1,\bar T_2\in\mathcal D'(\hG)$ be two solution of $\eqref{eq_dist_Gamma}$. 
Choose $\bar \eta\in C^\infty(\hG)$ and let $\bar h_{\bar \eta}\in C^\infty(\hG)$ be a solution to 
$\frac{1}{d}\Delta \bar h_{\bar \eta}-\bar h_{\bar \eta}=\bar \eta$, which exists thanks to Theorem \ref{th_smooth}. Then
\[
(\bar T_1,\bar \eta)=(\bar T_1,\frac{1}{d}\Delta \bar h_{\bar \eta}-\bar h_{\bar \eta})=
(\frac{1}{d}\Delta \bar T_1 - \bar T_1,\bar h_{\bar \eta})=(\bar \mu,\bar h_{\bar \eta})=
(\frac{1}{d}\Delta \bar T_2 -\bar T_2,\bar h_{\bar \eta})=
(\bar T_2,\frac{1}{d}\Delta \bar h_{\bar \eta}-\bar h_{\bar \eta})=(\bar T_2,\bar \eta).
\]
Since $\bar \eta$ is arbitrary, this proves that $\bar T_1=\bar T_2$ in the sense of distributions.
\end{proof}

\begin{corollary}\label{cor_unicity-fuchsian}
Let $\tau$ be a cocycle and let $h$ and $h'$ be two $\tau$-equivariant maps such that
$S_1(h)=S_1(h')$. Then $h=h'$.
In particular there exists at most one $\tau$-F-convex set with a given first area measure.
\end{corollary}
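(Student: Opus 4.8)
The plan is to reduce the statement to the uniqueness already obtained on the compact quotient in Proposition~\ref{pr_unicity-fuchsian}. The only genuine content is that the difference of two $\tau$-equivariant solutions is $\Gamma_0$-invariant and solves the \emph{homogeneous} equation on $\H^d/\Gamma_0$; once this is established the conclusion is immediate. So first I would set $w:=h-h'$. By Lemma~\ref{lem:equivariant}\ref{item diff} the difference of two $\tau$-equivariant maps is $0$-equivariant, hence $w$ is $\Gamma_0$-invariant and its restriction to $\H^d$ descends to a function $\bar w$ on $\H^d/\Gamma_0$. Since $S_1$ is linear with respect to $h$ (the remark just after \eqref{eq:distribution action}), the hypothesis $S_1(h)=S_1(h')$ gives $S_1(w)=S_1(h)-S_1(h')=0$ as a distribution on $\H^d$.

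Next I would verify that the distribution $S_1(w)=\frac1d\Delta w-w$ is $\Gamma_0$-invariant and descends to $\frac1d\Delta\bar w-\bar w$ on $\H^d/\Gamma_0$. This is a short computation entirely parallel to the invariance argument in \eqref{well-def}: for $f\in C_c^\infty(\H^d)$ and $\gamma\in\Gamma_0$, using that $\gamma$ is an isometry (so it commutes with $\frac1d\Delta-1$ and preserves $\mathrm{d}\H^d$) together with the $\Gamma_0$-invariance of $w$, one obtains from \eqref{eq:distribution action} that $(S_1(w),f\circ\gamma)=(S_1(w),f)$. Hence $S_1(w)$ induces the distribution $\bar T:=P_{\Gamma_0,\ast}S_1(w)=\frac1d\Delta\bar w-\bar w\in\mathcal D'(\H^d/\Gamma_0)$, and $S_1(w)=0$ forces $\bar T=0$, so $\bar w$ solves $\frac1d\Delta\bar h-\bar h=0$ on $\H^d/\Gamma_0$.

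Finally, Proposition~\ref{pr_unicity-fuchsian} applied with $\bar\mu=0$ (for which the zero function is an obvious solution) yields $\bar w=0$, i.e. $h=h'$ on $\H^d$, which is the first assertion. For the ``in particular'': if $K,K'$ are $\tau$-F-convex sets with the same first area measure, their support functions $h_K,h_{K'}$ are $\tau$-equivariant by Remark~\ref{ex: gamma inv supp fct}, and $S_1(h_K)=S_1(K,\cdot)=S_1(K',\cdot)=S_1(h_{K'})$ by Lemma~\ref{lem: aire distribution}; the first part then gives $h_K=h_{K'}$, and since a F-convex set is determined by its support function (Lemma~\ref{lem: deter Hcs}) we conclude $K=K'$. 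The only step requiring care is the distributional descent in the second paragraph, but as it merely repeats the invariance computation \eqref{well-def} at the level of the operator $\frac1d\Delta-1$, I do not expect it to be a real obstacle; the essential ingredient is simply that passing to the difference kills the cocycle and reduces an a priori noncompact, non-unique problem on $\H^d$ to the already solved compact one on $\H^d/\Gamma_0$.
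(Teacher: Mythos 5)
Your proposal is correct and follows essentially the same route as the paper: the paper's proof is exactly "by linearity $S_1(h-h')=0$, the difference $h-h'$ is $\Gamma_0$-invariant, so Proposition~\ref{pr_unicity-fuchsian} gives $h=h'$," with the second assertion obtained by passing to support functions. You have merely made explicit the distributional descent to $\H^d/\Gamma_0$ and the application of the proposition with $\bar\mu=0$, which the paper leaves implicit.
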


\begin{proof}
 By linearity, $S_1(h-h')=0$, but $h-h'$ is $\Gamma$-invariant, so by Proposition~\ref{pr_unicity-fuchsian}, $h=h'$.
 The second part follows by considering support functions for $h$ and $h'$. 
\end{proof}


\subsection{The $\tau$-hedgehog of zero curvature}

\begin{lemma}\label{lem:tau herisson nul}
 For any $\tau\in Z^1(\Gamma,\R^{d+1})$, there exists a unique 
$C^{\infty}$ $\tau$-hedgehog $\lambda_{\tau}$ with $S_1(\lambda_{\tau})=0$.
It is the support function of a convex set if and only if $\tau$ is a coboundary.
\end{lemma}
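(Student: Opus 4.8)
The plan is to prove existence and uniqueness first, then address the convexity dichotomy. The equation $S_1(\lambda_\tau)=0$ means, in the sense of distributions, $\frac1d\Delta\lambda_\tau-\lambda_\tau=0$. Since we want a $\tau$-hedgehog, I would first reduce to the $\Gamma_0$-invariant setting. Fix any $C^\infty$ $\tau$-equivariant function $H_0$ (one exists, e.g.\ by Remark~\ref{ex: gamma inv supp fct} together with Example~\ref{ex: domain}, whose $\overline{\Omega_\tau}$ has a $\tau$-equivariant support function that can be smoothed, or more directly by taking any $C^2$ $\tau$-hedgehog as in Lemma~\ref{lem:tau herisson}). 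Writing $\lambda_\tau=H_0+u$, the equation becomes $\frac1d\Delta u-u=-S_1(H_0)=:\bar\mu$, where by Lemma~\ref{lem: hess gamma inv} the right-hand side $S_1(H_0)=\frac1d\Delta h_0-h_0$ is $\Gamma_0$-invariant, hence descends to a function $\bar\phi$ on $\hG$. I would thus seek $u$ as a $\Gamma_0$-invariant solution.

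The existence and uniqueness of such a $u$ is exactly Theorem~\ref{th_smooth} (and its uniqueness counterpart Proposition~\ref{pr_unicity-fuchsian}): since $H_0$ is $C^\infty$, $\bar\phi\in C^\infty(\hG)\subset C^{k,\alpha}$, and Theorem~\ref{th_smooth} produces a unique $\Gamma_0$-invariant solution $u=\bar h_{-\bar\phi}\circ P_{\Gamma_0}$, which is $C^\infty$ by the regularity statement. Then $\lambda_\tau:=H_0+u$ is a $C^\infty$ $\tau$-equivariant function (the sum of a $\tau$-equivariant and a $0$-equivariant map, by Lemma~\ref{lem:equivariant}\ref{item sum}) with $S_1(\lambda_\tau)=0$, and it is a $\tau$-hedgehog by definition. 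For uniqueness, if $\lambda_\tau$ and $\lambda_\tau'$ both work, their difference is $\Gamma_0$-invariant with $S_1(\lambda_\tau-\lambda_\tau')=0$, so by Corollary~\ref{cor_unicity-fuchsian} (or directly Proposition~\ref{pr_unicity-fuchsian}) they coincide.

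The interesting part is the convexity characterization. If $\tau$ is a coboundary, $\tau_{\gamma_0}=v-\gamma_0 v$, then by Lemma~\ref{lem:equivariant}\ref{item coboundary}--\ref{item cocycle diff} every $\tau$-equivariant map differs from a $0$-equivariant one by $\langle\cdot,v\rangle_-$; in particular $\lambda_\tau=\lambda_0+\langle\cdot,v\rangle_-$ where $\lambda_0$ is the unique $0$-hedgehog of zero curvature. But the $0$-invariant solution to $\frac1d\Delta h-h=0$ on the compact manifold $\hG$ is the constant, which by the maximum principle (or by the uniqueness in Theorem~\ref{th_smooth} applied to $\bar\phi=0$) must vanish identically, so $\lambda_0\equiv0$ and $\lambda_\tau=\langle\cdot,v\rangle_-$ is the extended support function of $\mathcal C(v)$, a (translated) future cone, hence convex. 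Conversely, if $\lambda_\tau$ is the support function of a convex set $K$, then $K$ is a $\tau$-F-convex set with $S_1(K,\cdot)=0$; its first area measure being zero means, via the argument of Subsection~\ref{unique-convex}, that $\nabla^2\lambda_\tau-\lambda_\tau g\equiv0$ on all of $\H^d$, so $\lambda_\tau$ is the restriction of a linear form $\langle\cdot,v\rangle_-$ to $\H^d$. Then $\lambda_\tau=\langle\cdot,v\rangle_-$ is $\tau$-equivariant and also $\tau'$-equivariant for the coboundary $\tau'_{\gamma_0}=v-\gamma_0 v$ (Lemma~\ref{lem:equivariant}\ref{item coboundary}), so by the rigidity Lemma~\ref{lem:equivariant}\ref{item quasi inv} we get $\tau=\tau'$, i.e.\ $\tau$ is a coboundary.

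The main obstacle I anticipate is the converse convexity step: justifying that $S_1(K,\cdot)=0$ as a distribution forces the Hessian $\nabla^2\lambda_\tau-\lambda_\tau g$ to vanish \emph{identically and smoothly} on $\H^d$. This is exactly the elliptic-regularity-plus-trace-and-sign argument carried out in Subsection~\ref{unique-convex}: the homogeneous equation gives $\lambda_\tau\in C^\infty$, the quadratic form $\nabla^2\lambda_\tau-\lambda_\tau g$ is trace-free (since $\frac1d\operatorname{tr}(\nabla^2\lambda_\tau-\lambda_\tau g)=\frac1d\Delta\lambda_\tau-\lambda_\tau=0$) and positive semidefinite by convexity of $K$, whence identically zero, forcing $\lambda_\tau$ to be the restriction of a linear form by \eqref{eq:II-1}. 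I would simply cite that subsection rather than repeat the computation.
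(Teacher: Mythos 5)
Your proof is correct and follows essentially the same route as the paper: $\lambda_\tau$ is obtained by subtracting from a $\tau$-equivariant function the $\Gamma_0$-invariant solution with the same first area measure, uniqueness and well-definedness come from the Fuchsian uniqueness statement (Corollary~\ref{cor_unicity-fuchsian}), and the convexity dichotomy is handled exactly as in the paper via the trace-free positive-semidefinite Hessian argument of Subsection~\ref{unique-convex} together with items \ref{item quasi inv} and \ref{item coboundary} of Lemma~\ref{lem:equivariant}. The only soft spot is the setup of the invariant correction: Theorem~\ref{th_smooth} is stated for $0<\bar\phi$, and your source term $-S_1(H_0)$ need not be positive (add a large constant and use $S_1(c)=-c$ to repair this), while the existence of a $C^\infty$ $\tau$-equivariant $H_0$ is asserted but not really justified --- the paper sidesteps both points by starting from the merely continuous support function of $\overline{\Omega_\tau}$, whose first area measure is a positive $\Gamma_0$-invariant Radon measure, invoking the distributional Theorem~\ref{sol_fuchs_distr}, and recovering smoothness of $\lambda_\tau$ only at the end by elliptic regularity applied to $S_1(\lambda_\tau)=0$.
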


In the Fuchsian case ($\tau=0$), $\lambda_{\tau}$ is the origin.

\begin{proof}
Let  $h$ be a $\tau$-equivariant map.
By Theorem~\ref{sol_fuchs_distr} there exits a 
$\Gamma$-invariant function $h_0$ such that $S_1(h_0)=S_1(h)$
in the sense of distribution ($h_0$ is a continuous function by the arguments of Subsubsection~\ref{sec_non-uniq}).
Let us define $\lambda_{\tau}=h-h_0$. 
It has the following properties:
\begin{itemize}[nolistsep]
\item \emph{$\lambda_{\tau}$ is unique}: by Corollary~\ref{cor_unicity-fuchsian}. In particular,
it is well-defined in the sense that is depends only on $\tau$.
\item\emph{$S_1(\lambda_{\tau})=0$}: by construction. 
\item \emph{$\lambda_{\tau}$ is $C^{\infty}$}: by the preceding item and elliptic regularity.
 \item \emph{If $\tau$ is a coboundary}, with the notations of  \ref{item cocycle diff} of Lemma~\ref{lem:equivariant},
$S_1(H)=S_1(H_0)$, $H-H_0=\langle \cdot,v\rangle_-$
 and this is the $1$-extension of
$\lambda_{\tau}$.
 \item \emph{If $H-H_0$ is convex,}  
  as $H$ and $H_0$ have the same area measure, by
Subsubsection~\ref{unique-convex}, $H$ and $H_0$ differ by the restriction to $\F$ of a linear form. So $\tau$ is a coboundary.
\item\emph{$\lambda_{\tau}$ is $\tau$-equivariant} by construction.
\end{itemize}
\end{proof}

\begin{remark}[\textbf{Formal eigenfunctions of the hyperbolic Laplacian}]{\rm 
Let us denote  by $E(d)$ the space of formal eigenfunctions of the  Laplacian of $\H^d$
 for the eigenvalue $d$. For any $\tau\in Z^1(\Gamma,\R^{d+1})$,
 $\lambda_{\tau}$ belongs to $E(d)$ (note that its $1$-extension is a formal
 eigenfunction of the wave operator). 
  Actually this correspondence
 is a linear injection.

\begin{lemma}
The map $\lambda:\tau\mapsto\lambda_{\tau}$
from
 $Z^1(\Gamma,\R^{d+1})$ to $E(d)$ is an injective linear map.
 
 The image of $B^1(\Gamma,\R^{d+1})$ is the set of the restrictions to
 $\H^d$ of linear forms of $\R^{d+1}$. 
\end{lemma}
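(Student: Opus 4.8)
The plan is to deduce everything formally from the construction of $\lambda_\tau$ in the proof of the previous lemma, together with Lemma~\ref{lem:equivariant}. Recall that by construction $\lambda_\tau=h-h_0$, where $h$ is any $\tau$-equivariant map and $h_0$ is a $\Gamma_0$-invariant function with $S_1(h_0)=S_1(h)$; moreover $\lambda_\tau$ is $\tau$-equivariant and is uniquely determined by $\tau$. Since $S_1(\lambda_\tau)=0$ is the equation $\frac1d\Delta\lambda_\tau-\lambda_\tau=0$, i.e.~$\Delta\lambda_\tau=d\lambda_\tau$, the function $\lambda_\tau$ indeed lies in $E(d)$, as already observed. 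So the three things to establish are linearity, injectivity, and the description of $\lambda(B^1)$.

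For linearity I would fix cocycles $\tau,\tau'$ and a scalar $\alpha$, pick a $\tau$-equivariant $h$ and a $\tau'$-equivariant $h'$, and invoke \ref{item sum} of Lemma~\ref{lem:equivariant} to see that $\alpha h+h'$ is $(\alpha\tau+\tau')$-equivariant. Letting $h_0,h_0'$ be the $\Gamma_0$-invariant functions with $S_1(h_0)=S_1(h)$ and $S_1(h_0')=S_1(h')$, the function $\alpha h_0+h_0'$ is again $\Gamma_0$-invariant and, by linearity of $S_1$, satisfies $S_1(\alpha h_0+h_0')=S_1(\alpha h+h')$. The uniqueness in the construction of $\lambda$ then forces $\lambda_{\alpha\tau+\tau'}=(\alpha h+h')-(\alpha h_0+h_0')=\alpha\lambda_\tau+\lambda_{\tau'}$.

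For injectivity, by linearity it suffices to show the kernel is trivial. I would suppose $\lambda_\tau=0$; since $\lambda_\tau$ is $\tau$-equivariant, the zero function is $\tau$-equivariant, and it is obviously also $0$-equivariant, so \ref{item quasi inv} of Lemma~\ref{lem:equivariant} yields $\tau=0$. Concretely, $\tau$-equivariance of the zero function reads $\langle\gamma_0^{-1}\tau_{\gamma_0},\eta\rangle_-=0$ for all $\eta\in\F$, and because $\F$ spans $\R^{d+1}$ this gives $\tau_{\gamma_0}=0$ for every $\gamma_0$.

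For the image of $B^1$ I would use \ref{item coboundary}: if $\tau$ is a coboundary with $\tau_{\gamma_0}=v-\gamma_0 v$, then $\eta\mapsto\langle\eta,v\rangle_-$ is $\tau$-equivariant, and the computation in the proof of the previous lemma identifies $\lambda_\tau$ with the restriction to $\H^d$ of the linear form $\langle\cdot,v\rangle_-$. As $v$ ranges over $\R^{d+1}$ these coboundaries exhaust $B^1$, and since the Minkowski form is non-degenerate the assignment $v\mapsto\langle\cdot,v\rangle_-$ is a bijection onto the space of linear forms of $\R^{d+1}$; hence $\lambda(B^1)$ is exactly the set of restrictions to $\H^d$ of such forms, giving both inclusions at once. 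I do not anticipate a real obstacle: the statement is a formal consequence of the construction of $\lambda_\tau$, and the only point needing care is the implication $\lambda_\tau=0\Rightarrow\tau=0$, where the equivariance rigidity of Lemma~\ref{lem:equivariant} is essential.
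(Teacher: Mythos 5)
Your proof is correct and follows essentially the same route as the paper: linearity via the uniqueness in the construction of $\lambda_\tau$, and injectivity via the rigidity statement \ref{item quasi inv} of Lemma~\ref{lem:equivariant} (you phrase it as triviality of the kernel, the paper compares $\lambda_\tau=\lambda_{\tau'}$ directly, but the mechanism is identical). The only cosmetic difference is in the $B^1$ step, where you identify the image explicitly through $v\mapsto\langle\cdot,v\rangle_-$ and non-degeneracy of the Minkowski form, whereas the paper concludes by a dimension count using $\dim B^1=d+1$; both are valid.
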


\begin{proof}
We already know that the image of $Z^1$ belongs to $E(d)$.

\emph{$\lambda$ is injective:}
Let $\tau'\in Z^1$. 
If $\lambda(\tau)=\lambda(\tau')$, then there exists a 
$\tau$-equivariant function $h$, a
$\tau'$-equivariant function $h'$ and  $\Gamma$ invariant functions $h_0$ and $h_0'$ with $h'-h_0'=h-h_0$ i.e.~$h'+h_0=h+h_0$. The right hand side is 
a $\tau$-equivariant function and the left hand side is a $\tau'$-equivariant function. The result follows from 
Lemma~\ref{lem:equivariant}.

\emph{$\lambda$ is linear:} 
with the preceding notations and $\alpha$ 
a real number,  from Lemma~\ref{lem:equivariant}, 
$\alpha h+h'$ is 
$(\alpha \tau+\tau')$-equivariant. On one hand, $\alpha (h-h_0)+h'-h_0'$ is equal to
$\alpha \lambda_{\tau}+\lambda_{\tau'}$. On the other hand, $S_1(\alpha h+h')=S_1(\alpha h_0+h_0')$ hence
$\alpha h+h'-\alpha h_0+h_0'$ is equal to $\lambda_{\alpha \tau+\tau'}$.

\emph{$\lambda(B^1)$:} we already know that the image is made of restriction of linear 
forms. The result follows because 
$\lambda$ is linear and $B^1$ has dimension $d+1$.
\end{proof}
}\end{remark}

\begin{remark}[\textbf{Slicing by constant mean radius of curvature}]{\rm 
From Lemma~\ref{lem:tau herisson}, we get two positive constants $c_1$ and $c_2$ such that, for any positive $c$,
$\lambda_{\tau}-c_1-c$ is a slicing of an unbounded part of the $\tau$-F-regular domain $\Omega^+_{\tau}$
by smooth convex Cauchy surfaces with
constant mean radius of curvature. In the same way, $\lambda_{\tau}+c_2+c$ 
is a slicing of an unbounded part of the $\tau$-P-convex domain $\Omega^-_{\tau}$  by smooth convex Cauchy surfaces with
constant mean radius of curvature. Taking negative $c$, the slicing can be extended, going outside of $\Omega^+_{\tau}\cup\Omega^-_{\tau}$, and the slices are
$\tau$-hedgehogs. }\end{remark}

\begin{remark}[\textbf{Quasi-Fuchsian Christoffel problem}]{\rm 
 The uniqueness part of the problem is solved by Corollary~\ref{cor_unicity-fuchsian}.
 Given a $\Gamma$-invariant measure $\mu$, Theorem~\ref{th_smooth} gives the 
(unique) $\Gamma$-invariant solution $h_0$ of $S_1(h_0)=\mu$. So
$h:=h_0+\lambda_{\tau}$ is the unique $\tau$-equivariant solution of $S_1(h)=\mu$, in the sense of distribution.
To know when $h$ is the support function of a $\tau$-F-convex set, one has to
use Proposition~\ref{prop: conv cond}.
 }\end{remark}

\begin{remark}[\textbf{Relations with Codazzi tensors}]{\rm 
A Codazzi tensor (here on a hyperbolic surface $S$) is a self-adjoint $(0,2)$-tensor which satisfies the Codazzi equation. 
For example, for any smooth function $u$ on $S$, $\mathrm{Hess}u-u\mathrm{Id}$ is a Codazzi tensor.
If $S$ is compact, a group isomorphism $\Phi$ between the space of traceless Codazzi tensor and $H^1(\Gamma,\R^3)$ ($\Gamma=\pi_1(S)$) is constructed 
in \cite{BA}: Let $ \tilde{b}$ be the lifting of $b$ to $\H^2$. From a result of
\cite{OS83}, there exists a smooth map $h:\H^2\rightarrow R$ with $\tilde{b}=\mathrm{Hess}h-h\mathrm{Id}$.
It can be checked that $\tilde{b}$ is $\tau$-equivariant, for a $\tau\in Z^1(\Gamma,\R^3)$. Then define $\Phi(b)=\tau$. 
Lemma~\ref{lem:tau herisson nul} says that $\Phi$ is surjective: for any $ \tau$,  
$\mathrm{Hess}\lambda_{\tau}-\lambda_{\tau}\mathrm{Id}$ is a traceless Codazzi operator on $\H^d/\Gamma$.

}\end{remark}

\subsection{Mean width of flat GHCM spacetimes}\label{sub mean width}

Let $h$ be a $\tau$-equivariant map. The map $h-\lambda_{\tau}$ is $\Gamma$-invariant, and
$S_1(h)=S_1(h-\lambda_{\tau})$. With the notations of Subsection~\ref{sub:fuchssol} together with the definition of the action
given in \eqref{eq:distribution action}, $\forall f\in C^{\infty}(\H^d/\Gamma)$, the action of the first area measure on 
$\H^d/\Gamma$ writes as

\begin{equation*}(\overline{S}_1(\overline{h-\lambda_{\tau}}),f)=\int_{\H^d/\Gamma}(\overline{h-\lambda_{\tau}})\left(\frac{1}{d}\Delta-1\right)f.\end{equation*}

Let $h$ be the support function of a  $\tau$-F-convex set $K$. The Radon measure $S_1(K,\cdot)$ is $\Gamma$ invariant, so for any
fundamental domain $\omega$ for the action of $\Gamma$, we can define the \emph{total first area measure} of $K$
by $\overline{S}_1(K):=S_1(K,\omega)$. Actually,  $S_1(K,\cdot)$ gives a Radon measure $\overline{S}_1(K,\cdot)$ on 
$\H^d/\Gamma$, and $\overline{S}_1(K)=\overline{S}_1(K,\H^d/\Gamma)$.
By setting $f=1$ in the above formula, we obtain  (compare with Remark~\ref{rem:meanwidth fuch})
\begin{equation}\label{masse quotient}\overline{S}_1(K)=-\int_{\H^d/\Gamma}\overline{h-\lambda_{\tau}}.\end{equation}

Let us consider  a $\tau$-F-regular domain  $\Omega^+_{\tau}$ with simplicial singularity (see Subsection~\ref{subsub back poly}).
In this case, 
the total mass of the measured geodesic stratification on $\H^d/\Gamma$ is equal to 
$\overline{S}_1(\Omega^+_{\tau})$ (see Remark~\ref{subsub: strat}). 

From the given cocycle $\tau$, one also get a 
 $\tau$-P-regular domain $\Omega_{\tau}^-$ (it is given for example by the symmetry with respect to the origin of the F-convex domain $\Omega_{-\tau}$).
 The meaning of $\overline{S}_1(\Omega_{\tau}^-)$ is clear.
  Let us denote by $h^+_{\tau}$  the support function of $\Omega^+_{\tau}$ and by
$h^-_{\tau}$ the support function of $\Omega^-_{-\tau}$, which is  a $(-\tau)$-equivariant map.
Moreover $-\lambda_{\tau}=\lambda_{-\tau}$, so using \eqref{masse quotient} and the equation above,
$$\overline{S}_1(\Omega_{\tau}^-)+\overline{S}_1(\Omega_{\tau}^+)=-\int_{\H^d/\Gamma} \overline{h_{\tau}^++h_{\tau}^-}.$$
This last formula has the following geometric meaning.
Let $\eta\in\F$, and $-1:\R^d\rightarrow\R^d, x\mapsto -x$. Then 
$h^-_{\tau}\circ -1$ is the support function (defined on $-\F$) of $\Omega_{\tau}^-$. 
So $(h_{\tau}^++h_{\tau}^-)(\eta)$ is the ``distance'' between the support planes of $\Omega_{\tau}^+$ and $\Omega_{\tau}^+$
orthogonal to $\eta$ ($(h_{\tau}^++h_{\tau}^-)(\eta)<0$ says that the respective half-spaces are disjoint). Hence $-\int_{\H^d/\Gamma} \overline{h_{\tau}^++h_{\tau}^-}$ divided by the volume of $\H^d/\Gamma$
can be called the  \emph{mean width} of the  flat spacetime
$(\Omega_{\tau}^+\cup\Omega_{\tau}^-)/\Gamma_{\tau} $. We get that this mean width is determined by the total mass of 
the measured geodesic stratifications defining the spacetime. In the Fuchsian case $\tau=0$, the mean width is null.

\begin{spacing}{0.9}
\begin{footnotesize}
\newcommand{\etalchar}[1]{$^{#1}$}
\def\dbar{\leavevmode\hbox to 0pt{\hskip.2ex \accent"16\hss}d}

\end{footnotesize}
\end{spacing}

\end{document}